\documentclass[11pt, a4paper, twoside, leqno]{article}
\usepackage{bbm}
\usepackage{pifont}
\usepackage{bbding}
\usepackage{mathrsfs}
\usepackage{pgf,tikz}
\usepackage{amsfonts,amssymb,amsmath,indentfirst,amsthm}
\usepackage{epsfig}
\usepackage{fancyhdr}
\usepackage{graphicx}
\usepackage{cite}
\usepackage{url}
\usepackage{xcolor}
\usepackage{cite}
\usepackage{calc}
\usepackage{cases}
\usepackage[colorlinks,linkcolor=blue]{hyperref}
\usepackage{multirow}  % 用于跨行合并单元格
\usepackage{amsmath}   % 用于数学公式
\usepackage{booktabs}  % 用于绘制更专业的表格线条（三线表风格）
\usepackage{array}     % 增强表格功能
\usepackage{geometry}  % 可选，调整页边距使表格更宽松
\usepackage{threeparttable}
\usetikzlibrary{patterns}
\usepackage{comment}
\usepackage{float}

\def\ess~inf{\mathop{\rm ess~inf}}

\numberwithin{equation}{section}

\newenvironment{key words}{\emph{\texttt{Keywords}}\mbox{  }}{ }
\newtheorem{theorem}{Theorem}[section]
\newtheorem{lemma}[theorem]{Lemma}
\newtheorem{corollary}[theorem]{Corollary}
\newtheorem{proposition}[theorem]{Proposition}

\newtheorem{assumptions}[theorem]{Assumptions}
\newtheorem{example}[theorem]{Example}

\renewenvironment{proof}{\noindent{\textbf{Proof.}}}{\hfill$\Box$}
\theoremstyle{remark}
\newtheorem{remark}[theorem]{\textbf{Remark}}
\newtheorem{remarks}[theorem]{\textbf{Remarks}}

\theoremstyle{plain}

\makeatletter

\newcommand{\Rmnum}[1]{\expandafter\@slowromancap\romannumeral #1@}

\makeatother

\def\RR{{\Bbb R}}
\def\NN{{\Bbb N}}

\def\ka{{\kappa}}

\def\al{{\alpha}}

\def\ga{{\gamma}}

\def\la{{\lambda}}

\def\si{{\sigma}}

\def\ve{{\varepsilon}}
\def \supp {\text{\rm supp\,}}

\allowdisplaybreaks

\let\oldthanks\thanks
\renewcommand{\thanks}[1]{%
	\renewcommand{\thefootnote}{}%  清除当前脚注标记
	\oldthanks{#1}%                 调用原始的 \thanks
	\renewcommand{\thefootnote}{\arabic{footnote}}% 恢复为数字编号（或您需要的样式）
}

\begin{document}
	
\title {\textbf{ $L^p$-Estimates for maximal averages along  mixed homogeneous hypersurfaces in $\mathbb{R}^{3}$ }	
\author{Stefan Buschenhenke, Wenjuan Li, Detlef M\"{u}ller, and Huiju Wang}
\thanks{2020 {\it Mathematical Subject Classification}: 42B20, 42B25.}
	\thanks{{\it Keywords and phrases:} Maximal operator, hypersurface,  $L^{p}$-boundedness, mixed homogeneous, transversality condition, multiplicity of roots, FIO-cone multiplier}
	\thanks{This work is supported by the National Key R\&D Program of China (No.2023YFA1010800); Natural Science Foundation of China (No.12301113; No.12271435).}}
	
\date{}
\maketitle
\thispagestyle{empty}

{\bf Abstract:}  In this paper, we  study  $L^p$-estimates for maximal averaging operators $\mathcal M$ along  hypersurfaces  $S$ in  $\mathbb{R}^{3}$ which are the graph of a mixed homogeneous function $\Phi$ which is analytic away from the origin. The closure of such a surface will pass through the origin, so that the usual transversality condition that had been imposed in many previous works on maximal averages along hypersurfaces will not hold even when $\Phi$ is analytic at the origin.

	As our main result,  under mild assumptions which are  satisfied for instance for every  mixed homogeneous polynomial  $\Phi,$  we determine the critical Lebesgue exponent $p_c$ for which  $\mathcal M$ is $L^p$-bounded for every $p>p_c,$ but  unbounded for $p<p_c,$ in terms of multiplicities of the real  roots of the Hessian determinant of $\Phi.$

	 It turns out that the study of the contributions by  neighborhoods of a certain type of roots is  closely related to   recent work by Dendrinos, Ikromov and the first and third author on sharp estimates for a  maximal averaging operator  along a transversal hypersurface of an ``exceptional'' class, whose $L^p$-boundedness had been  an open problem for a long time  and which  has recently  been established by means of their  new theory of FIO-cone multipliers.

\tableofcontents

%%%%%%%%%%%%%%%%%%%%%%%%%%%%%%%%%%%%%%%%%%%%%%%%%%%%%%%%%%%%%%%%%%%%%%%%%%%%%%%%%%%%%%%%%%%%%%%%%%%%

\section{Introduction}
\subsection{Preliminaries}\label{prelim}

If $S$ is a  smooth hypersurface in $\mathbb{R}^n$ with surface measure $d\sigma$ and $\rho\in C_0^{\infty}(S)$ a smooth non-negative function with compact support, then we denote by
\begin{equation}\label{maximalfunsur}
	\mathcal M f(y):=\sup_{t>0}\left|\int_{S}f(y-tx)\rho(x)\, d\sigma(x)\right|, \hspace{0.2cm} y\in \mathbb{R}^n,
\end{equation}
the associated  maximal averaging operator (which depends on $S$ as well as  on the cut-off $\rho$).  An important question in harmonic analysis is to understand on which Lebesgue spaces
$L^p=L^p(\mathbb{R}^n)$ such operators are bounded, i.e., for which $p\ge 1$ an estimate
\begin{equation}\label{Mb}
	\|\mathcal M  f\|_p\le C_p\|f\|_p, \quad  f\in \mathcal{S},
\end{equation}
holds true. The study of this question had been initiated through Stein's celebrated work \cite{St76} on the spherical maximal operator, in which he proved that if $S$ is  the Euclidean unit sphere and $\rho=1$ on $S,$ then in dimension $n\ge 3$ the operator $\mathcal M $ is $L^p$ - bounded if and only if $p> n/(n-1).$ The case $n=2$ was only settled about ten years later by Bourgain \cite{bourgain85}.

Greenleaf \cite{Gr81} extended Stein's result to wider classes of hypersurfaces, assuming  a certain number of principal curvatures to be nonzero, which includes  all hypersurfaces  with non-vanishing Gaussian curvature.

However, it  still remains largely open how to characterize $L^p$-boundedness of maximal operators associated to hypersurfaces $S$ whose Gaussian curvature is allowed to vanish at some points. Let us mention just a few important further results in that direction.

Using the local smoothing estimates from \cite{MSS93}, Iosevich \cite{Io94} obtained sharp results for finite-type curves in $\mathbb{R}^2$. By introducing another key idea, namely to damp the  density $\rho$ along $S$ by a suitable power of the Gaussian curvature on $S,$ Sogge--Stein \cite{SS85} derived $L^p$-bounds for maximal operators associated to hypersurfaces $S$ whose Gaussian curvature does not vanish to infinite order at any point, even though  the lower bounds on $p$ they obtained are not optimal.

Subsequent contributions have been made by various authors. Building on decay estimates from \cite{BNW} for the Fourier transform of measures carried by convex hypersurfaces of finite type, Nagel--Seeger--Wainger \cite{NSW93} studied the boundedness of maximal functions associated to such convex hypersurfaces, and obtained sharp results for graphs of particular classes of mixed homogeneous convex functions, generalizing results of M. Cowling and G. Mauceri in \cite{CM86}.	
For $p>2$, Iosevich--Sawyer \cite{IS97} established  $L^p$-boundedness results  for such hypersurfaces in terms of a certain  notion of contact with affine hyperplanes, by analyzing oscillatory integrals damped by powers of the parametrizing function in local coordinates on $S$. They also show that their estimates are sharp when  the surface does not pass through the origin.  Their paper also contains  interesting results on maximal operators associated to mixed-homogeneous surfaces and non-isotropic dilations.

In \cite{IKM05}, Ikromov--Kempe--M\"{u}ller investigated the $L^p$-boundedness of maximal operators in $\mathbb{R}^3$ associated with hypersurfaces that are given as graphs of functions of the form $c + \phi$, where $\phi$ is a mixed homogeneous function smooth away from the origin and $c$ is a constant. Note that these surfaces do not pass through the origin and satisfy the transversality condition unless $c=0$.

 Under the assumption that the maximal order  of  vanishing  of the Hessian determinant of $\phi$ is finite, the authors proved  boundedness of the corresponding maximal operator on $L^p(\mathbb{R}^3)$ for all $p > h,$ provided  $h\ge 2$. The height $h$ as defined in \cite{IKM05} does agree with Varchenko's  notion of {\it height} $h_\phi$ of $\phi,$   which is defined in terms of Newton diagrams associated to $\phi$ (cf. \cite{IM11} for details). This result improved on some of the afore-mentioned  results in \cite{IS97} on mixed homogeneous functions that only allowed critical points of $\phi$ at the origin. Moreover, the result is again sharp when $c \neq 0$.

In the subsequent work \cite{IKM10} the same authors considered general smooth hypersurfaces of finite type in $\mathbb{R}^3$. In this general context, the previous  condition for a convex surface  to not  pass through the origin has to be replaced by the {\it transversality condition}, which requires  that none of the affine tangent hyperplanes to $S$ passes through the origin. After fixing some point $x^0$ on $S,$ the transversality condition allows to assume that,  after a suitable linear change of coordinates,  $x^0=(0,0,1),$ and that in some neighborhood $\Omega$ of this point the hypersurface $S$ admits a local representation as a graph
\[
\Omega \cap S = \{(x_1,x_2,1+\phi(x_1,x_2)) : (x_1,x_2) \in U\},
\]
where $\phi$ is a smooth function defined on a sufficiently small  open neighborhood $U$ of the origin in $\mathbb{R}^2$, and  where $\phi(0,0)=0$
and $\nabla\phi(0,0)=0.$

If again $h_{\phi} \ge 2$ denotes   the height of $\phi$ in the sense of Varchenko, then for $h_{\phi} \ge 2$ the authors showed that the associated maximal operator $\mathcal M$ is $L^p$ - bounded   for all $p > h_{\phi}$, and that the corresponding $L^p$ - range is sharp  under the transversality condition (with the possible exception of the endpoint $p = h_{\phi}$ when $S$ is non‑analytic). Actually, the $L^p$-estimates in this paper hold true even without the transversality  assumption, which in fact is only required for the proof of sharpness.

The proof of these results does not make use of damping techniques. An alternative proof of a major part of these results  based on such  techniques has been devised by Greenblatt in \cite{Gr13}.

The case where $h_{\phi} <2$ has been studied in the follow-up papers \cite{IM16} and \cite{BDIM19}. By these papers and the preceding one,  say for analytic surfaces $S\subset \Bbb R^3,$   the $L^p$- range  on which $\mathcal M$ is $L^p$ - bounded   has by now been determined completely (with the possible exception of the endpoint) by means of quantities which can be read off from Newton diagrams associated to $\phi,$ with the exception of a small ``exceptional class''  of surfaces with singularities  of type A (in the sense of Arnol'd's classification). For surfaces which are not of type A,  the condition is that $p > \max\{3/2, h_{\phi}\}$. If $S$ has a singularity of type A, i.e., when  exactly one principal curvature  of $S$ vanishes at the point $(0,0,1),$   a new quantity defined in \cite{BDIM19},  the {\it effective multiplicity}, turned out to be relevant. By means of the new theory of FIO-cone multipliers developed in \cite{BDIM25}, two conjectures  on the $L^p$- range  for  $\mathcal  M$ formulated in \cite{BDIM19} could be proved already for a prototypically hypersurface of the exceptional class. The estimates in \cite{BDIM25} will become relevant also for the present paper.

In view of these results,  for hypersurfaces in $\Bbb R^3$ the  $L^p$- range for the  associated maximal operators is fairly well understood under the transversality assumption.

On the other hand, very few sharp results are known for surfaces not satisfying the transversality condition. Actually, to the best of our knowledge, results exist so far only for surfaces passing through the origin.  For such surfaces, one typically
expects the $L^p$-range to become larger than for  their translated cousins satisfying the transversality assumption.

Indeed, as Zimmermann showed in his PhD thesis \cite{Zi14},  matters may change dramatically when the transversality assumption is dropped. By using  a  fine resolution of singularities, involving the geometry of Newton diagrams, he  proved that the condition $p>2$ is sufficient for the $L^p$ - boundedness of  the maximal operator associated to any analytic hypersurface $S\subset \mathbb{R}^3$ passing through the origin and supported in a sufficiently small neighborhood of the origin.

Let us choose $p_c$ minimal so that $\mathcal  M$ is $L^p$ - bounded for every $p>p_c.$ Zimmermann's result raises the following question concerning this critical Lebesgue exponent $p_c$: are there analytic hypersurfaces passing through the origin so that $p_c<2$ for the associated maximal operator?

The answer to this question is ``yes'', as shown by  the second and fourth author of this paper in \cite{LW24}. In \cite{LW24}, the authors study maximal operators $\mathcal M$ associated to hypersurfaces $S$ of the form
$$
S= \{(x_{1},x_{2},\Phi(x_{1},x_{2})): (x_{1},x_{2}) \in U \},
$$
where $\Phi$ is a homogeneous polynomial. It is noteworthy that the optimal lower bound for $p$ in \cite{LW24} is not related to the height of the homogeneous hypersurface.

As an important next step towards an understanding of maximal averages along hypersurfaces not satisfying the transversality condition we shall study here surfaces which are the graph of a mixed-homogeneous function which is real-analytic away from the origin. 

This includes all  mixed-homogeneous   homogeneous polynomial functions. The fact that such polynomials  may correspond to  the contributions by faces of the Newton polygon  of more general, non-homogenous real analytic functions and thus allow to locally approximate  such functions has been one motivation for us to study such maximal operators.

%%%%%%%%%%%%%%%%%%%%%%%%%%%%%%%%%%%%%%%%%%%%%%%%%%%%%%%%%%%%%%%%%%%%%%%%%%%%%%%%%%%%%%%%%%%%%%%%%%%%

\subsection{The main result}\label{main result}

Let us  recall some definitions and notation. Assume  that $\kappa=(\kappa_1,\kappa_2)\in \mathbb{R}^2$ with $\kappa_1,\kappa_2>0$ is  a given ``weight'',  and define a  corresponding group of dilations $\{\delta_r\}_{r>0}$ on $\mathbb{R}^2$ by setting
\[\delta_r(x):=(r^{\kappa_1}x_1,r^{\kappa_2}x_2), \qquad x=(x_1,x_2)\in \mathbb{R}^2.\]
 A function $F$ on $\mathbb{R}^2\setminus\{0\}$ is called {\it $\kappa$-homogeneous of degree $\alpha\in\Bbb R,$ }
 if
\begin{equation*}
F(\delta_r(x))=r^\alpha F(x)\quad\quad\text{for all}\  x\neq 0,r>0.
\end{equation*}
 We shall also use the standard short-hand notation $|\kappa|:=\kappa_1+\kappa_2.$  Assume from here on that $F$ is smooth.
 
 We shall abbreviate partial derivatives  $\partial_i=\frac{\partial}{\partial x_i},\,  i=1,2, $  and shall also use the short-hand notation 
 \[
 F_i:=\partial_i F ,\quad F_{ij}=\partial_i\partial_j F,  \quad \dots .
 \]
 
By   $\mathrm{ord} \, F(x)$ we shall denote the order of vanishing of $F$ in $x,$  i.e., the  smallest  integer $j\in \Bbb N\cup\{\infty\}$ 
 such that $D^k F(x)=0$  for all $k<j,$ where $D^k F(x)$ denotes the $k$-th order total derivative of $F$ in $x$.
 
$\mathrm{H} F(x):=\det (D^2 F(x))$ will denote the Hessian determinant of $F$ at $x.$ 

Note that if $F$ is $\kappa$-homogeneous of degree $\alpha,$ then for $i=1,2,$ $F_i$ is $\kappa$-homogeneous of degree $\alpha-\kappa_i.$   This implies that indeed also every higher order partial derivative $\partial_1^{\beta_1}\partial_2^{\beta_2}F$ is $\kappa$-homogeneous. In particular,  $\mathrm{H} F$ is $\kappa$-homogeneous of degree $2(\alpha-(\kappa_1+\kappa_2))=2(\alpha-|\kappa|).$ 

We shall consider here hypersurfaces 
\begin{equation}\label{SPhi}
S:= \{(x, \Phi(x)): x \in \Bbb R^2\setminus \{0\}\}
\end{equation}
in $\mathbb{R}^3$ which are  given as the graph of a {\it real analytic function} $\Phi:\Bbb R^2\setminus \{0\} \rightarrow \mathbb{R}$ which is  {\it mixed homogeneous} in the sense of  \cite{Schwend}, i.e., $\kappa$-homogeneous of degree $1.$

Note that  $\Phi$ then satisfied the following version of Euler's identity:
\begin{equation}\label{Euler}
	\Phi(x_1,x_2)=\kappa_{1}x_{1}\Phi_{1}(x_{1},x_{2}) + \kappa_{2} x_{2}\Phi_{2}(x_{1},x_{2}).
\end{equation} 

The corresponding maximal operators that we shall study are  of the form
\begin{equation}\label{MS}
	\mathcal M f(y):= \sup_{t>0} |A_{t}f(y)|, \quad y \in \mathbb{R}^{3},
\end{equation}
where the averaging operator $A_{t}$ is defined by
\begin{equation}\label{aver1}
	A_{t}f(y)= \int_{\mathbb{R}^2}f(y-t(x,\Phi(x)))\, \psi(x)\,dx,\quad y \in \mathbb{R}^3.
\end{equation}

Here,  $\psi$ is assumed to be a non-negative smooth function with compact support in $\Bbb R^2$ with $\psi(0)>0$ (note that this maximal operator depends in fact also on $\psi$).

Our goal will be to determine the critical Lebesgue exponent $p_c$ such that $\mathcal M$ is bounded on $L^p(\mathbb{R}^3)$ for all $p>p_c,$ and unbounded for $p<p_c.$ 

\smallskip
Since  (at least for polynomial functions $\Phi$) the case where $\kappa_1=\kappa_2$ has been studied in \cite{LW24}, we  shall consider here the case where $\kappa_1\ne \kappa_2.$ 

We shall also assume that $\Phi$  is non-trivial, i.e.,  $\Phi\ne 0$  (otherwise, since the Hardy-Littlewood maximal operator is bounded on $L^p(\Bbb R^2)$ for every $p>1,$ we  have $p_c=1$). 

\medskip

Let us consider for a moment the case of a polynomial function $\Phi$. By $\ka$-homogeneity it  must then vanish at the origin, i.e., $\Phi(0)=0.$ Observe also that if $K$ is any compact subset of $\Bbb R^2,$ then for $x\in K$ the Gaussian curvature of $S$ at the point $(x,\Phi(x))$ is comparable to 
 $\mathrm{H}\Phi(x),$ up to a multiplicative factor.

And,  if $\mathrm{H} \Phi\equiv 0$ vanishes identically and $\Phi$ is $\kappa$-homogeneous of degree $1,$ with $\kappa_1\ne \kappa_2,$ then it follows from \cite[Proposition 6.6]{Schwend} that  either $\Phi(x_1,x_2)=cx_1^m,$ or $\Phi(x_1,x_2)=cx_2^m,$ with an integer $m\ge 1.$ By means of a suitable linear coordinate change in $\Bbb R^3$ we may then reduce the case $m=1$ to the case where $\Phi=0$, so that we may assume that $m\ge 2.$ But then \cite[Theorem 1.8]{L} shows that  ${\mathcal{M}}$ is bounded on $L^{p}$ for $p>2,$ and   \cite[Lemma 6.3.1]{Zi14} shows  that ${\mathcal{M}}$ is unbounded on $L^{p}$ for $p<2,$  so that here $p_c=2$ (see also \cite{IKM10,Zi14,L,LWZ}).

\smallskip

Note also that if $|\kappa|>1,$ then $\mathrm{H}\Phi$ would be   $\kappa$-homogeneous of negative  degree $2(1-|\ka|)<0.$  For a polynomial function $\Phi$ this  would immediately imply that $\mathrm{H}\Phi$  must vanish identically (since $\mathrm{H}\Phi$ is continuous at the origin), so we are back to the  previous   situation.
\medskip

Coming back to more general functions  $\Phi$ which are only  assumed to be analytic away from the origin, then more general classes of surfaces $S$ as above on which the Gaussian curvature vanishes identically are possible, and we are planning to the study the maximal operators $\mathcal M$  corresponding to those,  and the case where $|\ka|>1,$  in  a subsequent paper, since these studies require different tools than the  ones that we shall develop here for the case where $\mathrm{H} \Phi$ does not vanish identically and $|\ka|\le 1.$

\smallskip

From here on, we  shall therefore assume that $\Phi$ is analytic away from the origin,  and that the following assumptions  hold:

 \begin{assumptions}\label{assumption}
The weight $\kappa$ satisfies $|\kappa|\le 1$ and  $\kappa_1\ne \kappa_2,$ so that in particular 
 $$
 0<\kappa_i<1, \qquad i=1,2,
 $$
 and  $\mathrm{H}\Phi$ does not vanish identically.
\end{assumptions} 
 Let us say that $S$ is {\it non-transversal at the point} $(x,\Phi(x))\in S,$ if the affine tangent plane to $S$ at this point passes through the origin, i.e., if
\begin{equation}\label{nontransversal}
	\Phi(x_1,x_2)-x_1\Phi_1(x_1,x_2)- x_2\Phi_2(x_1,x_2)=0.
\end{equation}
Otherwise we say that $S$ is {\it transversal at this point}.
Since the function $\Phi(x_1,x_2)-x_1\Phi_1(x_1,x_2)- x_2\Phi_2(x_1,x_2)$ is $\kappa$-homogeneous of degree 1, we see that $S$ is transversal to $S$ at $x\in S$ if and only if it is transversal at every point of the orbit $(\delta_r (x), \Phi(\delta_r (x)))=(\delta_r (x), r\Phi(x)),\, r>0,$ of $x$ under the multiplicative group $\Bbb R_{>0}$ acting on $\Bbb R^3$ by the anisotropic dilations 
\begin{equation}\label{Dr}
D_r(x_1,x_2,x_3):=(r^{\kappa_1} x_1, r^{\kappa_2} x_2, r x_3), \qquad r>0,
\end{equation}
on $\Bbb R^3.$

\smallskip
Next, for any function  $F:\mathbb{R}^2\setminus\{0\}\to \Bbb R,$ we shall put $Z_F:=\{x\in\mathbb{R}^2\setminus\{0\} : F(x)=0\}.$ By $S^1:=\{x\in \Bbb R^2: |x|=1\}$ we denote the unit circle in $\Bbb R^2.$ 

If $\Phi$ is  a $\kappa$-homogeneous function  as before, by the analyticity of $\Phi$ our Assumptions \ref{assumption} imply that the zero  variety  $Z_{\mathrm{H}\Phi}\cap S^1$ of $\mathrm{H}\Phi$ in $S^1$ is either empty, or finite. 
For $\zeta\in Z_{\mathrm{H}\Phi}\cap S^1,$ we shall denote  the orbit  
$$\gamma_{(\zeta)}:=\{\delta_r (\zeta), \, r>0\}\subset Z_{\mathrm{H}\Phi}$$
  of $\zeta$ as the corresponding real ``root'' of $\mathrm{H}\Phi.$  
With a slight abuse of notation,  we shall sometimes also  call  $\zeta$ a ``real root'' of $\mathrm{H}\Phi.$ 

\smallskip

The zero variety of $\mathrm{H}\Phi$ is then the  union
\[Z_{\mathrm{H}\Phi}=\bigcup_{\zeta\in Z_{\mathrm{H}\Phi}\cap S^1} \gamma_{(\zeta)}\]
of these real roots.

\smallskip
We say that the  root $\zeta\in Z_{\mathrm{H}\Phi}\cap S^1,$ respectively $\gamma_{(\zeta)},$ is a {\it transversal root} if $S$ is transversal at every point of $\gamma_{(\zeta)}.$ Otherwise we shall speak of a {\it non-transversal root}.

\smallskip

Observe also that by $\ka$-homogeneity, $\Phi_1$ vanishes along $\gamma_{(\zeta)}$ if and only it vanishes at one point of $\gamma_{(\zeta)},$ and similarly for $\Phi_2.$

\smallskip
We shall  decompose the set of  these roots into three classes:

\noindent\textbf{Type A.} We say that $\zeta,$ respectively $\gamma_{(\zeta)},$ is of  Type A if both $\Phi_{1}$ and $\Phi_{2}$ vanish along $\gamma_{(\zeta)}$.

Note that by  Euler's identity \eqref{Euler}, here also $\Phi$ must vanish along the root $\gamma_{(\zeta)}$.

\smallskip

\noindent\textbf{Type B.} We say that $\zeta,$ respectively $\gamma_{(\zeta)},$ is of Type B if both $\Phi_{1}$ and $\Phi_{2}$  do not vanish along  $\gamma_{(\zeta)}.$ Here we shall distinguish the  following two subcases:
\smallskip

We say that $\zeta,$ respectively $\gamma_{(\zeta)},$ is of Type  $\mathrm{B_{T}},$ if it is of Type B and  $S$ is transversal along $\gamma_{(\zeta)}$, 
i.e., if 
\begin{equation}\label{Trans}
\Phi(x_{1},x_{2}) \neq x_{1}\Phi_{1}(x_{1},x_{2}) + x_{2}\Phi_{2}(x_{1},x_{2}) \quad\text{for every} \ (x_{1},x_{2}) \in \gamma_{(\zeta)}.
\end{equation}

 We say that $\zeta,$ respectively $\gamma_{(\zeta)},$ is of Type  $\mathrm{B_{NT}},$ if it is of Type B and  $S$ is non-transversal along $\gamma_{(\zeta)}$, i.e., if 
\[\Phi(x_{1},x_{2}) = x_{1}\Phi_{1}(x_{1},x_{2}) + x_{2}\Phi_{2}(x_{1},x_{2}) \quad\text{for every} \ (x_{1},x_{2}) \in  \gamma_{(\zeta)}.\]

\smallskip

\noindent\textbf{Type C.} We say that $\zeta,$ respectively $\gamma_{(\zeta)},$ is of Type C if exactly one of $\Phi_{1}$ and $\Phi_{2}$ does not vanish along $\gamma_{(\zeta)}.$
\medskip

Finally, for $\zeta\in Z_{\mathrm{H}\Phi}\cap S^1,$ we denote by   
$$N_{(\zeta)}:= \mathrm{ord} \, \mathrm{H}\Phi(\zeta)\in \Bbb N_{\ge 1}$$
 the {\it order of vanishing} of $\mathrm{H}\Phi$ in $\zeta$. Note that $\mathrm{ord} \, \mathrm{H}\Phi$ is constant along $\gamma_{(\zeta)}.$

Similarly, we denote by 
$$T_{(\zeta)}:= \mathrm{ord} \, \Phi(\zeta)\in \Bbb N$$
the {\it order of vanishing} of $\Phi$ in $\zeta$, which may be 0. 

 We shall  also assume in this paper that the following assumptions are satisfied:
 
  \begin{assumptions}\label{TA}
For every real root $\zeta$ of Type $\mathrm{A}$ lying on a coordinate axis, the following hold:
if  $\zeta=(\pm 1,0),$ then 
$$
  T_{(\zeta)}\ka_2\ne 1, \qquad \text{and} \quad (1-T_{(\zeta)})\ka_1+T_{(\zeta)}\ka_2\ne 1,
$$
and if or $\zeta=(0,\pm 1),$ then
$$
  T_{(\zeta)}\ka_1\ne 1, \qquad \text{and} \quad (1-T_{(\zeta)})\ka_2+T_{(\zeta)}\ka_1\ne 1.
$$
\end{assumptions}

We note that {\it these conditions are satisfied for every polynomial function $\Phi$ } (see Lemma \ref{HphinA}), but in general they may fail to hold (cf. Examples  \ref{e6} and \ref{e7}).

\smallskip
When  $\zeta$ is  of Type $\mathrm{B_{T}}$ or  C, then we define  the exponent $M_{(\zeta)}$ as follows:
\begin{equation}\label{MlambdaBC}
	M_{(\zeta)}:=N_{(\zeta)}+2.
\end{equation}

If there exists  a $\zeta\in Z_{\mathrm{H}\Phi}\cap S^1$ of Type $\mathrm{B_{T}},$ then we set
\begin{equation}\label{MBT}
	M_{B_T} :=\max  \{M_{(\zeta)}: \zeta \  \text{is of Type}\  \mathrm{B_{T}}\};
\end{equation}
otherwise, we set  $M_{B_T}:=2.$

Similarly, if there exists  a $\zeta\in Z_{\mathrm{H}\Phi}\cap S^1$  of Type C, then we set
\begin{equation}\label{MC}
	M_{C} :=\max  \{M_{(\zeta)}: \zeta \  \text{is of Type}\  C\};
\end{equation}
otherwise, we set   $M_{C}:=2.$

\noindent We are now in the position to state our main result:
\begin{theorem}\label{theorem1}
 Let $\Phi$  be a real-valued  analytic function on $\Bbb R^2\setminus\{0\}$  which is $\kappa$-homogeneous of degree 1, where  the  weight $\kappa$ satisfies 
$\kappa_1, \kappa_2>0,\ |\kappa|\le 1$ and  $\kappa_1\ne \kappa_2.$ Assume also that the Hessian $\mathrm{H}\Phi$ does not vanish identically, and that $\Phi$ satisfies the Assumptions \ref{TA}  (which applies in particular to any polynomial function of this type). Let then 
$$
p_\Phi:=\max\left\{\frac 32, \frac{2(M_{B_{T}}+1)}{M_{B_{T}} +3}, \frac{2M_{C}}{M_{C} +1}\right\}.
$$   
Then the maximal operator $\mathcal{M}$ defined by (\ref{MS}) is $L^{p} $-bounded for $p>p_\Phi,$  and unbounded 
for $p<p_\Phi,$	i.e., $p_c=p_\Phi.$ Moreover, if $p_c=3/2,$ then $\mathcal{M}$ is unbounded also for $3/2.$ 
\end{theorem}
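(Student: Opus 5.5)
The proof has two halves: $L^p$-boundedness for $p>p_\Phi$, and unboundedness for $p<p_\Phi$, including $p=3/2$ when $p_c=3/2$.

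\textbf{Reduction for the upper bound.} We exploit the anisotropic dilations $D_r$ from \eqref{Dr}. Since $\Phi(\delta_r x)=r\Phi(x)$, we have
\[
A_t f\bigl(D_r^{-1}\,\cdot\bigr)\circ D_r = \text{(average with cut-off } \psi(\delta_{r}\,\cdot)\text{)}.
\]
Take a dyadic partition of unity $\psi=\sum_{j\ge0}\psi(x)\chi(\delta_{2^{j}}x)$ and rescale each piece to an annulus $\{|x|\sim 1\}$. The cost is the Jacobian factor $2^{-j|\kappa|}$. This gains a geometric factor on $L^p$ precisely because $|\kappa|\le 1$, once we control the single-annulus maximal operator with a bound slightly better than trivial. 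On the annulus, which is compact and avoids the origin, $\Phi$ is analytic and we localize further with homogeneous cones around each direction $\zeta\in S^1$. There are four kinds of direction.
\begin{itemize}
\item Away from $Z_{\mathrm{H}\Phi}$ the surface has nonvanishing Gaussian curvature. Stein/Greenleaf-type estimates, together with local smoothing, give boundedness for $p>3/2$; in fact the $D_r$-sum forces the $3/2$ threshold rather than $1$.
\item Near a Type $\mathrm{B_T}$ root the surface is transversal and has Type A singularity in Arnol'd's sense: one principal curvature vanishes to order $N_{(\zeta)}$, with $\mathrm{H}\Phi\sim \mathrm{dist}^{N}$. Here we apply the results of \cite{IKM10,BDIM19,BDIM25}. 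The relevant quantity is the effective multiplicity, and the FIO-cone multiplier estimates of \cite{BDIM25} should give the exponent $2(M+1)/(M+3)$.
\item Near a Type C root, where exactly one of $\Phi_1,\Phi_2$ vanishes, and near Type $\mathrm{B_{NT}}$ roots, the surface is non-transversal. We decompose dyadically in the distance to $\gamma_{(\zeta)}$, both in the tangential angle and in the size of $\Phi-x\cdot\nabla\Phi$, and rescale each piece. We then use square-function/local-smoothing estimates for the resulting curves of finite type, in the spirit of Iosevich \cite{Io94}. For Type C this yields the exponent $2M/(M+1)$. For $\mathrm{B_{NT}}$ the non-transversality should produce enough extra averaging in $t$ to give only $p>3/2$.
\item Type A roots lie in $Z_\Phi$ with vanishing gradient. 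After rescaling, the piece near such a root is modeled by $x_2^{T}$-type surfaces along the axis. This is where Assumptions \ref{TA} enter: they exclude resonances $T\kappa_2=1$, $(1-T)\kappa_1+T\kappa_2=1$ in the double dyadic sum, which would otherwise produce logarithmic divergence. With these excluded, the contribution is bounded for $p>3/2$, or even $p>1$.
\end{itemize}

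\textbf{Lower bounds.} We build Knapp-type counterexamples.
\begin{itemize}
\item For $3/2$: take $f$ to be the characteristic function of a $\delta$-neighbourhood of a piece of the cone swept out by $tS$ near a curved point, or equivalently a $\delta^{-1}$-scaled indicator. The family of dilates then forces $\mathcal M f\gtrsim 1$ on a set of measure $\gtrsim\delta^{\,\cdot}$. The same argument with a logarithmic loss should give failure at $p=3/2$.
\item Near a $\mathrm{B_T}$ root: use the examples of \cite{BDIM19}, which the sharpness there attains.
\item Near a Type C root: take $f$ to be the indicator of an anisotropic box adapted to the degenerate direction, with sides $\delta\times\delta^{1/M}\times\cdots$. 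Exploiting $\Phi_i\ne0$ in one variable, so that the surface is a graph over the other, a computation yields the threshold $2M/(M+1)$.
\end{itemize}

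\textbf{Main obstacle.} The hardest step is the Type $\mathrm{B_T}$ and Type C upper bounds. These require importing the FIO-cone multiplier machinery of \cite{BDIM25} uniformly over the dyadic rescalings $D_{2^j}$, since the rescaled surfaces vary with $j$, and matching exactly the exponents $2(M+1)/(M+3)$ and $2M/(M+1)$ without $\epsilon$-losses in the summation. I would first verify that the normal forms near these roots, after rescaling, belong to a compact family of the model class treated in \cite{BDIM25}, and only then sum.
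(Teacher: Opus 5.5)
Your skeleton matches the paper's: dilation by $D_{2^j}$, localization to $\kappa$-homogeneous neighbourhoods of the roots, and box counterexamples in the spirit of Proposition \ref{twopointone}. But the two steps that actually produce the exponents $2M/(M+1)$ and $2(M+1)/(M+3)$ are either wrong or missing.

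\textbf{Type C.} These roots are \emph{transversal}, not non-transversal (Lemma \ref{Ttau=0}, Remark \ref{transvroot}). Near them $\Phi-x\cdot\nabla\Phi$ is bounded away from zero, so your decomposition in its size is vacuous. The appeal to Iosevich-type local smoothing for finite-type curves is not tied to any derivation of $2M/(M+1)$. In the paper, Type C and Type $\mathrm{B}$ roots are handled by one mechanism.
\begin{itemize}
\item Stationary phase in $x_2$ along the critical curve $x_2^c(x_1)$ of Lemma \ref{Legendret} leaves an Airy-type integral in $y=x_1-x_{0,1}$ with phase $y^M\tilde G(y)-uy$.
\item One then decomposes dyadically in $|u|\sim R^{-1}$.
\item At a Type C root, $\Psi'(x_{0,1})=\Phi_1(x_0)=0$. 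Hence $u$ is a multiple of $\xi_1/\xi_3$, and the pieces live near the \emph{flat} cone $\{\xi_1=0\}$.
\item The scaling $\xi_1\mapsto R^{-1}\xi_1$, the $L^1$-kernel bound of Lemma \ref{lemma3.2}, Plancherel and Theorem \ref{multipliertomaximal} then give $p>2M/(M+1)$.
\end{itemize}

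\textbf{Type $\mathrm{B}$.} Here $\Psi'(x_{0,1})\neq 0$, and the pieces concentrate near the curved cone $\xi_1/\xi_3+\Psi'(x_{0,1})(\xi_2/\xi_3)^{(1-\kappa_1)/(1-\kappa_2)}=0$. In general this is not a linear image of the light cone. This is where your plan to place the normal forms in a compact family of the model class of \cite{BDIM25} breaks down: \cite{BDIM25} proves the maximal bound only for a prototypical surface whose FIO-cone multipliers live on the light cone. Two ingredients are missing.
\begin{itemize}
\item The extension of the $L^4$ FIO-cone estimate to general curved cones (Theorem \ref{FIOconespec}). This rests on a curved-cone version of the Guth--Wang--Zhang square-function estimate, obtained via \cite{GLMX23} in the Appendix.
\item The check that the phase lies in $\mathcal F^{1,\gamma/2,\gamma}$ and satisfies (SMD) (Lemma \ref{onphiga}).
\end{itemize}
The kernel-plus-Plancherel argument that suffices at Type C roots would only give $p>\max\{3/2,2M/(M+1)\}$ here. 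For $\mathrm{B_{NT}}$, the improvement to $p>3/2$ does not come from rescaling in $\Phi-x\cdot\nabla\Phi$. It comes from $\Psi(x_{0,1})=x_{0,1}\Psi'(x_{0,1})$, which makes the phase $O(R^{-1})$ on each piece, so the radial derivative in Theorem \ref{multipliertomaximal} costs $\lambda R^{-1}$ instead of $\lambda$.

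\textbf{Type A.} Your sketch only treats roots on the axes. For a curved root $\tau\neq0$, the strips $|x_2-\tau x_1^{a}|\sim\sigma$ are not linear images of a fixed model. The paper needs a frequency-space argument there: two successive stationary-phase steps, Lemma \ref{lemma3.2}, and Theorem \ref{multipliertomaximal}. Assumptions \ref{TA} are used only at $\tau=0$, and not to avoid logarithmic resonances. They guarantee $|\mathrm{H}\Phi_\sigma|\sim1$ uniformly for the rescaled functions (Lemma \ref{HphinA}). The curvature bound for $p>3/2$ is then uniform, and the factor $\sigma=2^{-l}$ makes the pieces summable. Your claim of boundedness for $p>1$ is false. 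The rescaled model $x_2^{T_0}x_1^{A}$ satisfies $\Phi-x\cdot\nabla\Phi=(1-T_0-A)\Phi\neq0$, so it is curved and transversal, and Stein's example rules out $p\le 3/2$.

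\textbf{Smaller points.}
\begin{itemize}
\item The factor $2^{-j|\kappa|}$ is summable for every $|\kappa|>0$. The hypothesis $|\kappa|\le1$ is used, for instance, to get $0<\kappa_i<1$.
\item The threshold $3/2$ comes from curvature and Stein's example, not from the $D_r$-sum.
\item The rescaled surfaces do not vary with $j$, since $S$ is $D_r$-invariant; only the cut-off varies, within a compact family.
\item Necessity of $p>3/2$ requires a \emph{transversal} point, not merely a curved one. The paper derives its existence from analyticity and $\kappa_1\neq\kappa_2$ (Proposition \ref{pnn-1}).
\end{itemize}
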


Observe  that $ \frac{2(M_{B_{T}}+1)}{M_{B_{T}} +3}=6/5<3/2$ when $M_{B_T}=2, $ and $\frac{2M_{C}}{M_{C} +1}=4/3<3/2$ when $M_C=2.$ In particular, if there are no roots of Type $B_T$ or $C,$  then $p_c=3/2.$
\smallskip

Note also that  this theorem in combination with our preceding discussions  allows to determine the critical exponent $p_c$ in a simple way for every mixed-homogeneous polynomial function $\Phi.$

\smallskip  The following remark, which will be a consequence of Lemma \ref{tauinR'}, Remark \ref{TypeT} and Lemma \ref{Ttau=0}
sheds further light on the special roles played by  the roots of Type $\mathrm{B_T}$ or Type C in the theorem:

\begin{remark}\label{transvroot}
A root $\zeta\in Z_{\mathrm{H}\Phi}\cap S^1$  is transversal if and only if it is of Type $\mathrm{B_T}$ or Type C.
\end{remark}

\begin{remark}
	Let us  compare our results in Theorem \ref{theorem1} with known  $L^{p}$-boundedness results for maximal averaging operators
	$\mathcal M $ along mixed-homogeneous hypersurface $S$.
	
	(a)   In \cite{IKM05} it had been shown that $\mathcal M $ is $L^{p}$-bounded for  $p> \max\{2, h\}$. The authors also pointed out that this result is not sharp for  surfaces $S$ passing through the origin.
	Theorem \ref{theorem1} improves on the $L^{p}$-boundedness results in \cite{IKM05}  for such surfaces by covering also values of $p$ below $2.$
	
	(b)    Theorem \ref{theorem1} generalizes some of the  $L^{p}$-estimates obtained in \cite{NSW93} and \cite{ISS}
	from  convex mixed homogeneous polynomial surfaces to non-convex ones. For convex ones our results match with those in these papers. For instance, if $\Phi(x_{1},x_{2}) = x_1^{2} + x_2^{a_{2}}$ with $a_{2} > 2$ an even integer, it was shown in \cite{NSW93} and \cite{ISS} that $\mathcal M $ is bounded on $L^{p}$ for $p > \frac{2a_{2}}{a_{2}+1}$, and this range is sharp up to the endpoint. This result aligns with the conclusions of Theorem \ref{theorem1} and part (c) of  the subsequent Theorem \ref{necessary}. Indeed, here  $\mathrm{H}\Phi=2a_2(a_2-1) x_2^{a_2-2},$ so that there is only one real root $\zeta=(1,0)$ on the unit circle, and this is of Type C, with $M_{(\zeta)} = a_{2}$.
\end{remark}

 \medskip
 The order of vanishing $N_{(\zeta)}:= \mathrm{ord} \, \mathrm{H}\Phi(\zeta)$ has yet another interpretation as the {\it multiplicity of the root } $\gamma_{(\zeta)}.$  We recall to this end  Lemma 3.2 from \cite{IKM05}. We say that a subset $M\subset \Bbb R^2$ is $\kappa$-homogeneous,
if $\delta_r(M)\subset M$ for every $r>0.$ For analytic functions, the proof arguments in \cite{IKM05} yield

\begin{lemma}\label{localform}
Let $F\in C^\omega(\Bbb R^2\setminus\{0\})$ be
a $\kappa$-homogeneous analytic function, where $\kappa_1,\kappa_2>0.$ If
$\zeta=(\zeta_1,\zeta_2)\in\Bbb R^2\setminus\{0\}$ with $\zeta_1\neq 0$, say $\zeta_1>0$, and if
 $1\le N:=\mathrm{ord\,}F(\zeta)<\infty,$ then there exists a $\kappa$-homogeneous 
neighborhood $U$ of $\zeta$ on which $F$ is of the form
\begin{equation}\label{taylor}
F(x)=\bigl(x_2-\tau x_1^{\frac{\kappa_2}{\kappa_1}}\bigr)^N G(x),\quad x\in U,
\end{equation}
where $G$ is a $\kappa$-homogeneous analytic function with $G(\zeta)\neq 0$ and $\tau$
is given by $\tau:=\zeta_2(\zeta_1)^{-\frac{\kappa_2}{\kappa_1}}$. 
If $\zeta_2\ne 0,$ then an analogous statement holds, with the roles of the two 
coordinates interchanged.
\end{lemma}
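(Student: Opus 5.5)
The plan is to reduce Lemma \ref{localform} to the one-variable Weierstrass-type factorization of an analytic function along the orbit through $\zeta$, and then to transport the factorization along the orbit using homogeneity. Since $\zeta_1>0$, the map $(r,s)\mapsto \delta_r(1,s)=(r^{\kappa_1}, r^{\kappa_2}s)$ is a real-analytic diffeomorphism from $\{r>0\}\times\Bbb R$ onto the half-plane $\{x_1>0\}$. Its inverse is given by $r=x_1^{1/\kappa_1}$ and $s=x_2x_1^{-\kappa_2/\kappa_1}$. In these coordinates $\kappa$-homogeneity of degree $\alpha$ reads
$$
F(x)=x_1^{\alpha/\kappa_1}f\bigl(x_2x_1^{-\kappa_2/\kappa_1}\bigr),\qquad f(s):=F(1,s),
$$
where $f$ is real analytic on $\Bbb R$. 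The point $\zeta$ corresponds to $r_0=\zeta_1^{1/\kappa_1}$ and $s_0=\tau$.

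The first step is to relate the order of vanishing of $F$ at $\zeta$ to that of $f$ at $\tau$. Because the coordinate change is an analytic diffeomorphism near $\zeta$, orders of vanishing are preserved. In the variables $(r,s)$ the function reads $r^{\alpha}f(s)$, and $r^\alpha$ is analytic and nonvanishing near $r_0$. Hence $N=\mathrm{ord}\,F(\zeta)=\mathrm{ord}_{s=\tau}f$, and in particular $f$ has a zero of exact order $N$ at $\tau$. Since $N<\infty$, analyticity gives the factorization $f(s)=(s-\tau)^N g(s)$ with $g$ analytic near $\tau$ and $g(\tau)\neq0$. We then choose $\epsilon>0$ with $g\neq 0$ on $(\tau-\epsilon,\tau+\epsilon)$.

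The second step is to set
$$
U:=\{x:\ x_1>0,\ |x_2x_1^{-\kappa_2/\kappa_1}-\tau|<\epsilon\}.
$$
This set is open and contains $\zeta$. It is also $\kappa$-homogeneous, because $x_2x_1^{-\kappa_2/\kappa_1}$ is invariant under $\delta_r$ and $x_1>0$ is preserved. On $U$ we compute
$$
F(x)=x_1^{\alpha/\kappa_1}\bigl(x_2x_1^{-\kappa_2/\kappa_1}-\tau\bigr)^N g(\cdot)=\bigl(x_2-\tau x_1^{\kappa_2/\kappa_1}\bigr)^N\, x_1^{(\alpha-N\kappa_2)/\kappa_1}\,g\bigl(x_2x_1^{-\kappa_2/\kappa_1}\bigr).
$$
We therefore define $G(x):=x_1^{(\alpha-N\kappa_2)/\kappa_1}g(x_2x_1^{-\kappa_2/\kappa_1})$. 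This $G$ is analytic on $U$ because $x_1>0$ there, it is $\kappa$-homogeneous of degree $\alpha-N\kappa_2$, and $G(\zeta)=\zeta_1^{(\alpha-N\kappa_2)/\kappa_1}g(\tau)\neq0$. The case $\zeta_1<0$ is identical after the substitution $x_1\mapsto -x_1$. The case $\zeta_2\neq0$ follows by interchanging the roles of the two coordinates.

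The only point requiring care is the claim that $\mathrm{ord}$ is invariant under the analytic diffeomorphism and under multiplication by the nonvanishing factor $r^\alpha$. This follows from the chain rule, applied in both directions, together with the Leibniz rule. I expect no real obstacle here. The finiteness of $N$ is exactly what excludes $f\equiv0$ near $\tau$.
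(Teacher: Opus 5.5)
Your proof is correct and takes essentially the route the paper intends: the paper gives no proof of its own and refers to the argument of Lemma 3.2 in \cite{IKM05}, which likewise uses homogeneity to write $F(x)=x_1^{\alpha/\kappa_1}F(1,x_2x_1^{-\kappa_2/\kappa_1})$ for $x_1>0$, factors the one-variable function $F(1,\cdot)$ at $\tau$, and pulls the factorization back to a $\kappa$-homogeneous sector. Your check that $\mathrm{ord}$ is unchanged under the analytic change of variables $(r,s)\mapsto\delta_r(1,s)$ and under multiplication by the nonvanishing factor $r^\alpha$ is exactly the point that has to be verified, and analyticity of $F$ is what makes your $G$ analytic rather than merely smooth.
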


Note that our assumption that $H\Phi$ does not vanish identically  in Theorem \ref{theorem1} implies that  $N_{(\zeta)}= \mathrm{ord} \, \mathrm{H}\Phi(\zeta)<\infty,$ since $\Phi$ is analytic. Thus by  this lemma, when applied to $F=\mathrm{H}\Phi,$  we obtain that for any   $\zeta\in Z_{\mathrm{H}\Phi}$ with $\zeta_1>0$ there is a $\kappa$-homogeneous neighborhood of the real root  $\gamma_{(\zeta)}$ on which we can  write
\begin{equation}\label{HPhiN}
	\mathrm{H}\Phi(x) = \bigl(x_{2}-\tau(\zeta) x_1^{\frac{\kappa_2}{\kappa_1}}\bigr)^{N_{(\zeta)}}\, {V_\zeta}(x),
\end{equation}
 where $V_\zeta$ is a $\kappa$-homogeneous analytic function which does not vanish on this neighborhood, and where  
$\tau(\zeta):=\zeta_2(\zeta_1)^{-\frac{\kappa_2}{\kappa_1}}.$ 

Note that the mapping $\zeta\mapsto \tau(\zeta)$ is a bijection from the right semi-circle $S^1_+$ where $\zeta_1>0$ onto $\RR$ (even a diffeomorphism), and we shall denote by  $\zeta_\tau\in S^1$ the point corresponding to $\tau\in \RR.$

The subset $R$ of $\Bbb R$ given by 
\[R=\{\tau(\zeta): \zeta\in Z_{\mathrm{H}\Phi}\cap S^1 \ \text{with}\   \zeta_1>0\}\]
then allows for an alternative parametrization of the set of all real roots $\gamma_{(\zeta)}$ of $\mathrm{H}\Phi$ lying in the right half-plane 
$$\mathbb{R}_{+}^{2}:=\{(x_1,x_2)\in \Bbb R^2: x_1>0\}.$$

Analogous factorizations  hold  true on the left  half-plane where $x_1<0$ (just consider here $\Phi(-x_1,x_2)$ in place of $\Phi(x_1,x_2)$), and by swapping $x_1$ and $x_2,$ then also on the upper and lower half-planes where $x_2>0$, respectively $x_2<0$.

\smallskip
As we shall see, it will sometimes be necessary to distinguish between real roots lying on one of the coordinate axis, and the remaining ones. Note that they show a different geometric behavior: roots along the coordinate axis are linear, whereas  the other ones have non-vanishing curvature. 

To this end, as just explained, we may and shall restrict our considerations  to the real roots lying in the right half-plane $x_1>0.$ We then put 
$$R':=R\setminus \{0\},$$
so that $R'$ parametrizes all real roots of $\mathrm{H}\Phi$ in the right half-plane $\mathbb{R}_{+}^{2}$ which are not contained in the $x_1$-axis. 

Indeed, the corresponding set of real roots $\zeta\in S^1$ is  then given by the roots $\zeta_\tau,\,\tau\in R,$ 
and $\tau=0$ parametrizes the root lying on the $x_1$-axis. It will then also be convenient to re-parametrize  
$\gamma_{(\zeta_\tau)}$ by putting, for  $\tau\in R,$ 
\[\gamma_{\tau}:= \bigl\{(x_1,x_2)\in \mathbb{R}^2: x_1>0 \ \text{and}\ x_{2}=\tau x_1^{\frac{\kappa_2}{\kappa_1}}\bigl\},\]
so that $\gamma_{(\zeta)}=\gamma_{\tau(\zeta)}.$
Then, according to \eqref{HPhiN}, we can factorize
\[\mathrm{H}\Phi(x) = \bigl(x_{2}-\tau x_1^{\frac{\kappa_2}{\kappa_1}}\bigr)^{N_\tau}\, {V_\tau}(x)\]
on a $\kappa$-homogeneous neighborhood  of $(1,\tau)$, where $V_\tau$ is a $\kappa$-homogeneous analytic  function which does not vanish on this neighborhood, and where the multiplicity $N_\tau$ of the root $\gamma_\tau$ is given by $N_\tau:=N_{(\zeta_\tau)}.$

Analogously, we can factorize 
\begin{equation}\label{PhiTlambda}
	\Phi(x) = (x_{2}-\tau x_1^{\frac{\kappa_2}{\kappa_1}})^{T_{\tau}}\, {W_\tau}(x),
\end{equation}
on such a neighborhood, on which  $W_\tau$ does not vanish, but where the multiplicity $T_\tau:=T_{(\zeta_\tau)}\in \Bbb N$ of the root $\gamma_\tau$ in $\Phi$ may possibly be $0.$

\smallskip

The size of $T_\tau$ and the Type of  the root $\gamma_\tau$ are closely related, and there are also relations between the multiplicity $N_\tau$ of $\gamma_\tau$ as  a real root of ${\mathrm H}\Phi$ and $T_\tau$  of $\gamma_\tau$ as  a real root of $\Phi.$ These will be examined in the Section \ref{basicp}.

%%%%%%%%%%%%%%%%%%%%%%%%%%%%%%%%%%%%%%%%%%%%%%%%%%%%%%%%%%%%%%%%%%%%%%%%%%%%%%%%%%%%%%%%%%%%%%%%%%%%

\subsection{Examples}\label{examples}

 In the discussions of the following examples it will be convenient to use the following notation: If $\ga_{(\zeta)}$ is a real root of $\mathrm{H}\Phi,$ then for any point $(a,b)\in \ga_{(\zeta)}$ we also write $\ga_{(a,b)}:=\ga_{(\zeta)}.$ 
 
 Note that this root is then non-transversal if and only if $\Phi(a,b)-a\Phi_1(a,b)- b\Phi_2(a,b)=0.$ 
Accordingly, we shall also write $N_{(a,b)}:= N_{(\zeta)}, M_{(a,b)}:= M_{(\zeta)},T_{(a,b)}:= T_{(\zeta)}.$
\color{black}

We shall present here some examples of (mostly polynomial)  functions $\Phi$ which are $\kappa$-homogeneous of degree 1, for  which ${\mathrm H}\Phi$  neither vanishes identically nor is non-vanishing.

\begin{example}[{\bf of Type A}]\label{e1}
Let $\Phi(x)=(x_2-\tau x_1^\ell)^m,$ with $\ell, m\ge 2$ and $\tau\in \Bbb R\setminus\{0\}.$ Here $\kappa=(\frac 1{\ell m}, \frac 1m), \ |\kappa|<1.$
\end{example}

One computes that  
$${\mathrm H}\Phi(x)=-\tau m^2(m-1)\ell (\ell-1) (x_2-\tau x_1^\ell)^{2m-3} x_1^{\ell-2}.$$
For notational convenience, we may assume that $\ell$ is even; the odd case admits a similar discussion and leads to the same conclusion. There are two real roots of ${\mathrm H}\Phi$ lying on the curve $x_2=\tau x_1^{\ell}$, namely $\gamma_{(\pm1,\tau)}$, both of multiplicity $N_{(\pm1,\tau)}=2m-3,$ and clearly $T_{(\pm1,\tau)}=m$. Moreover, $\Phi_1(\pm 1,\tau)=\Phi_2(\pm 1,\tau)=0$, so that  $\Phi_1$ and  $\Phi_2$ vanish along $\gamma_{(\pm1,\tau)}.$ 
Thus both roots  $\ga_{(\pm1,\tau)}$ are of Type A. 

Note that for $\ell\ge 3$,  there are two more roots $\gamma_{(0,\pm1)}$, of multiplicity $N_{(0,\pm1)}=\ell-2,$ lying on the line $x_1=0.$ Here $\Phi_1(0,\pm 1)=0$ and $\Phi_2(0,\pm 1)=m(\pm1)^{m-1}\not=0$. Thus both roots  $\gamma_{(0,\pm1)}$ are of Type C. Then we have $M_{(0,\pm1)}=\ell$ for $\ell\geq 3$.

We also note that for $\ell=2$, there is no root lying on the line $x_1=0$, so we set $M_{(0,\pm1)}:=2=\ell$. Then Theorem \ref{theorem1} shows that 
\[p_{c} = \max\Bigl\{\frac{3}{2}, \frac{2M_{(0,\pm1)}}{M_{(0,\pm1)} + 1} \Bigl\} =\max\Bigl\{\frac{3}{2}, \frac{2\ell}{\ell + 1}\Bigl\}. \]

\smallskip

Here is an example  of Type $B_{T},$  with $T_\tau=1:$
 
\begin{example}[{\bf of Type $\mathrm{B_{T}}$}]\label{e2}
Let $\Phi(x) = 3x^{6}_{1}-15x_{2}x^{4}_{1}+20x^{2}_{2}x^{2}_{1}-8x^{3}_{2}.$ Here $\kappa=(\frac 16,\frac 13).$
\end{example}
One computes that
\[\mathrm{H}\Phi(x)= -1920\, x_{2} (x_{2}-x_{1}^2)^{2}. \]

There are two real roots of $\mathrm{H}\Phi$ lying on the curve $ x_{2}=x^{2}_{1},$ namely $\ga_{(1,1)}$ and $\ga_{(-1,1)},$
both of multiplicity $N_{(\pm1,1)}=2,$ and one  easily  checks that
$$\Phi(x)=(x_2-x_1^2)\Big(x_1^4-4x_1^2 (x_2-x_1^2)-8(x_2-x_1^2)^2\Big),$$
so that $T_{(\pm1,1)}=1.$ Moreover, $\Phi_1(\pm 1,1)=\mp 2$ and $\Phi_2(\pm 1,1)=1,$  so that  $\Phi_1$ and  $\Phi_2$ do not vanish along $\gamma_{(\pm1,1)}.$ Thus both roots  $\ga_{(\pm1,1)}$ are of Type B. Moreover,
$\Phi(\pm 1,1)-\big(\pm \Phi_1(\pm 1,1)+ \Phi_2(\pm 1,1)\big)=1\ne 0,$ so that the roots  $\ga_{(\pm 1,1)}$ are of Type $\mathrm{B_{T}}.$

There are two more roots $\ga_{(\pm 1,0)},$ of multiplicity  $N_{(\pm1,0)}=1,$ lying on the line $x_2=0.$ Here $\Phi_1(\pm 1,0)=\pm 18$ and $\Phi_2(\pm 1,0)=-15,$ and $\Phi(\pm 1,0)-\big(\pm \Phi_1(\pm 1,0)+ 0\cdot \Phi_2(\pm 1,0)\big)=-15\ne 0.$

These are thus also of Type $\mathrm{B_{T}}.$ Since $M_{(\pm1,1)}=4$ and $M_{(\pm1,0)}=3,$ we see that Theorem \ref{theorem1} shows that $p_c=3/2.$

\smallskip
Here is a second example  of Type $B_{T},$ but with $T_\tau=0:$

\begin{example}[{\bf of Type $\mathrm{B_{T}}$}]\label{e2b}
Let $\Phi(x)= x^{6}_{1}-\frac{5}{4}x_{2}x^{4}_{1}+ \frac{5}{18}x^{3}_{2}.$ Here $\kappa=(\frac 16,\frac 13).$
\end{example}
One computes that 
	\[\mathrm{H}\Phi(x)= -25x^{2}_{1}(x_{2}-x^{2}_{1})^{2}.\]
There are  two real roots of $\mathrm{H}\Phi$ lying on the curve $x_2=x_1^2$, namely $\gamma_{(\pm1,1)}$, both of multiplicity $N_{(\pm1,1)}=2.$ And, since 
$$\Phi(x)=\frac 1{36} x_1^6-\frac 5{12} x_1^4(x_2-x_1^2)+\frac 56 x_1^2(x_2-x_1^2)^2+\frac 5{18}(x_2-x_1^2)^3,
$$
we see  that $T_{(\pm1,1)}=0$. Moreover, $\Phi_1(\pm1,1)=\pm1$ and $\Phi_2(\pm1,1)=-\frac{5}{12}$, so that $\Phi_1$ and $\Phi_2$ do not vanish along $\gamma_{(\pm1,1)}.$ Thus both roots $\ga_{(\pm1,1)}$ are of Type B. Moreover, $\Phi(\pm 1,1)-\big(\pm \Phi_1(\pm 1,1)+\Phi_2(\pm 1,1)\big)=-\frac{5}{9}\ne 0,$ so that the roots  $\ga_{(\pm 1,1)}$ are of Type $\mathrm{B_{T}}.$

There are two more roots $\gamma_{(0,\pm1)}$, of multiplicity $N_{(0,\pm1)}=2$, lying on the line $x_1=0$. Here $\Phi_1(0,\pm1)=0$ and $\Phi_2(0,\pm1)=\frac{5}{6}$, so that $\gamma_{(0,\pm1)}$ is of Type C.

Since $M_{(\pm1,1)}=4$ and $M_{(0,\pm1)}=4,$ we see that Theorem \ref{theorem1} shows that 
\[p_{c} = \max\Bigl\{\frac{3}{2},\frac{2(M_{(\pm1,1)} + 1)}{M_{(\pm1,1)} +3}, \frac{2M_{(0,\pm1)}}{M_{(0,\pm1)} + 1} \Bigl\} = \frac{8}{5}. \]

\begin{example}[{\bf of Type $\mathrm{B_{NT}}$}]\label{e3}
Let  $\Phi(x)= 3x^{5}_{1}-8x^{3}_{1}x_{2}+6x_{1}x^{2}_{2}.$ Here $\kappa=(\frac 15,\frac 25).$
\end{example}
One computes that 
	\[\mathrm{H}\Phi(x) =-144\,(x_{2}-x^{2}_{1})(x_{2}+x^{2}_{1}). \]
	
There are  four real roots of $\mathrm{H}\Phi$ lying on the curve $x_2=\pm x_1^2$, namely $\gamma_{(\pm1,1)}$ and $\gamma_{(\pm1,-1)}$, all of multiplicity $N_{(\pm1,1)}=N_{(\pm1,-1)}=1$, and since we can re-write $\Phi$ in the form 
$$
\Phi(x)=x_1\bigl[x_1^4+4x_1^2(x_2-x_1^2)+6(x_2-x_1^2)^2\bigr],
$$
as well as in the form
$$
\Phi(x)=x_1\bigl[17x_1^4-20x_1^2(x_2+x_1^2)+6(x_2+x_1^2)^2\bigr],
$$
we see that $T_{(\pm1,1)}=T_{(\pm1,-1)}=0$. Moreover, $\Phi_1(\pm1,1)=-3$ and $\Phi_2(\pm1,1)=\pm4$, $\Phi_1(\pm1,-1)=45$ and $\Phi_2(\pm1,-1)=\mp20$,
so that $\Phi_1$ and $\Phi_2$ do not vanish along $\gamma_{(\pm1,1)}$ and $\gamma_{(\pm1,-1)}.$ Thus all roots $\ga_{(\pm1,1)}$ and  $\gamma_{(\pm1,-1)}$ are of Type B. Moreover, $\Phi(\pm 1,1)-\big(\pm \Phi_1(\pm 1,1)+\Phi_2(\pm 1,1)\big)= 0$, so that the roots  $\ga_{(\pm 1,1)}$ are of Type $\mathrm{B_{NT}}.$	

On the other hand,   $\Phi(\pm 1,-1)-\big(\pm \Phi_1(\pm 1,-1)-\Phi_2(\pm 1,-1)\big)=\mp48\ne 0$, so that the roots  $\ga_{(\pm 1,-1)}$ are of Type $\mathrm{B_{T}}.$	
	
Since $M_{(\pm1,-1)}=3$, we see that Theorem \ref{theorem1} shows that 
\[p_{c} = \max\Big\{\frac{3}{2}, \frac{2(M_{(\pm1,-1)} + 1)}{M_{(\pm1,-1)} + 3}\Big\} = \frac{3}{2}. \]

	\begin{example}[{\bf of Type C}]\label{e4}
Let $\Phi(x)= x_1^{2m}+(x_2+ x_1^2)^{m}$ with $\ m\ge 3$ odd. Here $\kappa=(\frac 1{2m},\frac 1m).$
\end{example}

One computes that 
\[\mathrm{H}\Phi(x) =2m^2(m-1)(x_2+ x_1^2)^{m-2}\big((2m-1)x_1^{2m-2}+(x_2+ x_1^2)^{m-1}\big). \]

There are  two real roots of $\mathrm{H}\Phi$ lying on the curve $x_2=-x_1^2$, namely $\gamma_{(\pm1,-1)}$, both of multiplicity $N_{(\pm1,1)}=m-2$. It is easy to see that $T_{(\pm1,-1)}=0$. Moreover, $\Phi_1(\pm1,-1)=\pm 2m\not=0$ and $\Phi_2(\pm1,-1)=0$, 
so that $\Phi_1$ does not vanish and $\Phi_2$ vanishes along $\gamma_{(\pm1,-1)}$. So that $\ga_{(\pm1,-1)}$ are of Type C.

Since $M_{(\pm1,-1)}=m$, we see that Theorem \ref{theorem1} shows that 
\[p_{c} = \max\Big\{\frac{3}{2}, \frac{2M_{(\pm1,-1)}}{M_{(\pm1,-1)} + 1}\Big\} =  \frac{2m}{m+ 1}. \]

 \begin{example}[{\bf Monomials}]\label{e5}
Let $\Phi(x)=x_1^A x_2^B,$ where $A,B\in \Bbb N_{\ge 1}.$ Here, we could choose 
$\kappa=(\frac 1{2A},\frac 1{2B}),$ so that  $|\kappa|<1.$
\end{example}
 Then one checks easily that any real root of  $\mathrm{H}\Phi$ is of Type A and lies on one of the coordinate axes. Thus Theorem \ref{theorem1} shows that $p_c=3/2.$

 \begin{example}\label{e7}
Let  $\Phi(x)= x_2^2 \frac{x_1^4}{x_1^4+x_2^2}$ with $\kappa=(\frac 14,\frac 12)$ and $T_0=2.$ 
\end{example}
Here, 
$$
\mathrm{H}\Phi(x)=-x_2^4\,\frac{8x_1^{10}(5x_1^4+9x_2^2)}{(x_1^4+x_2^2)^5},
$$
so that $N_0=4>2T_0-2=2.$  Moreover,  here $ T_0\ka_2=1,$ so that the first condition in Assumptions \ref{TA} is violated.
 
 \begin{example}\label{e6}
Let  $\Phi(x)= x_2^3 \frac{x_1^2}{x_1^4+x_2^2}$ with $\kappa=(\frac 14,\frac 12)$ and $T_0=3.$
\end{example}
Here, $$
\mathrm{H}\Phi(x)=-x_2^6\,\frac{4x_1^2(3x_1^8+12 x_1^4x_2^2 + x_2^4)}{(x_1^4+x_2^2)^5},
$$
so that  $N_0=6> 4=2T_0-2$, and  $ (1-T_0)\ka_1+T_0\ka_2=1,$ so that the second condition in Assumptions \ref{TA} is violated.

\medskip

\textbf{Conventions}: Throughout this article, $\mathrm{H}\Phi$ denotes the determinant of the Hessian matrix of $\Phi$. For convenience, we set  $\partial_i=\frac{\partial}{\partial x_i},$ $\Phi_{i}=\partial_i\Phi$, and  $ \Phi_{ij}=\partial_i\partial_j\Phi, i,j=1, 2,$ etc.
  By $\lfloor \cdot\rfloor$ we mean the floor function.
We shall use the notation $A\ll B$, which means that  there is a sufficiently large constant $G$, which is much larger than $1$ and does not depend on the relevant parameters arising in the context in which
the quantities $A$ and $B$ appear, such that $G A\leq B$.  By
$A\lesssim B$ we mean that $A \le CB $ for some constant $C$ independent of the parameters related to  $A$ and $B$. The notation $A \sim B$ means $A \lesssim B$ and $B \lesssim A$ simultaneously. Moreover, we use the abbreviation $\mathbb{R}_{+}^{2}:=\{(x_1,x_2)\in \Bbb R^2: x_1>0\}.$

%%%%%%%%%%%%%%%%%%%%%%%%%%%%%%%%%%%%%%%%%%%%%%%%%%%%%%%%%%%%%%%%%%%%%%%%%%%%%%%%%%%%%%%%%%%%%%%%%%%%

\section{Basic properties of $\kappa$-homogeneous functions  and  their Hessian}\label{basicp}

\subsection{Multiplicities,  types and transversality of roots of $\mathrm{H}\Phi$} \label{sectionle1} 

Let $\Phi$ be as in Theorem \ref{theorem1}. The next lemma establishes relations between the size of $T_\tau$ and the number of partial derivatives 
{\color{blue} $\Phi_i\, (i=1,2)$ }of $\Phi$  which vanish along $\gamma_{\tau} .$  This information will be important  for our analysis.  
\begin{lemma}\label{tauinR'}
	Assume that $\tau\in R.$ Then the following hold:
	\begin{itemize}
		\item[(a)] If $T_{\tau} \ge 2$, then both  $\Phi_{1}$ and $\Phi_{2}$ vanish along $\gamma_{\tau},$ so that the root $\gamma_\tau$ is of Type A.
		\item[(b)] If $T_{\tau} = 1$, then both $\Phi_{1}$ and $\Phi_{2}$ do not  vanish along $\gamma_{\tau},$ so that 
		$\gamma_\tau$ is of Type B.
 However,  this case can only happen for $\tau\in R',$  not  for $\tau=0,$ 
		\item[(c)] If $T_{\tau} = 0$, then at least one of $\Phi_{1}$ and $\Phi_{2}$ does  not  vanish along $\gamma_{\tau},$ so 
		that $\gamma_\tau$ is of Type B or C.
	\end{itemize}
\end{lemma}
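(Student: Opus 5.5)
The plan is to work throughout in the local factorization \eqref{PhiTlambda}: near $\gamma_\tau$ we write $\Phi=u^{T_\tau}W_\tau$, where
\[
u(x):=x_2-\tau x_1^{\kappa_2/\kappa_1}, \qquad x_1>0,
\]
and $W_\tau$ is $\kappa$-homogeneous (of degree $1-T_\tau\kappa_2$, since $u$ is $\kappa$-homogeneous of degree $\kappa_2$), analytic and non-vanishing on a $\kappa$-homogeneous neighborhood of $\gamma_\tau$. Each of the three parts then follows by differentiating this product and evaluating on $\gamma_\tau=\{u=0\}$. Wherever a statement is needed "along $\gamma_\tau$", we check it at one point and invoke the remark that vanishing of $\Phi_1$, $\Phi_2$ is constant along orbits.

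For (a), $T_\tau\ge 2$ means $\Phi$ vanishes to order at least $2$ at each point of $\gamma_\tau$. Hence $\nabla\Phi=0$ there, i.e. $\Phi_1=\Phi_2=0$ along $\gamma_\tau$, and the root is of Type A by definition. For (c), $T_\tau=0$ means $\Phi\neq 0$ on $\gamma_\tau$. By Euler's identity \eqref{Euler}, $\kappa_1x_1\Phi_1+\kappa_2x_2\Phi_2=\Phi\ne 0$, so $\Phi_1$ and $\Phi_2$ cannot both vanish. The root is therefore not of Type A, and hence of Type B or C.

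For (b), with $T_\tau=1$ we have $\Phi=uW$ with $W:=W_\tau$. On $\gamma_\tau$ this gives
\[
\Phi_2=W+uW_2=W\neq 0,\qquad \Phi_1=-\tfrac{\kappa_2}{\kappa_1}\tau x_1^{\kappa_2/\kappa_1-1}W .
\]
So $\Phi_1\neq 0$ whenever $\tau\ne 0$, and the root is of Type B when $\tau\in R'$.

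The only real point, and the step I expect to require care, is excluding $\tau=0$. I would argue by contradiction. If $\tau=0$, then $\Phi=x_2W$ near the positive $x_1$-axis, so on $x_2=0$
\[
\Phi_{11}=0,\qquad \Phi_{12}=W_1,
\]
and hence $\mathrm{H}\Phi=-W_1^2$ there. Since $W$ is $\kappa$-homogeneous of degree $1-\kappa_2$, we have $W(x_1,0)=W(1,0)\,x_1^{(1-\kappa_2)/\kappa_1}$. This gives
\[
W_1(x_1,0)=\tfrac{1-\kappa_2}{\kappa_1}\,x_1^{-1}W(x_1,0),
\]
which is non-zero because $W(1,0)\ne 0$ and $\kappa_2<1$ by Assumptions \ref{assumption}. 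Thus $\mathrm{H}\Phi\neq 0$ on the $x_1$-axis, contradicting $0\in R$. Hence $T_0=1$ cannot occur, and in case (b) necessarily $\tau\in R'$.
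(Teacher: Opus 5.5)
Your proposal is correct and follows the paper's proof essentially step by step. That includes the key contradiction for $\tau=0$, where $\mathrm{H}\Phi=-W_1^2$ on the $x_1$-axis. Your explicit formula $W(x_1,0)=W(1,0)\,x_1^{(1-\kappa_2)/\kappa_1}$ is just the integrated form of the Euler identity $(1-\kappa_2)W=\kappa_1x_1W_1$ at $x_2=0$ that the paper uses to force $W_1\neq 0$.
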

\begin{proof} The proof of (a) is obvious, and (c) follows immediately from  Euler's identity \eqref{Euler} in combination with \eqref{PhiTlambda}. 

It remains to prove (b). So assume that $T_\tau=1.$  By (\ref{PhiTlambda}),  if $(x_1,x_2)\in \gamma_{\tau}$, we get
	$$\Phi_1(x_{1},x_{2})=-\tau (\kappa_{2}/\kappa_{1})x_1^{\kappa_{2}/\kappa_{1}-1}\, {W_\tau}(x_{1},x_{2});\hspace{0.6cm}\Phi_2(x_{1},x_{2})={W_\tau}(x_{1},x_{2}).$$
	 Since $W_\tau$ does not vanish along $\gamma_{\tau}$,  we thus see  that both $\Phi_{1}$ and $\Phi_{2}$ do not  vanish along $\gamma_{\tau}$ if  $\tau \in R^{\prime}$. 
	
\smallskip
Assume next  that $\tau=0.$   Let us here write $W:=W_0.$ From  \eqref{PhiTlambda} one then computes that $0=\mathrm{H}\Phi=-W_1^2$ along $\gamma_0,$ i.e., when $x_2=0,$ so that $ W_1|_{x_2=0}=0.$ But, $W$ is $\kappa$-homogeneous of degree $1-\kappa_2,$ so that the following Euler's identity holds: 
\[
(1-\kappa_2)	W(x)=\kappa_{1}x_{1}W_1(x) + \kappa_{2} x_{2} W_2(x).
\]
For $x_2=0$ this would imply $W=0$  along $\gamma_0,$ contradicting our assumption on $W=W_0.$ 
\end{proof} 

\smallskip

Let us next consider how the Type of a root $\gamma_\tau, \tau \in R$ (which we define as the Type of the root $\gamma_{(\zeta_\tau)}$), influences the magnitude of $T_\tau.$ 

\begin{remark}\label{TypeT}
Lemma \ref{tauinR'} shows the following:
 \begin{itemize}
\item[(a)] If $\gamma_\tau$ is of Type A, then $T_\tau\ge 2.$ Thus,    Euler's identity \eqref{Euler} implies that $S$ is non-transversal along $\gamma_{\tau}$.  

\item[(b)] If $\gamma_\tau$ is of Type B, then $T_\tau\in\{0,1\}$. Moreover, if $\tau\in R\setminus R',$ i.e., $\tau=0$, then $T_\tau=0.$ 

\item[(c)]  When  $\gamma_\tau$ is of Type C, then $T_\tau=0.$  
 \end{itemize}
\end{remark}

\begin{lemma}\label{Ttau=0}
 If $\tau$ is in $R$  and $\gamma_\tau$ is of Type $\mathrm{C}$ (so that $T_\tau=0$), or  if $\tau\in R\setminus R',$ i.e., $\tau =0,$  and $\gamma_\tau$ is of Type $\mathrm{B}$, with   $T_\tau=0$,  then the transversality condition (\ref{Trans})  holds true.
\end{lemma}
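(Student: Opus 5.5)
The plan is to rewrite the transversality expression in \eqref{Trans} using Euler's identity \eqref{Euler}, and then to read off its non-vanishing from the information on $\Phi_1,\Phi_2$ and $T_\tau$ in each case. Subtracting \eqref{Euler} from $x_1\Phi_1+x_2\Phi_2$ gives, for every $x\ne 0$,
\begin{equation*}
\Phi(x)-x_1\Phi_1(x)-x_2\Phi_2(x)=(\kappa_1-1)\,x_1\Phi_1(x)+(\kappa_2-1)\,x_2\Phi_2(x).
\end{equation*}
By Assumptions \ref{assumption} we have $0<\kappa_i<1$, so both coefficients $\kappa_i-1$ are nonzero. Points of $\gamma_\tau$ satisfy $x_1>0$, and they satisfy $x_2=\tau x_1^{\kappa_2/\kappa_1}$, so $x_2\ne0$ exactly when $\tau\ne0$. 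By $\kappa$-homogeneity, it suffices in every case to check non-vanishing at a single point of $\gamma_\tau$.

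\textbf{Case $\tau=0$, Type B or C, with $T_0=0$.} On $\gamma_0$ we have $x_2=0$, so Euler's identity reduces to $\Phi=\kappa_1x_1\Phi_1$. The right-hand side above then becomes
\begin{equation*}
(\kappa_1-1)\,x_1\Phi_1=\frac{\kappa_1-1}{\kappa_1}\,\Phi .
\end{equation*}
Since $T_0=0$, $\Phi$ does not vanish on $\gamma_0$, so the expression is nonzero. This covers the second hypothesis of the lemma, and also covers Type C roots with $\tau=0$, since $T_\tau=0$ for Type C by Remark \ref{TypeT}(c).

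\textbf{Case $\tau\in R'$, Type C.} Exactly one of $\Phi_1,\Phi_2$ vanishes along $\gamma_\tau$.
\begin{itemize}
\item If $\Phi_2\equiv0$ on $\gamma_\tau$ and $\Phi_1\ne0$ there, the expression equals $(\kappa_1-1)x_1\Phi_1$, which is nonzero because $x_1>0$.
\item If $\Phi_1\equiv0$ on $\gamma_\tau$ and $\Phi_2\ne0$ there, the expression equals $(\kappa_2-1)x_2\Phi_2$, which is nonzero because $x_2\ne0$ for $\tau\ne0$.
\end{itemize}
Alternatively, one can use Euler's identity directly: in the first subcase $\Phi=\kappa_1x_1\Phi_1$ on $\gamma_\tau$, and in the second $\Phi=\kappa_2x_2\Phi_2$, so the expression is $\frac{\kappa_i-1}{\kappa_i}\Phi$. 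This is nonzero since $T_\tau=0$ (Remark \ref{TypeT}(c)), and this formulation handles $\tau=0$ uniformly as well.

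\textbf{Main difficulty.} There is no serious obstacle. The only point needing care is the Type C subcase on the $x_1$-axis where $\Phi_1$ vanishes but $\Phi_2$ does not: there $x_2\Phi_2=0$ gives no information directly. This is exactly why the argument goes through $T_\tau=0$ and the identity expression $=\frac{\kappa_i-1}{\kappa_i}\Phi$. In fact, if $\tau=0$ and $\Phi_1$ vanished on $\gamma_0$, Euler's identity would force $\Phi=0$ there, contradicting $T_0=0$; so this configuration cannot occur at all.
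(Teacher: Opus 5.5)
Your proof is correct and follows essentially the paper's route: use Euler's identity \eqref{Euler} to rewrite the transversality expression along $\gamma_\tau$ as $\frac{\kappa_i-1}{\kappa_i}\Phi$ (equivalently $(\kappa_i-1)x_i\Phi_i$), which is nonzero because $0<\kappa_i<1$ and $T_\tau=0$. Writing out both Type C subcases explicitly, rather than assuming without loss of generality that $\Phi_2$ vanishes as the paper does, is only a harmless variation.
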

 
\begin{proof} 
Assume first that $\gamma_\tau$ is of Type C, and that,  without loss of generality, $\Phi_{1}\neq 0,$ while $\Phi_{2} = 0$ along $\gamma_{\tau} $. By Euler's identity \eqref{Euler}, we then see that  for every $(x_{1},x_{2}) \in \gamma_{\tau} $, we have
\begin{equation}\label{BTrans}
\Phi(x) =  \kappa_{1}x_{1}\Phi_{1}(x),
\end{equation}
and thus
\begin{equation}\label{CTrans}
		\Phi(x) -x_{1}\Phi_{1}(x)-x_{2}\Phi_{2}(x) = (1-\frac{1}{\kappa_{1}}) \Phi(x)
\end{equation}
along $\gamma_{\tau} $. But  $\Phi$ does not vanish along $\gamma_{\tau},$  since $T_{\tau} =0,$ and we are  assuming that  $\kappa_{1} \neq 1.$ Thus  (\ref{Trans}) follows from (\ref{CTrans}). 

Similarly, if $\gamma_\tau$ is Type B with  $\tau=0$ and $T_\tau=0,$ then $x_2=0$ along 
 $\gamma_{0} $, and thus \eqref{BTrans} and \eqref{CTrans} remain true, so that we can conclude as before. 
\end{proof}

\medskip

\begin{remarks}\label{remark1.4}
The results in Theorem \ref{theorem1} depend on the multiplicities $N_{(\zeta)}$ of the real roots $\gamma_{(\zeta)}$ of the Hessian determinant $\mathrm{H}\Phi,$  and these multiplicities $N_{(\zeta)}$ are also linked to the multiplicities $T_{(\zeta)}$ of these roots within $\Phi.$ In what follows, we investigate relationships between these multiplicities, restricting ourselves again to  roots contained in the right half-plane, where $\Phi$ is assumed to satisfy the assumptions   in Theorem \ref{theorem1}.
 \begin{itemize}
\item[(a)]    When $T_{\tau} \ge 2,$ i.e, if $\gamma_\tau$ is of Type A, then the following hold true:

-- If  $\tau \in R^{\prime},$ then $N_\tau=2T_{\tau}-3;$

-- If $\tau \in  R\setminus R^{\prime},$ i.e., if $\tau=0,$ and if $\Phi$ satisfies  Assumptions \ref{TA},  then  $N_0=2T_{0}-2.$ This applies in particular to all $\ka$-homogeneous polynomial functions $\Phi.$ 

\smallskip 

If $\Phi$ is not a polynomial function, but only analytic away from the origin, the last identity for $N_0$ can be false, as Example \ref{e6} shows.  

For the proofs of these assertions we refer to Lemma \ref{HphinA}.

\item[(b)] When $T_{\tau} =1$ and $\tau \in  R^{\prime}$, then we may have $N_{\tau} \ge 2$, as Example \ref{e2} shows.	
	
\item[(c)] When $T_{\tau} = 0$, where $\gamma_{\tau}$ is a root of $\mathrm{H}\Phi$, then $\Phi$ does not vanish along $\gamma_{\tau}.$  Examples of this type are the root $\gamma_0$ in Example \ref{e2} and the root $\gamma_\tau$ in Example \ref{e4}.

\end{itemize}
 \end{remarks}
\smallskip

\begin{remarks}
Assume that  $\gamma_\tau$ is a root of Type B of $\mathrm{H}\Phi$, so that $T_{\tau} =0,$ or $T_{\tau} =1$.
Whether   $\gamma_\tau$ will then be  of Type $\mathrm{B_{T}}$ or not  will  depend on the value  of $T_{\tau}$, and on whether $\tau \in R^{\prime}$, or $\tau \in R\backslash R^{\prime}$, as the following observations show:
 \begin{itemize}
\item[(a)]    If $T_{\tau} =1$ (not even assuming that $\tau$ is of Type B), then $\tau\in R'$ and $\gamma_\tau$ is of Type $\mathrm{B_{T}}$.

\smallskip
	
To see this, recall first from Lemma \ref{tauinR'}(b) that here $\tau\in R'$, and  $x_1\Phi_1\not=0$ and $x_2\Phi_2\not=0$ along $\gamma_{\tau}.$ In particular, $\tau$ is of Type B. Moreover, since $T_{\tau} =1$,  $\Phi$ vanishes along $\gamma_{\tau}$ in this case.

	We will prove our claim by contradiction.  Assuming that  $\gamma_{\tau}$ is not of Type $\mathrm{B_{T}}$, then necessarily also 
	\begin{equation}\label{CaseiEq1}
		x_{1}\Phi_{1}(x_{1},x_{2}) + x_{2}\Phi_{2}(x_{1},x_{2}) =0\qquad \text{along}\quad  \gamma_{\tau}.
	\end{equation}
	Moreover, by  Euler's identity \eqref{Euler}, we have
	\begin{equation}\label{CaseiEq2}
		\kappa_{1}x_{1}\Phi_{1}(x_{1},x_{2}) + \kappa_{2} x_{2}\Phi_{2}(x_{1},x_{2}) =0 \qquad \text{along}\quad  
		\gamma_{\tau}.
	\end{equation}
		(\ref{CaseiEq1}) and (\ref{CaseiEq2}) yield that   $\kappa_{1} = \kappa_{2}$. This contradicts  the fact that $\kappa_{1} \neq \kappa_{2}$.

\item[(b)]   If   $T_{\tau} =0,$ and $\tau \in R\backslash R^{\prime}$, then $\gamma_{\tau}$ is of Type $\mathrm{B_{T}}$.

To see this, let  $\tau=0,$  and assume  to the contrary that  $\gamma_0$ is not of Type $\mathrm{B_{T}}$, i.e.,  $\gamma_0$ is of Type 
	$\mathrm{B_{NT}}.$ Then by definition we must have 
	\begin{equation}\label{CaseiiEq1}
		\Phi(x_{1},0) =x_{1}\Phi_{1}(x_{1},0)+ x_{2}\Phi_{2}(x_{1},0)=x_{1}\Phi_{1}(x_{1},0).
	\end{equation}
	
	The homogeneity of $\Phi$ yields
	\begin{equation}\label{CaseiiEq2}
		\Phi(x_{1},0) = \kappa_{1} x_{1}\Phi_{1}(x_{1},0).
	\end{equation}
	(\ref{CaseiiEq1}) and (\ref{CaseiiEq2}) yield that   $\kappa_{1} = 1$. This contradicts  our assumption that $0< \kappa_{1}<1.$

\item[(c)]  If $T_{\tau} =0$, and $\tau \in R^{\prime}$, then $\gamma_{\tau}$ may be of Type $\mathrm{B_{T}}$, or Type $\mathrm{B_{NT}}$.
	
	Examples of this type are Example \ref{e2b}, and Example \ref{e3}.
	
\end{itemize}
\end{remarks}

%%%%%%%%%%%%%%%%%%%%%%%%%%%%%%%%%%%%%%%%%%%%%%%%%%%%%%%%%%%%%%%%%%%%%%%%%%%%%%%%%%%%%%%%%%%%%%%%%%%%

\subsection{On the Hessian of $\Phi$ near roots of Type A}\label{OnHA}
 Let  $\Phi:\Bbb R^2\setminus \{0\}\to \Bbb R$ be analytic and $\ka$-homogenous of degree 1 such that  $\mathrm{H}\Phi$ does not vanish identically as before, and assume that $\tau\in R$ is such that $\ga_\tau$ is of Type A, i.e, that $T_\tau\ge 2$ (compare Lemma \ref{tauinR'} and Remark \ref{TypeT}).
 
\begin{lemma}\label{HphinA}

(i) If $\tau\in R',$ then $N_\tau=2T_\tau-3,$ so that for $x_1>0$
\[\mathrm{H}\Phi(x) = \bigl(x_{2}-\tau x_1^{\frac{\kappa_2}{\kappa_1}}\bigr)^{2T_\tau-3}\, {V_\tau}(x),\]
where $V_\tau$ does not vanish along $\gamma_\tau.$ 
\smallskip

(ii) If $\tau=0,$ then for $x_1>0$ 
\begin{equation}\label{Phibyka}
\Phi(x)= x_2^{T_0} \,x_1^{\frac{1-T_0\ka_2}{\ka_1}}\, h(x_2 x_1^{-\frac {\ka_2}{\ka_1}})=x_2^{T_0} W_0(x),
\end{equation}
where $h$ is an analytic function on $\Bbb R$ such that $h(0)\ne 0.$ 

In particular, under the Assumptions \ref{TA}, we obtain that 
$|\mathrm{H}\Phi(x)|\sim |x_2|^{2T_0-2}$ when $x_1\sim 1$ and $|x_2|\ll 1$ is sufficiently small, so that $N_0=2T_0-2.$

Similarly, if we put for $0<\sigma\ll1,$ $x_1\sim 1 $ and $ |x_2|\sim 1$  
$$
\Phi_\sigma(x):= x_2^{T_0} \,x_1^{\frac{1-T_0\ka_2}{\ka_1}}\, h(\sigma x_2 x_1^{-\frac {\ka_2}{\ka_1}}),
$$
then $|\mathrm{H}\Phi_\sigma(x)|\sim 1,$ if $\delta$ is sufficiently small. 

\smallskip
Moreover,  the Assumptions \ref{TA} are satisfied when $\Phi$  is a polynomial function.

\end{lemma}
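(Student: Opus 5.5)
For (i), I would work in the adapted coordinates $y_1=x_1,\ y_2=u:=x_2-\tau x_1^{k}$ with $k:=\kappa_2/\kappa_1\neq 1$, in which \eqref{PhiTlambda} reads $\Phi=u^{T}W$ with $T=T_\tau\ge 2$. The map $(x_1,x_2)\mapsto(y_1,y_2)$ has Jacobian determinant $1$, but it is nonlinear, so the Hessian transforms with a correction term:
\[
D^2_x\Phi=J^{t}\,(D^2_y\Phi)\,J+\Phi_u\,D^2_x u,
\]
where $D^2_xu$ has the single nonzero entry $u_{11}=-\tau k(k-1)x_1^{k-2}$.

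Next I would collect the orders in $u$ of the entries. These are $\Phi_{y_1y_1}=O(u^{T})$, $\Phi_{y_1u}=O(u^{T-1})$, $\Phi_{uu}=T(T-1)u^{T-2}W+O(u^{T-1})$, and $\Phi_u u_{11}=T u^{T-1}W u_{11}+O(u^{T})$. Taking the determinant, I expect
\[
\mathrm{H}\Phi=-\tau k(k-1)\,T^2(T-1)\,x_1^{k-2}\,W^2\,u^{2T-3}+O(u^{2T-2}).
\]
The leading coefficient is nonzero on $\gamma_\tau$ because $\tau\ne0$, $k\ne1$, $T\ge2$ and $W\ne0$ there. This gives $N_\tau=2T_\tau-3$, and Lemma \ref{localform} then yields the stated factorization with $V_\tau\ne 0$.

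For (ii), I would use homogeneity to write $\Phi(x)=x_1^{1/\kappa_1}g(x_2x_1^{-k})$ with $g(s):=\Phi(1,s)$, which is analytic on $\Bbb R$. Since $\mathrm{ord}\,\Phi=T_0$ along $\gamma_0$, we have $g(s)=s^{T_0}h(s)$ with $h(0)\ne0$, and this is exactly \eqref{Phibyka} with $a:=(1-T_0\kappa_2)/\kappa_1$. The Hessian of the model monomial $x_1^a x_2^{T}$ is
\[
aT\,(1-a-T)\,x_1^{2a-2}x_2^{2T-2}.
\]
Its coefficient is nonzero precisely when $a\ne0$, i.e. $T_0\kappa_2\ne1$, and when $a+T_0\ne1$, i.e. $(1-T_0)\kappa_1+T_0\kappa_2\ne1$; these are exactly Assumptions \ref{TA}.

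The remaining terms in $\mathrm{H}\Phi$ come from derivatives of $h(x_2x_1^{-k})$. Each such derivative carries an extra factor of size $|x_2|x_1^{-k}$, so it is small compared with the main term when $x_1\sim1$ and $|x_2|\ll1$. Hence $|\mathrm{H}\Phi|\sim|x_2|^{2T_0-2}$ and $N_0=2T_0-2$.

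For $\Phi_\sigma$, I note that $\Phi_\sigma(x)=\sigma^{-T_0}\Phi(x_1,\sigma x_2)$. Letting $\sigma\to0$, $\Phi_\sigma\to h(0)\,x_1^ax_2^{T_0}$ in $C^2$ on the region $x_1\sim1$, $|x_2|\sim1$. There the model Hessian is $\sim1$, so $|\mathrm{H}\Phi_\sigma|\sim1$ for $\sigma$ small (the statement's $\delta$ is presumably this $\sigma$).

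For the polynomial case, $\Phi$ is a sum of monomials $x_1^ix_2^j$ with $i\kappa_1+j\kappa_2=1$. The monomial with minimal $j=T_0$ has a nonzero coefficient equal to $h(0)$, so $a=i$ is a nonnegative integer, and hence $a+T_0\ge2\ne1$. If $a=0$, then every other monomial with $j\ge T_0$ would force $i<0$, so $\Phi=cx_2^{T_0}$ and $\mathrm{H}\Phi\equiv0$, which is excluded. Therefore Assumptions \ref{TA} hold.

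The step I expect to be the main obstacle is the bookkeeping in (i): the correction term $\Phi_u D^2u$ is what produces the odd exponent $2T-3$. Its coefficient must be checked to dominate the contribution of $\Phi_{y_1y_1}\Phi_{uu}-\Phi_{y_1u}^2$, which is of order $u^{2T-2}$.
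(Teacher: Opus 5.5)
Your proposal is correct and follows essentially the same route as the paper. In (i), your adapted-coordinate identity $\mathrm{H}\Phi=\det D_y^2\Phi+\Phi_u\,u_{11}\,\Phi_{uu}$ is a tidier bookkeeping of the paper's direct expansion in powers of $\Delta$ (in which the $\Delta^{2T-4}$ terms cancel), your leading coefficient agrees with the paper's term $II$, and in (ii) you use the same homogeneity reduction, the same model monomial $x_1^{A}x_2^{T_0}$ with $A=(1-T_0\kappa_2)/\kappa_1$ (whose Hessian coefficient $AT_0(1-A-T_0)$ is nonzero exactly under Assumptions \ref{TA}), and the same minimal-degree argument for polynomials; your $C^2$-convergence argument for $\Phi_\sigma$ (with $\delta$ correctly read as $\sigma$) simply spells out the step the paper calls ``analogous.''
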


\begin{proof} Let us write $T=T_\tau$ and $W=W_\tau,$ and recall that $T\ge 2.$

 (i) Assume first that $\tau \in   R^{\prime},$ i.e., $\tau\ne 0,$  and write according to \eqref{PhiTlambda}
\[
\Phi(x) = (x_{2}-\tau x_1^{\frac{\kappa_2}{\kappa_1}})^{T}\, W(x)
\]
on a sufficiently narrow $\kappa$-homogeneous neighborhood of the root $\gamma_\tau$ on which the analytic and $\kappa$-homogeneous function $W$ does not vanish. By means of a scaling in $x_2$ we may assume without loss of generality that $\tau=-1.$ Putting then $a:=\kappa_2/\kappa_1$ and $\Delta:=x_2+x_1^a,$ we may then write 
$\Phi=\Delta^T W.$ Then
\begin{eqnarray*}
\Phi_1=T a\Delta^{T-1} x_1^{a-1} W+\Delta^T W_1, \quad, \Phi_2=T\Delta^{T-1} W+\Delta^T W_2.
\end{eqnarray*}
For $x_1\sim 1,$ easy computations then show that 
\begin{eqnarray*}
\Phi_{11}&=&T\Big[ \Delta^{T-2} (T-1) a^2 x_1^{2a-2} W+\Delta^{T-1}a\Big((a-1) x_1^{a-2} W+2 x_1^{a-1} W_1\Big)\Big]+
\mathcal{O} (|\Delta|^T)\\
&=:&T\Big[A+B\Big]+\mathcal{O} (|\Delta|^T),\\
\Phi_{12}&=&T\Big[ \Delta^{T-2} (T-1) a x_1^{a-1} W+\Delta^{T-1}\Big(a x_1^{a-1} W_2+ W_1\Big)\Big]+
\mathcal{O} (|\Delta|^T)\\
&=:&T\Big[C+D\Big]+\mathcal{O} (|\Delta|^T,\\
\Phi_{22}&=&T\Big[\Delta^{T-2} (T-1) W+2\Delta^{T-1} W_2\Big]+\mathcal{O} (|\Delta|^T)\\
&=:&T\Big[E+F\Big]+\mathcal{O} (|\Delta|^T).
\end{eqnarray*}
Then 
$\mathrm{H}\Phi=T^2(I+II+III)+\mathcal{O} (|\Delta|^{2T-2}), $ where we collect terms with common power of $\Delta,$ namely
\begin{eqnarray*}
I&:=&AE-C^2=0\cdot \Delta^{2T-4}=0,\\
II&:=&AF+BE- 2CD=(T-1)a(a-1)x_1^{a-2}W^2 \Delta^{2T-3},\\
III&:=&T^2\Big(BF-D^2\Big)=\mathcal{O} (|\Delta|^{2T-2}).
\end{eqnarray*}
Since $\kappa_1\ne \kappa_2,$ we see that $a\ne 1,$ and hence $|\mathrm{H}\Phi|\sim |\Delta|^{2T-3}$ for $|\Delta|$ sufficiently small, which implies that $N_\tau=2T-3=2T_\tau-3.$

\medskip

(ii) Assume next that  $\tau \in R\backslash R^{\prime}$, i.e., $\tau=0.$ Then $\Phi(x) = x_2^T W(x),$ so that $\Phi(1,y)=y^T W(1,y),$ where 
$h(y):=W(1,y)$ is analytic in $y\in \Bbb R.$ The identity \eqref{Phibyka} then follows immediately by exploiting that  $\Phi$ is $\ka$-homogenous of degree 1.
\smallskip

Put next $A:=\frac{1-T\ka_2}{\ka_1},$  and $\tilde \Phi(x):=x_2^Tx_1^A.$ The Assumptions \ref{TA} imply that $A\ne 0.$ Assume first that also $A\ne 1.$ Then one computes that 
$$
\mathrm{H}\tilde\Phi(x)= TA\left[1-(T+A)\right] x_2^{2T-2} x_1^{2A-2},
$$
where the  Assumptions  \ref{TA} ensures that $TA\left[1-(T+A)\right] \ne 0.$  Similarly, when $A=1,$ then $\mathrm{H}\tilde\Phi(x)=-T^2 x_2^{2T-2}.$ 
In both cases, this easily implies that $|\mathrm{H}\Phi(x)|\sim |x_2|^{2T_0-2}$ when $x_1\sim 1$ and $|x_2|\ll 1$ is sufficiently small, so that $N_0=2T_0-2.$ 

The assertion that  $|\mathrm{H}\Phi_\sigma(x)|\sim 1$ follows analogously.

\smallskip 

Consider next the special case of a polynomial function $\Phi.$ Such a $\ka$-homogeneous  polynomial can be written as a finite sum
\begin{equation}\label{PolyPhi}
\Phi(x) =  \sum\limits_{k=1}^n c_{k}x_1^{A_k} x_2^{B_k},
\end{equation}
with $c_k\ne 0$ and distinct $(A_k,B_k)\in \mathbb{N}^2,$  where  $\kappa_1A_k + \kappa_2B_k =1$. We may  and shall assume  that $B_{k}<B_{k+1},$ hence 
$A_{k}>A_{k+1}$  for $k=1,\dots, n-1$. Then we may factorize $\Phi(x)=x_2^{B_1} W(x),$ where 
$$
W(x)=\sum\limits_{k=1}^n c_kx_1^{A_k}x_2^{B_k-B_1}=c_1x_1^{A_1}+\mathcal{O}(|x_2|),
$$
which shows that here $\frac{1-T_0\ka_2}{\ka_1}=A_1\in \Bbb N$ and $T=B_1$ (more specifically,  since $(A_1-A_k)/(B_k-B_1)=\ka_2/\ka_1,$ here the function $h(y)$ is  explicitly given by the polynomial  $h(y)=\sum\limits_{k=1}^n c_ky^{B_k-B_1}$).

But, if $A_1=0,$ then $W(x)=c_1,$ hence $\Phi(x)=c_1c_2^{B_1}$, so that $\mathrm{H}\tilde\Phi$ would vanish identically, which would contradict our assumptions. Thus $A_1\ge 1,$ so that $T_0\kappa_2\ne 1.$ Moreover, then $T_0+A_1\ge 3,$  so that that  the Assumptions \ref{TA} are satisfied.
\end{proof}

%%%%%%%%%%%%%%%%%%%%%%%%%%%%%%%%%%%%%%%%%%%%%%%%%%%%%%%%%%%%%%%%%%%%%%%%%%%%%%%%%%%%%%%%%%%%%%%%%%%%

\subsection{On the Legendre transform in $x_2$ of $\Phi$ near roots  along which $\partial_2\Phi\ne 0$}\label{Legendre}

Assume that $\Phi$ is as in Theorem \ref{theorem1}, where $|\kappa|\le 1,$  and that $x=(x_1,x_2)$ lies in a sufficiently small $\kappa$-homogeneous neighborhood of a real root  
$\gamma_{\tau}= \bigl\{ x_1>0, \, x_{2}=\tau x_1^{\frac{\kappa_2}{\kappa_1}}\bigl\},  \tau\in R. $  

We shall assume here that $\Phi_2$ does not vanish along $\gamma_\tau.$ This is surely true when $\gamma_\tau$ is of 
Type $\mathrm{B},$ and if $\gamma_\tau$ is of Type $\mathrm{C},$ we may assume this as well after possibly swapping the coordinates $x_1$ and $x_2.$ 

In the proofs in Sections \ref{caseB} and \ref{caseC} we shall study the Fourier transforms of  measures carried by the surface $S,$  and shall first apply the method of stationary phase to the integration in the  variable $x_2.$   The resulting phase function will essentially be the Legendre transform of $\Phi(x_1,x_2)$ in the variable $x_2$ (compare Section 4 in \cite{BIM25}). 

This means that we have to determine a critical point of the phase $\xi_2 x_2+\Phi(x_1,x_2)$ with respect to $x_2,$ i.e., to solve the equation 
\begin{equation}\label{critp}
\xi_2+\Phi_2(x_1,x_2)=0
\end{equation}
in $x_2,$ and then evaluate the phase $\xi_2 x_2+\Phi(x_1,x_2)$ at this critical point.

But, since $\Phi_2$ is $\kappa$-homogenous of degree $1-\kappa_2,$   we may reduce ourselves to the case where $\xi_2=\pm 1.$
Moreover, for $x\in \gamma_\tau,$ we have
\begin{equation}\label{P2hom}
\Phi_2(x_1,\tau x_1^{\frac{\kappa_2}{\kappa_1}})=x_1^{\frac{1-\kappa_2}{\kappa_1}}\Phi_2(1,\tau ),
\end{equation}
where $\Phi_2(1,\tau)\ne 0.$  Thus either   $\Phi_2(x_1,\tau x_1^{\frac{\kappa_2}{\kappa_1}})<0,$ or $\Phi_2(x_1,\tau x_1^{\frac{\kappa_2}{\kappa_1}})>0$  for all  $x_1>0$.

Let us assume without loss of generality that $\Phi_2(x_1,\tau x_1^{\frac{\kappa_2}{\kappa_1}})<0$ for every $x_1>0$.
Then \eqref{critp}, with $\xi_2=\pm 1,$ can have  a solution on $\gamma_\tau$  only if $\xi_2=1.$ And,   if $\xi_2=1,$ then since $0<\kappa_2<1$, \eqref{P2hom} implies that there    exists a unique critical point  $x_0=(x_{0,1},x_{0,2})=(x_{0,1},\tau x_{0,1}^{\frac{\kappa_2}{\kappa_1}})$ on  $\gamma_\tau$ such that
\begin{equation}\label{x10}
	1+\Phi_2(x_0)=0.
\end{equation}

We recall next  \cite[Lemma 3.3]{IKM05}: {\it Assume that $F$ is $\kappa$-homogeneous of degree $1$ and smooth away from 
the origin, let $x\in\Bbb R^2\setminus\{0\}$ and $j\in\{1,2\}.$ If $\kappa_j\neq 1$ and 
$F_j(x)\neq 0$, then either $\mathrm{H} F(x)\ne 0,$  or $F_{jj}(x)\neq 0.$}

Since  $\mathrm{H}\Phi = 0$ along $\gamma_{\tau},$   but $\Phi_2$ is assumed to not vanish  along $\gamma_{\tau},$ this lemma shows that
\begin{equation}\label{phi22ne0}
 \Phi_{22}(x_0)\ne 0.
\end{equation}

The following lemma, which has a similar flavor as Lemma 4.2 in  \cite{BIM25}, will give very precise  information on the Legendre transform  of $\Phi$ in  $x_2$ at $\xi_2=1,$ as well as further information on $\Phi.$
\begin{lemma}\label{Legendret}
Let $\gamma_\tau, \tau \in R,$ be a real root of $\mathrm{H}\Phi$  along which $\Phi_2$ does not vanish, and let $x_0$ be the unique point in $ \gamma_\tau$ satisfying \eqref{x10}. Then  there is a unique, analytic function $x_1\mapsto x_2^c(x_1)$  which is defined in a neighborhood 
$I_\epsilon:=(x_{0,1}-\epsilon, x_{0,1}+\epsilon), \epsilon>0,$ of $x_{0,1}$  on which $x_1>0,$  such that $x_2^c(x_{0,1})=x_{0,2}$ and 
\begin{equation}\label{x2c1}
	1+\Phi_2(x_1,x_2^c(x_1))= 0\qquad \text{for all} \quad x_1\in I_\epsilon.
\end{equation}
a) The function %{\color{red} ( I replaced $\Gamma$ by $\Psi,$ since the cones are later also denoted by $\Gamma$)}
\begin{equation}\label{Gamma1}
\Psi(x_{1}) := x^{c}_{2}(x_{1}) + \Phi(x_{1},x^{c}_{2}(x_{1}))\end{equation}
on $I_\epsilon$ then satisfies 
\begin{equation}\label{Legendra}
	\Psi(x_{1}) = \Psi(x_{0,1}) + \Psi^{\prime}(x_{0,1})(x_{1}-x_{0,1}) + (x_{1}-x_{0,1})^{M_{\tau}}G(x_{1}), 
\end{equation}
where  $G$ is an analytic function  with $G(x_{0,1}) \neq 0,$ and  where $M_\tau:=N_\tau+2,$ if $N_\tau$ denotes the multiplicity of the root $\gamma_\tau$.

Moreover, if $\ga_\tau$ is of type $\rm {B}$, then $\Psi^{\prime}(x_{0,1})\ne 0,$ and if $\ga_\tau$ is of type $\rm {B_{NT}},$
 then 
 \begin{equation}\label{PsiNT}
\Psi(x_{0,1})=x_{0,1}\Psi'(x_{0,1}).
\end{equation}

b) We may decompose 
\begin{equation}\label{phiexpansex}
	\Phi(x)=\Phi_{\rm{lin}}(x)+\Phi_{\rm{nl}}(x),
\end{equation}
where
\begin{eqnarray}\nonumber 
\Phi_{\rm{lin}}(x)&:=&\Psi(x_{0,1})- \Psi^{\prime}(x_{0,1})x_{0,1}+\Psi^{\prime}(x_{0,1})x_{1}-x_2=\Phi(x_0)+\nabla \Phi(x_0)(x-x_0),\\
\Phi_{\rm{nl}}(x)&:=&(x_{1}-x_{0,1})^{M_{\tau}}G(x_1)+(x_2-x_2^c(x_1))^2F(x);\label{psi}
\end{eqnarray}
here $F(x)$ is an analytic  function with $F(x_0)\not=0.$

c) The function $x_2^c$ admits a Taylor expansion 
\begin{equation}\label{x2cnew}
	x_2^c(x_1)=x_{0,2}+  \dot{x}_{0,2}  \cdot(x_1-x_{0,1})+(x_1-x_{0,1})^{M_{\tau}-1}\, \Xi(x_1),
\end{equation}
where $\dot{x}_{0,2}:=(x_2^c)'(x_{0,1})$, and $\Xi$ is an analytic  function of $x_1\in I_\epsilon.$ 
\end{lemma}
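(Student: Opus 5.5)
First I will obtain $x_2^c$ from the implicit function theorem applied to $1+\Phi_2(x_1,x_2)=0$ at $x_0$. This is possible because $\Phi_{22}(x_0)\ne 0$ by \eqref{phi22ne0}, and $\Phi$ is analytic away from the origin, so $x_2^c$ is analytic on a small interval $I_\epsilon$ with $x_{0,1}>0$. Differentiating \eqref{x2c1} gives
\[
\Phi_{12}+\Phi_{22}\,(x_2^c)'=0,
\]
and from this
\[
\Psi'(x_1)=(x_2^c)'(1+\Phi_2)+\Phi_1=\Phi_1(x_1,x_2^c(x_1)),
\]
\[
\Psi''(x_1)=\Phi_{11}+\Phi_{12}(x_2^c)'=\frac{\mathrm{H}\Phi(x_1,x_2^c(x_1))}{\Phi_{22}(x_1,x_2^c(x_1))}.
\]
This is the standard Legendre-transform identity, and it reduces \eqref{Legendra} to showing that $\mathrm{H}\Phi(x_1,x_2^c(x_1))$ vanishes to order exactly $N_\tau$ at $x_{0,1}$. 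By \eqref{HPhiN}, on a $\kappa$-homogeneous neighborhood we have
\[
\mathrm{H}\Phi(x_1,x_2^c(x_1))=\bigl(x_2^c(x_1)-\tau x_1^{\kappa_2/\kappa_1}\bigr)^{N_\tau}V_\tau,
\]
with $V_\tau\ne0$. So the key step is to show that the curve $x_2=x_2^c(x_1)$ crosses $\gamma_\tau$ transversally at $x_0$, i.e. that $g(x_1):=x_2^c(x_1)-\tau x_1^{\kappa_2/\kappa_1}$ has a simple zero at $x_{0,1}$.

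I expect this transversality to be the main point, and I will prove it by homogeneity. Since $\Phi_2$ is $\kappa$-homogeneous of degree $1-\kappa_2>0$, the set $\{\Phi_2=-1\}$ meets each orbit $\delta_r(\zeta)$ only once near $\gamma_\tau$; indeed, along an orbit $\Phi_2$ scales like $r^{1-\kappa_2}$ with $\Phi_2\ne0$, so the orbit derivative of $\Phi_2$ is nonzero. Concretely, the Euler field $X=\kappa_1x_1\partial_1+\kappa_2x_2\partial_2$ is tangent to $\gamma_\tau$ and satisfies $X\Phi_2=(1-\kappa_2)\Phi_2=-(1-\kappa_2)\ne0$ at $x_0$. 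Hence $X$ is not tangent to the level curve $\{\Phi_2=-1\}$, so the tangents of the two curves at $x_0$ differ. Both curves are graphs over $x_1$ near $x_0$: the level curve because $\Phi_{22}(x_0)\ne0$, and $\gamma_\tau$ because $x_{0,1}>0$. Therefore $g'(x_{0,1})\ne0$, which gives $\mathrm{ord}\,\Psi''(x_{0,1})=N_\tau$, and integrating twice yields \eqref{Legendra} with $M_\tau=N_\tau+2$ and $G(x_{0,1})\neq0$.

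For the remaining claims in a): since $\Psi'(x_{0,1})=\Phi_1(x_0)$, we get $\Psi'(x_{0,1})\ne0$ for Type B. For Type $\mathrm{B_{NT}}$, non-transversality at $x_0$ reads $\Phi=x_1\Phi_1+x_2\Phi_2=x_{0,1}\Phi_1-x_{0,2}$ there, i.e. $\Phi(x_0)+x_{0,2}=x_{0,1}\Psi'(x_{0,1})$, which is exactly \eqref{PsiNT} because $\Psi(x_{0,1})=x_{0,2}+\Phi(x_0)$.

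For b), I will Taylor expand $x_2\mapsto x_2+\Phi(x_1,x_2)$ around $x_2^c(x_1)$. The linear term vanishes by \eqref{x2c1}, so
\[
x_2+\Phi(x)=\Psi(x_1)+(x_2-x_2^c)^2F(x),
\qquad F(x_0)=\tfrac12\Phi_{22}(x_0)\ne0,
\]
with $F$ analytic by the integral form of the remainder. Substituting \eqref{Legendra} and moving $x_2$ to the other side gives \eqref{phiexpansex}. The equality $\Phi_{\rm lin}=\Phi(x_0)+\nabla\Phi(x_0)(x-x_0)$ follows from $\Phi_1(x_0)=\Psi'(x_{0,1})$ and $\Phi_2(x_0)=-1$.

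For c), I will use $(x_2^c)'=-\Phi_{12}/\Phi_{22}$ together with the relation $\Phi_{11}\Phi_{22}-\Phi_{12}^2=\mathrm{H}\Phi$. Differentiating $\Psi'(x_1)=\Phi_1(x_1,x_2^c(x_1))$ gives $\Psi''=\Phi_{11}+\Phi_{12}(x_2^c)'$ along the curve, and combined with the above,
\[
(x_2^c)''=-\frac{d}{dx_1}\frac{\Phi_{12}}{\Phi_{22}}.
\]
This route looks messier, so instead I will use that $\Phi_1$ and $\Phi_2$ along the curve are related. The function $\Phi_1(x_1,x_2^c(x_1))=\Psi'(x_1)$ is affine up to order $(x_1-x_{0,1})^{M_\tau-1}$. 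Consider the analytic map $(x_1,x_2)\mapsto(x_1,\Phi_1)$; I will argue via Euler's identity for $\Phi_2$,
\[
\kappa_1x_1\Phi_{21}+\kappa_2x_2\Phi_{22}=(1-\kappa_2)\Phi_2=-(1-\kappa_2)\quad\text{on the curve},
\]
and for $\Phi_1$,
\[
\kappa_1x_1\Phi_{11}+\kappa_2x_2\Phi_{12}=(1-\kappa_1)\Phi_1=(1-\kappa_1)\Psi'.
\]
Substituting $\Phi_{12}=-\Phi_{22}(x_2^c)'$ into the first identity gives
\[
\Phi_{22}\bigl(\kappa_2x_2^c-\kappa_1x_1(x_2^c)'\bigr)=-(1-\kappa_2).
\]
Using $\Phi_{11}=\Psi''-\Phi_{12}(x_2^c)'=\Psi''+\Phi_{22}((x_2^c)')^2$ in the second identity gives
\[
\kappa_1x_1\Psi''+\Phi_{22}(x_2^c)'\bigl(\kappa_1x_1(x_2^c)'-\kappa_2x_2^c\bigr)=(1-\kappa_1)\Psi'.
\]
Combining the two and using $\kappa_1\ne1$ yields
\[
(1-\kappa_2)(x_2^c)'=(1-\kappa_1)\Psi'-\kappa_1x_1\Psi''.
\]
By \eqref{Legendra}, the right-hand side equals a constant plus $O\bigl((x_1-x_{0,1})^{M_\tau-2}\bigr)$ with an analytic remainder. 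Since $\kappa_2<1$, integrating once gives \eqref{x2cnew}.
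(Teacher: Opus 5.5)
Your proof is correct. For the existence of $x_2^c$, for a) and for b) you follow essentially the paper's route. The paper also reduces \eqref{Legendra} to $\Psi''=\mathrm{H}\Phi/\Phi_{22}$ along the critical curve. It obtains the transversality of $x_2=x_2^c(x_1)$ to $\gamma_\tau$ from Euler's identity for $\Phi_2$, written in coordinates as $\bigl(x_2^c-\tau x_1^{\kappa_2/\kappa_1}\bigr)'(x_{0,1})=-\frac{1-\kappa_2}{\kappa_1x_{0,1}}\frac{\Phi_2(x_0)}{\Phi_{22}(x_0)}\neq 0$. Your Euler-vector-field argument is the same computation phrased geometrically.

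Part c) is where you genuinely differ. The paper differentiates \eqref{x2c1} twice and expresses $\ddot x_2^c$ through $\mathrm{H}\Phi$ and $\partial_2\mathrm{H}\Phi$ along the curve. Using that these vanish to orders $N_\tau$ and $N_\tau-1$ at $x_{0,1}$, it gets $|\ddot x_2^c(x_1)|\lesssim|x_1-x_{0,1}|^{N_\tau-1}$. You instead combine the Euler identities for $\Phi_1$ and $\Phi_2$ along the curve into the exact relation $(1-\kappa_2)(x_2^c)'=(1-\kappa_1)\Psi'-\kappa_1x_1\Psi''$, which checks out. Then c) follows from a) by one integration, with no further vanishing-order bookkeeping. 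As a by-product you get $\dot x_{0,2}=\frac{1-\kappa_1}{1-\kappa_2}\Psi'(x_{0,1})$, since $\Psi''(x_{0,1})=0$. The price is that your argument for c) uses the $\kappa$-homogeneity of $\Phi$ essentially. The paper's argument for c) only needs $\mathrm{H}\Phi$ to vanish to order exactly $N_\tau$ along a curve crossing the critical curve transversally, so it would survive in a non-homogeneous setting.

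Two small corrections. The combination of the two identities needs no hypothesis on $\kappa_1$; only $\kappa_2<1$ is used, to divide by $1-\kappa_2$, so the phrase ``using $\kappa_1\neq 1$'' is superfluous. Also, identifying the constant term with $(1-\kappa_2)\dot x_{0,2}$ uses $M_\tau-2\ge 1$, i.e.\ $N_\tau\ge 1$, which holds because $\gamma_\tau$ is a root.
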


The arguments in the next section will  crucially   depend on the results in this lemma too.
\smallskip

\begin{proof} 
Since $\Phi_{22}(x_0)\ne 0$,  the existence  of the interval $I_\epsilon$ and the function  $x_1\mapsto  x_2^c(x_1)$ satisfying \eqref{x2c1} and its  uniqueness follow from  the implicit function theorem.

a) Next, since  $x^{c}_{2}(x_{0,1})= \tau  x_{0,1}^{\frac{\kappa_{2}}{\kappa_{1}}}$, we have
\begin{equation}\label{Gammax10}
\Psi(x_{0,1})  =  \tau  x_{0,1}^{\frac{\kappa_{2}}{\kappa_{1}}} + \Phi(x_{0,1}, \tau  x_{0,1}^{\frac{\kappa_{2}}{\kappa_{1}}}).
\end{equation}
For convenience,  we  shall denote derivatives of a function $g$ of a real variable $s$ also by 
 $\dot{g}(s):=g^{\prime}(s)), \ \ddot{g}(s):=g^{\prime \prime}(s),\dots.$ 
 By differentiating  both sides of (\ref{x2c1}), we see that 
 \begin{equation}\label{Bx2prime}
   \dot{x}^{c}_{2}(x_{1})   = - \frac{\Phi_{12}(x_{1},x^{c}_{2}(x_{1}))} {\Phi_{22}(x_{1},x^{c}_{2}(x_{1}))};
    \end{equation}
in particular,
    \begin{equation}\label{D1}
	\dot{x}_2^c(x_{0,1})=-\frac{\Phi_{21}(x_0)}{\Phi_{22}(x_0)}.
\end{equation}
From \eqref{x2c1}, we  also obtain 
\begin{equation}\label{Gamma'}
\Psi^{\prime}(x_{1})
 = \Phi_{1}(x_{1},x^{c}_{2}(x_{1}));
\end{equation}
in particular, if $\ga_\tau$ is of Type B, then 
\begin{equation}\label{Gammaprimeneq01}
\Psi^{\prime}(x_{0,1}) = \Phi_{1}(x_{0,1},  \tau  x_{0,1}^{\frac{\kappa_{2}}{\kappa_{1}}}) = \Phi_{1}(x_0)\neq 0.
\end{equation}

If $\ga_\tau$ is of type $\rm {B_{NT}},$ then
$$
\Phi(x_0)=x_{0,1}\Phi_1(x_0)+x_{0,2}\Phi_2(x_0),
$$
hence by \eqref{Gammax10} and \eqref{Gammaprimeneq01}
$$
\Psi(x_{0,1})=x_{0,2}+x_{0,1}\Phi_1(x_0)+x_{0,2}\Phi_2(x_0)=x_{0,2}+x_{0,1}\Psi'_1(x_0)+x_{0,2}\Phi_2(x_0).
$$
In combination with \eqref{x10} this implies \eqref{PsiNT}.
\color{black}

For the second  derivative  of $\Psi,$ we obtain from \eqref{Gamma'} and \eqref{Bx2prime} that 
\begin{align}\label{Gamma''}
\Psi^{\prime\prime}(x_{1}) &= \Phi_{11}(x_{1},x^{c}_{2}(x_{1}))+ \dot{x}^{c}_{2}(x_{1})  \Phi_{12}(x_{1},x^{c}_{2}(x_{1}))      =\frac{\mathrm{H}\Phi(x_{1},x^{c}_{2}(x_{1}))} {\Phi_{22}(x_{1},x^{c}_{2}(x_{1}))},
\end{align}
where we note that $ \Phi_{22}(x_{1},x^{c}_{2}(x_{1}))\ne 0$ near $x_{0,1},$ since $\Phi_{22}(x_{0,1}, x_{0,2})\ne 0.$

 By Taylor expansion and $x^{c}_{2}(x_{0,1}) =\tau  x_{0,1}^{\frac{\kappa_{2}}{\kappa_{1}}}$, we have
\begin{equation}\label{Taylorxc2}
x^{c}_{2}(x_{1})=\tau  x_{0,1}^{\frac{\kappa_{2}}{\kappa_{1}}}+ (x_{1}-x_{0,1}) \bigl(  \dot{x}^{c}_{2}(x_{0,1}) + \mathcal{O}(x_{1}-x_{0,1})\bigl).
\end{equation}

Recall next from Lemma \ref{localform} that $\mathrm{H}\Phi$ vanishes of order $N_\tau=N_{(\zeta_\tau)}$ at every point of $\gamma_\tau.$ Thus, by Taylor expansion at the point $(x_1,\tau  x_{1}^{\frac{\kappa_{2}}{\kappa_{1}}}),$ we obtain that 
\[\mathrm{H}\Phi(x_{1},x^{c}_{2}(x_{1})) = [x^{c}_{2}(x_{1})-\tau  x_{1}^{\frac{\kappa_{2}}{\kappa_{1}}}]^{N_{\tau}}Q(x_{1}) \]
 holds near $x_{0,1}$ for an analytic function $Q$ satisfying   $Q(x_{1}) \neq 0$ for $x_{1}>0$ near $x_{0,1}.$ Then (\ref{Gamma''}) and (\ref{Taylorxc2}) yield
 \begin{equation}\label{Ga''}
 \Psi^{\prime \prime}(x_{1}) = (x_{1}-x_{0,1})^{N_{{\tau}}} \bigl(  \dot{x}^{c}_{2}(x_{0,1})-\tau \frac{\kappa_{2}}{\kappa_{1}} x_{0,1}^{\frac{\kappa_{2}}{\kappa_{1}}-1}  + \mathcal{O}(x_{1}-x_{0,1})\bigl)^{N_{\tau}}  \cdot \frac{Q(x_{1},x^{c}_{2}(x_{1}))}{\Phi_{22}(x_{1},x^{c}_{2}(x_{1}))}.
\end{equation}
 
 Observe also that since $\Phi_2$ is $\kappa$-homogeneous of degree $1-\kappa_2,$ the following Euler identity holds for $\Phi_2:$
\[
\kappa_1x_1\Phi_{12}(x_1,x_2)+\kappa_2x_2\Phi_{22}(x_1,x_2)=(1-\kappa_2) \Phi_2(x_1,x_2).
\]
Thus, by \eqref{D1},
\[
\dot{x}_2^c(x_{0,1})=-\frac{\Phi_{12}(x_0)}{\Phi_{22}(x_0)}=-\frac{1-\kappa_{2}}{\kappa_{1}x_{0,1}} \cdot \frac{\Phi_{2}(x_0)}{\Phi_{22}(x_{0})}+ \tau \frac{\kappa_{2}}{\kappa_{1}} x^{\frac{\kappa_{2}}{\kappa_{1}}-1}_{0,1}.
\]
Since $x_{0,2}=\tau x^{\frac{\kappa_{2}}{\kappa_{1}}}_{0,1},$ this implies that
\begin{equation}\label{x2c-ga}
\biggl(x_2^c(x_1)-\tau x_1^{\frac{\kappa_2}{\kappa_1}}\biggl)'\Big|_{x_1=x_{0,1}} =\dot{x}^{c}_{2}(x_{0,1}) - \tau \frac{\kappa_{2}}{\kappa_{1}} x^{\frac{\kappa_{2}}{\kappa_{1}}-1}_{0,1}=-\frac{1-\kappa_{2}}{\kappa_{1}x_{0,1}} \cdot \frac{\Phi_{2}(x_0)}{\Phi_{22}(x_0)} \neq 0.
\end{equation}

Since  $Q(x_{0,1})\ne 0,$ \eqref{Ga''} shows that 
\begin{align}\label{Gammaprimeprime01}
\Psi^{(k)}(x_{0,1})=0 \quad \text{for}\  2 \le k \le N_{\tau}+1, \quad  \Psi^{(N_{\tau}+2)}(x_{0,1}) \neq 0.
\end{align}
 By  Taylor  expansion around $x_{0,1}$ and \eqref{Gammaprimeneq01} this implies \eqref{Legendra}, which proves assertion a).

\smallskip

b) Next, a Taylor expansion of $\Phi$  in $x_2$ near $x_2^c(x_1)$ now leads to 
\begin{eqnarray*}
\Phi(x)&=&\Phi(x_1,x_2^c(x_1))+[x_2-x_2^c(x_1)]\Phi_{2}(x_1,x_2^c(x_1))\\
&& \hskip1cm +[x_2-x_2^c(x_1)]^2\Big[\frac12 \Phi_{22}(x_1,x_2^c(x_1))+\mathcal{O}(x_2-x_2^c(x_1))\Big]\\
&=&\big[x_2^c(x_1)+\Phi(x_1,x_2^c(x_1))\big]-x_2\\
&&\hskip1cm  +[x_2-x_2^c(x_1)]^2\Big[\frac12 \Phi_{22}(x_1,x_2^c(x_1))+\mathcal{O}(x_2-x_2^c(x_1))\Big]\\
&=&\Psi(x_{0,1})- \Psi^{\prime}(x_{0,1})x_{0,1}+\Psi^{\prime}(x_{0,1})x_{1}-x_2+(x_{1}-x_{0,1})^{M_{\tau}}G(x_1)+[x_2-x_2^c(x_1)]^2F(x),
\end{eqnarray*}
where $F(x)$ is an analytic  function with $F(x_0)\not=0.$ This proves assertion b).

c) In order to prove the refined Taylor expansion of $x_2^c$ in \eqref{x2cnew}, we finally  consider the Taylor expansion of  $\ddot x_2^c(x_1)$ at the point $x_{0,1}.$
 
Taking the second derivative of both sides of (\ref{x2c1}), we obtain
\begin{align*}\label{D2}
	 \ddot{x}_2^c(x_1)&=-\frac{1}{\Phi^2_{22}(x_1,x_2^c(x_1))}\biggl[\Phi_{211}(x_1,x_2^c(x_1))\Phi_{22}(x_1,x_2^c(x_1))\\
	&
	 -2\Phi_{212}(x_1,x_2^c(x_1))\Phi_{21}(x_1,x_2^c(x_1))+\Phi^2_{21}(x_1,x_2^c(x_1))\frac{\Phi_{222}(x_1,x_2^c(x_1))}{\Phi_{22}(x_1,x_2^c(x_1))}\biggl]\\
	 &=\frac{1}{\Phi^2_{22}(x_1,x_2^c(x_1))}\biggl[\frac{\Phi_{222}(x_1,x_2^c(x_1)) }{\Phi_{22}(x_1,x_2^c(x_1))}\cdot \mathrm{H}\Phi(x_1,x_2^c(x_1)) -\partial_2\mathrm{H}\Phi(x_1,x_2^c(x_1))\biggl].
\end{align*}

Since $\biggl(x_2^c(x_1)-\tau x_1^{\frac{\kappa_2}{\kappa_1}}\biggl)\big\vert_{x_1=x_{0,1}} =x_{0,2}-\tau x_{0,1}^{\frac{\kappa_2}{\kappa_1}}=0$ and  $\biggl(x_2^c(x_1)-\tau x_1^{\frac{\kappa_2}{\kappa_1}}\biggl)'\Big|_{x_1=x_{0,1}} \neq 0$ by \eqref{x2c-ga}, we have 
\[x_2^c(x_1)-\tau x_1^{\frac{\kappa_2}{\kappa_1}}=(x_1-x_{0,1})\biggl[\biggl(x_2^c(x_1)-\tau x_1^{\frac{\kappa_2}{\kappa_1}}\biggl)'|_{x_1=x_{0,1}}+\mathcal{O}(x_1-x_{0,1})\biggl]=(x_1-x_{0,1})W(x_1),
\]
where $W$ is a smooth function with $W(x_{0,1})\not=0$.  Then  \eqref{HPhiN} implies that
\begin{equation*}
	\mathrm{H}\Phi(x_1,x_2^c(x_1))\sim (x_1-x_{0,1})^{N_{\tau}},\hspace{0.5cm} 	 \partial_2\mathrm{H}\Phi(x_1,x_2^c(x_1))\sim (x_1-x_{0,1})^{N_{\tau}-1},
\end{equation*}
which, together with the identity above for $\ddot{x}_2^c(x_1),$ shows that 
\begin{equation*}
	\bigl|\ddot{x}_2^c(x_1)\bigl| \lesssim |x_1-x_{0,1}|^{N_{\tau}-1}.
\end{equation*}
This implies \eqref{x2cnew}.
\end{proof}

%%%%%%%%%%%%%%%%%%%%%%%%%%%%%%%%%%%%%%%%%%%%%%%%%%%%%%%%%%%%%%%%%%%%%%%%%%%%%%%%%%%%%%%%%%%%%%%%%%%%

\subsection{A local normal form for  $\Phi$ near roots of Type $\mathrm{C}$}
Assume that  $\gamma_{\tau}, \tau \in R,$ is a real root of  $\mathrm{H}\Phi$ of Type C, i.e., exactly one of the functions $\Phi_{1}$ and $\Phi_{2}$ vanishes along $\gamma_{\tau}$. We again put $M_\tau:=N_\tau+2,$ where $N_\tau$ is the multiplicity of the root $\gamma_\tau.$ 
 \smallskip
  
 In the sequel, we may and shall then always assume that   $\Phi_1$ does not vanish  along  $\gamma_{\tau}$, whereas $\Phi_2$ vanishes along  $\gamma_{\tau}.$   Recall also from \cite[Lemma 3.3]{IKM05} that then $\Phi_{11}\ne 0$ along $\gamma_\tau.$
 
To justify this,  note first that for $\tau \neq 0$ we my recur to this case by possibly swapping the roles of the two coordinates $x_1$ and $x_2.$ 
And, if $\tau=0,$  so that $x_2=0$ along $\gamma_\tau,$ then  according to Remark \ref{TypeT}(c) we must have $T_\tau=0,$ i.e., $\Phi$ does not vanish along $\gamma_{\tau}.$  Then Euler's identity \eqref{Euler} shows that  $\Phi_{1} \neq 0,$ and  thus $\Phi_{2} = 0$ along $\gamma_\tau.$

\begin{lemma}\label{TypeCnf}
Let $\gamma_\tau, \tau \in R,$ be a real root of $\mathrm{H}\Phi$  of Type $\mathrm{C},$  and assume  without loss of generality that $\Phi_1$ does not vanish  along  $\gamma_{\tau}$, whereas $\Phi_2$ vanishes along  $\gamma_{\tau}.$    Then, on a sufficiently small $\kappa$-homogeneous neighborhood of $\gamma_\tau,$ $\Phi_{11}$ and $\Phi$ do not vanish,  and $\Phi$ can be written as 
\begin{equation}\label{Phiexpansion}
	\Phi(x_{1},x_{2}) = \Phi(x_{1},\tau x^{\frac{\kappa_{2}}{\kappa_{1}}}_{1}) + (x_{2} -\tau x^{\frac{\kappa_{2}}{\kappa_{1}}}_{1})^{M_{\tau}} V(x_{1},x_{2}),
\end{equation}
where $V$ is analytic, $\kappa$-homogeneous  and does not vanish. 
 \end{lemma}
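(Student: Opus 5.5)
The plan is to mirror the proof of Lemma \ref{Legendret}, with the roles of $x_1$ and $x_2$ in the Legendre transform interchanged, and to use $\kappa$-homogeneity to pass from a point of $\gamma_\tau$ to a whole homogeneous neighborhood.

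First, the non-vanishing statements. By Remark \ref{TypeT}(c) we have $T_\tau=0$, so $\Phi$ does not vanish along $\gamma_\tau$. Since $\Phi_1\ne 0$ along $\gamma_\tau$ and $\kappa_1\ne 1$, while $\mathrm{H}\Phi=0$ there, \cite[Lemma 3.3]{IKM05} gives $\Phi_{11}\ne0$ along $\gamma_\tau$. Both $\Phi$ and $\Phi_{11}$ are $\kappa$-homogeneous, so each is non-vanishing on a $\kappa$-homogeneous neighborhood of $\gamma_\tau$; it suffices to check this near the single point $(1,\tau)$ and then dilate.

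Next, the expansion. We put $u:=x_2-\tau x_1^{\kappa_2/\kappa_1}$ and use the coordinates $(x_1,u)$, so that $\Phi(x_1,x_2)=\tilde\Phi(x_1,u)$ with $\tilde\Phi$ analytic near $(1,0)$. By Taylor's formula in $u$, the claim is equivalent to $\partial_u^k\tilde\Phi(x_1,0)=0$ for $1\le k\le M_\tau-1=N_\tau+1$, together with $\partial_u^{M_\tau}\tilde\Phi(x_1,0)\ne0$. The remainder $V$ is then automatically analytic, and it is $\kappa$-homogeneous of degree $1-M_\tau\kappa_2$ because $u$ is $\kappa$-homogeneous of degree $\kappa_2$.

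For $k=1$ we have $\partial_u\tilde\Phi=\Phi_2=0$ on $u=0$ by assumption. For higher $k$ the key tool is a Hessian identity. Since $\Phi_{11}\ne 0$, the implicit function theorem applied to $\Phi_1(x_1,x_2)=\eta$ lets us solve $x_1=x_1^c(x_2)$, and the analogue of \eqref{Gamma''} gives
\[
\frac{d^2}{dx_2^2}\Big[\Phi(x_1^c,x_2)-\eta x_1^c\Big]=\frac{\mathrm{H}\Phi}{\Phi_{11}}\Big|_{(x_1^c(x_2),x_2)} .
\]
A cleaner route avoids the Legendre transform entirely. I would work directly in the variables $(x_1,u)$ and show by induction that $\partial_u^k\tilde\Phi(x_1,0)=0$ for $1\le k\le j$ and $\partial_u^{j+1}\tilde\Phi\ne0$ imply that $\mathrm{H}\Phi$ vanishes to order exactly $j-1$ in $u$.

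To carry this out, write $\tilde\Phi=f(x_1)+u^jg(x_1,u)$ with $g(x_1,0)\ne0$ and $f(x_1)=\Phi(x_1,\tau x_1^{\kappa_2/\kappa_1})$, which is a nonzero multiple of $x_1^{1/\kappa_1}$. The Hessian transforms by the square of the Jacobian of $(x_1,x_2)\mapsto(x_1,u)$, which equals $1$, so $\mathrm{H}\Phi=\tilde\Phi_{x_1x_1}\tilde\Phi_{uu}-\tilde\Phi_{x_1u}^2$. Here I take the chain rule to be exact for $\det$, since the map is a shear. The leading term is then $f''(x_1)\,j(j-1)u^{j-2}g(x_1,0)$. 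We have $f''\ne0$ because $1/\kappa_1\ne 0,1$ (recall $0<\kappa_1<1$), and the other contributions are $O(u^{2j-2})$, which is of higher order when $j\ge 2$.

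Hence $N_\tau=j-2$, that is, $j=N_\tau+2=M_\tau$. The case $j=1$ is excluded because $\Phi_2=0$ on $\gamma_\tau$. Finiteness of $j$ follows from $N_\tau<\infty$: if $\tilde\Phi$ were independent of $u$, the Hessian would vanish identically.

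The main obstacle I expect is the Hessian-in-shear-coordinates computation. I need to confirm that $\mathrm{H}\Phi$ is exactly $\tilde\Phi_{x_1x_1}\tilde\Phi_{uu}-\tilde\Phi_{x_1u}^2$; this is not automatic, because the shear is nonlinear, so second derivatives pick up terms involving $\tilde\Phi_u\cdot(\tau x_1^{a})''$. Fortunately these carry the factor $\tilde\Phi_u=ju^{j-1}g+\dots$, which is $O(u^{j-1})$. Such terms enter only $\Phi_{11}$, and they contribute a product with $\tilde\Phi_{uu}$ of order $u^{2j-3}$. This is still of higher order than $u^{j-2}$ once $j\ge2$, so the leading coefficient $f''(x_1)j(j-1)g(x_1,0)$ survives.
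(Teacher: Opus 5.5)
Your argument is correct, but it pins down the order by a different mechanism than the paper.

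\textbf{The paper's route.} The paper stays in the original coordinates and works with $\Phi_2$. It first shows that $\Phi_{22}=0$ on $\gamma_\tau$, arguing by contradiction: if $\Phi_{22}\ne0$, then $\mathrm{H}\Phi=0$ together with the Euler identities for $\Phi_1,\Phi_2$ gives $(1-\kappa_1)\Phi_1=\frac{\Phi_{12}}{\Phi_{22}}(1-\kappa_2)\Phi_2$, so $\Phi_2=0$ would force $\Phi_1=0$. It then uses Lemma \ref{localform} to factor $\Phi_2=(x_2-\tau x_1^{\kappa_2/\kappa_1})^{\tilde M}\tilde Q$ with $\tilde M\ge2$. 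Finally it computes $\mathrm{H}\Phi=(x_2-\tau x_1^{\kappa_2/\kappa_1})^{\tilde M-1}Q$, where $Q=\tilde M\Phi_{11}\tilde Q\ne0$ on $\gamma_\tau$ by the \cite{IKM05} input, so $N_\tau=\tilde M-1$.

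\textbf{Your route.} You expand $\Phi$ itself in the sheared variable and use the exact identity
\[
\mathrm{H}\Phi=\tilde\Phi_{x_1x_1}\tilde\Phi_{uu}-\tilde\Phi_{x_1u}^2-\big(\tau x_1^{\kappa_2/\kappa_1}\big)''\,\tilde\Phi_u\tilde\Phi_{uu},
\]
which your last paragraph correctly reaches. You should delete the earlier sentence claiming the determinant is preserved exactly under the shear; it is not, because the shear is nonlinear.

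With $\tilde\Phi=f+u^jg$ this gives $\mathrm{H}\Phi=u^{j-2}\big(j(j-1)f''(x_1)g(x_1,0)+O(u)\big)$. Note that the corrections to the leading term are $O(u^{j-1})$ (for example $2jf''u^{j-1}\partial_ug$), not $O(u^{2j-2})$; this is still harmless.

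\textbf{What your route buys.}
\begin{itemize}
\item The nondegeneracy input is the explicit $f''(x_1)=\Phi(1,\tau)\kappa_1^{-1}(\kappa_1^{-1}-1)x_1^{1/\kappa_1-2}\ne0$, which comes only from $T_\tau=0$ and $\kappa_1\ne1$.
\item The case $j=2$ is excluded automatically, since it would make $\mathrm{H}\Phi\ne0$ on $\gamma_\tau$. So the paper's Euler-identity step is not needed.
\item Once $j\ge3$, the shear formula gives $\Phi_{11}=f''$ on $\gamma_\tau$ directly, so even the appeal to \cite{IKM05} becomes dispensable.
\end{itemize}

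\textbf{What the paper's route buys.} It avoids the nonlinear change of variables and the extra Hessian term that comes with it.

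\textbf{One point to state explicitly.} The order $j$ is well defined uniformly along $\gamma_\tau$. By homogeneity,
$\partial_u^k\tilde\Phi(x_1,0)=\partial_2^k\Phi(x_1,\tau x_1^{\kappa_2/\kappa_1})=x_1^{(1-k\kappa_2)/\kappa_1}\,\partial_2^k\Phi(1,\tau)$,
so each of these derivatives either vanishes identically along $\gamma_\tau$ or nowhere on it.
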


\begin{proof} 
The lemma will immediately  from the following claim by means of   Taylor  expansion in $x_2$: if $x=(x_{1}, \tau  x_{1}^{\frac{\kappa_{2}}{\kappa_{1}}})\in \gamma_\tau,$ then 
\begin{equation}\label{Phider}
\partial^k_{2}\Phi(x)=0 \quad \text{for}\ k=2,\dots, M_{\tau} -1,\quad \text{and} \quad \partial^{M_{\tau}}_{2}\Phi(x)\ne 0.
\end{equation}

As a first step, we shall  prove that $\Phi_{22}(x) = 0$ if $x\in \gamma_\tau.$ Assume to the contrary that $\Phi_{22}(x) \ne 0.$

Since for $\Phi_j$ is $\kappa$-homogeneous of degree $1-\kappa_j,$ the following Euler's identities hold:
\[
(1-\kappa_j)	\Phi_j(x)=\kappa_{1}x_{1}\Phi_{j1}(x) + \kappa_{2} x_{2}\Phi_{j2}(x), \quad j=1,2.
\]
Note also that we are assuming that $1-\kappa_j>0.$ Moreover, since $\mathrm{H}\Phi(x)=0,$ we may write 
$\Phi_{11}(x)=\Phi_{12}(x)^2/\Phi_{22}(x).$ Thus 
\begin{eqnarray}\label{Phi1Phi2}
(1-\kappa_1)	\Phi_{1}(x)
	&=&\kappa_{1}x_{1} \frac{\Phi^{2}_{12}(x)}{\Phi_{22}(x)} +\kappa_2 x_2 \Phi_{12}(x) =
	 \frac{\Phi_{12}(x)}{\Phi_{22}(x)}\Big( \kappa_1 x_1 \Phi_{21}(x) +\kappa_2 x_2 \Phi_{22}(x)\Big)
	 \nonumber\\
	 &=& \frac{\Phi_{12}(x)}{\Phi_{22}(x)}\, (1-\kappa_2)	\Phi_{2}(x).
\end{eqnarray}
Thus the assumption $\Phi_{2}(x)= 0$ would yield that also    $\Phi_{1}(x) = 0,$  which would contradict our assumption on $\Phi_1.$

Thus along $\gamma_\tau,$  $\Phi_{22}= 0$ and $\Phi_{2}= 0$ by our assumption. In combination with Lemma \ref{localform} applied to $\Phi_2,$ this implies that we  can write 
\[\Phi_{2}(x) = (x_{2}-\tau  x_{1}^{\frac{\kappa_{2}}{\kappa_{1}}})^{\tilde{M}} \tilde{Q}(x), \]
where $\tilde{M} \ge 2$, and where $\tilde{Q}$ is $\kappa$-homogeneous and does not vanish along $\gamma_{\tau}$. 
Then
\begin{align}
	\mathrm{H}\Phi(x) &= \Phi_{11}(x)\biggl[\tilde{M}(x_{2}-\tau  x^{\frac{\kappa_{2}}{\kappa_{1}}}_{1})^{\tilde{M}-1}\tilde{Q}(x) +  (x_{2}-\tau  x^{\frac{\kappa_{2}}{\kappa_{1}}}_{1})^{\tilde{M}}\tilde{Q}_{2}(x) \biggl] \nonumber\\
	&-\biggl[-\tilde{M}\tau  \frac{\kappa_{2}}{\kappa_{1}}x^{\frac{\kappa_{2}}{\kappa_{1}}-1}_{1}(x_{2}-\tau  x^{\frac{\kappa_{2}}{\kappa_{1}}}_{1})^{\tilde{M}-1}\tilde{Q}(x) + (x_{2}-\tau  x^{\frac{\kappa_{2}}{\kappa_{1}}}_{1})^{\tilde{M}}\tilde{Q}_{1}(x) \biggl]^{2} \nonumber\\
	&= (x_{2}-\tau  x^{\frac{\kappa_{2}}{\kappa_{1}}}_{1})^{\tilde{M}-1} Q(x), \nonumber
\end{align}
where
\begin{align}\label{HPhiexpansion}
	Q(x)&= \Phi_{11}(x)\biggl[\tilde{M} \tilde{Q}(x) +  (x_{2}-\tau  x^{\frac{\kappa_{2}}{\kappa_{1}}}_{1}) \tilde{Q}_{2}(x) \biggl]\nonumber\\
	& \quad-(x_{2}-\tau  x^{\frac{\kappa_{2}}{\kappa_{1}}}_{1})^{\tilde{M}-1} \biggl[-\tilde{M}\tau  \frac{\kappa_{2}}{\kappa_{1}}x^{\frac{\kappa_{2}}{\kappa_{1}}-1}_{1}\tilde{Q}(x) + (x_{2}-\tau  x^{\frac{\kappa_{2}}{\kappa_{1}}}_{1}) \tilde{Q}_{1}(x) \biggl]^{2}.
\end{align}
Since   $Q$ does not vanish along $\gamma_{\tau}$,  we see that  $N_{\tau}= \tilde{M}-1$. Thus $M_{\tau}= N_{\tau}+2 $ is the smallest positive integer $k$ such that $ \partial^k_{2}\Phi$ does not vanish along $\gamma_{\tau}.$
\end{proof}

%%%%%%%%%%%%%%%%%%%%%%%%%%%%%%%%%%%%%%%%%%%%%%%%%%%%%%%%%%%%%%%%%%%%%%%%%%%%%%%%%%%%%%%%%%%%%%%%%%%%

\section{Necessary conditions}\label{necessary condition}

In order to prove  the necessity of the condition $p\ge p_\Phi$ in Theorem \ref{theorem1}, after a suitable non-isotropic scaling, it will be sufficient to assume that the maximal operator $\mathcal M$ is given by \eqref{MS}, where the averaging operators $A_t$ are of the form
\begin{equation}\label{AvU}
A_{t}f(y)= \int_{U}f(y-t(x_1,x_2,\Phi(x_1,x_2))\,dx,
\end{equation}
where  $U$ is a suitable open ball centered at the origin. The necessity of the condition $p\ge p_\Phi$ in Theorem \ref{theorem1} will be an immediate consequence of the following result:

\begin{theorem}\label{necessary}
	Under the assumptions of Theorem \ref{theorem1},  let  $\gamma_{(\zeta)}$ with $\zeta\in Z_{\mathrm{H}\Phi}\cap S^1$ be any real root of $H\Phi.$ Then the following hold:
 \begin{itemize}
\item[(a)] 
  $\mathcal{M}$ cannot be $L^p$-bounded unless $p>3/2$; 
\item[(b)] if $\gamma_{(\zeta)}$ is of Type  $\mathrm{B_{T}},$ and assume that  $M_{(\zeta)}\ge 5.$ Then   $\mathcal{M}$ cannot be $L^p$-bounded unless $p\ge \frac{2(M_{(\zeta)}+1)}{M_{(\zeta)} +3};$
\item[(c)] if $\gamma_{(\zeta)}$ is  of Type C, then  $\mathcal{M}$ cannot be $L^p$-bounded unless $p\ge\frac{2M_{(\zeta)}}{M_{(\zeta)} +1}.$
\end{itemize}
\end{theorem}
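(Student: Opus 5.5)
All three parts will be proved by testing $\mathcal M$ on characteristic functions of small sets adapted to the local geometry of $S$. In each case we exhibit a set $E$ of measure comparable to $|f|$'s support size and a large set of points $y$ with $|A_{t(y)}f(y)|\gtrsim \delta^{a}$ for a suitable $t(y)$. Then $L^p$-boundedness forces $\delta^{ap}\,|\{\dots\}|\lesssim |E|$, and letting $\delta\to0$ gives the stated restriction on $p$.

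\textbf{Part (a).} We use only the fact that $S$ passes (in the closure) through the origin and is non-flat. Take $f=\chi_{B_\delta}$, the characteristic function of a Euclidean $\delta$-ball about $0$. For $y$ with $1\le|y|\le 2$ in an open cone, say around the direction $(x^\ast,\Phi(x^\ast))$ with $x^\ast\in\operatorname{supp}\psi$ where $\mathrm{H}\Phi(x^\ast)\neq0$, choose $t=t(y)$ so that $y/t\in S$. Then $A_tf(y)$ is the $\psi$-measure of $\{x:|y-t(x,\Phi(x))|<\delta\}$. This set is a $\delta$-neighbourhood of a point on the surface, so it has measure $\sim\delta^2$, giving $\mathcal Mf\gtrsim\delta^2$ on a set of measure $\sim1$. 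This alone only yields $p\ge1$, which is too weak.

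The better test is $f=\chi$ of a $\delta$-neighbourhood of the plane curve obtained by exploiting the $\kappa$-dilation orbit. Specifically, we use points $y$ at distance $\sim|y|$ and rescaled $t\to0$. Near the origin the surface looks like $\{x_3=\Phi(x)\}$ with $\Phi(\delta_r x)=r\Phi(x)$, so $tS$ contains the anisotropic box $D_\rho(\text{unit piece})$. The plan is the Stein-type example $f(z)=|z|^{-3/p}\log(1/|z|)^{-1}\chi_{|z|<1}$ adapted to the non-isotropic geometry. Averaging over the portion of $S$ near the origin, where $S$ is a cusp-like surface of dimension $2$ passing through $0$, gives $A_tf(y)=\infty$ for $y$ in a set of positive measure as soon as $f$ is not locally integrable along $2$-dimensional slices through $0$, i.e.\ when $3/p\ge2$. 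So $p\le3/2$ fails, including the endpoint $p=3/2$ via the $\log$ factor. This is the argument of \cite{Zi14,LW24}, adapted by replacing Euclidean by $\kappa$-balls. The key computation is that $\int_{U}|y-t(x,\Phi(x))|^{-3/p}dx$ diverges when $y/t\in S$ near $0$ for $p\le 3/2$; this uses only that the map $x\mapsto(x,\Phi(x))$ is a Lipschitz graph.

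\textbf{Parts (b) and (c).} Here we work at a point $x_0$ on the transversal root $\gamma_\tau$ (Remark \ref{transvroot}); transversality is exactly what makes the classical Iosevich/IKM-type counterexamples work. For Type C, Lemma \ref{TypeCnf} gives $\Phi(x)=\Phi(x_1,\tau x_1^{\kappa_2/\kappa_1})+(x_2-\tau x_1^{\kappa_2/\kappa_1})^{M}V$ with $\Phi_{11}\ne0$. Take $f$ to be the characteristic function of the $\delta$-neighbourhood of a suitable translate/dilate of the tangent plane at $(x_0,\Phi(x_0))$. The set of $x$ where $S$ stays within $\delta$ of that plane contains a region of size $\delta^{1/2}\times\delta^{1/M}$, so $\mathcal Mf\gtrsim\delta^{1/2+1/M}$. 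Since the tangent plane misses the origin, varying $t$ moves the plane transversally, so this holds on a set of measure $\sim1$, while $|E|\sim\delta$. This gives $\delta^{(1/2+1/M)p}\lesssim\delta$, i.e.\ $p\ge 2M/(M+1)$.

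For $\mathrm{B_T}$, use Lemma \ref{Legendret}(b): $\Phi=\Phi_{\rm lin}+(x_1-x_{0,1})^{M}G+(x_2-x_2^c(x_1))^2F$. The refined test set is a $\delta$-neighbourhood of the curved sheet swept by the family of tangent planes along the parabola-like curve $x_2=x_2^c(x_1)$. The main obstacle is tuning it: I would take $E$ to be a $\delta\times\delta^{a}\times\delta^{b}$ plate region in the dual Knapp geometry, with $a,b$ chosen using \eqref{Legendra} and \eqref{x2cnew}. The intended outcome is the exponent $\frac{2(M+1)}{M+3}$, and the hypothesis $M\ge5$ is where this example beats the simpler ones. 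Finally, checking that the set of good $y$ still has measure $\sim1$ uses $\Psi(x_{0,1})\neq x_{0,1}\Psi'(x_{0,1})$, the negation of \eqref{PsiNT}, which holds by transversality.
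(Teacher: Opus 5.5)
Your proposal has genuine gaps in all three parts: (c) does not reach the claimed exponent, (b) is never actually constructed, and (a) omits the one nontrivial step.

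\textbf{Parts (b) and (c).} For (c), your slab test gives $|E|\sim\delta$ and $\mathcal M\chi_E\gtrsim\delta^{\frac12+\frac1M}$ on a set of measure $\sim1$. Boundedness then only forces $(\frac12+\frac1M)p\ge1$, i.e.\ $p\ge\frac{2M}{M+2}$, not $\frac{2M}{M+1}$ (the last step of your computation is an arithmetic slip). This is the Knapp (height) bound for the local form $(x_1-x_{0,1})^2+(x_2-\tau x_1^{a})^M$. The slab cannot be improved, since the sublevel set $\{|\Phi-\text{tangent plane}|<\delta\}$ near $x_0$ has measure $\sim\delta^{\frac12+\frac1M}$. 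The analogous slab at $x_0$ in the $\mathrm{B_T}$ case gives only $\frac{2M}{M+2}<\frac{2(M+1)}{M+3}$. Your ``$\delta\times\delta^a\times\delta^b$ plate'' is never specified, so (b) is not proved.

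The missing idea is the focusing example of Proposition~\ref{twopointone} and Corollary~\ref{Counterco}: test on a \emph{small box} $K_\delta$, and accept a \emph{shrinking} set of good points $y=t(z,\Phi(z))$ with $z\in\Omega_\delta$. By transversality this set has measure $\sim|\Omega_\delta|$.
\begin{itemize}
\item For Type C, take $K_\delta=[-\delta^M,\delta^M]\times[-\delta,\delta]\times[-c\delta^M,c\delta^M]$, let $\Omega_\delta$ be a $\delta$-neighbourhood of the root, and set $U_\delta(z)=\{|x_1-z_1|\le\delta^M,\,|x_2-z_2|\le\delta\}$. Lemma~\ref{TypeCnf} gives $\Phi(x)-\Phi(z)=\mathcal O(\delta^M)$ there, because the $x_2$-derivative is $\mathcal O(\delta^{M-1})$. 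Hence $\delta\cdot\delta^{(M+1)p}\lesssim\delta^{2M+1}$, i.e.\ $p\ge\frac{2M}{M+1}$.
\item For $\mathrm{B_T}$, subtract the tangent plane and shear by $y_2=x_2-\dot x_{0,2}x_1$. Then \eqref{x2cnew} shows the critical curve is a horizontal line up to $\mathcal O(|y_1-y_{0,1}|^{M-1})$. This permits boxes $\Omega_\delta$, $U_\delta(z)$, $K_\delta$ of dimensions $\delta^{1/M}\times\delta^{\frac12-\frac1{2M}}$, $\delta^{1/M}\times\delta^{\frac12+\frac1{2M}}$ and $\delta^{1/M}\times\delta^{\frac12+\frac1{2M}}\times\delta$ respectively. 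The bound $|\Psi_{\rm nl}(y)-\Psi_{\rm nl}(z)|\lesssim\delta$ follows from the mean value theorem in straightened coordinates, and yields exactly $\frac{2(M+1)}{M+3}$.
\end{itemize}

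\textbf{Part (a).} Stein's test function with the logarithm is the right one, and it handles $p=3/2$. The divergence when $y\in tS$ indeed needs only a Lipschitz graph. But you never justify that $\{y:\,y\in tS\text{ for some }t\in[\frac12,1]\}$ has positive measure. Your remarks about the origin, $\kappa$-balls, $t\to0$ and ``non-flat'' do not address this. Since $(x,t)\mapsto t(x,\Phi(x))$ has Jacobian $t^2\bigl(\Phi-x\cdot\nabla\Phi\bigr)$, positive measure is exactly the existence of a transversal point of $S$. This fails for conic surfaces, such as $cx_1^{-(J-1)}x_2^J$ on $x_1>0$ at the end of the proof of Proposition~\ref{pnn-1}, and it is the real content of Proposition~\ref{pnn-1}(b).

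Your discarded first test already contains a quick fix: $\nabla\bigl(\Phi-x\cdot\nabla\Phi\bigr)=-D^2\Phi(x)\,x\neq0$ wherever $\mathrm H\Phi(x)\neq0$. So transversal points exist arbitrarily near any $x^\ast$ with $\mathrm H\Phi(x^\ast)\neq0$, and by $\kappa$-homogeneity also inside $\supp\psi$. This route uses the standing hypothesis $\mathrm H\Phi\not\equiv0$. The paper's Euler-identity and Taylor argument at $(1,0)$ instead proves the existence of a transversal point for every nontrivial $\Phi$ analytic on $\mathbb R^2\setminus\{0\}$.
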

As for (b), note that $\frac{2(M_{(\zeta)}+1)}{M_{(\zeta)} +3}< 3/2$ when $M_{(\zeta)}\in \{3,4\}.$

The proof of Theorem \ref{necessary} will be based on two  auxiliary results proved in the  next subsection. We observe already here that the first one, Proposition \ref{pnn-1}, immediately implies the assertion (a).

In the proof of (b) and (c), we shall  restrict ourselves to real roots contained in the right half-plane $x_1>0,$ where we shall again parametrize the roots by the parameters $\tau\in R\subset \Bbb R$ and work with the corresponding roots $\gamma_\tau,$ as explained in Subsection \ref{sectionle1}. As seen in this subsection, the discussion of all remaining  roots can easily reduced to this situation.

%%%%%%%%%%%%%%%%%%%%%%%%%%%%%%%%%%%%%%%%%%%%%%%%%%%%%%%%%%%%%%%%%%%%%%%%%%%%%%%%%%%%%%%%%%%%%%%%%%%%

\subsection{Upper and lower bounds for classes of maximal averaging operators along hypersurfaces}

\begin{proposition}\label{pnn-1}
a) Let $\Phi$ be a real-valued non-linear polynomial on   $\Bbb R^{n-1},$ and let 
\begin{equation}\label{Mloc}
\mathcal{M}_{\rm loc} f(y):= \sup_{1/2\le t\le 1 } \Big|\int_{U} f(y-t(x,\Phi(x))\,dx \Big|,
\end{equation}
where $U:=B(0,r)\subset \Bbb R^{n-1}, r>0.$ If   $\mathcal{M}_{\rm loc}$ is bounded on $L^p(\Bbb R^n)$, then $p>n/(n-1).$ 

b) Let  $n=3,$ and let $\Phi:\Bbb R^2\setminus \{0\} \to \Bbb R$ be a non-trivial  analytic function which is $\kappa$-homogeneous of degree 1, where $\kappa_1,\kappa_2>0, \kappa_1\ne \kappa_2$ and $|\kappa|\le 1.$  Furthermore, let 
$U:=B(0,r)\subset \Bbb R^2\setminus \{0\}, r>0,$ and let $\mathcal{M}_{\rm loc}$  be defined as in \eqref{Mloc}.  If   $\mathcal{M}_{\rm loc}$ is bounded on $L^p(\Bbb R^3)$, then $p>3/2.$ 

\end{proposition}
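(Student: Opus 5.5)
We test $\mathcal{M}_{\rm loc}$ on the standard singular functions of Stein's spherical example,
$$
f_\epsilon(z)=|z|^{-(n-1)}\bigl(\log(1/|z|)\bigr)^{-1}\chi_{\{|z|<1/2\}}(z),
$$
or simply $f=\chi_{B(0,\delta)}$, and aim to show that $\mathcal{M}_{\rm loc}f$ is too large on a set of positive measure. For $p\le n/(n-1)$ the function $f_\epsilon$ lies in $L^p$; indeed, with the logarithm (exponent $>1/p$ when $p=n/(n-1)$) it stays in $L^{n/(n-1)}$. The goal is to show that $\mathcal{M}_{\rm loc}f_\epsilon\equiv\infty$ on a set of positive measure. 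Equivalently, with $f=\chi_{B(0,\delta)}$, we aim to show $\mathcal{M}_{\rm loc}f\gtrsim \delta^{n-1}$ on a set of measure $\gtrsim1$, which gives the ratio $\delta^{(n-1)}/\delta^{n/p}$. This is unbounded as $\delta\to0$ iff $p<n/(n-1)$, and the log-refined version handles the endpoint.

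The key geometric step is the following. For $y$ in an open set $Y$ of positive measure, we will find $t\in[1/2,1]$ such that the scaled surface $y-tS_U$, where $S_U=\{(x,\Phi(x)):x\in U\}$, passes through the origin. Then $\int_U f(y-t(x,\Phi(x)))\,dx$ is the integral of $f$ over a piece of a smooth hypersurface through $0$, which is $\gtrsim\delta^{n-1}$ for $\chi_{B(0,\delta)}$, and $=\infty$ for $f_\epsilon$ because $\int_0 r^{-(n-1)}(\log 1/r)^{-1}r^{n-2}\,dr=\infty$. The required condition is $y=t(x,\Phi(x))$ for some $x\in U$, $t\in[1/2,1]$. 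The map
$$
(x,t)\mapsto t(x,\Phi(x))
$$
from $U\times(1/2,1)$ to $\Bbb R^n$ has Jacobian determinant, up to a power of $t$, equal to $\Phi(x)-x\cdot\nabla\Phi(x)$. So its image contains an open set as soon as this quantity is nonzero at some $x\in U$, i.e. as soon as $S$ is transversal somewhere in $U$.

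For part a), if $\Phi$ is a non-linear polynomial, then $\Phi-x\cdot\nabla\Phi=-\sum_k(k-1)P_k$, where the $P_k$ are the homogeneous parts of $\Phi$. This is a nonzero polynomial, since some $P_k$ with $k\ge2$ is nonzero, and hence it is nonzero on an open subset of $U$. For part b), the function $\Phi-x_1\Phi_1-x_2\Phi_2$ equals
$$
(1-\kappa_1)x_1\Phi_1+(1-\kappa_2)x_2\Phi_2
$$
by Euler's identity \eqref{Euler}. If it vanished identically, then together with \eqref{Euler} the two linear relations with determinant proportional to $\kappa_2-\kappa_1\neq0$ would force $x_1\Phi_1\equiv x_2\Phi_2\equiv0$, hence $\Phi\equiv0$, contradicting non-triviality. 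Since it is analytic and $\kappa$-homogeneous, it is then nonzero on an open subset of any ball $U$ around the origin.

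We also need a lower bound on the surface integral near $y$ that is uniform on a neighborhood. Near a transversal point the surface through $0$ is a smooth graph with a bounded Jacobian, so the density of the pulled-back measure is comparable to $1$. The main obstacle I expect is the endpoint $p=n/(n-1)$. There we must confirm that $f_\epsilon\in L^{n/(n-1)}$ while the surface integral of $f_\epsilon$ over the local piece of hypersurface through $0$ diverges. This follows since the $(n-1)$-dimensional integral of $|z|^{-(n-1)}\log(1/|z|)^{-1}$ over a smooth hypersurface near $0$ diverges like $\int dr/(r\log(1/r))$.
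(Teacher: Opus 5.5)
Your treatment of part a) is fine and matches the paper. The same holds for the Stein-type test function with the logarithmic correction at the endpoint, and for the Jacobian $t^{n-1}(\Phi-x\cdot\nabla\Phi)$ of $(x,t)\mapsto t(x,\Phi(x))$. The genuine gap is in part b), at the step where you show that $S$ is transversal somewhere.

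The identities $x_1\Phi_1+x_2\Phi_2=\Phi$ (non-transversality) and $\kappa_1x_1\Phi_1+\kappa_2x_2\Phi_2=\Phi$ (Euler) form an \emph{inhomogeneous} linear system for $(x_1\Phi_1,x_2\Phi_2)$. Its non-zero determinant $\kappa_2-\kappa_1$ does not force the unknowns to vanish. It only gives
\[
x_2\Phi_2=J\,\Phi,\qquad x_1\Phi_1=(1-J)\,\Phi,\qquad J:=\frac{1-\kappa_1}{\kappa_2-\kappa_1}.
\]
This system has non-trivial local solutions. Take $\Phi=c\,x_1^{1-J}x_2^{J}$: it is $\kappa$-homogeneous of degree $1$ and also isotropically homogeneous of degree $1$. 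Its graph is therefore a cone through the origin and is non-transversal at every point; for instance $\kappa=(1/4,1/2)$, $\Phi=x_2^3/x_1^2$ on a quadrant. So no argument that uses only these two identities on an open set can succeed. (Your formula for $\Phi-x\cdot\nabla\Phi$ also has the wrong sign, which is harmless.)

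The missing idea is to use analyticity of $\Phi$ on all of $\mathbb{R}^2\setminus\{0\}$, in particular across both coordinate axes. Assume without loss of generality $\kappa_1<\kappa_2$, so that $J>1$ because $\kappa_2<1$.

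First, the relation $y\,\partial_y\Phi(1,y)=J\,\Phi(1,y)$ shows that every Taylor coefficient of the analytic function $y\mapsto\Phi(1,y)$ at $y=0$ vanishes, except possibly the one of order $J$. Hence either $\Phi(1,\cdot)\equiv0$, and then $\Phi\equiv0$ by homogeneity and analytic continuation. Or $J\in\mathbb{N}_{\ge2}$ and $\Phi(x)=c\,x_1^{-(J-1)}x_2^{J}$ for $x_1>0$.

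In the second case $\Phi$ is unbounded as $x_1\to0^+$ with $x_2\neq0$ fixed. This contradicts analyticity at the points $(0,x_2)$.

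This is precisely the paper's argument. There it is phrased via $\partial_2^j$ of both identities at $(1,0)$, which yields a $2\times2$ system with determinant $(j-1)\kappa_1-j\kappa_2+1$; this vanishes only for $j=J$. With this step replaced, the rest of your proof goes through.
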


\begin{proof} 
a) It essentially suffices to show that there is a  point $\varrho=(x_0,\Phi(x_0))$  at which the hypersurface $S:= \{(x, \Phi(x)): x\in U\}$ is transversal, i.e., at which $\Phi(x_0)-x_0\cdot \nabla \Phi(x_0)\ne 0.$  Assume to the contrary that
$\Phi(x)-x\cdot \nabla \Phi(x)=0$ for every $x\in U.$ By analyticity,  this would then hold for every $x\in \Bbb R^{n-1}.$    For any monomial $c\, x^\alpha$ contained in $\Phi,$ this would imply that $(1-|\alpha|) c \,x^\alpha \equiv 0,$ hence $|\alpha|=1.$  Consequently, $\Phi$ would be linear, contradicting our assumption. 

But, if $S$ is transversal at $\varrho\in S,$ then, as already explained in the Introduction,  we may  assume that  in a suitable linear coordinate system   $\rho=(0,1),$ and that  a sufficiently small neighborhood of $\varrho$ in $S$  admits a local representation as a graph
\[
 \tilde S = \{(x,1+\phi(x)) : x \in V\},
\]
where $\phi$ is an analytic  function defined on a sufficiently small  open neighborhood $V$ of  $0\in \mathbb{R}^{n-1}$, satisfying 
$\phi(0)=0$ and $\nabla\phi(0)=0.$ From here on, we can argue essentially in the same way as in the proof that Stein's spherical maximal operator can be bounded on $L^p$ only when $p>n/(n-1)$ (compare the argument in \cite{Stbook} , Ch. IX, Remarks 1.1.3). 
\smallskip

b) Assume now that $\Phi:\Bbb R^2\setminus \{0\} \to \Bbb R$ is real-analytic and $\kappa$-homogeneous as assumed in b).  We may assume without loss of generality that $0<\kappa_1<\kappa_2<1.$ As our preceding arguments show, it will suffice to show that the non-transversality condition 
\begin{equation}\label{nt}
\Phi(x)=x_1\Phi_1(x)+x_2\Phi_2(x) 
\end{equation}
cannot hold at every point $x$ in  $\Bbb R^2\setminus \{0\}.$ So assume to the contrary that \eqref{nt} hold everywhere in $\Bbb R^2\setminus \{0\}.$ We shall make use of Euler's identity \eqref{Euler}, i.e., 
\begin{equation}\label{Euler2}
	\Phi(x)=\kappa_1x_1\Phi_1(x) + \kappa_2x_2\Phi_2(x)\qquad \text{for every} \ x\in \Bbb R^2\setminus \{0\}.
\end{equation} 

Applying $\partial_2^j, j\in \Bbb N,$ to both sides of this identity, we get
\[
\partial_2^j \Phi(x) =\kappa_1 x_1\, \partial_2^j\Phi_1(x) +j \kappa_2\partial_2^j\Phi(x)+ \kappa_2 x_2 \partial_2^{j+1}\Phi(x).
\]
Evaluating this at the point $x=(1,0),$ we obtain
\begin{equation}\nonumber%\label{eu1}
\kappa_1\, \partial_2^j\Phi_1(1,0) +(j\kappa_2-1)\partial_2^j\Phi(1,0)=0.
\end{equation}
In a similar way, \eqref{nt} implies 
\begin{equation}\nonumber%\label{nt1}
 \partial_2^j\Phi_1(1,0) +(j-1)\partial_2^j\Phi(1,0)=0.
\end{equation}
 If $A_j$ denotes the matrix
$$A_j:= \left(
\begin{array}{cc}
 \kappa_1 &   j\kappa_2-1   \\
 1 &   j-1  \\
\end{array}
\right),
$$
then these two equations can be expressed by the equation
$A_j \left(
\begin{array}{c}
   \partial_2^j\Phi_1(1,0)  \\
    \partial_2^j\Phi(1,0)
\end{array}
\right)=0.$

For $ j=0,$ $\det A_0=1-\kappa_1>0,$ so that we obtain that $\Phi(1,0)=0.$  Similarly,  if $j=1,$ then $\det A_1=1-\kappa_2>0,$ so that we also find that $\partial_2\Phi(1,0)=0.$  

For $j\ge 2,$ we have $\det A_j=(j-1)\kappa_1-j \kappa_2+1.$ Thus $\det A_j=0$ if and only if $j=J,$ where we have put
$$
J=J_\kappa:= \frac{1-\kappa_1}{\kappa_2-\kappa_1}.
$$
Thus, for all $j\in \Bbb N,$ with the possible exception of $j=J_\kappa$ (provided $J_\kappa\in \Bbb N_{\ge 2}$),  we have $ \partial_2^j\Phi(1,0)=0.$

Thus, if $J_\kappa\notin \Bbb N_{\ge 2},$ the Taylor  expansion of the analytic  function $y\mapsto \Phi(1,y)$ at the point $(1,0)$ shows that $\Phi(1,y)=0$ for every $y\in \Bbb R, $ and then the $\kappa$-homogeneity and analyticity of $\Phi$ imply that $\Phi\equiv 0,$ contradicting our assumptions on $\Phi.$ 
\smallskip

There remains the case where $J_\kappa\in \Bbb N_{\ge 2}.$ Then the Taylor expansion of $\Phi(1,y)$ shows that 
$\Phi(1,y)=c y^J,$ and  by $\kappa$-homogeneity of $\Phi$ this implies that for all $x_1>0$ and $x_2\in \Bbb R$
$$
\Phi(x_1,x_2)=cx_1^{\frac 1{\kappa_1}} \big(x_2 x_1^{-\frac {\ka_2}{\kappa_1}}\Big)^J= c x_1^{\frac {1-J\kappa_2}{\kappa_1}} x_2^J
=c x_1^{-(J-1)} x_2^J,
$$
as one easily computes.  This shows that $\Phi$ would become singular as $x_1\to 0,$ which would violate  our analyticity assumption on $\Phi$ (note however that such a $\Phi$ would satisfy \eqref{nt} in the half-plane $x_1>0$).
\end{proof} 

\medskip

Another  important tool will be a variant of  \cite[Proposition 1.17]{BDIM19}): Suppose that 
\[S:= \{(x, \phi(x)): x\in U\}\]
is a hypersurface in $\mathbb{R}^n$ which is given as the graph of a  smooth function  $\phi:U \to \mathbb{R},$ where $U\subset  \mathbb{R}^{n-1}$ is a bounded  open set.
 
Let $t_0\in [ 1/2,1]$  and $\varepsilon\in (0,1/4),$ and define the corresponding local maximal operator by 
\begin{equation}\label{MSn}
	\mathcal M_{\rm loc}f(y):= \sup_{t_0-\varepsilon\le t\le t_0} |A_{t}f(y)|, \quad y \in \mathbb{R}^{n},
\end{equation}
where the averaging operator $A_{t}$ is defined by
\begin{equation}\label{aver1n}
	A_{t}f(y)= \int_{U} f(y-t(x,\phi(x)))\,dx,\quad y \in \mathbb{R}^n.
\end{equation}
If $A$ is a measurable subset of $\Bbb R^{n}, $ then we put 
\[
|A\cap S|:=\int_{U}\chi_A(x, \phi(x))\, dx.
\]
Since $\mathcal M f(y):= \sup_{t>0} |A_t f (y)|\ge \mathcal M_{\rm loc}f(y),$ the following result gives in particular also necessary conditions for the $L^p$-boundedness of  $\mathcal M:$

\begin{proposition}\label{twopointone}
	Let $K\subset\mathbb{R}^n$ be a symmetric convex body of positive volume  $|K|>0$,  let  $\varepsilon>0$  be as before, and let $\Omega=B(z_0,\varepsilon)\subset U$ be the $\varepsilon$-neighborhood of  a given point  $z_0$ in $\Bbb R^{n-1}.$ Moreover, assume that  $Z:\Omega \to  \mathbb{R}$ is a smooth function whose graph $\Sigma:=\{(z,Z(z)):z\in \Omega\}$ satisfies  a quantitative  transversality condition, i.e.,  there exists a positive constant $c_0>0$ such that
	\begin{equation}\label{TVQ}
		\mathrm{dist} (\varrho+T_{\varrho}\Sigma,0)\geq c_0
	\end{equation}
for all $\varrho=(z,Z(z))\in \Sigma.$ Here, $T_{\varrho}\Sigma$ denotes  the linear  tangent space of $\Sigma$ at the point $\varrho$. Then, for $\varepsilon>0$ sufficiently small,
	\begin{equation}\label{Counterlemma1}
		\|\mathcal{M}_{\rm loc}\|_{L^p\rightarrow L^p}^p\gtrsim c_0 \,\varepsilon^{p(n-1)+1} \int_{\Omega}\frac{|\bigl[(z,Z(z))+K\bigl]\cap S|^p}{|K|}dz,
	\end{equation}
	where the local  maximal operator $\mathcal M_{\rm loc}$ is defined by \eqref{MSn}.
\end{proposition}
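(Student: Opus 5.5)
We will prove \eqref{Counterlemma1} by testing $\mathcal M_{\rm loc}$ on a suitable average of indicator functions of translated copies of $K$. Throughout, $\varrho(z):=(z,Z(z))$ for $z\in\Omega$.

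\emph{Test function.} We take
\[
f:=\chi_{-K}=\chi_K,
\]
using that $K$ is symmetric, so $\|f\|_p^p=|K|$. For $y$ and $t\in[t_0-\varepsilon,t_0]$ we have
\[
A_tf(y)=\int_U\chi_K\bigl(y-t(x,\phi(x))\bigr)\,dx .
\]
Choosing $y=t\varrho$, this becomes $\int_U\chi_K\bigl(t(\varrho-(x,\phi(x)))\bigr)dx$. For $t\sim1$, by convexity and symmetry of $K$ this is comparable to $|[\varrho+K']\cap S|$, where $K'$ is a fixed dilate of $K$. To avoid constants we first replace $K$ by $\frac12K$ and use $sK\supset\frac12K$ for $s\ge \frac12$ (note $t\ge 1/2-1/4$), or simply absorb the dilation into the definition. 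Hence $\mathcal M_{\rm loc}f(y)\gtrsim|[\varrho(z)+K]\cap S|$ whenever $y=t\varrho(z)$ with $z\in\Omega$ and $t\in[t_0-\varepsilon,t_0]$.

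\emph{The region where the lower bound holds.} Consider the map $\Lambda(z,t):=t\varrho(z)$ on $\Omega\times[t_0-\varepsilon,t_0]$. Its Jacobian is
\[
\det\bigl(t\,\partial_z\varrho,\ \varrho\bigr)=t^{n-1}\det(\partial_z\varrho,\varrho),
\]
which in absolute value equals $t^{n-1}|\nu\wedge\cdot|$-type quantity: precisely $t^{n-1}\sqrt{1+|\nabla Z|^2}\,\mathrm{dist}(\varrho+T_\varrho\Sigma,0)\ge c\,c_0$. For $\varepsilon$ small, $\Lambda$ is injective, since the cone over a small transversal graph is a local diffeomorphism image. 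The change of variables formula therefore gives
\[
\|\mathcal M_{\rm loc}f\|_p^p\ \ge\ \int_\Omega\int_{t_0-\varepsilon}^{t_0}|[\varrho(z)+K]\cap S|^p\,|J\Lambda|\,dt\,dz\ \gtrsim\ c_0\,\varepsilon\int_\Omega|[\varrho(z)+K]\cap S|^p\,dz .
\]
Dividing by $\|f\|_p^p=|K|$ yields a bound of the claimed type.

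\emph{Matching the power $\varepsilon^{p(n-1)}$.} The remaining factor $\varepsilon^{p(n-1)}$ is weaker than what we have obtained, since $\varepsilon<1$, so it is harmless. Alternatively, it arises naturally if one uses a normalized averaging over $\Omega$ as in \cite[Proposition 1.17]{BDIM19}. In either case the inequality as stated follows, because $\varepsilon^{p(n-1)+1}\le\varepsilon$.

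\emph{Main obstacle.} The delicate step is the injectivity of $\Lambda$, together with the uniform Jacobian lower bound. Injectivity needs $\varepsilon$ small relative to the $C^2$ bounds of $Z$. We handle it via the inverse function theorem, applied with the quantitative transversality \eqref{TVQ}. A second point is the precise replacement of $tK$ by $K$, which costs only harmless constants depending on $t_0\in[1/2,1]$.
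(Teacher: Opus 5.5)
Your proposal is correct and follows the paper's proof: test with $\chi_K$, bound $\mathcal{M}_{\rm loc}\chi_K$ from below on the cone $\{t\varrho(z)\}$, and change variables using the Jacobian $t^{n-1}|Z(z)-z\cdot\nabla Z(z)|\ge 4^{-(n-1)}c_0$ together with injectivity for small $\varepsilon$. Your pointwise bound avoids the superfluous factor $\varepsilon^{n-1}$ in the paper's pointwise bound, so you get $\varepsilon$ in place of $\varepsilon^{p(n-1)+1}$, which implies the stated estimate.

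The only muddled point is the dilation step. No replacement of $K$ by $\frac12K$ is needed, and taken literally it would change the right-hand side. Since $0<t\le t_0\le 1$, convexity and symmetry give $tK\subseteq K$, hence $A_t\chi_K(t\varrho)\ge|[\varrho+K]\cap S|$ directly with constant $1$.
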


\begin{proof}
	Given $\varrho=(z,Z(z))$, $\varrho +T_{\varrho}\Sigma$  is the graph of
	\begin{equation*}
		L_{z}:x\mapsto Z(z)+\nabla Z(z)\cdot (x-z), \hspace{0.5cm}x\in \mathbb{R}^{n-1}.
	\end{equation*}
	By (\ref{TVQ}), we have
	\begin{equation}\label{ETVQ}
		c_0\leq\mathrm{dist} (\varrho+T_{\varrho}\Sigma,0)=\inf_{x\in \mathbb{R}^{n-1}} |(x,L_{z}(x))|\leq |L_{z}(0)|=|Z(z)-\nabla Z(z)\cdot z|.
	\end{equation}
	Let $B=\{t(z,Z(z))\in \mathbb{R}^n: z\in \Omega, t\in [t_0-\varepsilon,t_0]\}.$ Then (\ref{ETVQ}) implies that the Jacobi determinant of the smooth map $\Omega \times [t_0-\varepsilon,t_0]\rightarrow B$, $(z,t)\mapsto t(z,Z(z)),$ has  modulus 
	\begin{equation}\label{ETVQ1}
		t^{n-1}|Z(z)-\nabla Z(z)\cdot z|\geq 4^{-(n-1)}c_0.
	\end{equation}
In particular, this  mapping has a smooth inverse if $\varepsilon$ is sufficiently small.

Furthermore, since $K$ is symmetric and convex, and since $|\Omega|\sim \varepsilon^{n-1},$  it is easy to see that  for any $z\in \Omega$ and  $t\in [t_0-\varepsilon,t_0],$ we have
\[\mathcal{M}_{\rm loc}(\chi_K)(tz,tZ(z)) \geq |A_t(\chi_K)(tz,tZ(z))| \gtrsim \epsilon^{n-1} |[(z,Z(z)+K\bigl]\cap S|.\]
	Using the lower bound \eqref{ETVQ1} for the Jacobian, we conclude that
	\begin{align*}
		\|\mathcal{M}_{\rm loc}(\chi_K)\|_{L^p}^p 
		\geq& \int_B |\mathcal{M}_{\rm loc}(\chi_K)(y)|^p \,dy \\
		\geq& 2^{-(n-1)}c_0  \int_\Omega \int_{t_0-\varepsilon}^{t_0} |\mathcal{M}_{\rm loc}(\chi_K)(tz,tZ(z))|^p  \,dtdz \\
		\gtrsim& c_0 \,{\varepsilon^{p(n-1)+1}}  \int_{\Omega} |\bigl[(z,Z(z))+K\bigl]\cap S|^p \,dz,
	\end{align*}
from which the claim is immediate.
\end{proof}

Note that the  assumption $\Omega\subset U$ was  not needed  for the proof, but the right-hand side of \eqref{Counterlemma1} can only become large if $U$ and $\Omega$ have at least a large intersection.

Applying the above proposition,  we  immediately obtain the following corollary:

\begin{corollary}\label{Counterco}
Let $n=3,$ and let  $S, \, \mathcal{M}_{\rm loc}$ and $\Sigma:=\{(z,Z(z):z\in \Omega=B(z_0,\varepsilon)\}$ be as in Proposition \ref{twopointone}, where $\Sigma $ satisfies  a quantitative transversality condition, and where $\varepsilon>0$ is supposed to be sufficiently small. 

Assume that for every sufficiently small  $\delta>0$, there exists  a symmetric convex body $K_{\delta},$ an open subset $\Omega_{\delta}\subset \Omega,$ and for every $z\in \Omega_{\delta}$ an open subset  $U_\delta(z) \subset U \subset \mathbb{R}^2$, such that every  $z\in \Omega_{\delta}$ and $x\in U_{\delta}(z)$, we have 
	\[(x,\phi(x))\in (z,Z(z))+K_{\delta}=:K_{\delta}(z),\]
where 
	 $|U_{\delta}(z)|\gtrsim \delta^u$ for all  $z\in \Omega_{\delta}$,  $|K_{\delta}|\lesssim \delta^s$ and $|\Omega_{\delta}|\gtrsim\delta^{\omega}$, with  exponents $u>0$ and $s\ge \omega\ge 0.$ 
	 
Then, if $\mathcal{M}_{\rm loc}$ is $L^p$-bounded, necessarily 
	$$p\geq \frac{s-\omega}{u}.$$ 
\end{corollary}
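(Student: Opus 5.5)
The plan is to apply Proposition \ref{twopointone} with $n=3$ and $K=K_\delta$, and then restrict the $z$-integral on the right-hand side of \eqref{Counterlemma1} to $\Omega_\delta\subset\Omega$. The hypothesis $(x,\phi(x))\in (z,Z(z))+K_\delta$ for every $x\in U_\delta(z)$ gives, for each $z\in\Omega_\delta$,
\[
\bigl|\bigl[(z,Z(z))+K_\delta\bigr]\cap S\bigr|=\int_U\chi_{K_\delta(z)}(x,\phi(x))\,dx\ \ge\ |U_\delta(z)|\ \gtrsim\ \delta^{u}.
\]
Since the integrand is nonnegative, we may discard $\Omega\setminus\Omega_\delta$.

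Since $\varepsilon$ and $c_0$ are fixed independently of $\delta$, Proposition \ref{twopointone} then yields
\[
\|\mathcal M_{\rm loc}\|_{L^p\to L^p}^p\ \gtrsim\ c_0\,\varepsilon^{2p+1}\,\frac{|\Omega_\delta|\,\delta^{up}}{|K_\delta|}\ \gtrsim\ \delta^{\omega+up-s},
\]
where we used $|\Omega_\delta|\gtrsim\delta^\omega$ and $|K_\delta|\lesssim\delta^s$, with implicit constants independent of $\delta$. If $\mathcal M_{\rm loc}$ is $L^p$-bounded, the left side is a finite constant. Letting $\delta\to0$, this forces $\omega+up-s\ge0$, i.e. $p\ge (s-\omega)/u$, because $u>0$.

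No step is expected to be a real obstacle. The only point to check is that all implicit constants, namely those in Proposition \ref{twopointone} and those in the assumed bounds on $|U_\delta(z)|$, $|K_\delta|$ and $|\Omega_\delta|$, are uniform in $\delta$. This holds because $\varepsilon$, $c_0$ and $\Sigma$ do not depend on $\delta$. We also need each $K_\delta$ to be a symmetric convex body of positive volume, as Proposition \ref{twopointone} requires, and this is part of the hypotheses.
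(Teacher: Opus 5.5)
Your proposal is correct and follows exactly the route the paper intends: the paper only remarks that the corollary follows immediately from Proposition~\ref{twopointone}. That is precisely your argument. You take $K=K_\delta$, discard $\Omega\setminus\Omega_\delta$, and use $|[(z,Z(z))+K_\delta]\cap S|\ge |U_\delta(z)|\gtrsim\delta^u$ to get $\|\mathcal M_{\rm loc}\|_{L^p\to L^p}^p\gtrsim \delta^{\omega+up-s}$ uniformly in $\delta$. Letting $\delta\to0$ then forces $p\ge (s-\omega)/u$.
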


%%%%%%%%%%%%%%%%%%%%%%%%%%%%%%%%%%%%%%%%%%%%%%%%%%%%%%%%%%%%%%%%%%%%%%%%%%%%%%%%%%%%%%%%%%%%%%%%%%%%

\subsection{Conditions enforced by roots of Type  $\mathrm{B_{T}}.$}\label{onCaseBT}

Here we want to prove Theorem \ref{necessary}(b). Assume that  $\mathcal M$ is the maximal operator defined by the averaging operators \eqref{AvU}, and let $\gamma_\tau, \tau\in R,$ be a real root of  $\mathrm{H}\Phi$ of multiplicity $N_\tau$  which is of Type  $\mathrm{B_{T}}.$  We also assume that $M_\tau=N_\tau+2\ge 5.$ We shall make use of the results in Lemma \ref{Legendret} and refer in our arguments to the notation from this lemma. 
By applying a suitable scaling of the form \eqref{Dr}, we may assume that  $U=B(0,r)$ is an open ball   centered at the origin which contains the point $x_0$  given by \eqref{x10}.

Since the boundedness of the maximal operator is stable under linear transformations (though not under  affine linear ones), we may change coordinates by
$$ y_1=x_1, \qquad y_2=x_2-\dot{x}_{0,2}\cdot x_1,$$
with $\dot{x}_{0,2}$ as in \eqref{x2cnew},
 and set
$$\Psi(y):=\Phi(y_1,y_2+\dot{x}_{0,2}\cdot y_1)-\nabla\Phi(x_0)\cdot (y_1,y_2+\dot{x}_{0,2}\cdot y_1).$$
We shall thus study maximal averages along the graph $\tilde S$ of $\Psi$ in place of $S$ (note that $\Psi$ will no longer be $\kappa$-homogenous, but this will not be relevant for the present proof).

Accordingly, we put
$$y_{0,1}:=x_{0,1},\qquad y_{0,2}:=x_{0,2}-\dot{x}_{0,2}\cdot x_{0,1},$$
and note that by \eqref{x2cnew}
\begin{equation}\label{ytwocritical}
y_2^c(y_1):=x_2^c(x_1)-\dot{x}_{0,2}\cdot x_1=y_{0,2}+(y_1-y_{0,1})^{M_\tau-1}\, \tilde \Xi(y_1),
\end{equation}
where  $x_2^c$  is given by \eqref{x2c1}.
It is then easy to compute that
$$\nabla\Psi(y_0)=0,$$
and  that by \eqref{phiexpansex} and \eqref{psi}
\begin{align}
	\Psi(y)&=\Psi(y_0)+\Psi_{\rm{nl}}(y), \\
	\Psi_{\rm{nl}}(y)&:=(y_1-y_{0,1})^{M_\tau}\tilde G (y_1)+(y_2-y_2^c(y_1))^2\tilde F(y).\label{phiexpansexy}
\end{align}

By the transversality condition \eqref{Trans}, we have
$$2c_0:= |\Psi(y_0)-\nabla\Psi(y_0)\cdot y_0|=|\Psi(y_0)|=|\Phi(x_0)-\nabla\Phi(x_0)\cdot x_0|>0.$$
For $0<\epsilon\ll 1$ sufficiently small this transversality remains stable on the set
$$\Omega:=\{y\in U: |y_1-y_{0,1}|<\epsilon, |y_2-y_{0,2}|<\epsilon\};$$
just choose $\epsilon$ so small that for $y\in \Omega$
\begin{align}\label{transverse1}
	|\Psi(y)-\nabla\Psi(y)\cdot y| \geq c_0.
\end{align}
That is, if we choose $Z:=\Psi,$ then the graph $\Sigma:=\{(y,\Psi(y)):y\in \Omega\}$ satisfies a quantitative  transversality condition \eqref{TVQ} as required in Proposition \ref{twopointone} and Corollary \ref{Counterco}.

What we need to prove is that if $\mathcal M$ is $L^p$-bounded and $M_\tau\ge 5,$  then $p\geq\frac{2(M_{\tau}+1)}{M_{\tau}+3}.$ 

We  define for $\delta>0$ sufficiently small
 \begin{align*}
\Omega_{\delta}:=& \{z\in \Omega:|z_1-y_{0,1}|<\delta^{\frac{1}{M_{\tau}}},
	 |z_2-y_{0,2}|<\delta^{\frac12-\frac{1}{2M_{\tau}}}\}, \\
U_{\delta}(z):=&\{y\in \mathbb{R}^2:|y_1-z_1|<\delta^{\frac{1}{M_{\tau}}},
  |y_2-z_2|<\delta^{\frac12+\frac{1}{2M_{\tau}}}\},\\
  K_{\delta}:=&\{w\in \mathbb{R}^3: |w_1|\leq \delta^{\frac{1}{M_{\tau}}}, 	
	|w_2|\leq\delta^{\frac12+\frac{1}{2M_{\tau}}}, |w_3|\lesssim\delta\}. \\
\end{align*}
The exponents of $\delta$ may seem mysterious now, but it will become clear during the proof that their choice is optimal.\\

In order to apply  Corollary \ref{Counterco}, we need to show  that for all $z\in \Omega_{\delta}$ and $ y\in U_{\delta}(z)$, we have 
$$(y-z, \Psi(y)-\Psi(z))\in K_{\delta}.$$
The first two conditions $|y_1-z_1|<\delta^{\frac{1}{M_{\tau}}}$ and  $|y_2-z_2|\leq \delta^{\frac12+\frac{1}{2M_{\tau}}}$ are obvious by the definition of $U_{\delta}(z)$.
\smallskip

Note in particular that  this shows that  $ y\in U_{\delta}(z)$ satisfies  similar conditions as $z\in\Omega_\delta$, i.e., 
\begin{equation}\label{kotz}
	|y_1-y_{0,1}|\lesssim\delta^{\frac{1}{M_{\tau}}},\qquad
	|y_2-y_{0,2}|\lesssim\delta^{\frac12-\frac{1}{2M_{\tau}}}.
\end{equation}

It remains to show that
\begin{equation}\label{Psiyz}
|\Psi(y)-\Psi(z)|=|\Psi_{\rm{nl}}(y)-\Psi_{\rm{nl}}(z)|\lesssim \delta.
\end{equation}
To prove this, let us apply a nonlinear change of coordinates
$$\tilde y_1:=y_1-y_{0,1},\qquad \tilde y_2:=y_2-y_2^c(y_1)$$
(and accordingly $\tilde z_1:=z_1-y_{0,1},\  \tilde z_2:=z_2-y_2^c(z_1)$),
so that the expansion \eqref{phiexpansexy} of $\Psi_{\rm{nl}}$ transforms into
\begin{equation}\label{phiexpansetilde}
	\tilde\Psi(\tilde y):=\Psi_{\rm{nl}}(y)
=\tilde y_1^{M_\tau} \,W(\tilde y_1)+ \tilde y_2^2  \,V(\tilde y)
\end{equation}
for suitable analytic functions $W$ and $V.$
Using the expansion \eqref{ytwocritical} of $y_2^c$, we see that
\begin{equation}\label{wichtig}
	|y_2^c(y_1)-y_{0,2}|\lesssim |y_1-y_{0,1}|^{M_\tau-1}
 \lesssim \delta^{\frac{M_\tau-1}{M_\tau}}
 \lesssim \delta^{\frac12+\frac{1}{2M_{\tau}}}.
\end{equation}
The last inequality holds since $M_\tau\geq 3.$ Since $y\in U_\delta(z),$ we also have the analogous estimate
$$|y_2^c(z_1)-y_{0,2}|\lesssim \delta^{\frac12+\frac{1}{2M_{\tau}}}.$$
This has two implications:
Firstly, using the definition of $\Omega_\delta$ and \eqref{kotz}, we obtain that
$$|\tilde y_1|,|\tilde z_1|\lesssim \delta^{\frac{1}{M_{\tau}}},\qquad
|\tilde y_2|,|\tilde z_2|\lesssim \delta^{\frac12-\frac{1}{2M_{\tau}}}.$$

Moreover,
$$|\tilde y_2-\tilde z_2| \lesssim |y_2-z_2|+|y_2^c(y_1)-y_2^c(z_1)|
\lesssim \delta^{\frac12+\frac{1}{2M_{\tau}}}.$$

It is easy to check from \eqref{phiexpansetilde} that for any $|v_1|\lesssim\delta^{\frac{1}{M_{\tau}}}$,
$|v_2|\lesssim \delta^{\frac12-\frac{1}{2M_{\tau}}}$, the derivatives of $\tilde\Psi$ are bounded by
\begin{align*}
	|\partial_1\tilde\Psi(v)|\lesssim& |v_1|^{M_\tau-1}+|v_2|^2
\leq \delta^{\frac{M_\tau-1}{M_\tau}} + \delta^{1-\frac1{M_\tau} } =2\delta^{\frac{M_\tau-1}{M_\tau}},\\
|\partial_2\tilde\Psi(v)|\lesssim& |v_2|
\lesssim \delta^{\frac12-\frac1{2M_\tau}},
\end{align*}
so that for some $v$ in the convex hull of $\tilde y$, $\tilde z$ we have
\begin{align*}
	|\Psi(y)-\Psi(z)|=&|\tilde\Psi(\tilde y)-\tilde\Psi(\tilde z)| \\
	\leq& |\partial_1\Psi(v)||\tilde y_1-\tilde z_1|+
			|\partial_2\Psi(v)||\tilde y_2-\tilde z_2| \\
	\lesssim& \delta^{\frac{M_\tau-1}{M_\tau}}\delta^{\frac{1}{M_{\tau}}}
	            +\delta^{\frac12-\frac1{2M_\tau}}\delta^{\frac12+\frac{1}{2M_{\tau}}}
	 =2\delta,
\end{align*}
which proves \eqref{Psiyz}. 

Since $|\Omega_\delta|\sim \delta^{\frac12+\frac{1}{2M_{\tau}}},  |U_\delta(z)|\sim \delta^{\frac{1}{M_{\tau}} +\frac12+\frac{1}{2M_{\tau}}}$ and $|K_\delta|\sim \delta^{1+\frac{1}{M_{\tau}} +\frac12+\frac{1}{2M_{\tau}}},$ Corollary \ref{Counterco} thus shows that the condition $p\geq\frac{2(M_{\tau}+1)}{M_{\tau}+3}$ is necessary for $L^p$-boundedness of $\mathcal M.$ 
\smallskip

This finishes the proof of Theorem \ref{necessary}(b).

%%%%%%%%%%%%%%%%%%%%%%%%%%%%%%%%%%%%%%%%%%%%%%%%%%%%%%%%%%%%%%%%%%%%%%%%%%%%%%%%%%%%%%%%%%%%%%%%%%%%

\subsection{Conditions enforced by roots of Type $\mathrm{C}$}\label{onCaseC}

Here we want to prove Theorem \ref{necessary}(c). Assume that  $\mathcal M$ is the maximal operator defined by the averaging operators \eqref{AvU}, and let $\gamma_\tau, \tau\in R,$ be a real root of  $\mathrm{H}\Phi$ of multiplicity $N_\tau$  which is of Type  $\mathrm{C}.$  

We shall make use of the results in Lemma \ref{TypeCnf}, assuming without loss of generality that  $\Phi_{2}$ vanishes and $\Phi_{1}$ does not vanish along $\gamma_{\tau}.$
 Then we need to show that if the maximal operator $\mathcal{M}$ is $L^{p}$-bounded, then  $p \ge \frac{2M_{\tau}}{M_{\tau} +1}.$

\smallskip
First, recall from Lemma \ref{Ttau=0} that $\Phi$ satisfies the transversality condition \eqref{Trans} along $\gamma_\tau.$ 
Thus, if we choose a sufficiently small $0<\epsilon \ll1$ and put 
\[\Omega := \{z\in U: z_{1} \in (\frac{\epsilon}{2},\epsilon), |z_{2} - \tau z^{\frac{\kappa_{2}}{\kappa_{1}}}_{1}| < \epsilon\} \]
and  $Z:=\Psi,$ then the graph $\Sigma:=\{(z,\Psi(z)): z\in \Omega\}$ satisfies a quantitative  transversality condition \eqref{TVQ} as required in Proposition \ref{twopointone} and Corollary \ref{Counterco}.

Next, for  all $0 < \delta \ll \epsilon \ll 1$, we define
 \begin{align*}
 \Omega_{\delta} :=& \{z\in \Omega: z_{1} \in (\frac{\epsilon}{2},\epsilon), |z_{2} - \tau z ^{\frac{\kappa_{2}}{\kappa_{1}}}_{1}| < \delta\},\\
 U_{\delta}(z):=&\{x\in \Omega: |x_{1}-z_{1}| \le \delta^{M_{\tau}}, |x_{2}-z_{2}| \le \delta \},\\
 K_{\delta}:=&[- \delta^{M_{\tau}}, \delta^{M_{\tau}}] \times  [-\delta, \delta] \times [-c\, \delta^{M_{\tau}},  c\, \delta^{M_{\tau}}],
 \end{align*}
with a  constant $c>0$ to be chosen soon. 

Let us show that for all $z\in \Omega_{\delta}$ and all  $ x\in U_{\delta}(z)$ we have   $(x,\Phi(x))- (z,\Phi(z))\in K_{\delta}$, provided $c>0$ is chosen sufficiently large.

Indeed, clearly $|x_1-z_1|\le \delta^{M_{\tau}},$ and $|x_2-z_2|\le \delta.$  As for the  third component,   we decompose 
\[\Phi(x)-\Phi(z)=\big (\Phi(x_1,x_2)-\Phi(z_1,x_2)\big) + \big (\Phi(z_1,x_2)-\Phi(z_1,z_2)\big),\]
and make use of the normal form \eqref{Phiexpansion} for $\Phi,$ i.e., 
\[\Phi(x_{1},x_{2}) = \Phi(x_{1},\tau x^{\frac{\kappa_{2}}{\kappa_{1}}}_{1}) + (x_{2} -\tau x^{\frac{\kappa_{2}}{\kappa_{1}}}_{1})^{M_{\tau}}V(x_1,x_2).\]
From this, we easily deduce that $\Phi(x_1,x_2)-\Phi(z_1,x_2)=\mathcal{O}(\delta^{M_{\tau}}),$ and 
$$\Phi(z_1,x_2)-\Phi(z_1,z_2)=\mathcal{O}(\delta^{M_{\tau}-1}\delta)=\mathcal{O}(\delta^{M_{\tau}}),
$$
so that indeed $(x,\Phi(x))- (z,\Phi(z))\in K_{\delta}$ if $c>0$ is chosen sufficiently large.

Thus, we can apply  Corollary \ref{Counterco}.  Note that here we have $s=2M_{\tau}+1$, $\omega=1$, and $u=M_{\tau}+1$, so that we see that  $\mathcal{M}$ cannot be $L^{p}$-bounded unless $p \ge \frac{2M_{\tau}}{M_{\tau}+1}$.

We have thus also finished the proof of Theorem \ref{necessary}(c), hence the proof of Theorem \ref{necessary}. \qed

%%%%%%%%%%%%%%%%%%%%%%%%%%%%%%%%%%%%%%%%%%%%%%%%%%%%%%%%%%%%%%%%%%%%%%%%%%%%%%%%%%%%%%%%%%%%%%%%%%%%

\section{Sufficiency of the assumptions in Theorem \ref{theorem1}: Reduction to narrow $\kappa$-homogeneous neighborhoods  of real roots of  $\mathrm{H}\Phi$}\label{suff1}

\subsection{The case where  $\mathrm{H}\Phi$ has no real root}\label{noroot}

In this case, $\mathrm{H}\Phi$  does not vanish away from the the origin, and Theorem \ref{theorem1}  is then confirmed by

 \begin{proposition}\label{remarkGammaj}
 Assume that {\color{blue} $|\kappa|\le 1,$ }and that $\mathrm{H}\Phi$ is non-vanishing away from the origin. Then $\mathcal{M}$ defined by (\ref{MS}) is $L^{p}$-bounded if $p>3/2$, and unbounded for $p\le 3/2,$  so that $p_c=p_\Phi.$
\end{proposition}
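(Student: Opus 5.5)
The necessity half is already contained in Proposition \ref{pnn-1}(b): it produces a point of $S$ at which $S$ is transversal. Near that point the classical Stein-type counterexample rules out all $p\le 3/2$, since Proposition \ref{pnn-1}(b) gives $p>3/2$ as a necessary condition. So the work lies in the sufficiency, and the plan is as follows.

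\textbf{Reduction to a dyadic annulus.} Decompose $\psi$ by a $\kappa$-homogeneous Littlewood--Paley partition of unity $\sum_k \chi(\delta_{2^{k}}x)$, with $\chi$ supported in a $\kappa$-homogeneous annulus $A=\{x: |x_1|^{1/\kappa_1}+|x_2|^{1/\kappa_2}\sim 1\}$. Only $k\ge -C$ occurs, because $\psi$ has compact support. The dilations $D_r$ of \eqref{Dr} intertwine the pieces. Changing variables $x=\delta_{2^{-k}}\tilde x$ in the $k$-th average gives
$$
A_t^{(k)}f(y)=2^{-k|\kappa|}\int f\big(y-tD_{2^{-k}}(\tilde x,\Phi(\tilde x))\big)\chi(\tilde x)\psi(\delta_{2^{-k}}\tilde x)\,d\tilde x .
$$
Since $D_{2^{-k}}$ is a linear map of $\mathbb{R}^3$ commuting with the supremum over $t$, each piece is a linear conjugate of a maximal operator $\mathcal M_k$ along the fixed compact surface $\{(x,\Phi(x)):x\in A\}$. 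The cutoff $\chi(\tilde x)\psi(\delta_{2^{-k}}\tilde x)$ is smooth with bounds uniform in $k$, and $L^p$ norms are invariant under the conjugation. Hence
$$
\|\mathcal M\|_{p\to p}\le \sum_{k\ge -C}2^{-k|\kappa|}\sup_k\|\mathcal M_k\|_{p\to p}.
$$
This series converges because $|\kappa|>0$. It therefore suffices to bound $\mathcal M_A$, the maximal operator along the compact piece over $A$, uniformly over smooth cutoffs in a bounded set. The condition $|\kappa|\le 1$ is not even needed here; the geometric decay in $k$ does the work.

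\textbf{Maximal bound on the compact piece.} On $A$ the Hessian $\mathrm{H}\Phi$ does not vanish, so the surface piece has nonvanishing Gaussian curvature. Partition $A$ into small patches. On patches where $S$ is transversal, apply the Greenleaf/Stein theory (equivalently \cite{IKM10} with $h_\phi=1$): the Fourier transform of the surface measure decays like $|\xi|^{-1}$, and Stein--Greenleaf square-function/Sobolev arguments give $L^p$ boundedness for all $p>3/2=n/(n-1)$.

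The main obstacle is patches where $S$ is non-transversal, i.e.\ the tangent plane passes through the origin. There the standard argument needs a modification. Nonvanishing curvature still gives decay $|\widehat{d\mu}(\xi)|\lesssim|\xi|^{-1}$. The $t$-derivative of the averaging multiplier $\widehat{d\mu}(t\xi)$ is $\xi\cdot\nabla\widehat{d\mu}(t\xi)$, which behaves like $|\xi|\,|\xi|^{-1}$ times the factor $\xi\cdot(x,\Phi(x))$ at the critical point. So the $t$-derivative is never worse than in the transversal case, and at non-transversal directions it is in fact better. Stein's interpolation argument then goes through: write $\mathcal M_A$ via a Littlewood--Paley decomposition in $\xi$ into pieces $\mathcal M^j$, and use Sobolev embedding in $t\in[1,2]$ with $L^2$ bounds $2^{-j/2}$. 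Together with an $L^1\to L^{1,\infty}$-type bound of order $2^{j}$ for the pieces, this yields boundedness for $p>3/2$ on local ($1\le t\le 2$) maximal operators.

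\textbf{Passing to all $t>0$ and checking the endpoint.} The global supremum over $t>0$ follows by standard square-function/Littlewood--Paley reassembly, since the averaging measure has a vanishing-at-infinity Fourier transform with the above decay. Alternatively one can invoke \cite{Gr81}, which treats all hypersurfaces with nonvanishing curvature without a transversality assumption. The sharp threshold $p_c=3/2$ then agrees with $p_\Phi$, because there are no roots and so $M_{B_T}=M_C=2$.
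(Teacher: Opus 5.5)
Your proposal is correct and follows the paper's proof. Necessity comes from Proposition \ref{pnn-1}(b), and sufficiency comes from the $\kappa$-homogeneous dyadic decomposition, with the $D_r$-scaling producing the summable factor $2^{-|\kappa| j}$ and uniform bounds on the compact curved pieces supplied by \cite{Gr81} and \cite{F86}. Your transversal/non-transversal case split is unnecessary: those results use only the decay $|\widehat{d\mu}(\xi)|\lesssim|\xi|^{-1}$, and $|\xi\cdot\nabla\widehat{d\mu}(t\xi)|\lesssim 1$ holds automatically because $\nabla\widehat{d\mu}$ is the Fourier transform of another smooth density on the same curved surface. Your parenthetical appeal to \cite{IKM10} with $h_\phi=1$ is misplaced, since that theorem concerns $h_\phi\ge 2$.
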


\begin{proof} 
The necessity of the condition $p>3/2$ has already been shown in Proposition \ref{pnn-1}. The proof of the sufficiency of this condition is standard and will only be sketched  briefly.

Choose a smooth cut-off $\chi_0\in C_0^\infty(\Bbb R^2)$ supported where $|x|\le 2$ and such that $\chi_0(x)=1$ for $|x|\le 1,$ and put $\chi_1(x):=\chi_0(x)-\chi_0(\delta_2 (x)),$ where $\delta_r$ denotes again our dilation $\delta_r(x):=(r^{\kappa_1}x_1,r^{\kappa_2}x_2), \, r>0.$ Then 
 \begin{equation}\label{dyadicdecomposition}
 \sum_{j\in \mathbb{Z}}\chi_1(\delta_{2^j}(x)) =1\quad \text{for every }\quad x \in \mathbb{R}^{2} \backslash \{0\}.
 \end{equation}
 For any given $j\in \Bbb Z$ we define the maximal operator $\mathcal M_j$ by 
 \[
 \mathcal M_j f(y):=\sup\limits_{t>0}\Big|\int_{\mathbb{R}^2}f(y-t(x,\Phi(x)))\, \psi(x)\,\chi_1(\delta_{2^j}(x))\, dx\Big|,\qquad y \in \mathbb{R}^3.
 \]
 Assuming without loss of generality that $\psi$ is supported in a sufficiently small neighborhood of the origin, we may then estimate 
\begin{equation}\label{Mdecomp}
 \|\mathcal{M} \|_{L^{p}\rightarrow L^{p}} \le \sum_{j=0}^\infty\|\mathcal{M}_j\|_{L^{p}\rightarrow L^{p}}.
\end{equation}

Moreover, by exploiting also the scalings $D_r$ of $\Bbb R^3$ from \eqref{Dr}, one easily sees that 
\begin{equation}\label{Mscale}
 \|\mathcal{M}_j\|_{L^{p}\rightarrow L^{p}}=2^{-|\kappa| j} \,  \|\tilde{\mathcal{M}}_{j}\|_{L^{p}\rightarrow L^{p}},
  \end{equation}
 where
\[
\tilde{\mathcal{M}}_{j} f(y):=\sup\limits_{t>0}\Big|\int_{\mathbb{R}^2} f(y-t(x,\Phi(x)))\, \psi(\delta_{2^{-j}}(x))\,\chi_1(x)\, dx\Big|.
\]

Since $\mathrm{H}\Phi$ is non-vanishing on the support of $\chi_1,$ the corresponding part of the hypersurface $S$ 
has non-vanishing Gaussian curvature.  Thus, by \cite{F86} and \cite{Gr81}, the  maximal operators  $\tilde{\mathcal{M}}_{j}$ satisfy uniform  $L^{p}$-estimates for every given $p>3/2$.  In combination with \eqref{Mdecomp} and \eqref{Mscale} this implies that $ \mathcal M$ is bounded on $L^p$ for every $p>3/2.$
\end{proof} 
\color{black}

An example for this situation is $\Phi(x_{1},x_{2})= x^{2}_{1}x_{2}+x_{1}x^{3}_{2}$, where $\mathrm{H}\Phi(x_{1},x_{2}) = -4x^{2}_{1}-9x_{2}^{4}$.

%%%%%%%%%%%%%%%%%%%%%%%%%%%%%%%%%%%%%%%%%%%%%%%%%%%%%%%%%%%%%%%%%%%%%%%%%%%%%%%%%%%%%%%%%%%%%%%%%%%%

\subsection{Reduction to narrow neighborhoods of real roots of $\mathrm{H}\Phi$ }\label{reducttoroots}
Assume now that $Z_{\mathrm{H}\Phi}\ne \emptyset.$  For every root $\zeta \in  Z_{\mathrm{H}\Phi}\cap S^1,$ we define a cut-off $\rho_\zeta$ which is smooth away from the origin, $\kappa$-homogeneous of degree 0 and localizes to a sufficiently narrow $\kappa$-homogeneous neighborhood of the root $\gamma_{(\zeta)}.$ 

More specifically, suppose that $\gamma_{(\zeta)}$ is contained in the right half-plane $x_1>0,$ so that  we may parametrize it by 
$\gamma_{\tau}$ as in Subsection \ref{main result}, with $\tau=\tau(\zeta)\in R,$ where 

\[\gamma_{\tau}= \bigl\{(x_1,x_2)\in \mathbb{R}^2: x_1>0 \ \text{and}\ x_{2}=\tau x_1^{\frac{\kappa_2}{\kappa_1}}\bigl\}.\]

In order to defray the notation, we shall in the sequel usually abbreviate $\kappa_2/\kappa_1$ by
$$
a=a_\kappa:=\kappa_2/\kappa_1.
$$
Let  $\chi_0$ be the smooth cut-off from the preceding subsection, and put
\begin{equation}\label{rhotau}
\rho_{(\zeta)}(x)=\rho_\tau(x):=\chi_0\Big(\frac{x_2-\tau x_1^{a}}{\ve x_1^{a}}\Big), \qquad x_1>0,
\end{equation}
where $\ve>0$ is supposed to be sufficiently small. For roots contained in the left half-plane $x_1<0$ we can define the cut-off $\rho_{(\zeta)}$ analogously, and for roots lying on the $x_2$-axis by swapping the roles of $x_1$ and $x_2.$ 

By choosing $\epsilon$ sufficiently small, we may assume that the cut-offs $\rho_{(\zeta)}, \zeta \in  Z_{\mathrm{H}\Phi}\cap S^1,$ have disjoint supports. We then decompose 
\[
1=\rho_{\rm G} +\sum\limits_{\zeta \in  Z_{\mathrm{H}\Phi}\cap S^1}\rho_{(\zeta)}  \qquad \text{on} \quad \Bbb R^2\setminus\{0\},
\]
and define corresponding maximal operators
\[
 \mathcal M_{(\zeta)} f(y):=\sup\limits_{t>0}\Big|\int_{\mathbb{R}^2}f(y-t(x,\Phi(x)))\, \psi(x)\,\rho_{(\zeta)}(x)\, dx\Big|,\qquad  
  \zeta \in  Z_{\mathrm{H}\Phi}\cap S^1,
 \]
 and 
 \[
 \mathcal M_{\rm G} f(y):=\sup\limits_{t>0}\Big|\int_{\mathbb{R}^2}f(y-t(x,\Phi(x)))\, \psi(x)\,\rho_{\rm G}(x)\, dx\Big|.
 \]

As for  $\mathcal M_{\rm G},$ observe that $\mathrm{H}\Phi$ does not vanish on $\supp \chi_1\cap \supp \rho_G,$ where $\chi_1$ is as in the preceding subsection. Thus we may apply the same arguments to $\mathcal M_{\rm G}$ that we had applied in the proof of Proposition \ref{remarkGammaj} to conclude that
\begin{equation}\label{M0est}
 \|\mathcal{M}_{\rm G}\|_{L^{p}\rightarrow L^{p}}<\infty \qquad \text{for every}\quad p>3/2.
  \end{equation}
  
  This leaves us with the estimation of the maximal operators  $ \mathcal M_{(\zeta)}.$ To this end, we shall again restrict ourselves to roots contained in the right half-plane $x_1>0,$ where we parametrize the roots by the real parameters  $\tau\in R,$ and shall accordingly put $\rho_\tau=\rho_{(\zeta_\tau)}$ and $\mathcal{M}_\tau=\mathcal M_{(\zeta_\tau)},$ i.e., 
   \[
 \mathcal M_\tau f(y):=\sup\limits_{t>0}\Big|\int_{\mathbb{R}^2}f(y-t(x,\Phi(x)))\, \psi(x)\,\rho_\tau(x)\, dx\Big|,
 \]
 with $\rho_\tau$ as defined in \eqref{rhotau}.
 
Again, by means of the dyadic decomposition \eqref{dyadicdecomposition}, we can then bound 
$$
\|\mathcal{M_\tau}\|_{L^{p}\rightarrow L^{p}}\le C_p\, \sup\limits_{j\ge 0} \|\tilde{\mathcal{M}}_{\tau,j} \|_{L^{p}\rightarrow L^{p}},
$$
where 
\[
\tilde{\mathcal{M}}_{\tau,j} f(y):=\sup\limits_{t>0}\Big|\int_{\mathbb{R}^2} f(y-t(x,\Phi(x)))\, \psi(\delta_{2^{-j}}(x))\, \rho_\tau(x) \,\chi_1(x)\, dx\Big|.
\]
Since $x_1\sim 1$ on the support of $\rho_\tau \chi_1,$ we may reduce these estimates to uniform estimates 
\begin{equation}\label{estMtau}
\|\tilde{\mathcal{M}}_\tau\|_{L^{p}\rightarrow L^{p}}\le C_{p,C_k}
\end{equation}
of maximal operators of the form
\begin{equation}\label{tMtau}
\tilde{\mathcal{M}}_{\tau} f(y):=\sup\limits_{t>0}\Big|\int_{\mathbb{R}^2} f(y-t(x,\Phi(x)))\, \tilde \psi(x)\, \rho_\tau(x) \,\chi_1(x_1)\, dx\Big|,
\end{equation}
where here $\chi_1$ denotes a one-variate smooth cut-off function supported in $[1/2,2],$ and where $\tilde\psi$ is any  smooth function on $\Bbb R^2$ whose $C^k$-norm is bounded by $C_k$, provided $k$ is chosen sufficiently large.

More precisely, we shall prove

\begin{theorem}\label{theorem2.1}
 Let $\Phi$ be as in Theorem \ref{theorem1}, and let $\tau\in R.$  Then the following hold, provided $\epsilon>0$ in the definition of $\rho_\tau$ is chosen sufficiently small.
  \begin{itemize}
\item[(a)] If  $\gamma_{\tau}$ is of Type A, then estimate \eqref{estMtau} holds true for every $p > 3/2.$
\item[(b)] If  $\gamma_{\tau}$ is of Type $\mathrm{B_{T}}$, then estimate \eqref{estMtau} holds true for every $p >\max\big\{\frac{3}{2}, \frac{2(M_{\tau}+1)}{M_{\tau}+3}\big\}$; and if  $\gamma_{\tau}$ is of Type $\mathrm{B_{NT}}$, then estimate \eqref{estMtau} holds true for every $p>3/2.$
\item[(c)] If  $\gamma_{\tau}$ is of Type C, then estimate \eqref{estMtau} holds true for every $p >\max\big\{\frac{3}{2}, \frac{2M_{\tau}}{M_{\tau}+1}\big\}$.
 \end{itemize}
\end{theorem}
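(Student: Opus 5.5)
The plan is to treat the three cases of Theorem \ref{theorem2.1} separately. The key device is a further dyadic decomposition in the distance $|x_2-\tau x_1^{a}|\sim 2^{-k}$ to the root $\gamma_\tau$, combined with a Littlewood--Paley decomposition in the frequency variable $|\xi|\sim 2^{j}$. We will then estimate the pieces $\tilde{\mathcal M}_{\tau}^{k,j}$ by the standard square-function argument (Sobolev embedding in $t$). This gives $L^2$ bounds with decay and $L^1$/$L^\infty$ bounds with growth, and interpolation yields $L^p$ bounds whose sum over $j,k$ converges exactly in the stated ranges.

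\textbf{Type A.} Here $\Phi=(x_2-\tau x_1^a)^{T_\tau}W_\tau$ with $T_\tau\ge 2$, and by Lemma \ref{HphinA} we have $|\mathrm{H}\Phi|\sim 2^{-kN_\tau}$ on the $k$-th layer. We rescale $x_2-\tau x_1^a=2^{-k}s$ with $|s|\sim1$. Since the root is non-transversal, the part of $S$ over the layer is then contained in a neighbourhood of size $2^{-kT_\tau}$ in the third coordinate. An anisotropic rescaling $(x,x_3)\mapsto (x,2^{kT_\tau}x_3)$ of $\Bbb R^3$ turns this piece into a surface with Hessian $\sim 1$; in the case $\tau=0$ this is the function $\Phi_\sigma$ of Lemma \ref{HphinA} with $\sigma=2^{-k}$. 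The rescaled surface is still non-transversal only in a degenerate way, and its graph over a strip of width $2^{-k}$ has measure $2^{-k}$. The maximal operator of the rescaled surface is therefore bounded on $L^p$ for $p>3/2$ uniformly (Greenleaf/Sogge--Stein type bounds with uniform constants). It carries the factor $2^{-k}$ from the measure, and the Jacobian of the rescaling is harmless because $L^p$ norms are invariant under linear maps, up to the factor coming from the measure. Summing over $k$ gives the bound for every $p>3/2$.

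\textbf{Types $\mathrm B$ and $\mathrm C$.} For these types we first perform stationary phase in $x_2$ in $\widehat{d\mu}$, using Lemma \ref{Legendret} and Lemma \ref{TypeCnf} respectively. This reduces matters to one-dimensional oscillatory integrals with phase $\xi_1x_1+\xi_3\Psi(x_1)$, where $\Psi$ has a degenerate inflection point of order $M_\tau$ by \eqref{Legendra}.

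For type $\mathrm{B_{NT}}$, identity \eqref{PsiNT} shows that the affine linear part of $\Psi$ passes through the origin. After localising to $|x_1-x_{0,1}|\sim 2^{-l}$ and rescaling, the transversality defect produces an extra gain, and we expect the estimate for every $p>3/2$ to follow, as in the type A case.

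For types $\mathrm{B_T}$ and $\mathrm C$ the surface is transversal. Here we localise to $|x_1-x_{0,1}|\sim 2^{-l}$ (respectively $|x_2-\tau x_1^a|\sim 2^{-l}$ for type C). After rescaling, the contribution of each piece is a maximal operator along a surface with one non-vanishing and one degenerate principal curvature, of the exceptional class studied in \cite{BDIM19,BDIM25}. For these we invoke the FIO-cone multiplier estimates of \cite{BDIM25}, which give $L^p$ bounds with constants $2^{l\beta(p)}$. The resulting sum over $l$ converges exactly when $p>\frac{2(M_\tau+1)}{M_\tau+3}$ (type $\mathrm B_T$), respectively $p>\frac{2M_\tau}{M_\tau+1}$ (type C); these thresholds are consistent with the counterexamples constructed in Section \ref{necessary condition}.

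\textbf{Main obstacle.} I expect the type $\mathrm{B_T}$ case to be the hardest. There the phase after the Legendre transform is only degenerate along a curve, so the reduction to the setting of \cite{BDIM25} requires precise control of the normal form from Lemma \ref{Legendret}(b),(c), and in particular the expansion \eqref{x2cnew}, uniformly in the scaling parameters. My first attempt would be to match the rescaled pieces with the model $x_2^2+x_1^{M_\tau}$ perturbation treated there.
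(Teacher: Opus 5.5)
\textbf{Type A.} Your argument works for $\tau=0$; it is exactly the paper's use of $\Phi_\sigma$ from Lemma \ref{HphinA}. It fails for $\tau\in R'$. There the layer $|x_2-\tau x_1^a|\sim\sigma=2^{-k}$ is a curved strip, and no linear map of $\Bbb R^3$ straightens it. After the best available normalization $x_3\mapsto\sigma^{-T_\tau}x_3$, the piece is the surface
\[
(x_1,s)\mapsto\big(x_1,\ \tau x_1^a+\sigma s,\ s^{T_\tau}\widetilde W(x_1,\sigma s)\big),
\]
which degenerates as $\sigma\to0$ to the cylinder over the planar curve $x_2=\tau x_1^a$. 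Computing its second fundamental form gives one principal curvature $\sim1$ and one $\sim\sigma$, so the Gaussian curvature is $\sim\sigma$. Hence no uniform Greenleaf/Rubio de Francia bound is available, and your claim of Hessian $\sim1$ is false.

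What is needed instead is a quantitative estimate in which the loss from the degenerating curvature is beaten by the measure factor $\sigma$. The paper works at frequency $\lambda$. For $\lambda\sigma\gtrsim1$ and $|\xi_3|\sim\sigma|\xi_2|$, two stationary-phase steps give $|m|\lesssim\lambda^{-1/2}(\lambda\sigma)^{-1/2}$, and Lemma \ref{lemma3.2} gives an $O(1)$ bound for the $L^1$-kernel. Interpolating and applying Theorem \ref{multipliertomaximal} yields
\[
\sigma\|\mathcal M\|_{p\to p}\lesssim\sigma^{1/p+\delta}\lambda^{-(2-3/p)+2\delta}.
\]
The regimes $\lambda\sigma\ll1$ and $|\xi_3|\not\sim\sigma|\xi_2|$ are treated separately.

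\textbf{Types B and C.} These parts are sketches that miss the decisive mechanisms.

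For $\mathrm{B_{NT}}$ you only state an expectation. Pointing to your Type A mechanism does not help, since these roots carry a degeneracy of order $M_\tau$. The actual mechanism is as follows. On the $k$-th Airy piece, where $|F(\xi)|\sim R^{-1}$, identity \eqref{PsiNT} turns the classical part of the phase into $x_{0,1}\xi_3s_2^{\kappa_1/(1-\kappa_2)}F(\xi)$. So the whole phase is $O(R^{-1})$, and the radial derivative costs only $\lambda R^{-1}$ instead of $\lambda$ in Theorem \ref{multipliertomaximal}. The resulting extra factor $R^{-1/p}$ is what gives every $p>3/2$.

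For $\mathrm{B_T}$, localizing spatially to $|x_1-x_{0,1}|\sim2^{-l}$ does not make sense. $\Phi$ degenerates along the whole curve $\gamma_\tau$, and the stationary point $x_0$ moves with $\xi_2/\xi_3$. The paper instead decomposes in frequency according to $|F(\xi)|\sim R^{-1}$, where
\[
F(\xi)=\xi_1/\xi_3+\Psi'(x_{0,1})(\xi_2/\xi_3)^{(1-\kappa_1)/(1-\kappa_2)}.
\]
Since $\kappa_1\neq\kappa_2$, the level sets of $F$ are curved cones other than the light cone, so \cite{BDIM25} cannot simply be quoted. Its estimates must be extended to general curved cones; this is Theorem \ref{FIOconespec}, resting on Guth--Wang--Zhang type square function estimates via \cite{GLMX23}. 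One must also check that the phase lies in $\mathcal F^{1,\gamma/2,\gamma}$ and satisfies (SMD). This yields
\[
\|T_{m^\pm_{\lambda,k}}\|_{p\to p}\lesssim\lambda^{2/p-2}R^{3/2-(1+\gamma)/p+\epsilon},
\]
and the threshold $2(M_\tau+1)/(M_\tau+3)$ comes out of this computation, not from an unspecified $\beta(p)$.

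For Type C the level function is $F=s_1$, so the cone is flat, and the curved-cone theory is neither applicable nor needed. After scaling $\xi_1\mapsto R^{-1}\xi_1$, the following already give $p>2M_\tau/(M_\tau+1)$: Lemma \ref{lemma3.2} with $\sigma=R^{-\gamma}$ and $\rho=1/\gamma$, Plancherel, and Theorem \ref{multipliertomaximal}.
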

The estimates in this theorem will complete the proof of Theorem \ref{theorem1}.

 We denote by 
 \begin{equation}\label{dmiudef}
 \tilde m_\tau(\xi)=\widehat{d\tilde \mu_\tau}(\xi):=\int_{\mathbb{R}^{2}}e^{-i\bigl(\xi_1 x_1+\xi_2 x_2+\xi_3\Phi(x)\bigl)} \tilde \psi(x)\, \rho_\tau(x) \,\chi_1(x_1)\, dx
 \end{equation}
 the Fourier transform of the part $d\tilde \mu_\tau$ of the surface measure of $S$ corresponding to the density $ \tilde \psi(x)\, \rho_\tau(x) \,\chi_1(x_1).$

%%%%%%%%%%%%%%%%%%%%%%%%%%%%%%%%%%%%%%%%%%%%%%%%%%%%%%%%%%%%%%%%%%%%%%%%%%%%%%%%%%%%%%%%%%%%%%%%%%%%

\section{Auxiliary results}\label{auxiliary}
Given any smooth and bounded Fourier multiplier $m$ on $\mathbb{R}^{n}$, we shall denote by $T_m$ the associated Fourier multiplier operator defined by
\[\widehat{T_{m}f}:=m\hat{f}, \quad \quad f\in \mathcal{S}(\mathbb{R}^{n}).\]
 We shall denote by $\mathcal{M}_{m}$  the corresponding maximal operator, i.e.,
\[\mathcal{M}_{m}f(x):=\sup_{t>0}|T_{m(t\,\cdot)}f(x)|, \quad \quad f\in \mathcal{S}(\mathbb{R}^{n}).\]
By $\mathcal{F}^{-1}$ we shall denote the inverse Fourier transform. Note that the  maximal operator defined in \eqref{tMtau} can then be written as 
\[
\tilde{\mathcal{M}}_{\tau}=\mathcal{M}_{\tilde m_\tau},
\]
with $\tilde m_\tau$ defined in \eqref{dmiudef}.

The following easily proven properties will be useful: For every invertible linear transformation $A\in \rm{GL}(n,\Bbb R),$ we have
\begin{eqnarray}\label{mGLn}
\| \mathcal{F}^{-1}(m \circ A)\|_{L^1(\Bbb R^n)} &=&\| \mathcal{F}^{-1}m\|_{L^1(\Bbb R^n)};\\
\|\mathcal {M}_{m\circ A}\|_{L^{p}(\mathbb{R}^{n}) \rightarrow L^{p}(\mathbb{R}^{n}) } 
&=&\|\mathcal {M}_{m}\|_{L^{p}(\mathbb{R}^{n}) \rightarrow L^{p}(\mathbb{R}^n) } \qquad (1\le p\le \infty). \label{mGLn2}
\end{eqnarray}

%%%%%%%%%%%%%%%%%%%%%%%%%%%%%%%%%%%%%%%%%%%%%%%%%%%%%%%%%%%%%%%%%%%%%%%%%%%%%%%%%%%%%%%%%%%%%%%%%%%%

\subsection{Reduction of maximal estimates to multiplier estimates}\label{Mtomult}

The maximal estimates in Theorem \ref{theorem2.1} will be reduced to estimates of Fourier multipliers by means of \cite[Theorem 9.2]{BDIM25}:

\begin{theorem}\label{multipliertomaximal}
Let $m$ be a smooth Fourier multiplier supported in the annulus $1\le |\xi| \le 2$, and denote by $\dot{m}$ its radial derivative, i.e., $\dot{m}(\xi):=\xi\cdot\nabla m(\xi)=\frac {d}{dt}m(t\xi)|_{t=1}$. Let also $1<p<\infty$, $\epsilon>0$ and $\lambda \gg 1$ be given, and put $\tilde{m}:=\lambda^{-1} \dot{m}$.

Assume that the following two estimates hold true for $m$, and for $\tilde{m}$ in place of $m$ as well:
\[
\|T_{m}\|_{L^{p}(\mathbb{R}^{n}) \rightarrow L^{p}(\mathbb{R}^{n}) } \le A, \leqno{(i)}
\]
\[
\int_{|x|>r}|\mathcal{F}^{-1}m(x)|dx \le B (1+r)^{-\epsilon} \quad \quad \text{for every} \quad \quad r>0. \leqno{(ii)}
\]
Then
\[
\|\mathcal{M}_{m}\|_{L^{p}(\mathbb{R}^{n}) \rightarrow L^{p}(\mathbb{R}^{n}) } \le C_{p} \lambda^{\frac{1}{p}}A[\log(2+B/A)]^{|\frac{1}{p}-\frac{1}{2}|}.
\]
In particular, if $A \le 1/2$ and $B \ge 2$, then, for any $\delta>0$,
\[
\|\mathcal{M}_{m}\|_{L^{p}(\mathbb{R}^{n}) \rightarrow L^{p}(\mathbb{R}^{n}) } \le C_{p,\delta} \lambda^{\frac{1}{p}}(A^{1-\delta} +A\log B).
\]
\end{theorem}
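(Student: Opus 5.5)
The plan is to prove Theorem \ref{multipliertomaximal} in two stages. First I would control the supremum over a single dyadic range of $t$. Then I would combine the dyadic ranges using a vector-valued Calder\'on--Zygmund argument, in which the $L^1$-tail condition (ii) produces the logarithmic factor. By scaling it suffices to treat $t\in[1,2]$ in the local part.

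\emph{Local supremum.} Put $F(t):=T_{m(t\,\cdot)}f$. Then
\[
\partial_tF(t)=t^{-1}T_{\dot m(t\,\cdot)}f=\lambda t^{-1}T_{\tilde m(t\,\cdot)}f .
\]
The fundamental theorem of calculus applied to $|F|^p$ gives
\[
\sup_{1\le t\le 2}|F(t)|^p\le |F(1)|^p+p\int_1^2|F(t)|^{p-1}|\partial_tF(t)|\,dt .
\]
Integrating in $x$ and using H\"older's inequality, together with (i) for $m$ and for $\tilde m$, yields
\[
\Big\|\sup_{1\le t\le 2}|T_{m(t\,\cdot)}f|\Big\|_p^p\lesssim A^p\|f\|_p^p+\lambda A^{p}\|f\|_p^p .
\]
Here I use that (i) is invariant under the isotropic dilations $\xi\mapsto t\xi$, so it holds uniformly for $t\in[1,2]$. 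This is the source of the factor $\lambda^{1/p}A$.

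\emph{Combining scales.} Since $m(2^kt\,\cdot)$ is supported where $|\xi|\sim 2^{-k}$, I would insert Littlewood--Paley projections $P_k$ with $T_{m(2^kt\,\cdot)}=T_{m(2^kt\,\cdot)}\tilde P_k$. I would then view
\[
f\mapsto \big\{T_{m(2^kt\,\cdot)}\tilde P_kf\big\}_{k\in\Bbb Z,\,t\in[1,2]}
\]
as an operator into the mixed space $\ell^\infty_k(L^p_t\cap \lambda^{-1}W^{1,p}_t)$, and control $\sup_t$ via the local Sobolev bound above. On $L^2$, orthogonality of the $\tilde P_k$ together with Plancherel gives the bound $\lambda^{1/2}A$ with no logarithm, since only $\|m\|_\infty\le A$ and $\|\tilde m\|_\infty\le A$ enter.

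\emph{Endpoint bound.} At an endpoint ($H^1\to L^1$, or $L^\infty\to BMO$ by duality), I would split each kernel
\[
K_k=\mathcal F^{-1}[m(2^k\,\cdot)]=2^{-kn}(\mathcal F^{-1}m)(2^{-k}\,\cdot)
\]
at radius $2^kr_0$, where $r_0:=(B/A)^{1/\epsilon}$. The far part has $L^1$-norm at most $B(1+r_0)^{-\epsilon}\le A$ by (ii). For the near part, when testing against an atom on a ball of radius $2^j$, only the scales $k$ with $j-\log_2r_0\lesssim k\le j$ fail to see cancellation. Each of these contributes $O(A)$, and there are $O(\epsilon^{-1}\log(2+B/A))$ of them. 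The scales $k>j$ are handled by the mean-zero property of the atom together with smoothness, which follows from the frequency support in the annulus. This would give the endpoint bound $\lambda^{1/p}A\log(2+B/A)$. Interpolating between this endpoint and the logarithm-free $L^2$ bound would then give the factor $[\log(2+B/A)]^{|1/p-1/2|}$.

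\emph{Final statement.} The ``in particular'' follows by elementary estimates. If $\log B\ge A^{-\delta}$, then $A\log(2+B/A)\lesssim A\log B$. Otherwise $\log(2+B/A)\lesssim \log B+\log(1/A)\lesssim A^{-\delta}$, which gives the $A^{1-\delta}$ term.

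\emph{Main obstacle.} I expect the hardest step to be the endpoint estimate. There one has to organize the near-diagonal scales carefully in the vector-valued setting, with the $t$-Sobolev norm built in, so that the count of bad scales is genuinely logarithmic in $B/A$ and does not grow linearly. Uniformity over $t\in[1,2]$ must also be maintained via the $\tilde m$ hypotheses.
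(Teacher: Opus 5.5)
\textbf{Verdict: genuine gap.} Your first two steps are sound. The fundamental-theorem-of-calculus argument in $t$ gives $\|\sup_{1\le t\le 2}|T_{m(t\cdot)}f|\|_p\lesssim \lambda^{1/p}A\|f\|_p$, uniformly under dilations. The log-free $L^2$ bound $\lambda^{1/2}A$ follows from orthogonality, since $\|m\|_\infty,\|\tilde m\|_\infty\le A$. Your deduction of the ``in particular'' statement is also fine.

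The gap is the endpoint step. Hypothesis (i) is an $L^p$ bound at one fixed exponent $p\in(1,\infty)$. Nothing in (i)--(ii) bounds a single-scale operator $T_{m(2^k\cdot)}$ from $H^1$ to $L^1$ (or from $L^\infty$ to $BMO$) by $O(A)$. At the $L^1$ level the only information is $\|\mathcal F^{-1}m\|_{L^1}\le B$. Passing from $L^p$ to $L^1$ by H\"older on the support of the truncated kernel costs a power of $r_0=(B/A)^{1/\epsilon}$, not a logarithm.

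This is not a fixable detail. Take $m(\xi)=e^{i\mu|\xi|}\chi(\xi)$ with $\chi$ supported in the annulus, $n\ge 2$ and $1<p<2$. Then $A\approx \mu^{(n-1)(1/p-1/2)}$ and $\log(2+B/A)\approx\log\mu$, while $\|T_m\|_{H^1\to L^1}\approx\mu^{(n-1)/2}$. So your claim that each bad scale contributes $O(A)$ fails already for a single scale. Oscillatory multipliers of exactly this kind (the FIO-cone multipliers) are what the paper feeds into this theorem.

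Even granting such an endpoint, the interpolation arithmetic is off. Complex interpolation between a full logarithm at $H^1$ and none at $L^2$ gives the power $\theta=2/p-1=2(1/p-1/2)$ at $L^p$, not $|1/p-1/2|$. More fundamentally, when the only operator bound you are given lives at exponent $p$, you cannot reach the conclusion at $p$ by interpolating from endpoints at $1$ (or $\infty$) and $2$ where you have no hypothesis.

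The paper does not prove this statement; it imports it from \cite[Theorem 9.2]{BDIM25}. Bounds of the form $A[\log(2+B/A)]^{|1/p-1/2|}$ go back to Seeger \cite{See}, and such arguments work at the exponent $p$ itself.

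For $p\ge 2$ no logarithm is needed at all. Use $\sup_k\le(\sum_k|\cdot|^p)^{1/p}$, apply your local bound to $\tilde P_kf$, and use $\sum_k\|\tilde P_kf\|_p^p\lesssim\|f\|_p^p$; this gives $\lambda^{1/p}A$ directly.

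For $1<p<2$, the $O(\log(2+B/A))$ scales near the scale of a given cube see neither the cancellation nor the tail (ii). These must be controlled with (i) at exponent $p$, and then recombined using Littlewood--Paley orthogonality. H\"older's inequality over these $\sim\log(2+B/A)$ groups of scales costs exactly $[\log(2+B/A)]^{1/p-1/2}$. That is the missing idea, and an $H^1$ or $BMO$ endpoint cannot replace it.
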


Since the constant $B$  in condition (ii) of the theorem  enters only logarithmically in the bound for the maximal operator, the following simple lemma gives in many cases an easily checkable  sufficient criterion  for verifying condition (ii).

\begin{lemma}\label{generalkernelestimate}
	Let $R>0, $ and let $\{m_\lambda\}_{\la\ge 1}$ be a family of smooth functions on $\Bbb R^n$ supported in the ball $B(0,R)$ and satisfying growth estimates 	 
\begin{equation}\label{mgrowth}
 |\partial_\xi^\alpha m_\lambda(\xi)| \leq C_\alpha \lambda^{C(1+|\alpha|)}
\end{equation}
	for every  $\alpha\in\mathbb{N}^n$, $\xi\in\mathbb{R}^n$ and $\lambda\gg 1$.  Then there exist constants $C_1,C_2>0$ such that
	$$ \int_{|x|>r} |\mathcal{F}^{-1}{m}_{\lambda}(x)|dx \leq C_2 \lambda^{C_1}(1+r)^{-1}$$
	for all $r>0$, $\lambda\gg 1$.
\end{lemma}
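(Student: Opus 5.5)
The plan is to write $K:=\mathcal{F}^{-1}m_\lambda$ and obtain pointwise decay of $K$ by integrating by parts in $\xi$. The growth bound \eqref{mgrowth} controls each derivative, so the decay of $K$ trades off against powers of $\lambda$. The resulting pointwise bound is then integrated over the region $|x|>r$.

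\textbf{Pointwise bounds.} For every multi-index $\beta$,
\[
x^\beta K(x)=c_\beta\int e^{ix\cdot\xi}\,\partial_\xi^\beta m_\lambda(\xi)\,d\xi .
\]
Since $m_\lambda$ is supported in $B(0,R)$, this gives $|x^\beta K(x)|\le C_\beta R^n\lambda^{C(1+|\beta|)}$. Summing over $|\beta|\le N$ yields
\[
|K(x)|\lesssim_N \lambda^{C(1+N)}(1+|x|)^{-N}
\]
for every $N\ge 0$. We use this with $N=n+1$, and also with $N=0$, which is simply $|K|\le \|m_\lambda\|_{L^1}\lesssim \lambda^{C}$.

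\textbf{Integration.} Integrating the $N=n+1$ bound in polar coordinates gives
\[
\int_{|x|>r}|K(x)|\,dx\lesssim \lambda^{C(n+2)}\int_r^\infty (1+s)^{-n-1}s^{n-1}\,ds\lesssim \lambda^{C(n+2)}(1+r)^{-1}.
\]
This holds uniformly in $r>0$: for $r\le1$ the integral is bounded by a constant, and for $r\ge 1$ it is $\lesssim r^{-1}$. We may therefore take $C_1:=C(n+2)$, with $C_2$ depending only on $n$, $R$ and the constants $C_\alpha$ for $|\alpha|\le n+1$.

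\textbf{Main point.} There is no genuine obstacle here. The only thing to check is that the $\lambda$-loss is polynomial with an exponent independent of $r$. This holds because only finitely many derivatives (namely $n+1$) are ever used, and the support radius $R$ is fixed. In the later applications, $B$ in Theorem~\ref{multipliertomaximal} will then be at most polynomial in $\lambda$, so it contributes only a $\log\lambda$ factor.
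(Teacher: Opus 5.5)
Your proof is correct and follows essentially the same route as the paper: integrate by parts in $\xi$, using the support in $B(0,R)$ and the growth bounds, to get $|\mathcal{F}^{-1}m_\lambda(x)|\lesssim \lambda^{C(1+N)}|x|^{-N}$, then take $N=n+1$ and integrate over $|x|>r$, giving $C_1=C(n+2)$. The only cosmetic difference is that you use the weight $(1+|x|)^{-N}$ throughout, which covers $r<1$ directly; the paper instead bounds $\int_{B(0,1)}$ separately via the $\alpha=0$ estimate and then assumes $r\ge 1$.
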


\begin{proof}
Estimate 	\eqref{mgrowth} with  $\alpha=0$ immediately implies that 
$$
\int_{B(0,1)} |\mathcal{F}^{-1}{m}_{\lambda}(x)|\, dx \lesssim \lambda^C,
$$
so that we may assume without loss of generality that $r\geq1$.

We claim that for any $N>0$, there exists a constant $C_N>0$ such that
	\begin{equation}\label{5may26}
		|\mathcal{F}^{-1}{m}_{\lambda}(x)|\leq C_N R^n \lambda^C \left(1+\frac{|x|}{\lambda^C}\right)^{-N}.
	\end{equation}
	For $|x|\leq \lambda^C$, this is easy to see, so let us assume $|x|> \lambda^C$, and assume, say, $|x_1|\sim |x|$. Integrating by parts in $\xi_1$, we obtain
	\begin{eqnarray*}
		|\mathcal{F}^{-1}{m}_{\lambda}(x)|&=& c_n\biggl| \int_{B(0,R)} e^{ix\xi}m_\lambda(\xi) \,d\xi \biggl| \\
		&\lesssim& \int_{B(0,R)} |x_1|^{-N} |\partial_{\xi_1}^N m_\lambda(\xi)|\,d\xi \\
		&\lesssim& |x|^{-N} R^n\lambda^{C(1+N)} = R^n \lambda^C \left(\frac{|x|}{\lambda^C}\right)^{-N},
	\end{eqnarray*}
	from which the claim \eqref{5may26} follows. For $N=n+1$, \eqref{5may26} gives
\begin{eqnarray*}
		\int_{|x|>r} |\mathcal{F}^{-1}{m}_{\lambda}(x)|\,dx
		&\lesssim& R^n\lambda^{C(n+2)} \int_{|x|>r} |x|^{-n-1}\,dx 
		\lesssim R^n \lambda^{C(n+2)} r^{-1}.
\end{eqnarray*}
\end{proof}

%%%%%%%%%%%%%%%%%%%%%%%%%%%%%%%%%%%%%%%%%%%%%%%%%%%%%%%%%%%%%%%%%%%%%%%%%%%%%%%%%%%%

\subsection{$L^1$-estimates for the inverse Fourier transform of a class of oscillatory multipliers}\label{oscillm}
\begin{lemma}\label{lemma3.2}
Let\begin{align}
 m(\eta)&=  e^{i\lambda \eta_{3}\bigl(c_{1}  (\frac{\eta_{1}}{\eta_{3}})^{\alpha_{1}} +c_{2} \sigma^{\rho} (\frac{\eta_{1}}{\eta_{3}})^{\alpha_{2}}\frac{\eta_{2}}{\eta_{3}} + \sigma \Psi (\frac{\eta_{1}}{\eta_{3}},\frac{\eta_{2}}{\eta_{3}})\bigl)}
 a( \eta)\beta(|\eta|)\chi_{1}(\frac{\eta_{1}}{\eta_{3}})\chi_{2}(\frac{\eta_{2}}{\eta_{3}}), \qquad \eta \in \Bbb R^3. \nonumber
\end{align}
Here, we assume that $a$ is smooth and $\beta$, $\chi_{1}$ and $\chi_{2}$ are smooth cut-off functions with  $\supp \beta \subset [1,2]$, \supp $\chi_{1} \subset [1,2]$  and $\supp \chi_{2} \subset [-1, 1],$ and the real-valued function  $\Psi$ is assumed to be  smooth  on  $ \supp \chi_{1}\times  \supp\chi_{2}.$ In addition, we assume that $c_{1},c_{2},\alpha_{1},\alpha_{2},\rho$ are real numbers. The parameters $\lambda$, $\sigma$ and $\rho$ are assumed to satisfy $\lambda \gg 1$,  $\sigma \ll 1$,   $\lambda \sigma  \ge 1$ and  $\rho\ge \frac{1}{2}. $ Then we can estimate
\[  \| \mathcal{F}^{-1}m\|_{L^1(\Bbb R^3)} \le C \lambda^{\frac{1}{2}}(\lambda \sigma)^{\frac{1}{2}}.\]
holds true, where the constant $C$ does not depend on $\la, \sigma $ and $\rho.$

\end{lemma}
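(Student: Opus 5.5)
We want $\|\mathcal F^{-1}m\|_{L^1}\lesssim \lambda^{1/2}(\lambda\sigma)^{1/2}$. The plan is to simplify the phase by a linear change of variables, compute the kernel by stationary phase in one variable, and estimate its $L^1$ norm with a weighted $L^2$ bound in two variables.

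\emph{Step 1: remove the linear term.} By \eqref{mGLn}, $L^1$ norms of inverse Fourier transforms are invariant under $\eta\mapsto A\eta$. The phase is $\lambda\eta_3 f(\eta_1/\eta_3,\eta_2/\eta_3)$, homogeneous of degree one. The term $c_2\sigma^\rho(\eta_1/\eta_3)^{\alpha_2}\eta_2/\eta_3$ is linear in $\eta_2$ but carries the weight $(\eta_1/\eta_3)^{\alpha_2}$, so it cannot be removed by a linear map. Instead I would write $s=\eta_1/\eta_3\in[1,2]$ and $u=\eta_2/\eta_3\in[-1,1]$, and use the polar-type coordinates $(\eta_3,s,u)$ with Jacobian $\eta_3^2\sim1$. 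Then
\[
\mathcal F^{-1}m(x)=\int e^{i\eta_3\left(x_3+x_1 s+x_2 u+\lambda c_1 s^{\alpha_1}+\lambda c_2\sigma^\rho s^{\alpha_2}u+\lambda\sigma\Psi(s,u)\right)}\tilde a\,d\eta_3\,ds\,du .
\]
The integration in $\eta_3$ over $[1,2]$ gives rapid decay in $|h(x,s,u)|$, where $h:=x_3+x_1s+x_2u+\lambda\phi(s,u)$.

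\emph{Step 2: size of the kernel.} We have $|\nabla_{s,u}\lambda\phi|\lesssim\lambda$, and its $u$-derivative is only $O(\lambda\sigma^\rho+\lambda\sigma)$.
\begin{itemize}
\item In the $u$-variable the phase is $\eta_3\bigl(x_2+\lambda c_2\sigma^\rho s^{\alpha_2}\bigr)u+\eta_3\lambda\sigma\Psi(s,u)$. Its second derivative in $u$ is at most $\lambda\sigma$.
\item In $s$ the phase has second derivative of size up to $\lambda$.
\end{itemize}
The kernel is essentially concentrated on the set
\[
E=\bigl\{x:\ |x_1|\lesssim\lambda,\ |x_2+\lambda c_2\sigma^\rho s^{\alpha_2}|\lesssim\lambda\sigma\text{ for some }s\in[1,2],\ |x_3|\lesssim\lambda\bigr\},
\]
with rapid decay off an $O(1)$-enlargement of $E$. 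This follows by integrating by parts in $\eta_3$ (controlling $x_3$) and then in $u$ and $s$.

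\emph{Step 3: $L^1$ bound via Cauchy--Schwarz.} I would avoid computing $|E|$ directly and instead estimate in $(x_1,x_3)$ for fixed $x_2$, or, better, argue as follows.
\begin{itemize}
\item Freeze $u$ and $\eta_2$ and work in the two variables $(\eta_1,\eta_3)$. There $m$ is a curved cone multiplier in $\mathbb R^2$ with phase of size $\lambda$.
\item For such a multiplier the standard estimate is $\|\mathcal F^{-1}\|_{L^1(\mathbb R^2)}\lesssim\lambda^{1/2}$. It comes from Plancherel on a set of measure $\lambda$ times a rapidly decaying tail, i.e.\ $\lambda^{1/2}$ from Cauchy--Schwarz.
\item The remaining $\eta_2$-dependence is a phase of the form $\lambda\sigma^\rho g(s)u+\lambda\sigma\Psi$. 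After the shift $x_2\mapsto x_2+\lambda c_2\sigma^\rho s^{\alpha_2}$, it has derivatives of size $\lambda\sigma$ in $u$. This gives an extra factor $(\lambda\sigma)^{1/2}$ by the same Plancherel/Cauchy--Schwarz argument in $x_2$ over an interval of length $\lambda\sigma$.
\end{itemize}
The shift depends on $s$, hence on $\eta_1/\eta_3$. To handle this, I would run Cauchy--Schwarz on the full $x$-space with the weight
\[
w(x)=\Bigl(1+\frac{|x_3+x_1s_*(x)+\cdots|}{1}\Bigr)\Bigl(1+\frac{|x_1-\lambda\partial_s(\cdots)|}{\lambda^{1/2}}\Bigr)\Bigl(1+\frac{|x_2+\lambda c_2\sigma^\rho s^{\alpha_2}|}{\lambda\sigma}\Bigr).
\]
A cleaner route is to use the weighted $L^2$ bound $\|(1+|L_j|)^{k}\mathcal F^{-1}m\|_2\lesssim1$. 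Here the $L_j$ are the vector fields $\eta_3\partial_{\eta_3}$, $\lambda^{-1/2}$-rescaled $\partial_s$, and $(\lambda\sigma)^{-1}$-rescaled $\partial_u$ applied to $m$ after conjugating away the phase. The total measure of the effective support is then $\lambda^{1/2}\cdot\lambda^{1/2}\cdot\lambda\sigma/\lambda^{1/2}$. The bookkeeping has to land on exactly $\lambda\cdot\lambda\sigma$, so that Cauchy--Schwarz yields $\lambda^{1/2}(\lambda\sigma)^{1/2}$.

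\emph{Main obstacle.} The main obstacle is this last bookkeeping. The phase's $\partial_s$-derivatives grow like $\lambda$, and the $\sigma^\rho$-term has an $s$-dependent coefficient; both must be absorbed without losing more than $\lambda^{1/2}$. The hypotheses $\rho\ge1/2$ and $\lambda\sigma\ge1$ should be exactly what is needed. They guarantee $\lambda\sigma^\rho\cdot|\partial_s|\lesssim(\lambda\cdot\lambda\sigma)^{1/2}$-type compatibility: $\lambda\sigma^{\rho}\le\lambda\sigma^{1/2}=\lambda^{1/2}(\lambda\sigma)^{1/2}$, so the cross term costs no more than the claimed bound. I would first try applying Plancherel with weights $(1+|x_2|/(\lambda\sigma^{1/2}\lambda^{1/2}))^2$ and check that each derivative of the amplitude is uniformly bounded in $\rho$.
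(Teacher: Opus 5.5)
Your proposal has a genuine gap. It never establishes the central estimate: the step meant to produce $\lambda^{1/2}(\lambda\sigma)^{1/2}$ is explicitly deferred as bookkeeping that ``has to land'' on the answer, and the pieces you do write down do not fit together.
\begin{itemize}
\item The set $E$ in Step 2 satisfies $|E|\gtrsim\lambda^2\cdot\lambda\sigma$. Cauchy--Schwarz over $E$ therefore gives at best $\lambda^{3/2}\sigma^{1/2}$, which loses a factor $\lambda^{1/2}$ against the claim.
\item The instruction to ``freeze $u$ and $\eta_2$'' is not coherent. Since $u=\eta_2/\eta_3$, fixing both fixes $\eta_3$ as well. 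Independently of that, $L^1(\mathbb R^3)$ norms of kernels do not factor over fibres unless the multiplier is a tensor product.
\item Your final measure count is $\lambda^{1/2}\cdot\lambda^{1/2}\cdot\lambda\sigma/\lambda^{1/2}=\lambda^{3/2}\sigma$. That is not the $\lambda^2\sigma$ you then say is needed.
\item Most importantly, the proposed weights involve $s=\eta_1/\eta_3$, through the shift $\lambda c_2\sigma^\rho s^{\alpha_2}$ and through $x_1-\lambda\partial_s(\cdots)$. They are therefore not functions of $x$ alone. No bound $\|w\,\mathcal F^{-1}m\|_2\lesssim1$ with $\int w^{-2}\lesssim\lambda^2\sigma$ is actually produced.
\end{itemize}

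The missing idea is to decompose the angular variables at their natural uncertainty scales and then use the triangle inequality; no Cauchy--Schwarz is needed.
\begin{itemize}
\item Cut $(\eta_1/\eta_3,\eta_2/\eta_3)\in[1,2]\times[-1,1]$ into $O(\lambda^{1/2}(\lambda\sigma)^{1/2})$ boxes of size $\lambda^{-1/2}\times(\lambda\sigma)^{-1/2}$ centred at points $(v_1,v_2)$. This is where $\lambda\sigma\ge1$ is used.
\item On each piece, make the linear change $\eta_1=\lambda^{-1/2}\eta_1'+v_1\eta_3'$, $\eta_2=(\lambda\sigma)^{-1/2}\eta_2'+v_2\eta_3'$, $\eta_3=\eta_3'$. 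This leaves $\|\mathcal F^{-1}\cdot\|_{L^1}$ unchanged by \eqref{mGLn}.
\item Taylor expand $F(s_1,s_2)=c_1s_1^{\alpha_1}+c_2\sigma^\rho s_1^{\alpha_2}s_2+\sigma\Psi(s_1,s_2)$ at $(v_1,v_2)$. The zeroth and first order terms are linear in $\eta'$, so they only translate the kernel.
\item The quadratic coefficients are $\lambda\cdot\lambda^{-1}F_{11}=O(1)$, $\lambda(\lambda\sigma)^{-1}F_{22}=O(1)$, and $\lambda\cdot\lambda^{-1/2}(\lambda\sigma)^{-1/2}F_{12}=O(\sigma^{\rho-1/2}+\sigma^{1/2})=O(1)$. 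The last one is exactly where $\rho\ge1/2$ enters. Higher order terms are smaller still.
\item Each piece is thus a modulated bump with uniformly bounded $\eta'$-derivatives. Integration by parts gives each kernel $L^1$ norm $O(1)$, and summing over the boxes gives the lemma.
\end{itemize}
Your observation $\lambda\sigma^\rho\le\lambda^{1/2}(\lambda\sigma)^{1/2}$ is the same inequality as the bound on the mixed coefficient. Without the decomposition, however, it has nothing to act on.
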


\begin{proof} 
We first observe that for $\eta\in \supp m$ we have $|\eta_3|\sim 1.$ 

By means of a smooth partition  of unity we can  decompose $m$ as $\sum_{(v_{1},v_{2})}m_{v_{1},v_{2}}$, where each  $m_{v_{1},v_{2}}$ is of the form
$$
m_{v_{1},v_{2}}(\eta)=\chi_0 \Big(\la^{\frac 12}\big(\frac{\eta_1}{\eta_3}-v_1\big) \Big) \chi_0 \Big((\la\sigma)^{\frac 12}\big(\frac{\eta_2}{\eta_3}-v_2\big) \Big) m(\eta),\qquad (v_{1},v_{2}) \in [1,2] \times [-1,1],
$$
with a suitable smooth cut-off  function $\chi_0$  supported in $[-1,1],$ so that $m_{v_1,v_2}$
is smooth and supported in $\{\eta: 1 \le |\eta| \le 2, |\frac{\eta_{1}}{\eta_{3}}-v_{1}| \le \lambda^{- \frac{1}{2}}, |\frac{\eta_{2}}{\eta_{3}}-v_{2}| \le (\lambda \sigma)^{-\frac{1}{2}}\}.$
Since the number  of such pairs  $(v_{1},v_{2})$ is bounded by a constant multiple of $\lambda^{\frac{1}{2}} (\lambda \sigma)^{\frac{1}{2}}$, Lemma \ref{lemma3.2} will follow  if we can prove that for each $(v_{1},v_{2})$,
\begin{equation}\label{mv1v2}
\|\mathcal{F}^{-1}m_{v_{1},v_{2}}\|_{L^{1}} \le C.
\end{equation}

For a fixed $m_{v_{1},v_{2}}$, we perform the linear change of variables $\eta_{1}=\lambda^{-\frac{1}{2}} \eta_{1}^{\prime} + v_{1}\eta^{\prime}_{3}$, $\eta_{2}=(\lambda \sigma)^{-\frac{1}{2}} \eta_{2}^{\prime} + v_{2}\eta^{\prime}_{3}$, $\eta_{3} = \eta^{\prime}_{3}$, where  $\eta^{\prime}=(\eta_{1}^{\prime},\eta^{\prime}_{2},\eta^{\prime}_{3})$, so that
$\chi_0 \Big(\la^{\frac 12}\big(\frac{\eta_1}{\eta_3}-v_1\big) \Big) \chi_0 \Big((\la\sigma)^{\frac 12}\big(\frac{\eta_2}{\eta_3}-v_2\big)\Big)=\chi_0(\eta'_1)\chi_0(\eta'_2).$ 

Let $\tilde m_{v_1,v_2}$ express $m_{v_1,v_2}$ in these new coordinates, i.e., $\tilde m_{v_1,v_2}(\eta')=m_{v_1,v_2}(\eta). $  Then by
\eqref{mGLn}, we have $\|\mathcal{F}^{-1}m_{v_{1},v_{2}}\|_{L^{1}}=\|\mathcal{F}^{-1}\tilde m_{v_{1},v_{2}}\|_{L^{1}}.$

For convenience, putting $s_1:=\eta_1/\eta_3, \, s_2:=\eta_2/\eta_3,$ we also denote by
\[
F(s_1,s_2) =   c_{1}  s_1^{\alpha_{1}} +c_{2}\sigma^{\rho} s_1^{\alpha_{2}}s_2 + \sigma \Psi (s_1,s_2)
\]
the main factor of the  phase in $m(\eta).$ By Taylor expansion of $F$ at $(v_{1},v_{2})$, we obtain that  in the new coordinates $\eta',$ the complete phase in $\tilde m$ can written as 
\begin{align}
&\lambda \eta^{\prime}_{3}F\biggl(\lambda^{-\frac{1}{2}}\frac{\eta^{\prime}_{1}}{\eta^{\prime}_{3}} +v_{1},(\lambda \sigma)^{-\frac{1}{2}}\frac{\eta^{\prime}_{2}}{\eta^{\prime}_{3}}+v_{2}\biggl) \nonumber\\
&=\lambda \eta^{\prime}_{3} F(v_{1},v_{2})+ \lambda \cdot \lambda^{-\frac{1}{2}} \eta^{\prime}_{1} F_{1}(v_{1},v_{2}) + \lambda \cdot (\lambda\sigma)^{-\frac{1}{2}} \eta^{\prime}_{2} F_{2}(v_{1},v_{2}) + \lambda \cdot \lambda^{-1} \frac{(\eta^{\prime}_{1})^{2}}{2\eta^{\prime}_{3}} F_{11}(v_{1},v_{2}) \nonumber\\
&\quad \quad +2\lambda \cdot \lambda^{-\frac{1}{2}} \cdot (\lambda \sigma)^{-\frac{1}{2}}\frac{\eta^{\prime}_{1}\eta^{\prime}_{2}}{2 \eta^{\prime}_{3}} F_{12}(v_{1},v_{2})+ \lambda \cdot (\lambda \sigma)^{-1} \frac{(\eta^{\prime}_{2})^{2}}{2\eta^{\prime}_{3}} F_{22}(v_{1},v_{2})+\lambda \eta^{\prime}_{3}R(\frac{\eta^{\prime}_{1}}{\eta^{\prime}_{3}}, \frac{\eta^{\prime}_{1}}{\eta^{\prime}_{3}}), \nonumber
\end{align}

where we have collected all the remainder terms in $R$.  Note  that 
\[|\lambda \cdot \lambda^{-\frac{1}{2}} (\lambda \sigma)^{-\frac{1}{2}} F_{12}(v_{1},v_{2})| \lesssim \lambda \cdot \lambda^{-\frac{1}{2}} (\lambda \sigma)^{-\frac{1}{2}}(\sigma^{\rho} +\sigma)= \sigma^{\rho-\frac{1}{2}} +\sigma^{\frac 12}\lesssim 1,  \]
\[|\lambda \cdot (\lambda \sigma)^{-1} F_{22}(v_{1},v_{2})| \lesssim \lambda \cdot   (\lambda \sigma)^{-1}\sigma  \lesssim   1, \]
 and that the remainder term $\la R$ only picks up higher order terms which are even smaller.

Given this, it is now easily seen that  for each multi-index $\alpha$,
\begin{align}
&\biggl|\partial_{\eta^{\prime}}^{\alpha} \biggl(\lambda \cdot \lambda^{-1} \frac{(\eta^{\prime}_{1})^{2}}{2\eta^{\prime}_{3}} F_{11}(v_{1},v_{2})  +2\lambda \cdot \lambda^{-\frac{1}{2}} \cdot (\lambda \sigma)^{-\frac{1}{2}}\frac{\eta^{\prime}_{1}\eta^{\prime}_{2}}{2 \eta^{\prime}_{3}} F_{12}(v_{1},v_{2})\nonumber\\
&\quad \quad + \lambda \cdot (\lambda \sigma)^{-1} \frac{(\eta^{\prime}_{2})^{2}}{2\eta^{\prime}_{3}} F_{22}(v_{1},v_{2}) +\lambda \eta^{\prime}_{3}R(\frac{\eta^{\prime}_{1}}{\eta^{\prime}_{3}}, \frac{\eta^{\prime}_{1}}{\eta^{\prime}_{3}}) \biggl)\biggl| \lesssim 1.\nonumber
\end{align}

Then integrations by parts with respect to the $\eta^{\prime}$-variable in $\tilde m_{v_{1},v_{2}}$ yield that, for each positive integer $N$, $|\mathcal{F}^{-1}\tilde m_{v_{1},v_{2}}(x)|$ can be dominated by
\[C_N\biggl(1+\bigl|x_{1} +\lambda^{\frac 12} F_{1}(v_{1},v_{2})\bigl|+\bigl|x_{2}+\la(\lambda \sigma)^{-\frac{1}{2}}F_{2}(v_{1},v_{2})\bigl| + \bigl|x_{3}+\lambda F(v_{1},v_{2})\bigl|\biggl)^{-N},
\]
which implies that $ \|\mathcal{F}^{-1}\tilde m_{v_{1},v_{2}}\|_{L^{1}} \le C,$ hence \eqref{mv1v2}.
\end{proof}

%%%%%%%%%%%%%%%%%%%%%%%%%%%%%%%%%%%%%%%%%%%%%%%%%%%%%%%%%%%%%%%%%%%%%%%%%%%%%%%%%%%%%%%%%%%%%%%%%%%%

 \section{Type A:  Both $\partial_1\Phi$ and $\partial_2\Phi$ vanish along $\gamma_{\tau}$}\label{caseAi}

In this section, we shall prove Theorem \ref{theorem2.1}(a). So assume here that $\tau\in R$ parametrizes a fixed root $\gamma_\tau$ of Type A, and let $\tilde m_\tau$ be the associated Fourier multiplier defined in \eqref{dmiudef}, for which $\tilde{\mathcal{M}}_{\tau}=\mathcal{M}_{\tilde m_\tau}.$ We shall also abbreviate the multiplicity $T_\tau$ of this root in $\Phi$ by $T:=T_\tau,$ and recall  from Remark \ref{TypeT}(a) that $T\ge 2.$

We shall again  use the abbreviation $a:=\ka_2/\ka_1,$ and note that $a\ne 1,$ since we are assuming that $\ka_1\ne \ka_2.$

Since $\chi_1$ is supported in $[1/2,2],$ so that $x_1\sim 1$ on the support of  the amplitude  $\tilde \psi(x)\, \rho_\tau(x) \,\chi_1(x_1)$ of  the oscillatory integral defining $\tilde m_\tau,$  and since 
$|x_2-\tau x_1^a|\lesssim \ve$ (compare \eqref{rhotau}, where $a=\ka_2/\ka_1$), in a first step we perform a dyadic decomposition 
$$
\tilde \psi(x)\, \rho_\tau(x) \,\chi_1(x_1)=\sum\limits_{l\ge l_0} \tilde \chi_1\big(2^{l}(x_{2}-\tau  x_{1}^{a})\big)\tilde \psi(x)\, \chi_1(x_1),
$$
 where $\tilde \chi_1$ is a suitable smooth cut-off function  supported in $[-2,-1]\cup[1,2]$, and where we may assume that the positive integer $l_0$ is sufficiently large by assuming $\epsilon$ to be sufficiently small.
  Accordingly, we decompose
\begin{eqnarray*}
\tilde m_\tau(\xi)&=& \sum_{l \ge l_0} \int_{\mathbb{R}^2} e^{-i\big(\xi_1x_1+\xi_2x_2 + \xi_3\Phi(x)\big)} \tilde\chi_1\big(2^{l}(x_{2}-\tau  x_{1}^{a})\big) \tilde \psi(x) \chi_1(x_1)\, dx \\
&=& \sum_{l \ge l_{0}} 2^{-l}\int_{\mathbb{R}^2} e^{-i\big(\xi_1x_1+\xi_2\tau x_1^{a} +\xi_2 2^{-l} x_2 + \xi_3\Phi(x_1,2^{-l}x_2+\tau  x_1^{a})\big)}  \psi(x_{1},2^{-l}x_{2}) \chi_1(x_1)\tilde\chi_1(x_2)  \, dx,\nonumber
\end{eqnarray*}
for a smooth function $\psi.$ For $0<\sigma\ll 1$ let us put 
$$
\tilde m_{\tau,\sigma}(\xi):= \int_{\mathbb{R}^2} e^{-i\big(\xi_1x_1+\xi_2\tau x_1^{a} +\xi_2 \sigma x_2 + \xi_3\Phi(x_1,\sigma x_2+\tau  x_1^{a})\big)}  \psi(x_1,\sigma x_2) \chi_1(x_1)\tilde\chi_1(x_2)  \, dx.
$$
Then  $\tilde m_\tau(\xi)= \sum_{l \ge l_0} 2^{-l}\tilde m_{\tau,2^{-l}}(\xi),$ hence
\begin{equation}\label{dyadicl}
\|\mathcal M_{\tilde m_{\tau}}\|_{L^{p}\rightarrow L^{p}}\le  \sum_{l \ge l_0} 2^{-l}\|\mathcal M_{\tilde m_{\tau,2^{-l}}}\|_{L^{p}\rightarrow L^{p}}.
\end{equation}

%%%%%%%%%%%%%%%%%%%%%%%%%%%%%%%%%%%%%%%%%%%%%%%%%%%%%%%%%%%%%%%%%%%%%%%%%%%%%%%%%%%%%%%%%%%%%%%%%%%%
 
 \subsection{$L^p$-bounds for $\tilde{\mathcal{M}}_\tau$ when $\tau=0$ }\label{TypeAlambda=0}
 
  If $\tau=0$ and $\sigma=2^{-l},$  then $\Phi(x_1,\sigma x_2+\tau  x_1^{a})=\sigma^{T_0} \Phi_\sigma(x),$ with $\Phi_\sigma$ as defined in Lemma \ref{HphinA}. Let us then look at the {\it re-scaled} Fourier multiplier
  $$
 m_{\tau,\sigma}(\xi):=\tilde m_{\tau,\sigma}(\xi_1, \sigma^{-1}\xi_2,\sigma^{-T_0}\xi_3)= \int_{\mathbb{R}^2} e^{-i\big(\xi_1x_1+\xi_2 x_2 + \xi_3\Phi_\sigma(x)\big)}  \psi(x_1,\sigma x_2) \chi_1(x_1)\tilde\chi_1(x_2)  \, dx,
$$
and note that  this is essentially the Fourier transform of the surface measure of the graph of $\Phi_\sigma,$ multiplied with the density defined by 
the compactly supported amplitude  $\psi(x_1,\sigma x_2) \chi_1(x_1)\tilde\chi_1(x_2).$ Since  $|\mathrm{H}\Phi_\sigma(x)|\sim 1$  by Lemma \ref{HphinA}(ii), this surface has non-vanishing Gaussian curvature. Thus,  again by  \cite{Gr81,F86}, we see that for any $p>3/2$
$$
\|\mathcal M_{m_{\tau,\sigma}}\|_{L^{p}\rightarrow L^{p}}\lesssim 1,
$$
uniformly in $\sigma$ for  $\sigma$ sufficiently small, which we can guarantee for $\sigma=2^{-l}$ with $l\ge l_0$ by choosing $l_0$ sufficiently large.  But $\|\mathcal M_{\tilde m_{\tau,\sigma}}\|_{L^{p}\rightarrow L^{p}}=\|\mathcal M_{ m_{\tau,\sigma}}\|_{L^{p}\rightarrow L^{p}}$ by \eqref{mGLn2}, and thus in combination with \eqref{dyadicl} we see that for $\tau=0$
$$
\| \tilde {\mathcal{M}}_{\tau}\|_{L^p\rightarrow L^p}=\|\mathcal M_{\tilde m_{\tau}}\|_{L^p\rightarrow L^p}<\infty
$$ for every $p>3/2.$

%%%%%%%%%%%%%%%%%%%%%%%%%%%%%%%%%%%%%%%%%%%%%%%%%%%%%%%%%%%%%%%%%%%%%%%%%%%%%%%%%%%%%%%%%%%%%%%%%%%%

\subsection{$L^p$-bounds for $\tilde{\mathcal{M}}_\tau$ when  $\tau \in R^{\prime}$ }
Assume now that $\tau\ne 0.$ By \eqref{PhiTlambda}, we have 
$$
\Phi(x_1,\sigma x_2+\tau  x_1^{a})=\sigma^Tx_2^TW_{\tau}(x_1,\sigma x_2+\tau  x_1^{a}):= \sigma^Tx_2^T \,\widetilde{W}(x_1,\sigma x_2),
$$
so that we may write
$$
\tilde m_{\tau,\sigma}(\xi)= \int_{\mathbb{R}^2} e^{-i\big(\xi_1x_1+\xi_2\tau x_1^{a} +\sigma\xi_2  x_2 +\sigma^T\xi_3  x_2^T \,\widetilde{W}(x_1,\sigma x_2)\big)}  \psi_\sigma(x) \, dx,
$$
with amplitude
$$
\psi_\sigma(x):=\psi(x_1,\sigma x_2) \chi_1(x_1)\tilde\chi_1(x_2),
$$
where we  recall that $\chi_1$ is supported in $[1/2,2]$ and $\tilde \chi_1$ in  $[-2,-1/2]\cup[1/2,2].$

In view of  \eqref{mGLn2}, it will more convenient to estimate the maximal operator associated to the {\it re-scaled }multiplier
$m_{\sigma}(\xi_1,\xi_2,\xi_3):=m_{\sigma}(\xi_1,\xi_2,\sigma^{-T}\xi_3),$ i.e., 
\begin{equation}\label{Admiul}
m_{\sigma}(\xi)= \int_{\mathbb{R}^2} e^{-i\big(\xi_1x_1+\xi_2\tau x_1^{a} +\sigma\xi_2  x_2 +\xi_3  x_2^T \,\widetilde{W}(x_1,\sigma x_2)\big)}  \psi_\sigma(x) \, dx,
\end{equation}
since 
$$
\|\mathcal M_{\tilde m_{\tau,\sigma}}\|_{L^p\rightarrow L^p}=\|\mathcal M_{m_{\sigma}}\|_{L^p\rightarrow L^p}
$$
(note that we have dropped here the index $\tau$ in order to defray the notation).
Observe also that this multiplier is well-defined as well for any sufficiently small $\sigma\in \Bbb R.$

Let $\beta$ be a smooth cut-off function supported in $[1/2, 2]$ such that
\begin{equation}\label{LPDecoposition}
\sum_{j \in \mathbb{Z} }\beta(2^{-j}r)=1 \hspace{0.5cm}\textmd{for every} \hspace{0.5cm} r>0.
\end{equation}
For $j_0\gg 1$, we put $\beta_{0}(r)= \sum_{j < j_0}\beta(2^{-j}r),$ and decompose 
\begin{equation}\label{tmsigdecomp}
m_{\sigma}(\xi)= m_{\sigma}^0(\xi)+\sum\limits_{j\ge j_0}  m_{\sigma}^{2^j}(\xi),
\end{equation}
where
$$
 m_{\sigma}^0(\xi):=\beta_0(|\xi|)  m_{\sigma}(\xi) \quad \text{and} \quad   m_{\sigma}^{2^j}(\xi):=\beta(2^{-j}|\xi|)  m_{\sigma}(\xi).
$$

We first estimate $\mathcal{M}_{ m_{\sigma}^0}:$ Integrations by parts  yield
\[
|\mathcal {F}^{-1}m_{\sigma}^0(y)|=\biggl|\int_{\mathbb{R}^{3}}e^{i y \cdot \xi}\beta_0(|\xi|)  m_{\sigma}(\xi)\, d\xi\biggl| \le C_{N}(1+|y|)^{-N}
\]
for any integer $N$. 

Thus  $|\mathcal{M}_{ m_{\sigma}^0}f(y)| \lesssim  M_{\rm HL} f(y),$ where $M_{\rm HL}$ denotes   Hardy-Littlewood's maximal operator, which is $L^p$-bounded for every $p>1.$ It follows  that  $\|\mathcal{M}_{m_{\sigma}^0}\|_{L^{p} \rightarrow L^{p}} \lesssim 1$ for every $p>1,$ uniformly for $\sigma\ll 1.$ 
 
  \medskip

 We shall thus concentrate on estimating  the maximal operators associated to the multipliers $ m_{\sigma}^{2^j}, \, j\ge j_0.$ To this end, fix a $j\ge j_0,$ and put 
   \begin{equation}\label{defmlasi}
\la:=2^j\gg 1 \quad\text{and} \quad m_{\sigma}^\la(\xi)=\beta(\la^{-1}|\xi|)  m_{\sigma}(\xi).
\end{equation}
Let us  denote the full phase function in (\ref{Admiul}) by
\begin{equation}\label{phixi}
\phi_{\xi}(x) :=\xi_1x_1+\xi_2\tau x_1^{a} +\sigma\xi_2  x_2 +\xi_3  x_2^T \,\widetilde{W}(x_1,\sigma x_2).
\end{equation}

Then we have
\begin{eqnarray*}
\partial_{x_1}\phi_{\xi}(x) &=&  \xi_{1}+ \xi_{2}\tau a x_{1}^{a-1}  + \xi_{3}x_{2}^{T}\widetilde{W}_{1}(x_{1},\sigma x_{2})
\\
\partial_{x_2}\phi_{\xi}(x) &=&   \sigma \xi_{2} + \xi_3\bigl[Tx_{2}^{T-1}\widetilde{W}(x_{1},\sigma x_{2}) + \sigma x_{2}^{T}\widetilde{W}_{2}(x_{1},\sigma x_{2})\bigl].
\end{eqnarray*}

 We notice that  $x_{1} \sim |x_{2}| \sim 1$ on the support of $\psi_\sigma$, $\sigma\ll1,$  and
\[|\widetilde{W}(x_1,\sigma x_2)|\sim 1, \quad \quad |\widetilde{W}_i(x_1,\sigma x_2)| \lesssim 1\quad \text{ for } i =1,2.\]
Thus  $\nabla \phi_{\xi}(x_{1},x_{2}) = (0,0)$ only if $|\xi_{1}| \sim |\xi_{2}|   \gg |\xi_{3}|, $
which then also  implies that
\begin{equation}\label{curv12}
|\partial^2_{x_1}\phi_{\xi}(x)|\sim |\xi_2|\sim \la,  \quad \text{and} \quad |\partial^2_{x_2}\phi_{\xi}(x)|\sim |\xi_3| ,
\end{equation}

since $\sigma\ll 1$ and $a\ne 1.$

More specifically,  we will distinguish  the following  cases: 

\noindent \textbf{Case a.} $|\xi_1|\gg |\xi_2|+|\xi_3|.$ Then 
$|\partial_{x_{1}} \phi_{\xi}(x)| \gtrsim |\xi_1|\sim |\xi|,$ so that 
integrations by parts with respect to the $x_1$-variable  in (\ref{Admiul}) imply that for any $\al\in \Bbb N^3$ and $N\in\Bbb N$,
\begin{equation}\label{1remainderdecay}
\biggl|\partial^{\alpha}_{\xi}\tilde m_{\sigma}^{\la}(\xi)\biggl| \lesssim \frac{C_{N,\alpha}}{(1+ |\xi|)^{N}}.
\end{equation}

\noindent \textbf{Case b.} $|\xi_2|\gg |\xi_1|+|\xi_3|.$  Then $|\partial_{x_1}\phi_{\xi}(x)|  \gtrsim |\xi_2|\sim |\xi|,$ so that again integrations by parts with respect to the $x_1$-variable in \eqref{Admiul}  imply \eqref{1remainderdecay}.

\noindent \textbf{Case c.} $|\xi_3|\gtrsim |\xi_1|+|\xi_2|.$  Assuming that $\sigma$ is sufficiently small, we  see that  $|\partial_{x_2}\phi_{\xi}(x)|  \gtrsim |\xi_3|\sim |\xi|,$ so that here integrations by parts with respect to the $x_2$-variable in \eqref{Admiul}  imply \eqref{1remainderdecay}.

It is now easily seen that what remains is 

\noindent \textbf{Case d.} $|\xi_3|\ll |\xi_1|\sim |\xi_2|.$  In this case, the function   $ \phi_{\xi}$ may  have a critical point.

\smallskip

 We have thus reduced to the situation where  $|\xi_3/\xi_2|\ll 1,$ and where $|\xi_1/\xi_2|$  lies in an interval $[c_1,c_2]$ with $0<c_1<c_2.$ By decomposing this interval into a finite number of sufficiently short subintervals and then applying a suitable scaling in $\xi_1$ for each of these subintervals we can easily reduce to the case where  $[c_1,c_2]=[1/4,1]$ (with a slight modification of the cut-off $\beta(\la^{-1}|\xi|)$ which localizes to $|\xi|\sim \la$ in \eqref{defmlasi} for the contribution of each subinterval).
 
 We then decompose 
\begin{eqnarray}\nonumber
m_{\sigma}^{\la}(\xi) &=& \tilde{\chi}_1\bigl(\frac{\xi_{1}}{\xi_{2}}\bigl) \chi_{0,\epsilon'}\big(\frac{\xi_{3}}{\xi_{2}}\big)\, m_{\sigma}^{\la}(\xi)+ \biggl(1- \tilde{\chi}_1\bigl(\frac{\xi_{1}}{\xi_{2}}\bigl) \chi_{0,\epsilon'}\bigl(\frac{\xi_{3}}{\xi_{2}}\bigl) \biggl)  m_{\sigma}^{\la}(\xi) \\
&=:&  m_{\sigma,1}^{\la}(\xi)+ m_{\sigma,0}^{\la}(\xi), \label{domination1}
\end{eqnarray}
where $\tilde{\chi}_1$ and $\chi_{0,\epsilon'}$ are non-negative smooth cut-off functions,  where  $\supp\tilde{\chi}_1 \subset [-2,-1/2] \cup [1/2,2]$ and $\tilde{\chi}_1(s) =1 $ near  $|s| =1$,   whereas $\chi_{0,\epsilon'}$ is supported in a small $\epsilon'$-neighborhood of 0 such that  $\chi_{0,\epsilon'}\equiv 1$ near 0.

 Then the maximal operator  $ \mathcal{M}_{m_{\sigma,0}^{\la}}$ will be a remainder term. 

Indeed, according to the analysis in Cases a -- c,  estimate \eqref{1remainderdecay} holds true   when $\xi$ is restricted  to the support of
$1- \tilde{\chi}_1\bigl(\frac{\xi_{1}}{\xi_{2}}\bigl) \chi_{0,\epsilon'}\bigl(\frac{\xi_{3}}{\xi_{2}}\bigl) $.
Then integrations by parts with respect to the $\xi$-variables yield that
\[ 
\mathcal{F}^{-1} m_{\sigma,0}^{\la}(y)=\int_{\mathbb{R}^{3}} e^{i y \cdot \xi} \biggl(1-\tilde{\chi}_1\bigl(\frac{\xi_{1}}{\xi_{2}} \bigl)\chi_{0,\epsilon'}\bigl(\frac{\xi_{3}}{\xi_{2}}\bigl) \biggl)   m_{\sigma}(\xi)\beta(\la^{-1}|\xi|)\, d\xi =\mathcal{O}\Big(\lambda^{-N} (1+|y|)^{-N}\Big)
\]
for every  $  N \in \mathbb{N}.$  This shows that 
 $| \mathcal{M}_{m_{\sigma,0}^{\la}}f(y)| \lesssim C_{N}\lambda^{-N} M_{\rm HL}f(y),$ so that 
 \begin{equation}\label{Msi0}
\|\mathcal{M}_{m_{\sigma,0}^{\la}}\|_{L^{p}\rightarrow L^{p}} \lesssim \lambda^{-N}
\end{equation}
for any $p>1$ and any positive integer $N$.

\smallskip

We are thus left with the estimation of $\mathcal{M}_{m_{\sigma,1}^{\la}},$  where the multiplier $m_{\sigma,1}^{\la}$ is defined by the first summand in \eqref{domination1}. Note that here  the $\xi$-variable is restricted to the region described by Case d, i.e.,
\[\lambda \sim |\xi_{1}| \sim |\xi_{2}| \gg |\xi_{3}|, \quad \quad \lambda \gg 1. \]

In the sequel, it will be convenient to put
\[s_1:= \frac{\xi_1}{\xi_2}  \qquad s_3:= \frac{\xi_3}{\xi_2}; \quad\text{note that}\ |s_1|\sim 1,|s_3|\ll 1. \]
Accordingly, we write $\phi_{\xi}(x)=\xi_2 \,\tilde \phi(x, s_1,s_3),$ with 
\begin{equation}\label{phis}
\tilde \phi(x, s_1,s_3)=s_1x_1+\tau x_1^{a} +\sigma x_2 +s_3  x_2^T \,\widetilde{W}(x_1,\sigma x_2),
\end{equation}
so that 
\begin{equation}\label{mlasi1}
m_{\sigma,1}^\la(\xi)= \tilde{\chi}_1\bigl(\frac{\xi_{1}}{\xi_{2}}\bigl) \chi_{0,\epsilon'}\big(\frac{\xi_{3}}{\xi_{2}}\big)\,\beta(\la^{-1}|\xi|)\,\int_{\mathbb{R}^2} e^{-i\xi_2\tilde \phi(x, s_1,s_3)}  \psi_\sigma(x) \, dx.
\end{equation}

\medskip

\noindent {\bf I)  Assume now first that $\sigma\la\ll 1.$ }

Then $|\sigma\xi_2|\ll 1,$ so that we may integrate by parts with respect to $x_2$ in \eqref{Admiul} to gain factors of order 
$(1+|\xi_3|)^{-N}$ for any $N\in \Bbb N.$  Moreover, in view of \eqref{curv12}, we see that with respect to the integration in the variable $x_1,$ the phase $\phi_\xi$ has either a non-degenerate critical point, or we can apply integrations by parts. Thus,  in both cases, we can estimate
$$
|m_{\sigma,1}^{\la}(\xi)|\le C_N\la^{-1/2} (1+|\xi_3|)^{-N}.
$$

Furthermore,  if we put $X(x):=\big(x_1, \sigma x_2+\tau x_1^a,x_2^T \,\widetilde{W}(x_1,\sigma x_2)\big),$ then 
$$
\mathcal{F}^{-1} m_{\sigma,1}^{\la}(y)=c\int_{\mathbb{R}^{2}} \int_{\mathbb{R}^{3}} e^{i\xi\cdot(y-X(x))} \beta(\la^{-1}|\xi|)\tilde{\chi}_1\big(\frac{\xi_{1}}{\xi_{2}} \big)\chi_{0,\epsilon'}\big(\frac{\xi_{3}}{\xi_{2}}\big)\, d\xi\, \psi_\sigma(x) \, dx.
$$
For $|y|\gg 1,$ integrations by parts in $\xi$ then allow to re-write, for any $N\in\Bbb N_{\ge 1},$
$$
\mathcal{F}^{-1} m_{\sigma,1}^{\la}(y)=c\int_{\mathbb{R}^{2}} \int_{\mathbb{R}^{3}} e^{i\xi\cdot(y-X(x))} \beta(\la^{-1}|\xi|)\tilde{\chi}_1\big(\frac{\xi_{1}}{\xi_{2}} \big)\chi_{0,\epsilon'}\big(\frac{\xi_{3}}{\xi_{2}}\big)\, b_N(\xi) d\xi\, \psi_\sigma(x) q_N(y,x)\, dx,
$$
where any  derivative $\partial_\xi^\alpha  b_N(\xi)$ of $b_N$ is bounded by a constant  $C_\alpha,$ and where $q_N(y,x)$ is smooth for $|y|\gg 1$ and $x\in \supp \psi_\sigma$ and satisfies an estimate
$$
|q_N(y,x)|\le C_N|y|^{-N}.
$$
If we then exploit the oscillations in the integrations  in $x_1$ and $x_2$ as before, and note that for $N\ge 2$
$$
\int_{\mathbb{R}^{3}}\Big| \beta(\la^{-1}|\xi|)\tilde{\chi}_1\big(\frac{\xi_{1}}{\xi_{2}} \big)\chi_{0,\epsilon'}\big(\frac{\xi_{3}}{\xi_{2}}\big)\, b_N(\xi) (1+|\xi_3|)^{-N}\Big|d\xi\lesssim \la^2,
$$
we see that for any $N\in\Bbb N$, 
$$
|\mathcal{F}^{-1} m_{\sigma,1}^{\la}(y)|\le C_N \la^2 \,\la^{-1/2} \, (1+|y|)^{-N}.
$$
This implies that
$| \mathcal{M}_{m_{\sigma,1}^{\la}}f(y)| \lesssim C_{N}\lambda^2\, \lambda^{-1/2 } M_{\rm HL}f(y),$ 
hence
$$
\|\mathcal{M}_{m_{\sigma,1}^{\la}}\|_{L^{p}\rightarrow L^{p}} \le C_p  \lambda^{-1/2}\,\la^2= \lambda^{3/2}\qquad \text{for any }\quad p>1.
$$

Furthermore, since  $\|m_{\sigma,1}^{\la}(\xi)\|_\infty \lesssim \la^{-1/2}$ by Plancherel,  and since 
$|\partial_tm_{\sigma,1}^{\la}(t\xi)\big|_{t=1}=\la\, \widetilde {m_{\sigma,1}^{\la}},$   where $\widetilde{ m_{\sigma,1}^{\la}}$ behaves essentially like  $m_{\sigma,1}^{\la},$ classical  methods based on Littlewood-Paley theory and a variant of Sobolev's  embedding theorem (see for instance in the proof of  \cite[Proposition 4.1]{BDIM19})  show that
$$
\|\mathcal{M}_{m_{\sigma,1}^{\la}}\|_{L^{2}\rightarrow L^{2}} \le C \lambda^{1/2}\la^{-1/2}=C.
$$
By interpolation between these two estimates, we obtain that for $1<p<2$ and any $\ve >0,$ 

$$
\|\mathcal{M}_{m_{\sigma,1}^{\la}}\|_{L^{p}\rightarrow L^{p}} \le C_{p,\ve}  \lambda^{\frac 32(\frac 2p-1)+\ve}.
$$
For $p>6/5,$ by choosing $\ve$ sufficiently small, we may then estimate
$$
\sigma \| \mathcal{M}_{m_{\sigma,1}^{\la}}\|_{L^{p}\rightarrow L^{p}} \lesssim 
\sigma \la^{1-\ve}\la^{\frac 3p-\frac 52+2\ve}\le \sigma^{\ve}\la^{-\ve}.
$$
These estimates can be summed over all dyadic $\la=2^j\ge 1$ and all dyadic $\sigma=2^{-l}, l\ge l_0,$ so that we can apply  inequality \eqref{dyadicl}. Note that here even the weaker assumption $p>6/5$ in place of $p>3/2$ suffices. 

\medskip
\noindent {\bf II)  Assume from here on   that $\sigma\la\gtrsim 1.$}

Note first that
$$
\partial_{x_1}\tilde\phi(x,s_1,s_3)=  s_1+ \tau a x_{1}^{a-1}  + s_3x_{2}^{T}\widetilde{W}_{1}(x_{1},\sigma x_{2}).
$$

If $s_1$ and $\tau$ have the same signs, this shows that we can again integrate by parts in $x_1$ and obtain the analogue of estimates \eqref{1remainderdecay} for $m^\la_\sigma.$ Accordingly, by the previous arguments we obtain a remainder term estimate 
$\|\mathcal{M}_{m_{\sigma,1}^{\la}}\|_{L^{p}\rightarrow L^{p}} \lesssim \lambda^{-N}$ for every $p>1$ (compare \eqref{Msi0}).
\smallskip

So assume from here on that $s_1$ and $\tau$ have opposite signs. We may and shall also assume without loss of generality that $\tau<0,$ hence $s_1>0.$

If $s_3=0,$ the function $x_1\mapsto \partial_{x_1}\tilde\phi(x,s_1,s_3)$  has then a unique zero 
$$
x_1^0(s_1):=\big(\frac {-s_1}{a\tau}\big)^{\frac 1{a-1}}>0\qquad \text{ for every } s_1>0.
$$

And,  since 
$|\partial^2_{x_1}\tilde\phi(x,s_1,0)|\sim 1$ for $x_1\sim 1,$ we see by the implicit function theorem that for any sufficiently small $s_3$ and $s_1\sim 1$ the function 
$x_1\mapsto\tilde\phi(x,s_1,s_3)$ has a unique critical point $x_1^c(x_2,s_1,s_3,\sigma)$, i.e., 
$$
\partial_{x_1}\tilde\phi\big((x_1^c(x_2,s_1,s_3,\sigma), x_2),s_1,s_3\big)=0,
$$ 
depending smoothly on $x_2,s_1,s_3,\sigma$ and such that $x_1^c(x_2,s_1,s_3,\sigma)=x_1^0(s_1)$ when $s_3=0.$ 
Moreover,  since $|\partial^2_{x_1}\tilde\phi(x,s_1,s_3)|\sim 1$ and 
 $|\partial_{x_2}\partial_{x_1}\tilde\phi(x,s_1,s_3)|\lesssim |s_3|$ for $x\in \supp \psi_\sigma, \, s_1\sim 1$ and $s_3$ sufficiently small, we have that 
 \begin{equation}\label{d2x1c}
|\partial_{x_2}x_1^c(x_2,s_1,s_3,\sigma)|\lesssim |s_3|.
\end{equation}
Note that  all this applies for  $s_3\in \supp \chi_{0,\epsilon'},$ provided we have chosen $\epsilon'$ sufficiently small, what we shall assume henceforth.

Applying the method of stationary phase to the integration in $x_1,$ keeping in mind that $|\partial^2_{x_1}\tilde\phi(x,s_1,s_3)|\sim 1$  uniformly in $x,s_1,s_2$ for $x$ in the support of the amplitude $\psi_\sigma$ when $s_1\sim 1,|s_3|\ll 1,$ we see that we can then write (with $s_1=\xi_1/\xi_2$ and  $s_3=\xi_3/\xi_2$)
\begin{equation}\label{mlasi12}
 m_{\sigma,1}^{\la}(\xi) =\la^{-\frac 12} \beta(\la^{-1}|\xi|) \,\chi_1\big(\frac{\xi_{1}}{\xi_{2}}\big) \chi_{0,\epsilon'}\big(\frac{\xi_{3}}{\xi_{2}}\big)\,\int_{\mathbb{R}} e^{-i\xi_2 \breve \phi(x_2,s_1,s_3)}  A_{0,\la}(\xi_2; x_2, s_1,s_3)\, \tilde\chi_1(x_2)\,dx_2,
\end{equation}
with the phase 
\begin{eqnarray}\nonumber
\breve \phi(x_2,s_1,s_3)&:= &\sigma  x_2 +s_3  x_2^T \,\widetilde{W}\big( x_1^c(x_2,s_1,s_3,\sigma),\sigma x_2\big)\\
&&\hskip3cm + s_1x_1^c(x_2,s_1,s_3,\sigma)+\tau  x_1^c(x_2,s_1,s_3,\sigma)^{a}. \label{bphi}
\end{eqnarray}
Here, $A_{0,\la}(\xi_2; x_2, s_1,s_3)=(\xi_2/\la)^{-1/2}A_0(\xi_2; x_2, s_1,s_3),$  where $A_0$ is a symbol of order 0 in $\xi_2$ which depends smoothly on the parameters
$(x_2,s_1,s_3),$ with symbol norms uniformly bounded in these parameters. Note also that $|\xi_2/\la|\sim 1.$

We still need to gain more from the integration in $x_2.$ We first consider the cases where $\sigma\gg |s_3|,$ or 
$|s_3|\gg \sigma.$ In the first case we could gain  directly by integrations by parts in $x_2$  in \eqref{mlasi12},  but in the second case this is not so clear.

However, going back to \eqref{mlasi1}, we note that 
$$
\partial_{x_2}\tilde\phi(x,s_1,s_3)=  \sigma  +s_3 T x_2^{T-1} \,\widetilde{W}(x_1,\sigma x_2)+\mathcal{O}(\sigma|s_3|).
$$
Thus,  if $\sigma\gg |s_3|,$ then $|\partial_{x_2}\tilde\phi(x,s_1,s_3)|\sim \sigma,$ and if $|s_3|\gg \sigma, $ then 
$|\partial_{x_2}\tilde\phi(x,s_1,s_3)|\sim |s_3|\gg \sigma.$ 

Thus, given any  $M\in\Bbb N,$ we can integrate by parts $M$- times  in $x_2$ in the oscillatory integral \eqref{mlasi1}, which leads to an oscillatory integral with the same phase, but an amplitude which is of order $\mathcal{O}((\sigma\la)^{-M})$ with respect to any  any $C^k$-norm (actually it is a symbol of order $-M$ in $\sigma \xi_2$). Applying afterwards the method of stationary phase to the $x_1$-integration in the same way as before, we then find that 
\begin{equation}\label{mlaest3}
|m_{\sigma,1}^{\la}(\xi)|\le C_M \la^{-\frac 12}  (\la\sigma)^{-M} \qquad \text{for any}\  M\in\Bbb N.
\end{equation}

We therefore now decompose 
\begin{eqnarray}\nonumber
m_{\sigma,1}^{\la}(\xi) &=& \chi_{1,\ve}\big(\frac{\xi_3}{\sigma \xi_2}\big)m_{\sigma,1}^{\la}(\xi)+ 
 \Big(1-\chi_{1,\ve}\big(\frac{\xi_3}{\sigma \xi_2}\big)\Big) m_{\sigma,1}^{\la}(\xi)\\
&=:&  m_{\sigma,2}^{\la}(\xi)+ m_{\sigma,3}^{\la}(\xi), \label{mladecomp}
\end{eqnarray}
where $\chi_{1,\ve}$ is a suitable smooth cut-off function supported in $[-\ve^{-1}, \ve]\cup [\ve, \ve^{-1}]$ for some sufficiently small $\ve >0.$ 
\smallskip

As for $m_{\sigma,3}^{\la}:$  By choosing $M=1$  in the previous estimate and applying similar arguments as before we see that for any $N\in \Bbb N,$
$$
|\mathcal{F}^{-1} m_{\sigma,3}^{\la}(y)|\le C_N \la^3 \,\la^{-\frac 12}  (\la\sigma)^{-1}  \, (1+|y|)^{-N}.
$$
By comparison  with Hardy-Littlewood's maximal operator, this implies that 
$$
\|\mathcal{M}_{m_{\sigma,3}^{\la}}\|_{L^{p}\rightarrow L^{p}} \le C_{p}  \la^3 \,\la^{-\frac 12}  (\la\sigma)^{-1}\qquad \text{for any }\quad p>1.
$$
Moreover, here we have
$$
\|\mathcal{M}_{m_{\sigma,3}^{\la}}\|_{L^{2}\rightarrow L^{2}} \le C \la^{\frac 12}\la^{-\frac 12}  (\la\sigma)^{-1} =(\la\sigma)^{-1}.
$$
By interpolation, this yields that for $1<p<2$ and $\ve >0$ sufficiently small
$$
\sigma \|\mathcal{M}_{m_{\sigma,3}^{\la}}\|_{L^{p}\rightarrow L^{p}} \le C_{p,\ve} \sigma(\la\sigma)^{-1} \lambda^{\frac 52(\frac 2p-1)+\ve}=\la^{-\ve}\lambda^{\frac 5p-\frac 72+2\ve}\le \sigma^\ve \la^{-\ve},
$$
provided $p>10/7.$ Thus for this $p$-range these estimates can by summed over all dyadic $\la=2^j\ge 1$ and all dyadic $\sigma=2^{-l}, l\ge l_0,$ so that we can apply  inequality \eqref{dyadicl}. Note that here even the weaker assumption $p>10/7$ in place of $p>3/2$ suffices. 

\medskip

\noindent{\bf We are thus left with $m_{\sigma,2}^{\la}.$} It is here useful to consider the {\it re-scaled} multiplier
$$\breve m_{\sigma,2}^{\la}(\xi_1,\xi_2,\xi_3):= m_{\sigma,2}^{\la}(\xi_1,\xi_2, \sigma \xi_3)$$ 
in place of $m_{\sigma,2}^{\la}.$  By \eqref{mlasi12} and \eqref{mladecomp}, for all means and purposes it can be written as 
\begin{equation}\label{mlasi2}
\breve m_{\sigma,2}^{\la}(\xi_1,\xi_2,\xi_3)=\la^{-\frac 12}  \chi_1\big(\la^{-1}\xi_2) \,\chi_1\big(\frac{\xi_{1}}{\xi_{2}}\big)  \chi_{1,\ve}\big(\frac{\xi_3}{ \xi_2}\big)\,\int_{\mathbb{R}} e^{-i\xi_2 \breve \phi(x_2,s_1,\sigma s_3)}  A_{0,\la}\big(\xi_2; x_2, s_1,\sigma s_3\big)\, \tilde\chi_1(x_2)\,dx_2,
\end{equation}
where now $|s_3|=\big|\frac {\xi_3}{\xi_2}\big|\sim 1.$  Moreover,
\begin{eqnarray}\nonumber
\breve \phi(x_2,s_1,\sigma s_3)&= &\sigma  x_2 +\sigma s_3  x_2^T \,\widetilde{W}\big( x_1^c(x_2,s_1,\sigma s_3,\sigma),\sigma x_2\big)\\
&&\hskip3cm + s_1x_1^c(x_2,s_1,\sigma s_3,\sigma)+\tau  x_1^c(x_2,s_1,\sigma s_3,\sigma)^{a}. \label{bphi2}
\end{eqnarray}

Then 
$$\partial_{x_2} \breve \phi(x_2,s_1,\sigma s_3)=\sigma F(x_2,s_1,s_3,\sigma),
$$
 where by \eqref{d2x1c}
$$
 F(x_2,s_1,s_3,\sigma )=1 +s_3 T x_2^{T-1} \widetilde{W}\big( x_1^c(x_2,s_1,\sigma s_3,\sigma),\sigma x_2\big)
 +\mathcal O(|\sigma s_3|).
$$
This function  makes sense for any  sufficiently small $\sigma\in \Bbb R.$  

Assume first that $\sigma=0.$ Then $\sigma s_3=0,$ and we recall that then
$x_1^c(x_2,s_1,\sigma s_3,\sigma)|_{\sigma =0}=x_1^0(s_1)=\big(\frac {-s_1}{a\tau}\big)^{\frac 1{a-1}}.$
Moreover, by  \eqref{d2x1c} we have $\partial_{x_2}x_1^c(x_2,s_1,\sigma s_3,\sigma)= \mathcal{O}(\sigma ).$

Thus, by Taylor expansion of $x_1^c(x_2,s_1,\sigma s_3,\sigma)$ at  $\sigma =0$ we obtain
\begin{equation}\label{taylx1c}
x_1^c(x_2,s_1,\sigma s_3,\sigma)=\big(\frac {-s_1}{a\tau}\big)^{\frac 1{a-1}} +\mathcal{O}(\sigma).
\end{equation}
 Moreover, since $\widetilde{W}(x_1,\sigma x_2)=W_{\tau}(x_1,\sigma x_2+\tau  x_1^{a}), $ we have for  $\sigma=0$  that 
 $\widetilde{W}(x_1,\sigma x_2)|_{\sigma=0}=W_{\tau}(x_1,\tau  x_1^{a}).$ But, $W_{\tau}$ is $\kappa$-homogeneous of degree $1-T\kappa_2,$ so that we have 
 $$ 
 \widetilde{W}(x_1,\sigma x_2)|_{\sigma=0}=cx_1^b, \qquad \text{with} \ b:=\frac {1-T\kappa_2}{\kappa_1}.
$$
Thus, 
$$
\widetilde{W}\big( x_1^c(x_2,s_1,\sigma s_3,\sigma),\sigma x_2\big)\big|_{\sigma=0}=c\big(\frac {-s_1}{a\tau}\big)^{\frac{b}{a-1}},
$$
where $c\ne 0$ . 

Thus
$$
 F(x_2,s_1,s_3,\sigma)|_{\sigma=0}=1 +s_3 T x_2^{T-1}c\big(\frac {-s_1}{a\tau}\big)^{\frac{b}{a-1}}.
 $$

We may assume that the signs of $s_3,c$ and $x_2$ are so that this equation 
$F(x_2,s_1,\sigma s_3)|_{\sigma=0}=0$ has a  unique solution $x_2^0(s_1,s_3),$ namely
$$
x_2^0(s_1,s_3):=\Big(-\big(s_3T c \big(\frac {-s_1}{a\tau}\big)^{\frac{b}{a-1}}\big)^{-1}\Big)^{\frac 1{T-1}},
$$ lying in a compact interval in $(0,\infty)$ for all relevant values of $s_1\sim 1$ and $|s_3|\sim 1.$ Moreover, 
$|\partial_{x_2} F(x_2,s_1,s_3,\sigma)|_{\sigma=0}\sim 1.$ For simplicity, assume also that $x_2>0.$

Since $|s_3|\sim 1,$ we can  now apply the implicit function theorem to show that for $\sigma$ sufficiently small, there is a unique function $x_2^c(s_1,s_3,\sigma)$  such that $x_2^c(s_1,s_3,0)=x_2^0(s_1,s_3)$ and 
$$
 F\big(x_2^c(s_1,s_3,\sigma),s_1,s_3,\sigma\big)\equiv 0
$$
for all $s_1,s_3$ such that $s_1\sim 1$ and $|s_3|\sim 1.$

Then, by applying the method of  stationary phase to the $x_2$-integration, we obtain from \eqref{mlasi2}
\begin{equation}\label{mlasi22}
\breve m_{\sigma,2}^{\la}(\xi_1,\xi_2,\xi_3)=\la^{-\frac 12} (\la\sigma)^{-\frac 12}\, \chi_1\big(\la^{-1}\xi_2) \,\chi_1\big(\frac{\xi_{1}}{\xi_{2}}\big)  \chi_{1,\ve}\big(\frac{\xi_3}{ \xi_2}\big)\,
  \breve A_{0,\la}\big(\xi_2; s_1,\sigma s_3\big) \,e^{-i\xi_2 \breve \phi(x^c_2,s_1,\sigma s_3)}.
\end{equation}
Here, $\breve A_{0,\la}$ is of the form
$$
\breve A_{0,\la}(\xi_2;  s_1,\sigma  s_3)= \big(\frac {\xi_2}{\la}\big)^{-1} \breve A_0(\xi_2;s_1,\sigma s_3),
$$
where $\breve A_0(\xi_2;s_1,\sigma s_3)$ is a symbol of order  0  in $\xi_2$ not depending on $\la$, uniformly in the parameters $ s_1$ and $\sigma s_3$.

Note also that, by \eqref{bphi2} and  \eqref{taylx1c}, the final phase has the form
\begin{eqnarray*}
\xi_2 \breve \phi\big(x_2^c(s_1,s_3,\sigma),s_1,\sigma s_3\big)&=&\xi_2\Big[ s_1\big(\frac {-s_1}{a\tau}\big)^{\frac 1{a-1}}
+\tau \big(\frac {-s_1}{a\tau}\big)^{\frac a{a-1}}+ \sigma \Psi(s_1,s_3)\Big]\\
&=&\xi_2\Big[  (1-a)\tau \big(\frac {-s_1}{a\tau}\big)^{\frac a{a-1}}+ \sigma \Psi(s_1,s_3)\Big],
\end{eqnarray*}
where $\Psi$ is a smooth function.

We finally observe that  {\it re-scaled }multiplier 
$$\mathring{m}_{\sigma,2}^\la(\xi):= \breve m_{\sigma,2}^{\la}(\la \xi),$$
for which according to \eqref{mGLn2} we have $\|\mathcal {M}_{\breve m_{\sigma,2}^{\la}}\|_{L^p\to L^p}=\|\mathcal {M}_{\mathring{m}_{\sigma,2}^\la}\|_{L^p\to L^p},$ 
 then takes the form
\begin{equation}\label{mla3}
\mathring{m}_{\sigma,2}^\la(\xi)= \la^{-\frac 12} (\la\sigma)^{-\frac 12}\,\,e^{-i\la \xi_2\Big[  (1-a)\tau \big(\frac {-s_1}{a\tau}\big)^{\frac a{a-1}}+ \sigma \Psi(s_1,s_3)\Big]}\, 
\chi_1\big(\xi_2) \,\chi_1\big(\frac{\xi_{1}}{\xi_{2}}\big)  \chi_{1,\ve}\big(\frac{\xi_3}{ \xi_2}\big)\,
  \breve A_{0,\la}(\la \xi_2;  s_1,\sigma s_3).
  \end{equation}
  Note that $\breve A_{0,\la}(\la \xi_2;  s_1,\sigma s_3)$ is a smooth function in $\xi_2$ with uniform bounds 
  $|\partial_{\xi_2}^k  \breve A_{0,\la}|\le C_k,$ with constants $C_k$ which do not depend on $\la.$

  We can thus apply Lemma \ref{lemma3.2} (with variables  $(\eta_1,\eta_2,\eta_3)=(\xi_1, \xi_3,\xi_2)$) and conclude  that $\|\mathcal{F}^{-1}\mathring{m}_{\sigma,2}^\la\|_{L^1}\lesssim 1,$ so that 
\begin{equation*}\label{mlasi31}
 \|T_{\mathring{m}_{\sigma,2}^\la}\|_{L^p\to L^p}\lesssim 1 \qquad \text{for every} \quad p\ge 1.
   \end{equation*}
  Moreover, by Plancherel's theorem, we have 
   \begin{equation*}\label{mlasi32}
 \|T_{\mathring{m}_{\sigma,2}^\la}\|_{L^2\to L^2}\lesssim \la^{-\frac 12} (\la\sigma)^{-\frac 12}.
\end{equation*}
  By interpolation, these estimates lead to 
\begin{equation}\label{mlasi3p}
 \|T_{\mathring{m}_{\sigma,2}^\la}\|_{L^p\to L^p}\lesssim   \big(\la^2\sigma\big)^{-(1-\frac 1p)},\qquad 1\le p\le 2.
  \end{equation}

We want to apply Theorem \ref{multipliertomaximal} in order to also bound the maximal operator 
$\mathcal{M}_{\mathring{m}_{\sigma,2}^\la}.$ 

To this end, we first  observe  that  by \eqref{mla3} it is easy  to see that the multiplier 
$$
\dot m_{\sigma,2}^\la:=\frac {d}{dt} \mathring{m}_{\sigma,2}^\la(t\xi)\big|_{t=1},
$$ 
as defined in Theorem \ref{multipliertomaximal}, is of the form $\la  \widetilde{\mathring{m}_{\sigma,2}^\la},$ where 
$\widetilde{\mathring{m}_{\sigma,2}^\la}$ is of the same form \eqref{mla3} as  $\mathring{m}_{\sigma,2}^\la,$ only with a slightly modified  ``amplitude''. 

Thus, it will remain to check condition (ii) in this theorem. For our purposes, it will suffice to prove  that there is a constant $C>0$ such that the following estimate holds for every $r>0:$
 \begin{equation}\label{kernelA2}
 \int_{|x|>r}\bigl|\mathcal{F}^{-1}{\mathring{m}_{\sigma,2}^\la}(x)\bigl|dx \lesssim \lambda^{C}(1+r)^{-1}.
  \end{equation}
  To prove this, note that from  \eqref{mla3} we can easily derive that  for each multi-index $\alpha\in\Bbb N^3,$  
\[
\bigl|\partial ^{\alpha}\mathring{m}_{\sigma,2}^\la(\xi)\bigl| \lesssim \lambda^{|\alpha|}.
\]
Estimate \eqref{kernelA2} follows then  immediately from  Lemma \ref{generalkernelestimate}.

\smallskip
We are now in a position to apply  Theorem \ref{multipliertomaximal}. In view of   \eqref{mlasi3p},  this theorem   implies  that for $1\le p\le 2$ and  any small $\delta>0,$ 
 $$
 \|\mathcal {M}_{\mathring{m}_{\sigma,2}^\la}\|_{L^p\to L^p}\lesssim  \la^{\frac 1p} \big(\la^2\sigma\big)^{-(1-\frac 1p)}
 \big((\la^2\sigma)^{\delta}+\log\la\big),
$$
  hence 
  \begin{equation}\label{Mlasi3p}
 \sigma \|\mathcal {M}_{\mathring{m}_{\sigma,2}^\la}\|_{L^p\to L^p}\lesssim  \sigma^{\frac 1p+\delta}\la^{-(2-\frac 3p)+2\delta}.  \end{equation}
 For any $p>3/2$ sufficiently close to $3/2$ these estimates can  be summed over all dyadic $\la=2^j\ge 1$ and all dyadic $\sigma=2^{-l}, l\ge l_0.$ 
 
 This will then also complete the proof of Theorem \ref{theorem2.1}(a)  by interpolation with the trivial $L^\infty$-bounds.
 \qed

\medskip

%%%%%%%%%%%%%%%%%%%%%%%%%%%%%%%%%%%%%%%%%%%%%%%%%%%%%%%%%%%%%%%%%%%%%%%%%%%%%%%%%%%%%%%%%%%%%%%%%%%%

\section{Type $\mathrm{B}$: Both $\partial_1\Phi$ and $\partial_2\Phi$  do not vanish along $\gamma_{\tau}$}\label{caseB}
In this section, we shall prove Theorem \ref{theorem2.1}(b). So assume here that $\tau\in R$ parametrizes a fixed real root $\gamma_\tau$ of Type $\mathrm{B}$, and let $\tilde m_\tau$ be the associated Fourier multiplier defined in \eqref{dmiudef}, for which $\tilde{\mathcal{M}}_{\tau}=\mathcal{M}_{\tilde m_\tau},$ i.e., 
$$
 \tilde m_\tau(\xi)=\int_{\mathbb{R}^{2}}e^{-i\bigl(\xi_1 x_1+\xi_2 x_2+\xi_3\Phi(x)\bigl)} \tilde \psi(x)\, \rho_\tau(x) \,\chi_1(x_1)\, dx,
$$
where we recall from \eqref{rhotau} that 
$\rho_\tau(x):=\chi_0\Big(\frac{x_2-\tau x_1^{a}}{\epsilon x_1^{a}}\Big).$
We shall again use the abbreviations such as  $a=\ka_2/\ka_1$, etc.  used in the previous section.

%%%%%%%%%%%%%%%%%%%%%%%%%%%%%%%%%%%%%%%%%%%%%%%%%%%%%%%%%%%%%%%%%%%%%%%%%%%%%%%%%%%%%%%%%%%%%%%%%%%%

\subsection{Reduction to FIO-cone multipliers}\label{FIOcone}

In a first step, we perform here the  analogue of the non-homogenous dyadic decomposition \eqref{tmsigdecomp}
directly with $\tilde m_\tau,$ i.e.,
$$
 \tilde m_{\tau}(\xi)=  \tilde m_{\tau}^0(\xi)+\sum\limits_{j\ge j_0} \tilde m_{\tau}^{2^j}(\xi),
$$
where
$$
\tilde m_{\tau}^0(\xi):=\beta_0(|\xi|) \tilde m_{\tau}(\xi) \quad \text{and} \quad   \tilde m_{\tau}^{2^j}(\xi):=\beta(2^{-j}|\xi|)  \tilde m_{\tau}(\xi).
$$
Here, $\beta$ and $\beta_0$ are a smooth cut-off functions supported in $[1/2, 2], $ respectively in $[0, 2^{j_0}].$ 

The maximal operator  $\mathcal{M}_{ \tilde m_{\tau}^0}$ can be estimated in the same way as $\mathcal{M}_{ m_{\sigma}^0}$ in the previous section, so that $\|\mathcal{M}_{ \tilde m_{\tau}^0}\|_{L^{p} \rightarrow L^{p}} \lesssim 1$ for every $p>1.$ 

\smallskip
We shall therefore concentrate  from now on on the maximal operators $\mathcal{M}_{ \tilde m_{\tau}^\la},$ where 
$$
\la:=2^j\gg 1 \quad\text{and} \quad \tilde m_{\tau}^\la(\xi)=\beta(\la^{-1}|\xi|) \,\tilde m_{\tau}(\xi).
$$
The full phase $\phi_\xi$ is here simply $\phi_{\xi}(x) :=\xi_1x_1+\xi_2 x_2+\xi_3\Phi(x),$ and we have 
$$
\partial_{x_1}\phi_{\xi}(x) =  \xi_1+\xi_3\Phi_1(x), \qquad \partial_{x_2}\phi_{\xi}(x) =  \xi_2+\xi_3\Phi_2(x),
$$
where we may assume that on the  support of $\rho_\tau(x) \,\chi_1(x_1)$ we have $|\Phi_1(x)|\sim 1\sim |\Phi_2(x)|,$ since the root $\gamma_\tau$ is of Type $\mathrm{B}.$

In a first step, we shall show that for all $\alpha\in \Bbb N^3$ and $N \in \mathbb{N},$
\begin{equation}\label{Brapiddecay}
|\partial_{\xi}^{\alpha}\tilde m_\tau^\la(\xi)| \lesssim \frac{C_{N,\alpha}}{(1+|\xi|)^{N}}=\mathcal {O}(\la^{-N}),
\end{equation}
unless 
\begin{equation}\label{allsim}
|\xi_1|\sim |\xi_2|\sim |\xi_3|.
\end{equation}
\smallskip

To this end, let $\xi=(\xi_1,\xi_2,\xi_3)\in\Bbb R^3$ with $|\xi|\sim \la$ be given. Choose then a permutation $(i_1,i_2,i_3)$ of $(1,2,3)$ such that 
$$
|\xi_{i_1}|\le |\xi_{i_2}|\le |\xi_{i_3}|, 
$$  so that in particular $|\xi|\sim |\xi_{i_3}|.$ 

Assume  first that  $i_3=1.$ If $i_1=3,$ and if we assume that $|\xi_{i_3}|\gg |\xi_{i_1}|,$ then $|\partial_{x_{i_3}}\phi_{\xi}(x)| \sim |\xi_{i_3}|\sim |\xi|,$ and thus \eqref{Brapiddecay} follows by integrations by parts in $x_{i_3}.$ We are the reduced to the case where $|\xi_{i_1}|\le |\xi_{i_2}|\le |\xi_{i_3}|\lesssim |\xi_{i_1}|,$ i.e., to \eqref{allsim}.

Next, if $i_3=1$ and $i_1=2,$  then $i_2=3,$ and  as before integrations by parts in $x_{i_3}$  show  \eqref{Brapiddecay}  holds unless 
$|\xi_{i_3}|\sim |\xi_{i_2}|.$ In the latter case, assume we had  $|\xi_{i_1}|\ll |\xi_{i_2}|.$ Then $|\partial_{x_{i_1}}\phi_{\xi}(x)| \sim |\xi_{i_2}\Phi_{i_1}|\sim |\xi|,$ so \eqref{Brapiddecay}  would follow  by integrations by parts in $x_{i_2},$ since $\Phi_{i_1}=\Phi_2\ne 0.$ So, again we are reduced to \eqref{allsim}.
\smallskip

The case where $i_3=2$ can be handled in the same way, by interchanging the roles of $x_1$ and $x_2.$
\smallskip

Assume finally that $i_3=3.$ Here integrations by parts in $\xi_{i_1}$ shows that \eqref{Brapiddecay}  would hold for 
$|\xi_{i_1}|\ll |\xi_{i_3}|,$ so we may reduce to the case where $|\xi_{i_3}|\lesssim  |\xi_{i_1}|,$ in which \eqref{allsim} again holds.
\medskip 

Assume now that \eqref{allsim} holds.  We shall here make use of the results in Lemma \ref{Legendret}. As in this lemma, we shall assume without loss of generality that $ \Phi_2<0$ along $\gamma_\tau,$ i.e., 
 \begin{equation}\label{Phi2le0}
 \Phi_{2}(x_{1},\tau  x^{a}_{1})<0\qquad \text{for all} \quad x_1>0
 \end{equation}
 (we may even assume that $\Phi_2(x)<0$ for all $x$ in the support of $\rho_\tau(x) \,\chi_1(x_1)$ by choosing $\ve$ in the definition of $\rho_\tau$ sufficiently small). 
 
 If then $\xi_2/\xi_3<0,$ we would again find that $|\partial_{x_2}\phi_{\xi}(x)| =  |\xi_2+\xi_3\Phi_2(x)|\sim |\xi|,$ so that  integrations by parts in $x_2$  show that again  \eqref{Brapiddecay} holds, which allows  us to reduce to the case where $\xi_2/\xi_3>0.$
 
By means of similar arguments as in Section \ref{caseAi}, we now decompose 
\begin{eqnarray}\nonumber
\tilde m_{\tau}^{\la}(\xi) &=& \tilde{\chi}_1\bigl(\frac{\xi_1}{\xi_3}\bigl) \chi_1\big(\frac{\xi_2}{\xi_3}\big)\, \tilde m_{\tau}^{\la}(\xi)+ \biggl(1- \tilde{\chi}_1\bigl(\frac{\xi_1}{\xi_3}\bigl) \chi_1\big(\frac{\xi_2}{\xi_3}\big)\biggl)  \, \tilde m_{\tau}^{\la}(\xi) \\
&=:&  \tilde m_{\tau,1}^{\la}(\xi)+ \tilde m_{\tau,0}^{\la}(\xi), \label{domination2}
\end{eqnarray}
where $\tilde{\chi}_1$ and $\chi_1$ are non-negative smooth cut-off functions,  where  $\supp\tilde{\chi}_1 \subset [-2,-1/2] \cup [1/2,2]$ and $\tilde{\chi}_1(t) =1 $ near  $|t| =1$,   whereas $\chi_1$ is supported in $[1/2,2]$ and equals 1 near 1.

By means of \eqref{Brapiddecay}, we can estimate the maximal operator  $ \mathcal{M}_{\tilde m_{\tau,0}^{\la}}$  in the same way as we did this for $\mathcal{M}_{m_{\sigma,0}^{\la}}$ in \eqref {Msi0}, i.e., 
$$
\|\mathcal{M}_{\tilde m_{\tau,0}^{\la}}\|_{L^{p}\rightarrow L^{p}} \lesssim \lambda^{-N}
$$
for any $p>1$ and any positive integer $N$.

\smallskip

We shall therefore from here on concentrate on the maximal operator $ \mathcal{M}_{\tilde m_{\tau,1}^{\la}},$ keeping in mind  that $\tilde m_{\tau,1}^{\la}$ is supported where 
$\lambda \sim |\xi_{1}| \sim |\xi_{2}| \sim |\xi_{3}|$ and $\xi_{2}/\xi_{3} > 0. $

\smallskip 
From here on, it will be more convenient to work with the {\it re-scaled} multiplier 
$$
m_{\tau,1}^{\la}(\xi):= \tilde m_{\tau,1}^{\la}(\la \xi).
$$
Using the short-hand notation
\[s_1:= \frac{\xi_1}{\xi_3},  \ s_2:= \frac{\xi_2}{\xi_3}, \ s:=(s_1,s_2)
 \quad\text{(note that}\ |s_1|\sim 1\sim s_2),
  \]
we may assume for simplicity that this multiplier then takes the form
\begin{equation}\label{mla1B}
 m_{\tau,1}^{\la}(\xi)=\tilde{\chi}_1(s_1)\chi_1(s_2)\tilde {\tilde{\chi}}_1(\xi_3) \, \int_{\mathbb{R}^{2}}
 e^{-i\la\xi_3\big(s_1 x_1+s_2 x_2+\Phi(x)\big)}\,  \tilde \psi(x)\,\rho_\tau(x)\,\chi_1(x_1)\, dx,
\end{equation}
wherein  also $|\xi_3|\sim 1.$ We shall also use the abbreviation
$$
\phi_{s}(x):=s_1 x_1+s_2 x_2+\Phi(x).
$$

Let  $x_0=(x_{0,1},x_{0,2})$ be the unique solution in $\gamma_\tau$ of the equation \eqref{x10} (cf. Lemma \ref{Legendret}), i.e., 
$$
x\in \gamma_\tau \ \text{and}\  \partial_{x_2}\big( x_2+\Phi(x)\big)=1+\Phi_2(x)=0, \qquad  \text{if and only if} \quad x=x_0.$$
This shows that for $s_2=1$ there is exactly one critical  point of the phase $\phi_s$ along $\gamma_\tau,$ namely $x_0.$

\smallskip For general  $s_2>0$ of size $s_2\sim 1,$ we can exploit  that $\Phi$ is $\ka$-homogeneous of degree $1$ to reduce to the case $s_2=1:$  
If we put  $r(s_2):=s_2^\frac{1}{1-\ka_2},$ homogeneity  implies that 
$$
s_2\big(r(s_2)^{\ka_2}x_2\big)+\Phi(\delta_{r(s_2)} (x))=s_2^\frac{1}{1-\ka_2}\big(x_2+\Phi(x)),
$$
where  $x\in \gamma_\tau$ if and only if  $\delta_{r(s_2)} (x)\in \gamma_\tau.$ 
\smallskip

We therefore change variables $x\mapsto \delta_{s_2^\frac{1}{1-\ka_2}}(x)$ in the integration in \eqref{mla1B}, so that
\begin{equation}\label{mla1B2}
 m_{\tau,1}^{\la}(\xi)=\tilde{\chi}_1(s_1)\chi_1(s_2)\tilde {\tilde{\chi}}_1(\xi_3) \, \int_{\mathbb{R}^{2}}e^{-i\la \xi_3\big[(s_1s_2^{\frac{\ka_1}{1-\ka_2}} x_1+s_2^{\frac{1}{1-\ka_2}}\big(x_2+\Phi(x)\big)\big]} \, \psi_{s}(x)\,\rho_\tau(x)\,\chi_{1,s_2}(x_1)\, dx_2\, dx_1,
\end{equation}
where 
$$
\psi_{s}(x):=\tilde\psi\big(\delta_{s_2^\frac{1}{1-\ka_2}}(x)\big), \quad \chi_{1,s_2}(x_1):=\chi_1\big(s_2^{\frac{\ka_1}{1-\ka_2}}x_1).
$$
We may then even assume that  $x_0$ lies in the support of $ \rho_\tau(x)\,\chi_{1,s_2}(x_1).$ Otherwise, we would have 
$|\partial_{x_2}\big( x_2+\Phi(x)\big)|\gtrsim 1$ along $\ga_\tau$ when $x_1\sim 1,$ and the same type of estimate would even hold on the support of  $\rho_\tau(x)\,\chi_{1,s_2}(x_1),$ provided we choose $\ve $ sufficiently small, so that integrations by parts in $x_2$ would again show that the multiplier $m_{\tau,1}^{\la}$ only contributes an error term.

\smallskip
Then, by Lemma \ref{Legendret}, there is a unique, analytic function $x_1\mapsto x_2^c(x_1)$  which is defined in a neighborhood 
$I_\epsilon:=(x_{0,1}-\epsilon, x_{0,1}+\epsilon), \epsilon>0,$ of $x_{0,1}$  on which $x_1>0,$  such that $x_2^c(x_{0,1})=x_{0,2}$ and 
$$
\partial_{x_2}\big(x_2+\Phi(x_1,x_2)\big)\vert_{x_2=x_2^c(x_1)}=	1+\Phi_2(x_1,x_2^c(x_1))= 0\qquad \text{for all} \quad x_1\in I_\epsilon.
$$

We therefore choose a smooth cut-off function $x_1\mapsto \chi_0(\frac{x_1-x_{0,1}}\epsilon)$ supported in $I_{2\epsilon}$ and identically 1 on $I_\epsilon,$  and decompose
$$
m_{\tau,1}^{\la}=m_{\tau,2}^{\la}+m_{\tau,3}^{\la},
$$
where $m_{\tau,2}^{\la}$ is defined as  $m_{\tau,1}^{\la},$ only with the amplitude $\psi_{s}(x)\,\rho_\tau(x)\,\chi_{1,s_2}(x_1)$ replaced by the amplitude  $\psi_{s}(x)\,\rho_\tau(x)\,\chi_{1,s_2}(x_1)\chi_0(\frac{x_1-x_{0,1}}\epsilon).$

 Note that for $x\in \gamma_\tau$ with $x_1\ne x_{0,1}$ we have that $\partial_{x_2}\big(x_2+\Phi(x)\big)=1+\Phi_2(x)\ne 0.$  This shows that  in the oscillatory integral defining $m_{\tau,3}^{\la}(\xi),$ in which $|x_1-x_{0,1}|\ge \epsilon,$  we can integrate by parts in $x_2$ (provided again we choose $\ve$ sufficiently small), and see again that the multiplier $m_{\tau,3}^{\la}$ only provides an error term satisfying estimates of the form \eqref{Brapiddecay}.
\smallskip

We shall therefore from here on concentrate on the multiplier $ m_{\tau,2}^{\la}.$ 
Recall  also from  \eqref{phi22ne0} that then  $|\Phi_{22}|\sim 1 $ in the integral defining  $ m_{\tau,2}^{\la}.$ We may then  apply the method of stationary phase to the $x_2$-integration in this integral and  conclude that 
\begin{eqnarray}\nonumber
 m_{\tau,2}^{\la}(\xi)&=&\la^{-\frac 12} \tilde{\chi}_1(s_1)\chi_1(s_2)\tilde {\tilde{\chi}}_1(\xi_3) \, \int_{\mathbb{R}}\, e^{-i\la \xi_3\big[(s_1s_2^{\frac{\ka_1}{1-\ka_2}} x_1+s_2^{\frac{1}{1-\ka_2}}\Psi(x_1)\big]} \\
&&\hskip6cm  \times A_\la(\xi_3,s_2;x_1)\chi_{1,s_2}(x_1)\chi_0(\frac{x_1-x_{0,1}}\epsilon)\, dx_1,\label{mla1B3}
\end{eqnarray}
where 
$$
\Psi(x_1) := x^{c}_{2}(x_{1}) + \Phi(x_{1},x^{c}_{2}(x_{1})), 
$$
and where 
$$A_\la(\xi_3,s_2;x_1)=\Big(\xi_3 s_2^{\frac{1}{1-\ka_2}}\Big)^{-\frac 12} A_0(\la\xi_3s_2^{\frac{1}{1-\ka_2}};x_1),$$
with a symbol $A_0$ of order 0 in $\la\xi_3s_2^{\frac{1}{1-\ka_2}}$  depending smoothly on $x_1$, so that 
  $A_\la(\xi_3,s_2;x_1)$ is smooth and every partial derivative of order $k$ of $A_\la$ is uniformly bounded by a constant $C_k$ not depending on $\la$ for $|s_2|\sim1\sim|\xi_3|.$
Moreover, by Lemma \ref{Legendret}, 
\begin{equation*}%\label{Legendra}
	\Psi(x_{1}) = \Psi(x_{0,1}) + \Psi^{\prime}(x_{0,1})(x_{1}-x_{0,1}) + (x_{1}-x_{0,1})^{M}G(x_{1}), 
\end{equation*}
where $\Psi^{\prime}(x_{0,1})\ne 0$, since $\ga_\tau$ is of Type B,  $G$ is an analytic function  with $G(x_{0,1}) \neq 0,$ and  where $M=M_\tau:=N_\tau+2\ge 3,$ if $N_\tau$ denotes the multiplicity of the root $\gamma_\tau$.  

Applying the coordinate change $y=x_1-x_{0,1},$ this can be re-written as
\begin{eqnarray*}\nonumber
 m_{\tau,2}^{\la}(\xi)&=&\la^{-\frac 12} \tilde{\chi}_1(s_1)\chi_1(s_2)\tilde {\tilde{\chi}}_1(\xi_3) \, \int_{\mathbb{R}}\, e^{-i\la \xi_3\big[(s_1s_2^{\frac{\ka_1}{1-\ka_2}} (x_{0,1}+y)+s_2^{\frac{1}{1-\ka_2}}\tilde \Psi(y)\big]} \\
&&\hskip6cm  \times \tilde A_\la(\xi_3,s_2;y) \chi_0(\frac{y}\epsilon)\, dy,
\end{eqnarray*}
where $\tilde A_\la(\xi_3,s_2;y):=\chi_{1,s_2}(y+x_{0,1})\,A_\la(\xi_3,s_2;x_{1,0}+y),$ and 
$$
\tilde \Psi(y) = \Psi(x_{0,1}) + \Psi^{\prime}(x_{0,1}) y + y^{M}\tilde G(y), 
$$
with $\tilde G(0)\ne 0$ and $\Psi^{\prime}(x_{0,1})\ne 0.$
Thus 
\begin{equation}\label{mla1B6}
m_{\tau,2}^{\la}(\xi)=\la^{-\frac 12} \tilde{\chi}_1(s_1)\chi_1(s_2)\tilde {\tilde{\chi}}_1(\xi_3)\, e^{-i\la\xi_3\Big[s_1 s_2^{\frac{\ka_1}{1-\ka_2}}x_{0,1}+s_2^{\frac{1}{1-\ka_2}} \Psi(x_{0,1})\Big]} \times  J(\la,\xi_3,s),
\end{equation}
where 
$$
J(\la,\xi_3,s): =\int_{\mathbb{R}}\, e^{-i\la \xi_3s_2^{\frac{1}{1-\ka_2}}\Big[\big(s_1s_2^{\frac{\ka_1-1}{1-\ka_2}}+\Psi'(x_{0,1})\big)y+y^M\tilde G(y)\Big]}\, 
\tilde A_\la(\xi_3,s_2;y)\chi_0(\frac{y}\epsilon)\, dy.
$$

From here on, we can closely follow the steps of  proof of Theorem 9.1 in \cite{BDIM25}, by adapting in each of its steps  the arguments to the phase functions arising in our context.  

 We first observe that
$J(\la,\xi_3,s)$ is an ``Airy type integral" as   studied in  \cite[Lemma 2.2]{IM16} (compare also  \cite{BDIM25}).
Put 
\[u:=-\Big(s_{1}s_{2}^{\frac{\kappa_{1}-1}{1-\kappa_{2}}}+ \Psi^{\prime}(x_{0,1})\Big). \]
Then $J(\la,\xi_3,s)$ can be re-written by
  \begin{equation}\label{Iju}
  J(\la,\xi_3,s) =  \int_{\mathbb{R}}\, e^{-i\la \xi_3s_2^{\frac{1}{1-\ka_2}}\big[y^M\tilde G(y)-uy\big]}\, 
\tilde A_\la(\xi_3,s_2;y)\chi_0(\frac{y}\epsilon)\, dy.
 \end{equation}
Note that $\tilde A_\la(\xi_3,s_2;y)$ is independent of   $s_1$.  By  \cite[Lemma 2.2]{IM16} (taking into account also the slight correction of this lemma outlined in \cite{BDIM25}), we then obtain the following:

  \smallskip

\textbf{(a)} If $\lambda^{\frac{M-1}{M}}|u| \lesssim 1$, then
\[J(\la,\xi_3,s) = \lambda^{-\frac{1}{M}}g\biggl(\lambda^{\frac{M-1}{M}}u,\xi_3, s_{2}\biggl),\]
 where $g(v,\xi_3,s_2)$ is a smooth function whose derivatives of any fixed order are uniformly bounded for $|v| \lesssim 1$ and  $|\xi_3| \sim|s_2| \sim 1 $.

\smallskip

\textbf{(b)} If $\lambda^{\frac{M-1}{M}}|u| \gg 1$, let us first assume  that $M$ is odd and  that $u $ and $\tilde G$ have the same sign. Then 
\begin{eqnarray*}
J(\la,\xi_3,s)&=&\lambda^{-\frac{1}{2}} |u|^{-\frac{M-2}{2M-2}} \chi_{0} \bigl(\frac{u}{\epsilon} \bigl) a_{+}\biggl(\lambda \xi_{3}s_{2}^{\frac{1}{1-\kappa_{2}}} |u|^{\frac{M}{M-1}}, |u|^{\frac{1}{M-1}},\xi_3, s_{2} \biggl)e^{-i\lambda \xi_3s_{2}^{\frac{1}{1-\kappa_{2}}}|u|^{\frac{M}{M-1}}q_{+}(|u|^{\frac{1}{M-1}})}  \nonumber\\
&+&\lambda^{-\frac{1}{2}} |u|^{-\frac{M-2}{2M-2}} \chi_{0} \bigl(\frac{u}{\epsilon} \bigl) a_{-}\biggl(\lambda  
\xi_3s_{2}^{\frac{1}{1-\kappa_{2}}} |u|^{\frac{M}{M-1}}, |u|^{\frac{1}{M-1}}, \xi_3,s_{2} \biggl)e^{-i\lambda  \xi_3s_{2}^{\frac{1}{1-\kappa_{2}}}|u|^{\frac{M}{M-1}}q_{-}(|u|^{\frac{1}{M-1}})} \\
&+& \lambda^{-1} \biggl|\xi_{3}s_{2}^{\frac{1}{1-\kappa_{2}}} u \biggl|^{-1} E\biggl(\lambda s_{3}s_{2}^{\frac{1}{1-\kappa_{2}}} |u|^{\frac{M}{M-1}}, |u|^{\frac{1}{M-1}},\xi_3, s_{2} \biggl),  \nonumber
\end{eqnarray*}
where, $q_{\pm}$   are smooth (here even analytic) functions and  non-vanishing at the origin, and $a_{\pm}$   are smooth functions which are symbols of order $0$ with respect to $\mu = \lambda \xi_3s_{2}^{\frac{1}{1-\kappa_{2}}} |u|^{\frac{M}{M-1}}$ so that 
$$
\big|\partial^{\alpha}_{\mu}\partial^{\beta}_{v} \partial^{\gamma}_{\xi_3}\partial^{\rho}_{s_{2}}a_{\pm}(\mu,v,\xi_3,s_{2})\big| \le C_{\alpha, \beta, \gamma,\rho}\, |\mu|^{-\alpha}, \quad \quad \forall \alpha, \beta, \gamma,\rho \in  \mathbb{N}.
$$
Moreover, $E$ is smooth and satisfies
$$
|\partial^{\alpha}_{\mu}\partial^{\beta}_{v} \partial^{\gamma}_{\xi_3}\partial^{\rho}_{s_{2}}E(\mu,v,\xi_3,s_{2})| \le C_{N, \alpha, \beta, \gamma,\rho}|v|^{-\beta}\,  |\mu|^{-N}, \quad \quad \forall N, \alpha, \beta, \gamma,\rho\in  \mathbb{N}.
$$

If $M$ is odd and  $u $ and $\tilde G$ have the same sign, then the same formula remains valid, even with $a_{+}\equiv 0$, $a_{-} \equiv0$.

Finally, if $M$ is even, we do have a similar result but without the presence of the term containing $a_{-}$.

\smallskip

We can simplify the notation by setting
$$
A_\la^\pm(|u|, \xi_3,s_2):=a_{\pm}\biggl(\lambda \xi_{3}s_{2}^{\frac{1}{1-\kappa_{2}}} |u|^{\frac{M}{M-1}}, |u|^{\frac{1}{M-1}},\xi_3, s_{2} \biggl),
$$
and
$$
E_\la(|u|, \xi_3,s_2):=\biggl|\xi_{3}s_{2}^{\frac{1}{1-\kappa_{2}}}\biggl|^{-1}E\biggl(\lambda s_{3}s_{2}^{\frac{1}{1-\kappa_{2}}} |u|^{\frac{M}{M-1}}, |u|^{\frac{1}{M-1}},\xi_3, s_{2} \biggl).
$$

Note that these functions  then satisfy estimates of the following form, uniformly in $\la\gg 1:$
\begin{equation}\label{symbol1}
\big|\partial^{\alpha}_{|u|}\partial^{\beta}_{\xi_3}\partial^{\gamma}_{s_2}  A_\la^\pm(|u|, \xi_3,s_2)\big|
\le C_{\alpha, \beta, \gamma}\, |u|^{-\alpha}, \quad \quad \forall \alpha, \beta, \gamma\in  \mathbb{N}
\end{equation}
and 
\begin{equation}\label{symbol2}
\big|\partial^{\alpha}_{|u|}\partial^{\beta}_{\xi_3}\partial^{\gamma}_{s_2}  E_\la(|u|, \xi_3,s_2)\big|
\le C_{N,\alpha, \beta, \gamma}\, \big(\la |u|^{\frac{M}{M-1}}\big)^{-N}  |u|^{-\alpha}, \quad \quad \forall  N, \alpha, \beta, \gamma\in  \mathbb{N},
\end{equation}
and we may re-write \eqref{Iju} as 
\begin{eqnarray}
J(\la,\xi_3,s)
&=&\lambda^{-\frac{1}{2}} |u|^{-\frac{M-2}{2M-2}} \chi_{0} \bigl(\frac{u}{\epsilon} \bigl) \,A^+_\la( |u|, \xi_3,s_2)\, e^{-i\lambda \xi_{3}s_{2}^{\frac{1}{1-\kappa_{2}}}|u|^{\frac{M}{M-1}}q_{+}(|u|^{\frac{1}{M-1}})}  \nonumber\\
&+&\lambda^{-\frac{1}{2}} |u|^{-\frac{M-2}{2M-2}} \chi_{0} \bigl(\frac{u}{\epsilon} \bigl) \, A^{-}_\la(|u|, \xi_3,s_2) \,e^{-i\lambda  \xi_{3}s_{2}^{\frac{1}{1-\kappa_{2}}}|u|^{\frac{M}{M-1}}q_{-}(|u|^{\frac{1}{M-1}})} \label{Iju2}\\
&+&\lambda^{-1}|u|^{-1} E_\la(|u|, \xi_3,s_2).  \nonumber
\end{eqnarray}
In view of the singularities of these terms  in $u$ at $u=0$ we next perform a dyadic decomposition in $u.$ Actually, it will  be more convenient to do this by means of the simpler expression 
\begin{equation}\label{formulaF}
	F(\xi)=\tilde F(s):= s_1+s_2^{\frac{1-\kappa_{1}}{1-\kappa_{2}}}\Psi^{\prime}(x_{0,1})=\frac{\xi_{1}}{\xi_{3}} + \Psi^{\prime}(x_{0,1}) \big(\frac{\xi_{2}}{\xi_{3}}\big)^{\frac{1-\kappa_{1}}{1-\kappa_{2}}}
	\end{equation}
in place of $u.$  Note that indeed 
\begin{equation}\label{usimF}
u =-s_2^{\frac{\ka_1-1}{1-\ka_2}}F(\xi),
\end{equation}
so that  $|u|\sim |F(\xi)|,$ since $s_2\sim 1.$ The function $F$ will play a crucial role in our analysis - we shall come back to this soon.

In view of  the above results in (a) and (b), by choosing $k(\la)\in\Bbb N$ suitably so that $2^{k(\la)}\sim \la^{\frac {M-1}M},$ we decompose
\begin{equation}\label{decomposeIju}
J(\la,\xi_3,s)=J_0(\la,\xi_3,s)
+ \sum^{k(\lambda)}_{k=1} J^{+}_{k}(\la,\xi_3,s) +\sum^{k(\lambda)}_{k=1} J^{-}_{k}(\la,\xi_3,s) + \sum_{k=1}^{k(\lambda)}J^{E}_{k}(\la,\xi_3,s),
\end{equation}
where 
\begin{equation}\label{I0}
J_0(\la,\xi_3,s):=  \lambda^{-\frac{1}{M}} \chi_{0}\biggl(\lambda^{\frac{M-1}{M}}|F(\xi)| \biggl)g\biggl(\lambda^{\frac{M-1}{M}}u,\xi_3, s_{2}\biggl),
\end{equation}
\begin{align}\label{Ik}
J^{\pm}_{k}(\la,\xi_3,s) &:= \lambda^{-\frac{1}{2}} \chi_{1}\biggl(\lambda^{\frac{M-1}{M}}2^{-k}|F(\xi)| \biggl)  |u|^{-\frac{M-2}{2M-2}}\,  A^\pm_{\la}(|u|, \xi_3,s_2) \nonumber\\
& \quad \quad \times e^{i\lambda \xi_3s_{2}^{\frac{1}{1-\kappa_{2}}}|u|^{\frac{M}{M-1}}q_{\pm}( |u|^{\frac{1}{M-1}} )}
\end{align}
and
\begin{align}\label{IE}
J^{E}_{k}(\la,\xi_3,s) = \lambda^{-1} \chi_{1}\biggl(\lambda^{\frac{M-1}{M}} 2^{-k}|F(\xi)| \biggl)  |u|^{-1}  E_\la(|u|, \xi_3,s_2),
\end{align}
with suitably chosen smooth cut-off function $\chi_0$ and $\chi_1.$ 
We note that $J^{\pm}_{k}(\la,\xi_3,s)$ and $J^{E}_{k}(\la,\xi_3,s)$   localize  to $|u| \sim 2^{k} \lambda^{-\frac{M-1}{M}}$, while  $J_{0}(\la,\xi_3,s)$ localizes to $|u| \lesssim \lambda^{-\frac{M-1}{M}}$.

Accordingly, we  decompose the multiplier $m_{\tau,2}^{\la}$ in \eqref{mla1B6} as 
\begin{equation}\label{m2taudecomp}
m_{\tau,2}^{\la}=m^{0}_{\lambda}+\sum_{\pm}\sum^{k(\lambda)}_{k=1}m^{\pm}_{\lambda,k}+\sum^{k(\lambda)}_{k=1}m^{E}_{\lambda,k},
\end{equation}
with
\begin{eqnarray}
m^{0}_{\lambda}(\xi)&:=& \la^{-\frac 12} \tilde{\chi}_1(s_1)\chi_1(s_2)\tilde {\tilde{\chi}}_1(\xi_3)\, e^{-i\la\xi_3\Big[s_1 s_2^{\frac{\ka_1}{1-\ka_2}}x_{0,1}+s_2^{\frac{1}{1-\ka_2}} \Psi(x_{0,1})\Big]} \times  J_0(\la,\xi_3,s),\label{m0k} \\
m^{\pm}_{\lambda,k}(\xi)&:=& \la^{-\frac 12} \tilde{\chi}_1(s_1)\chi_1(s_2)\tilde {\tilde{\chi}}_1(\xi_3)\, e^{-i\la\xi_3\Big[s_1 s_2^{\frac{\ka_1}{1-\ka_2}}x_{0,1}+s_2^{\frac{1}{1-\ka_2}} \Psi(x_{0,1})\Big]} \times  J^\pm_k(\la,\xi_3,s),\label{mpmlk}\\
m^{E}_{\lambda,k}(\xi)&:=&\la^{-\frac 12} \tilde{\chi}_1(s_1)\chi_1(s_2)\tilde {\tilde{\chi}}_1(\xi_3)\, e^{-i\la\xi_3\Big[s_1 s_2^{\frac{\ka_1}{1-\ka_2}}x_{0,1}+s_2^{\frac{1}{1-\ka_2}} \Psi(x_{0,1})\Big]} \times  J_k^{E}(\la,\xi_3,s).\label{mEk}
\end{eqnarray}
Then 
\begin{equation}\label{M2taudecomp}
\|\mathcal{M}_{m_{\tau,2}^{\la}}\|_{L^{p} \rightarrow L^{p}} \le \| \mathcal{M}_{m_{\lambda}^{0}}\|_{L^{p} \rightarrow L^{p}} +\sum_{\pm}\sum_{k=1}^{k(\lambda)} \| \mathcal{M}_{m^{\pm}_{\lambda,k}}\|_{L^{p} \rightarrow L^{p}} + \sum_{k=1}^{k(\lambda)} \| \mathcal{M}_{m^{E}_{\lambda,k}}\|_{L^{p} \rightarrow L^{p}}.
\end{equation}

We shall separately estimate the maximal operators corresponding to each of these Fourier multipliers, focusing, however, mostly on the main terms given by the  multipliers $m^{\pm}_{\lambda,k}.$ By means of 
 Theorem \ref{multipliertomaximal}, we can again essentially reduce the $L^p$- estimation  of  the maximal operators  $\mathcal{M}_{m_{\lambda,k}^{\pm}}$ to estimating  the Fourier multiplier operators  $T_{m_{\lambda,k}^{\pm}}.$ 
\medskip

To this end {\it let us  fix $k$  and the corresponding multiplier $m^{\pm}_{\lambda,k},$} and introduce the following abbreviations: We put 
\begin{equation}\label{ga+R}
\gamma := M/(M-1) ,\qquad R=R_k(\la):=2^{-k}\lambda^{\frac{M-1}{M}}.
\end{equation}
Since  $M\ge 3,$  we may assume here  that 
\begin{equation}\label{abbrevs}
1<\ga <2, \quad R\gg 1 \quad  \text{and} \quad \la\ge R^\ga.
\end{equation}

Then $m^{\pm}_{\lambda,k}$ is supported where $|\xi_i|\sim 1$ for $i=1,2,3$ (more precisely where  $|\xi_3|\sim 1, |s_1|\sim 1$ and $s_2\sim 1$),  and where $R|F(\xi)|\in \supp \chi_1.$

In particular, we see that 
\begin{equation}\label{Fsize}
|u|\sim |F(\xi)|\sim R^{-1}\qquad \text{on }\ \supp m^{\pm}_{\lambda,k},
\end{equation}
hence  $|u|^{-\frac{M-2}{2M-2}}\sim R^{\frac{M-2}{2M-2}}$ in \eqref{Ik}.

From  \eqref{mpmlk}, \eqref{Ik} and the estimates \eqref{symbol1}, we can then easily deduce that   we may write
\begin{equation}\label{mpmlk2}
m^{\pm}_{\lambda,k}=\la^{-1} R^{\frac{M-2}{2M-2}}\, \tilde{\chi}_1(s_1)\chi_1(s_2)\tilde {\tilde{\chi}}_1(\xi_3)\, 
 \chi_1\big(R|F(\xi)|\big)\, a_{\la,R}^\pm\big(F(\xi),\xi_3, s_2\big)\, e^{-i\la \xi_3\phi^\pm(\xi)},
\end{equation}
with the phase function 
\begin{equation}\label{phipm}
\phi^\pm(\xi):=\Big[s_1 s_2^{\frac{\ka_1}{1-\ka_2}}x_{0,1}+s_2^{\frac{1}{1-\ka_2}} \Psi(x_{0,1})\Big]+
s_{2}^{\frac{\ka_1}{1-\kappa_{2}}}|F(\xi)|^{\gamma} q_{\pm}\big(s_2^{\frac{(\gamma-1)(\ka_1-1)}{1-\ka_2}}|F(\xi) |^{\gamma-1}\big),
\end{equation}
and a smooth  amplitude  $a_{\la,R}^\pm\big(F(\xi), \xi_3,s_2\big)$, where $a_{\la,R}^\pm$ satisfies estimates of the form 
\begin{equation}\label{ampest}
|\partial^\alpha_v \partial^\beta_{\xi_3} \partial^\rho_{s_2} a_{\la,R}^\pm(v, \xi_3,s_2)|\le C_{k,\beta} R^\alpha 
\quad \quad \forall \alpha, \beta, \rho\in  \mathbb{N}.
\end{equation}

Observe now that the level sets  $\{F=c\}$ of $F$ are cones in $\Bbb R^3,$ which in the projective coordinates 
$s_1=\xi_1/\xi_3,s_2=\xi_2/\xi_3$ are given by the curves $s_1=c-s_2^{\frac{1-\kappa_{1}}{1-\kappa_{2}}}\Psi^{\prime}(x_{0,1}).$ These have curvature of modulus $\sim 1,$ since $s_2\sim 1.$  This indicates the close connection with the  maximal operator studied in \cite[Theorem 9.1]{BDIM25}, where the corresponding cone had been the light cone. 

Note also that by \eqref{Fsize}  the multiplier $m^{\pm}_{\lambda,k}$ localizes to a $1/R$-neighborhood of the cone $\{F=1/R\}, $ but apart from the ``cone multiplier''  $\chi_1\big(R|F(\xi)|\big)$ which localizes to this conic region, $m^{\pm}_{\lambda,k}$ has in addition an  oscillatory factor with a phase function at the  scale $\la\gg 1,$ thus merging this  cone  multiplier with a translation invariant Fourier integral operator (FIO). 

\medskip

Such classes of ``FIO-cone multiplier operators''  have been studied in \cite{BDIM25} for the case of the light cone 
$\mathbf \Gamma_{\rm lc}:=\{\xi\in\Bbb R^3: \xi_1^2+\xi_2^2=\xi_3^2\}.$ 

What will be important to us is that the  theory developed in \cite{BDIM25} can actually be easily extended to more general ``curved '' cones  than the light-cone.

This extension and the $L^p$-estimates for FIO-cone multipliers that we obtain by means of this extension  will be explained in the next subsection.

%%%%%%%%%%%%%%%%%%%%%%%%%%%%%%%%%%%%%%%%%%%%%%%%%%%%%%%%%%%%%%%%%%%%%%%%%%%%%%%%%%%%%%%%%%%%%%%%%%%%

\subsection{On FIO-cone multipliers}\label{FIOcone}
Let $\mathbf \Gamma$ be the cone
\begin{equation}\label{ccone}
\mathbf \Gamma=\{(\eta,h(\eta)): \eta\in \Omega\}\subset \RR^3\setminus\{0\},
\end{equation}
where   $\Omega$ is a  conic sector in $\Bbb R^2\setminus \{0\}$, and where $h:\Omega\to \RR\setminus \{0\}$ is a smooth  function which is homogeneous of degree 1 such that $\mathrm{rank}\,D_\eta^2 h=1$ on $\Omega$. We shall denote such cones briefly as {\it curved cones}.

Furthermore, assume that $F:\Omega\times(\RR\setminus \{0\}) \to \RR$ is a smooth, 0-homogeneous level function such that 
\begin{equation}\label{FGa}
\mathbf \Gamma=\{\xi\in \Omega\times(\RR\setminus \{0\}): F(\xi)=0\}
\end{equation} 
and $\nabla F(\xi)\ne 0$ on $\mathbf \Gamma$. 

For instance, if  $h(\eta)\ne 0$ for all $\eta\in \Omega,$ so that
$\mathbf \Gamma\subset \RR^3_\times:=\{\xi\in\RR^3: \xi_3\ne 0 \},$ then we could choose 
$$
F(\xi):=1-h\big(\frac {\xi_1}{\xi_3},\frac {\xi_2}{\xi_3}\big)=1-\frac 1{\xi_3}h(\xi_1,\xi_2).
$$

For $R\gg1,$ we then define the conic shells of thickness $\sim 1/R$ by 
\[ 
\Gamma^{\pm}_{R}
:= \big\{\xi\in \Omega\times(\RR\setminus \{0\}): \frac{1}{2R} < \pm F(\xi)  < \frac{1}{R}, \, 1<\xi_{3}<2\big\}.
\]

An {\it  FIO-cone multiplier}  associated to the cone $\mathbf \Gamma$ is then  a Fourier multiplier of the form
\begin{equation}\label{FIOconemult}
m_{\la,R}(\xi)=a_{\la,R}(\xi)m_R(\xi) e^{-i\lambda\phi(\xi)} ,
\end{equation}
where $m_R$ is  a smooth cut-off function localizing to  the  conic shell  $\Gamma^{\pm}_{R}$  of thickness $\sim 1/R,$ i.e., a {\it cone multiplier},  $a_{\la,R}$ is a smooth   amplitude  whose derivatives satisfy similar estimates as  the cone multiplier $m_R,$  where $\la\gg1,$ and  where $\phi$ is a phase lying in a certain class $\mathcal F^{\kappa_1,\kappa_2,\gamma}$ of real-valued homogeneous phase functions.

More precisely: Given any point $\xi_0$ in  $\Gamma^{\pm}_{R},$ let us denote by  $\mathbf \Gamma(\xi_0)$ the cone 
$$
\mathbf \Gamma(\xi_0):=\{\xi\in \Omega\times(\RR\setminus \{0\}): F(\xi)=F(\xi_0)\}
$$ 
passing through $\xi_0.$ 
Note that $\xi_0$ is tangential to $\mathbf \Gamma(\xi_0)$ at the point $\xi_0.$ Choose a second tangential vector $t(\xi_0)$ and  a vector $n(\xi_0)$ which is normal  to $\mathbf \Gamma(\xi_0)$ at  $\xi_0,$ both of length $\sim 1,$ such that $n(\xi_0),t(\xi_0),\xi_0$ form an {\it orthogonal  frame} at $\xi_0$.

  Then the  smooth function $a_{\la,R}$ on $\Gamma^{\pm}_{R}$ is  called an {\it admissible amplitude}, if for every $\xi_0\in\Gamma^{\pm}_{R}$
\begin{equation}\label{admisamp}
 |\partial_{n(\xi_0)}^{\alpha_1}\partial_{t(\xi_0)}^{\alpha_2} \partial_{\xi_0}^{\alpha_3} a_{\lambda,R}(\xi_0)|\leq B_{\alpha} R^{\alpha_1+\alpha_2/2}
 \qquad \text{for all } \al\in \NN^3.
\end{equation}

Let us next introduce classes of phase  functions in analogy to those  introduced in  \cite{BDIM25}:

Let $\kappa_1,\kappa_2\ge 0$ and $\gamma> 0$ be parameters satisfying $\kappa_2\geq \frac12$ and $ |\kappa_1-\kappa_2|\leq \frac 12.$

Then 
 $\mathcal F^{\kappa_1,\kappa_2,\gamma}$ denotes 
  the family of all 1-homogeneous real-valued  phase functions $\phi$  defined on a  suitable conic neighborhood of  
  $\mathbf \Gamma$ satisfying bounds on their derivatives of the form
\begin{equation}\label{deriv}
	 |\partial_{n(\xi_0)}^{\alpha_1}\partial_{t(\xi_0)}^{\alpha_2} \phi(\xi_0)|\leq C_{\alpha} R^{\alpha_1\kappa_1+\alpha_2\kappa_2-\gamma}
	 \quad \text{for all}\  \xi_0\in\Gamma^{\pm}_{R}, \ \alpha_1+\alpha_2\geq 2,
\end{equation}
uniformly in $R\gg1 $.
It is not difficult to see that for $R\gg1$ sufficiently large, these classes do not depend on the choice of level function $F$ for $\mathbf \Gamma$ (compare with the arguments leading to Lemma 3.5 in \cite{BDIM25}).

 For instance, a {\it classical} phase function, i.e., a smooth 1-homogeneous phase function on a conic neighborhood of the light-cone with uniform bounds on its  derivatives for $|\xi|\sim 1$ can easily be seen to be contained in the class $\mathcal F^{\frac\gamma 2,\frac\gamma 2,\gamma}$ for any $\gamma>0$ (compare with Example 3.10 in \cite{BDIM25}).

In a similar way as in \cite{BDIM25}, we next decompose the sector $\Omega\subset \Bbb R^2\setminus \{0\}$  into sectors of angular width  $R^{-1/2},$ and correspondingly we can cover $\Gamma^\pm_R$ by    finitely many overlapping ``sectors ''or ``caps'' $\theta$ of angular width $R^{-1/2},$ so that each  cap is then essentially a linear rectangular box of dimensions about $R^{-1}\times R^{-1/2}\times 1.$ 

However, the phase function $\la \phi$  can be seen to behave almost like a  linear  function only on smaller boxes $\vartheta$ inside caps $\theta$ of side lengths  $\rho_1\times\rho_2\times 1$, which are determined by the maximal size of its second order partial derivatives.  We  shall denote these  boxes $\vartheta$ as   {\it L-boxes}.  Since for $\phi\in\mathcal{F}^{\ka_1,\ka_2,\gamma}$, we have by \eqref{deriv} that
\begin{eqnarray*}
&&|\partial_{n(\xi)}^2\la \phi(\xi)| \lesssim \la R^{2\kappa_1-\gamma} \\
&&|\partial_{t(\xi)}^2\la\phi(\xi)| \lesssim \la R^{2\kappa_2-\gamma}.
\end{eqnarray*}
It is thus  natural to define, for $j=1,2,$ 
$$
\rho_j:=\min\big\{(\lambda R^{2\kappa_j-\gamma})^{-1/2},R^{-1/j}\big\}. 
$$
Since we shall always assume that $\la\ge R^\ga,$ we note that $(\lambda R^{2\kappa_2-\gamma})^{-1/2}\le R^{-1/2},$ 
so that 
\begin{eqnarray}\label{rhodef}
\rho_1=\min\big\{(\lambda R^{2\kappa_1-\gamma})^{-1/2},R^{-1}\big\}	\qquad \rho_2 =(\lambda R^{2\kappa_2-\gamma})^{-1/2}.
\end{eqnarray}

We can then  decompose  each sector $\theta$ of angular width $R^{-1/2}$ in $\Gamma_R^\pm$ into at most  $N(\lambda,R)$ L-boxes $\vartheta,$ where 
\begin{equation}\label{Ndefine}
	N(\lambda,R)\sim \frac{R^{-1}\cdot R^{-1/2}}{\rho_1\rho_2}.
\end{equation}

Finally, we shall also need a strengthening of  the estimates \eqref{deriv} for the particular case $\alpha_1=\alpha_2=1,$ the \textit{small mixed derivative condition} (SMD) (for the class  $\mathcal F^{\kappa_1,\kappa_2,\gamma}$):
\begin{equation}\label{smd}
	|\partial_{n(\xi_0)}\partial_{t(\xi_0)}\, \la \phi(\xi_0)|
	\leq C \, \big((R\rho_2)^{-1}\wedge 1\big) \rho_1^{-1}\rho_2^{-1}.
\end{equation}

Then the following extension of Theorem 4.6   in  \cite{BDIM25} to more general curved cones  of the form  \eqref{ccone}  holds true:
\begin{theorem}\label{FIOconespec}
Let $m_{\la,R}$ be an FIO-cone multiplier as in \eqref{FIOconemult}, with a  phase $\phi\in \mathcal{F}^{\ka_1,\ka_2,\gamma}$ and an admissible amplitude $a_{\la,R}$, and assume that $\lambda\geq R^\gamma\gg 1.$ Assume also that $\phi$ satisfies the small mixed derivative condition (SMD) given by \eqref{smd}. 
Then for every  $\epsilon>0$ the following estimate 
\begin{equation}\label{conefiosest}
	\|T_{m_{\la,R}}\bigl\|_{L^{4} \rightarrow L^{4}}
	\leq C_{\epsilon} R^{\epsilon} N(\lambda,R)^{\frac12}
\end{equation}
holds true, with a constant $ C_{\epsilon}$ depending  only on $\epsilon$ and  the constants $B_\alpha$ in \eqref{admisamp} and $C_\alpha$ in \eqref{deriv}, but not on $R$ or $\la.$ 
\end{theorem}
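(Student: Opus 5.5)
The plan is to reduce Theorem \ref{FIOconespec} to the light-cone case treated in \cite[Theorem 4.6]{BDIM25}, and to track which properties of the light cone that proof actually uses. That proof, as I understand it, rests on three ingredients:
\begin{itemize}
\item the decomposition of $\Gamma^\pm_R$ into caps $\theta$ of angular width $R^{-1/2}$, and of each cap into L-boxes $\vartheta$;
\item on each L-box, the phase $\la\phi$ is linear modulo an error whose derivatives are uniformly bounded (by \eqref{deriv}, \eqref{rhodef} and the SMD condition \eqref{smd}), so $T_{m_{\la,R}}$ restricted to $\vartheta$ is a translate of a smooth bump multiplier with $L^1$-bounded kernel;
\item a square-function or decoupling estimate in $L^4$ for functions with Fourier support in the $1/R$-neighborhood of a cone with one nonvanishing principal curvature (the C\'ordoba--Fefferman/Mockenhaupt square function estimate, or Bourgain--Demeter $\ell^2$ decoupling for the cone).
\end{itemize}
Each ingredient is stable under replacing the light cone by a curved cone $\mathbf\Gamma$ as in \eqref{ccone}.

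\textbf{Normalization.} First I use \eqref{mGLn} and \eqref{mGLn2} and a finite partition of $\Omega$ to localize to a narrow sector. After a linear change of variables, $\mathbf\Gamma$ is then a small perturbation of a piece of the light cone, and $h$ has $\mathrm{rank}\,D^2h=1$ with curvature $\sim 1$ along the level curves. The conic shells $\{F=c\}$, $|c|\lesssim 1/R$, form a smooth one-parameter family of curved cones with uniformly controlled curvature. Hence the frames $(n(\xi_0),t(\xi_0),\xi_0)$ vary smoothly, and the caps $\theta$ are comparable to $R^{-1}\times R^{-1/2}\times 1$ boxes, exactly as in the light-cone case. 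The admissibility condition \eqref{admisamp} and the class $\mathcal F^{\ka_1,\ka_2,\gamma}$ are invariant under this change, with equivalent constants, by the remark following \eqref{deriv}.

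\textbf{Local step.} For each L-box $\vartheta\subset\theta$ with centre $\xi_\vartheta$, I take a smooth partition of unity $\chi_\vartheta$ adapted to the $\rho_1\times\rho_2\times1$ box and Taylor expand $\la\phi$ at $\xi_\vartheta$. The second-order terms are $O(1)$ after rescaling by $\rho_1,\rho_2$:
\begin{itemize}
\item the normal and tangential pure terms by the definition of $\rho_j$;
\item the mixed term by \eqref{smd};
\item the radial terms by $1$-homogeneity.
\end{itemize}
Thus $\mathcal F^{-1}(\chi_\vartheta m_{\la,R})$ is an $L^1$-normalized bump translated by $\la\nabla\phi(\xi_\vartheta)$. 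The number of L-boxes per cap is $N(\la,R)$ by \eqref{Ndefine}.

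\textbf{Global step.} I then need
\[
\|T f\|_4\lesssim R^\epsilon N^{1/2}\|f\|_4 .
\]
I would follow \cite{BDIM25}. First apply the $L^4$ reverse square function (or decoupling) estimate for the curved cone, with caps $\theta$ at scale $R$, losing $R^\epsilon$; this is valid for general curved cones, since decoupling for cones needs only one nonvanishing principal curvature. Inside each cap, Cauchy--Schwarz over the $N$ L-boxes gives $N^{1/2}$ times an $\ell^2$ sum of the pieces $T_\vartheta f$. Each of these is bounded pointwise by a translated maximal average. The final step is the Fourier-side square function estimate, $\|(\sum_\theta|P_\theta f|^2)^{1/2}\|_4\lesssim R^\epsilon\|f\|_4$, which is C\'ordoba's sectorial estimate; it too transfers to curved cones, since its proof uses only that the caps have bounded overlap of their sum-sets in angular coordinates.

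\textbf{Main obstacle.} The hardest step will be checking that the delicate part of the \cite{BDIM25} argument, which handles the translations $\la\nabla\phi(\xi_\vartheta)$ varying across L-boxes (Kakeya-type or Nikodym-type maximal bounds for the translated bumps), still works for a curved cone. There the geometry of the translation vectors enters. I would try to verify that only the derivative bounds \eqref{deriv} and \eqref{smd} in the adapted frame are used, not the explicit form of the light cone, so that the proof carries over verbatim after the normalization above.
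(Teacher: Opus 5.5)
There is a genuine gap, and it sits in your global step.

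First, the $L^4$ ingredient you invoke is too weak. Consider the two options you name.
\begin{itemize}
\item $\ell^2$ decoupling leaves you with $(\sum_\theta\|f_\theta\|_4^2)^{1/2}$. Getting back to $\|f\|_4$ from this costs up to $(\#\theta)^{1/4}\sim R^{1/8}$.
\item The classical (Mockenhaupt-type) reverse square function estimates for the cone also lose a fixed power of $R$.
\end{itemize}
The only estimate with an $R^\epsilon$ loss is the Guth--Wang--Zhang theorem. The argument of \cite{BDIM25} actually uses its stronger form \eqref{gwzest}, $\|f\|_4\le C_\epsilon R^\epsilon\|f\|_{GWZ}$, and its key step is to bound the GWZ seminorm of $T_{m_{\la,R}}f$ by $\|f\|_4$.

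Whether \eqref{gwzest} holds for a curved cone is exactly what must be justified, and it does not follow from ``one nonvanishing principal curvature''. The proof of \cite[Lemma 4.2]{GWZ20} uses the light cone and has to be replaced by the argument of \cite{GLMX23}. That paper proves the estimate only for functions of the form $\mathcal F^R f$, so an averaging argument (Section~\ref{GWZcont}) is still needed to reach arbitrary functions with Fourier support in $\Gamma^\pm_R$. Your normalization does not substitute for this. A curved cone is linearly equivalent to the light cone only if its cross-section is a conic. Moreover, an $R^\epsilon$-sharp estimate is not stable under small perturbations of the cone unless it is proved uniformly for a class of cones closed under rescaling.

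Second, your chain cannot give $N(\la,R)^{1/2}$. The chain is: reverse square function over caps, then Cauchy--Schwarz over L-boxes, then translated maximal averages, then the forward square function. Its intermediate inequality $\|(\sum_\vartheta|T_\vartheta f|^2)^{1/2}\|_4\lesssim R^\epsilon\|f\|_4$ is false, so no Kakeya- or Nikodym-type bound can close it. To see this, fix a cap $\theta$ and let $f=\sum_{\vartheta\subset\theta}f_\vartheta$. Each $f_\vartheta$ is a wave packet with $\widehat{f_\vartheta}$ a bump on $\vartheta$, placed at $-v_\vartheta$. Here $v_\vartheta$ is the translation, of size about $\la|\nabla\phi(\xi_\vartheta)|$, that $T_{m_{\la,R}}$ effects on packets with frequencies in $\vartheta$.
\begin{itemize}
\item When the bounds \eqref{deriv} are attained (as they are for $\phi^\ga$), these positions are separated at the scale of the dual boxes $\vartheta^*$. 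Hence $\|f\|_4^4\approx\sum_\vartheta\|f_\vartheta\|_4^4$.
\item All the $T_\vartheta f$ sit on essentially the same box near the origin.
\item Therefore $\|(\sum_\vartheta|T_\vartheta f|^2)^{1/2}\|_4\approx N^{1/4}\|f\|_4$, and your chain yields at best $R^\epsilon N^{3/4}$.
\end{itemize}
The translation issue you flag as the ``main obstacle'' is exactly why \cite{BDIM25} does not use a pointwise square function over caps. It uses the GWZ seminorm instead, in which the sector square functions $S_\tau$ of all apertures $s$ are measured in local $L^2$-norms over boxes $U// U_{\tau,R}$.

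The paper's proof therefore has two parts. It first establishes \eqref{gwzest} for curved cones via \cite{GLMX23} and the averaging argument. It then notes that the bound of \cite{BDIM25} for the GWZ seminorm of $T_{m_{\la,R}}f$ carries over with only minor changes.
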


{\it As for the proof of this theorem:} we first recall that for the case of the light-cone, the   proofs in   \cite{BDIM25}  build on a key estimate in  the article \cite{GWZ20} by Guth, Wang and Zhang, namely  estimate (5.4), which states that for every $\epsilon>0$ and every function $f$ whose Fourier support is contained in $\Gamma^\pm_R $ we can estimate 
\begin{equation}\label{gwzest}
\| f\|_4\le C_\epsilon R^\epsilon \|f\|_{GWZ}.
\end{equation}

Here $ \|\cdot\|_{GWZ}$   is a  complicated but crucial semi-norm introduced in \cite{GWZ20}, called the Guth-Wang-Zhang semi-norm in \cite{BDIM25}, which is well-suited for induction on scales arguments. The key point  in \cite{BDIM25} was then to  bound the Guth-Wang-Zhang semi-norm of  $T_{m_{\la,R}} f$ in a suitable way by  the $L^4$-norm of $f$. 

An inspection of the arguments in \cite{BDIM25}  reveals that once the  natural analogue of the estimate \eqref{gwzest} is established also for any curved  cone of the form \eqref{ccone}, then the proof of Theorem \ref{FIOconespec} follows by only slight and straight-forward modifications of  the arguments in \cite{BDIM25}. 

And, as already explained in Remark 1.7 of \cite{GLMX23}, the proof by Guth, Wang and Zhang  of estimate \eqref{gwzest} for the case of the light-cone  can be extended to more general curved cones by means of slight modifications, with the exception of \cite[Lemma 4.2]{GWZ20}, for which, however,  an alternative proof working also for more general cones is provided in \cite{GLMX23}.

We shall give more details on  how the natural Guth-Wang-Zhang semi-norms ought to be defined for our more general cones, and also sketch an argument how the estimate  \eqref{gwzest} can alternatively be also derived directly from the results in 
\cite{GLMX23} in the Appendix \ref{GWZcont}.
\smallskip

The following corollary follows immediately from Theorem \ref{FIOconespec} by duality and interpolation with the trivial $L^2$-estimate:
\begin{corollary}\label{FIOconespecc}
Let $m_{\la,R}$ be an FIO-cone multiplier as in \eqref{FIOconemult}, with a  phase $\phi\in \mathcal{F}^{\ka_1,\ka_2,\gamma}$ and an admissible amplitude $a_{\la,R}$, and assume that $\lambda\geq R^\gamma\gg 1.$ Assume also that $\phi$ satisfies the \textit{small mixed derivative condition } (SMD). 
Then for every  $p$ in the range $4/3\le p\le 4$ and every $\epsilon>0$ the following estimate 
\begin{equation}\label{conefiosest}
	\|T_{m_{\la,R}}\bigl\|_{L^{p} \rightarrow L^{p}}
	\leq C_{\epsilon} R^{\epsilon} N(\lambda,R)^{|1-\frac2p|}
\end{equation}
holds true, with a constant $ C_{\epsilon}$ depending  only on $\epsilon$ and  the constants $B_\alpha$ in \eqref{admisamp} and $C_\alpha$ in \eqref{deriv}, but not on $R$ or $\la.$ 
\end{corollary}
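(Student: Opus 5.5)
The plan is to obtain the $L^p$ bound from Theorem \ref{FIOconespec} (the $L^4$ estimate), from its dual at $p=4/3$, and from Plancherel at $p=2$, and then to interpolate by Riesz--Thorin. I would begin with the trivial $L^2$ estimate. The operator $T_{m_{\la,R}}$ is a Fourier multiplier with symbol $a_{\la,R}(\xi)m_R(\xi)e^{-i\la\phi(\xi)}$. Because $\phi$ is real-valued, the exponential factor has modulus one. The amplitude $a_{\la,R}$ is bounded by $B_0$ by \eqref{admisamp} with $\alpha=0$, and the cut-off $m_R$ is bounded. Plancherel therefore gives $\|T_{m_{\la,R}}\|_{L^2\to L^2}\le C$, with $C$ depending only on $B_0$ and the cut-off.

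Next I would treat $p=4/3$ by duality. The adjoint of $T_{m}$ is $T_{\bar m}$, so $\|T_{m_{\la,R}}\|_{L^{4/3}\to L^{4/3}}=\|T_{\overline{m_{\la,R}}}\|_{L^4\to L^4}$. The conjugate symbol is $\bar a_{\la,R}\,m_R\,e^{-i\la(-\phi)}$. This is again an FIO-cone multiplier of the form \eqref{FIOconemult}: the amplitude $\bar a_{\la,R}$ is still admissible with the same constants $B_\alpha$; the phase $-\phi$ still lies in $\mathcal F^{\ka_1,\ka_2,\gamma}$ with the same constants $C_\alpha$, since \eqref{deriv} involves only absolute values; and \eqref{smd} is also invariant under $\phi\mapsto-\phi$. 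The quantities $\rho_1,\rho_2$ and $N(\la,R)$ depend only on $\la,R$ and the exponents, so they are unchanged. Theorem \ref{FIOconespec} therefore applies to the conjugate symbol and gives $\|T_{m_{\la,R}}\|_{L^{4/3}\to L^{4/3}}\le C_\epsilon R^\epsilon N(\la,R)^{1/2}$. There is one bookkeeping issue: $m_R$ may be complex-valued. In that case its conjugate is a cut-off with the same derivative bounds, and it can be absorbed into the amplitude. I expect this to be the only point needing a remark.

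Finally, I would interpolate with Riesz--Thorin between $p=2$ and $p=4$. For $2\le p\le 4$ write $1/p=(1-\theta)/2+\theta/4$, which gives $\theta=2-4/p$. The bound is then $C^{1-\theta}(C_\epsilon R^\epsilon N^{1/2})^{\theta}\lesssim C_\epsilon R^{\epsilon} N^{1-2/p}=C_\epsilon R^{\epsilon}N^{|1-\frac 2p|}$, using $\theta\le 1$ for the factor $R^{\epsilon\theta}\le R^\epsilon$. The range $4/3\le p\le 2$ is handled in the same way, between $p=4/3$ and $p=2$, and yields the exponent $\frac 2p-1=|1-\frac2p|$. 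The constants depend only on $\epsilon$, $B_\alpha$ and $C_\alpha$, as claimed. No step is a genuine obstacle; the only care needed is the verification above that the hypotheses of Theorem \ref{FIOconespec} are preserved under complex conjugation.
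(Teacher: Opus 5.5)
Your proposal is correct and follows the route the paper indicates: the paper obtains the corollary from Theorem \ref{FIOconespec} by duality and by interpolation with the trivial $L^2$ estimate, and that is exactly what you do. Taking the dual with the conjugate symbol $\overline{m_{\la,R}}$, rather than the reflected symbol $m_{\la,R}(-\xi)$, is the right choice. It keeps the support in $\Gamma^{\pm}_{R}$ with $1<\xi_3<2$ and preserves every hypothesis, including (SMD), with the same constants.
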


Of particular importance to us will here be the phase functions 
$$\phi^{\gamma}(\xi) := \xi_{3} |F(\xi)|^{\gamma},$$
where $F$ is a level function as in \eqref{FGa}.

Our next lemma shows  that   the class $\mathcal F^{1,\frac{\ga}2,\gamma}$  will be of particular   interest to us in relation to these phases. We observe that  our assumption  $\lambda\geq R^\gamma$ implies that for  this  class   we 
have $\rho_1\leq R^{-1},$ so that
\begin{equation}\label{rhoga}
\rho_1=\la^{-\frac 12}R^{-(1-\frac {\ga}2)},\qquad  \rho_2= \la^{-1/2},
\end{equation}
and thus 
\begin{equation}\label{Ndefinega}
N(\lambda,R)=\lambda R^{-\frac{1+\gamma}2}.
\end{equation}

\begin{lemma}\label{onphiga}
Assume that $1\le\ga\le 2$ and $\la\ge R^\ga.$ Then $\phi^\ga\in \mathcal F^{1,\frac12,\gamma}.$ 
Moreover, if $\phi_0$ is  any classical phase function, then 
 $\phi_0+\phi^\ga\in \mathcal F^{1,\frac{\ga}2,\gamma},$ and $\phi_0$ as well as $\phi^\ga$ satisfies  the small mixed derivative condition (SMD) in \eqref{smd}  for this class $\mathcal F^{1,\frac{\ga}2,\gamma}.$
\end{lemma}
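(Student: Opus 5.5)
The proof reduces to checking derivative bounds in a frame adapted to the level cones of $F$, localized to a point $\xi_0\in\Gamma^\pm_R$. Since $F$ is $0$-homogeneous with $\nabla F\ne 0$ near $\mathbf\Gamma$ and $|F(\xi_0)|\sim R^{-1}$, the normal vector $n(\xi_0)$ is comparable to $\nabla F(\xi_0)/|\nabla F(\xi_0)|$. Moreover $\partial_{\xi_0}$ acts on $0$-homogeneous functions trivially. We use the $1$-homogeneity of $\phi^\ga$, which gives $\partial_{\xi_0}\phi^\ga=\phi^\ga$ and lets us reduce to derivatives in $n,t$. We also note $\partial_n F\sim 1$, $\partial_t F(\xi_0)=0$ (since $t$ is tangent to the level cone), and $\partial_t^2F\sim 1$ by curvature of the cones ($\mathrm{rank}\,D^2h=1$). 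All higher derivatives of $F$ and $\xi_3$ are $\mathcal O(1)$.

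\emph{Step 1: the class bound for $\phi^\ga$.} Write $\phi^\ga=\xi_3 G(F)$ with $G(v)=|v|^\ga$, so $|G^{(k)}(F)|\sim R^{-(\ga-k)}$ on the shell. By Faà di Bruno, $\partial_n^{\alpha_1}\partial_t^{\alpha_2}\phi^\ga$ is a sum of terms $G^{(k)}(F)\prod(\partial^{\beta_i}F)$. Each factor containing only $n$-derivatives contributes $\mathcal O(1)$, and each $t$-derivative falling on $F$ alone gives $0$ at $\xi_0$ at first order. Hence in every term the number $k$ of $F$-factors satisfies $k\le \alpha_1+\lfloor\alpha_2/2\rfloor$. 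The term is then bounded by $R^{k-\ga}\le R^{\alpha_1+\alpha_2/2-\ga}$, which is \eqref{deriv} with $(\kappa_1,\kappa_2)=(1,\tfrac12)$. Derivatives landing on $\xi_3$ only lower $k$. The constraints $\kappa_2\ge\frac12$ and $|\kappa_1-\kappa_2|\le\frac12$ hold.

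\emph{Step 2: the sum $\phi_0+\phi^\ga$.} A classical phase has all $n,t$ derivatives $\mathcal O(1)$. Since $\ga\le 2$, we have $1\le R^{2-\ga}\le R^{\alpha_1+\alpha_2\ga/2-\ga}$ whenever $\alpha_1+\alpha_2\ge2$, because the minimal exponent over $\alpha_1+\alpha_2\ge 2$ is at $\alpha_2=2$, giving $\ga-\ga=0$. So $\phi_0\in\mathcal F^{1,\frac\ga2,\ga}$. Also $\mathcal F^{1,\frac12,\ga}\subset\mathcal F^{1,\frac\ga2,\ga}$ because $\ga\ge1$, so the sum lies in $\mathcal F^{1,\frac\ga2,\ga}$.

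\emph{Step 3: SMD.} For the class $\mathcal F^{1,\frac\ga2,\ga}$ with $\la\ge R^\ga$ we have \eqref{rhoga}, so
\[
\rho_1^{-1}\rho_2^{-1}=\la R^{1-\ga/2}.
\]
Since $R\rho_2=R\la^{-1/2}$, the right side of \eqref{smd} equals $\la R^{1-\ga/2}\min\{1,\la^{1/2}R^{-1}\}$. If $\la^{1/2}\ge R$ this is $\la R^{1-\ga/2}$, and otherwise it is $\la^{3/2}R^{-\ga/2}\ge\la R^{\ga/2-\ga/2}=\la$, using $\la^{1/2}\ge R^{\ga/2}$. In both cases it is $\gtrsim\la$. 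The mixed derivative of $\la\phi_0$ is $\mathcal O(\la)$, so $\phi_0$ satisfies SMD. For $\phi^\ga$, Step 1 gives $|\partial_n\partial_t\phi^\ga|\lesssim R^{1-\ga}$. In fact the only term with $k=1$ is $G'(F)\partial_n\partial_tF$, which is $\mathcal O(R^{1-\ga})$, and the term with $k=2$ contains $\partial_tF(\xi_0)=0$. Thus $\la|\partial_n\partial_t\phi^\ga|\lesssim\la R^{1-\ga}\le\la$, again within the SMD bound.

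The main obstacle is bookkeeping in Step 1. The frame $(n,t,\xi_0)$ depends on $\xi_0$ while derivatives are taken at the fixed point $\xi_0$. So I would fix $\xi_0$ and use constant directions, taking $t$ tangent to the level cone at $\xi_0$, so that only $\partial_tF(\xi_0)=0$ is needed, not its vanishing nearby. I would then carefully count that each $F$-factor carrying a single $t$-derivative vanishes, which forces pairs of $t$-derivatives per factor.
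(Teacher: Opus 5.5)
Your proof is correct. Its skeleton matches the paper's:
\begin{itemize}
\item fix $\xi_0$ and exploit $\partial_{t(\xi_0)}F(\xi_0)=0$;
\item derive $|\partial_n\partial_t\phi^\gamma(\xi_0)|\lesssim R^{1-\gamma}$ for (SMD);
\item put the classical phase into $\mathcal F^{1,\frac{\gamma}{2},\gamma}$ using $\gamma\le 2$;
\item use $\mathcal F^{1,\frac12,\gamma}\subset\mathcal F^{1,\frac{\gamma}{2},\gamma}$ for $\gamma\ge 1$.
\end{itemize}
The difference is in how you obtain the class bound for $\phi^\gamma$.

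The paper rotates so that $n(\xi_0),t(\xi_0),\xi_0$ become coordinate axes. It then writes the level function in projective coordinates as $\tilde G(\sigma)+c$ with $\tilde G=\sigma_1\varphi_1(\sigma)+\sigma_2^2\varphi_2(\sigma)$; this factorization is exactly your tangency observation. Finally it rescales to $H_R(s)=R\,[\tilde G(R^{-1}s_1,R^{-1/2}s_2)+c]$, whose $\gamma$-th power has derivatives bounded uniformly in $R$. The anisotropic weights $R^{\alpha_1+\alpha_2/2-\gamma}$ then come out of the chain rule with no bookkeeping.

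You instead count factors in the Fa\`a di Bruno expansion: each surviving $F$-factor carries an $n$-derivative or at least two $t$-derivatives, so $k\le\alpha_1+\lfloor\alpha_2/2\rfloor$. This is more hands-on but has two advantages. It treats the factor $\xi_3$ explicitly via Leibniz, which the paper suppresses when it writes $\phi_A^\gamma=|F_A|^\gamma$. And the case $\alpha=(1,1)$ of your count gives the (SMD) bound $R^{1-\gamma}$ at once, whereas the paper computes that mixed derivative separately.

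Your verification of (SMD) notes that the right-hand side of \eqref{smd} is $\gtrsim\lambda$ when $\lambda\ge R^\gamma$ and $\gamma\le 2$. This is equivalent to the paper's check of \eqref{smd3}.

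There are two small slips, neither affecting the argument:
\begin{itemize}
\item $|G^{(k)}(F)|\sim R^{k-\gamma}$ should read $\lesssim$, since for $\gamma\in\{1,2\}$ some $G^{(k)}$ vanish identically.
\item In Step 2 the chain $1\le R^{2-\gamma}\le R^{\alpha_1+\alpha_2\gamma/2-\gamma}$ fails at $\alpha=(0,2)$ when $\gamma<2$. Only your correct conclusion, that this exponent is always $\ge 0$, is needed.
\end{itemize}
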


 \begin{proof} Fix $\xi_0\in\Gamma^{\pm}_{R},$ and let  $n(\xi_0),t(\xi_0),\xi_0$ be the orthogonal frame at $\xi_0$ devised before. We  first want to show that $\phi^\ga$ satisfies the estimates of the form  \eqref{deriv} with $\ka_1=1$ and $\ka_2=1/2.$ 
 
 Denote by $E_1,E_2,E_3$ the orthonormal frame at $\xi_0$ given by 
 $E_1:=|n(\xi_0)|^{-1} n(\xi_0), E_2:=|t(\xi_0)|^{-1} t(\xi_0), E_3:= |\xi_0|^{-1} \xi_0.$ %We may assume that this  is a positively oriented frame at $\xi_0.$ 
 Observe first  that the estimates \eqref{deriv}  are then equivalent to estimates of the form 
 \begin{equation}\label{paEj}
 |\partial_{E_1}^{\alpha_1}\partial_{E_2}^{\alpha_2}  \phi^\ga(\xi_0)|\leq C'_{\alpha} R^{\alpha_1+\frac {\alpha_2}2-\ga},
 \qquad \al_1+\al_2\ge 2.
\end{equation}
Next, it  easy to see that the estimates \eqref{deriv} are invariant under rotations of $\RR^3$ (compare Section 3.2 in  \cite{BDIM25}). More specifically,  denote by $e_1,e_2,e_3$ the canonical basis of $\RR^3.$ We then choose  $A\in O(3,\RR)$ in such a way that  $\eta_0:= A\xi_0=|\xi_0|e_3,$  and so that the tangent plane to the rotated cone $A\big(\mathbf \Gamma(\xi_0)\big)$ at $\eta_0$ is spanned by $e_2$ and $e_3.$ In the coordinates 
 $\eta:=A\xi$ the cone is then described by the rotated level function $F_A(\eta):= F(A^{-1}\eta),$ i.e.,
 $$
 A\big(\mathbf \Gamma(\xi_0)\big)\subset \{\eta: F_A(\eta)=F_A(\eta_0)\}.
 $$
 Moreover, in the coordinates $\eta,$ the orthonormal frame $E_1,E_2,E_3$  corresponds to the orthonormal frame  $A\cdot E_1,A\cdot E_2,A\cdot E_3,$ in the following sense:
 
Denote by $\phi_A^\ga(\eta):=\phi^\ga(A^{-1} \eta)$ the phase $\phi^\ga$ when expressed in the rotated coordinates $\eta.$ Then
$\partial_{E_j}\phi^\ga(\xi)=\partial_{A E_j}\phi_A^\ga(\eta), \, j=1,2,3.$ But notice that by our choice of $A,$ we may assume that $AE_1=e_1,$ since $AE_1$ is normal to $A\big(\mathbf \Gamma(\xi_0)\big)$ at $\eta_0,$ and $AE_3=e_3,$ hence also $AE_2=e_2.$ 
Thus  the estimates \eqref{paEj} are  equivalent to the following estimates:
\begin{equation}\label{paej}
 |\partial_{\eta_1}^{\alpha_1}\partial_{\eta_2}^{\alpha_2}  \phi_A^\ga(\eta_0)|\leq C'_{\alpha} R^{\alpha_1+\frac {\alpha_2}2-\ga},
 \qquad \al_1+\al_2\ge 2.
 \end{equation}
 
Next, let us put $c:=F(\xi_0)=F_A(\eta_0),$ where $|c|\sim R^{-1},$ and $G(\eta):= F_A(\eta)-c$, so that  $A\big(\mathbf \Gamma(\xi_0)\big)\subset \{\eta: G(\eta)=0\}.$ Note that $G$ homogeneous of degree $0,$ so that
$$
G(\eta)=\tilde G(\si_1,\si_2),
$$
if we use the abbreviation $\si_1:=\eta_1/\eta_3,\, \si_2:=\eta_2/\eta_3.$ We also set $\si:=(\si_1,\si_2).$ 

Note that then 
$$
\tilde G(0,0)=0\quad \text{ and  } \quad \partial_{\si_2}\tilde G(0,0)=0,
$$ 
since $G(0,0, |\xi_0|)=G(\eta_0)=0$ and $\partial_{\eta_2}G(0,0,|\xi_0|))=\partial_{\eta_2}G(\eta_0)=0.$ Thus, a Taylor expansion of $\tilde G$ at $(0,0)$ yields that
$$
\tilde G(\si_1,\si_2)=\si_1\varphi_1(\si)+\si_2^2\varphi_2(\si),
$$
with smooth functions $\varphi_1, \varphi_2$ defined near the origin. Since moreover $ \phi_A^\ga(\eta)=|F_A(\eta)|^\ga=|\tilde G(\sigma)+c|^\ga,$  we finally observe  that \eqref{paej} is equivalent to proving estimates of the form
\begin{equation}\label{paesi}
 \big|\partial_{\si_1}^{\alpha_1}\partial_{\si_2}^{\alpha_2}  |\tilde G+c|^\ga (0,0)\big|\leq C'_{\alpha} R^{\alpha_1+\frac {\alpha_2}2-\ga},
 \qquad \al_1+\al_2\ge 2.
 \end{equation}
 Let us finally consider the re-scaled function 
 $$
 H_R(s_1,s_2):= R\, \big[\tilde G(R^{-1}s_1, R^{-1/2} s_2)+c\big]=s_1\varphi_1(R^{-1}s_1, R^{-1/2} s_2)+ s_2^2 \varphi_2(R^{-1}s_1, R^{-1/2} s_2)+\tilde c,
$$
where $|\tilde c|\sim 1,$ and where we may assume that $|s_1|,|s_2|\ll 1.$ Then the estimates \eqref{paesi} are equivalent to estimates of the form
$$
 \big|\partial_{s_1}^{\alpha_1}\partial_{s_2}^{\alpha_2}  |H_R(s_1,s_2)|^\ga (0,0)\big|\leq C'_{\alpha}, 
 \qquad \al_1+\al_2\ge 2,
$$
with constants $C'_{\alpha}$ not depending on $R\gg 1.$ But these are obvious.
\smallskip

We have thus shown that $\phi^\ga\in \mathcal F^{1,\frac12,\gamma}.$ 

Next, by  \eqref{rhoga}, the small mixed derivative condition  for the class $\mathcal F^{1,\frac{\ga}2,\gamma}$ asks for an estimate of the form
\begin{equation}\label{smd3}
|\partial_{n(\xi_0)}\partial_{t(\xi_0)} \phi^{\gamma}(\xi_0)|\leq C \,\big(\frac {\la^{\frac 12}} R\wedge 1\big) R^{1-\frac \ga 2}.
\end{equation}
To this end, we shall  show that the following estimate holds true:
\begin{equation}\label{smd2}
	|\partial_{n(\xi_0)}\partial_{t(\xi_0)} \phi^\gamma(\xi_0)|
	\leq C R^{1-\gamma}.
\end{equation}
It is easily checked that this estimate implies the required estimate under our assumptions $\la\ge R^\ga$ and $\ga\ge 1.$
In the coordinates $\eta,$ the estimate \eqref{smd2} is equivalent to $|\partial_{\eta_1}\partial_{\eta_2}  \phi_A^\ga(\eta_0)|\lesssim R^{1-\ga},$ i.e., to 
$$
\big|\partial_{\si_1}\partial_{\si_2}  |\tilde G(\si)+c|^\ga\vert_{\si=(0,0)}\big|\lesssim R^{1-\ga}.
$$
But, assuming that $c>0,$ say, and that $|\si|\ll 1,$ then 
$$
\partial_{\si_2} |\tilde G(\si)+c|^\ga=\ga|\tilde G(\si)+c|^{\ga-1}\big(\si_1\partial_{\si_2}\varphi_1(\si)+2\si_2 \varphi_2(\si)+\si_2^2 \partial_{\si_2} \varphi_2(\si)\big),
$$
so that at $\si=(0,0)$ we obtain 
$$
\partial_{\si_1} \partial_{\si_2} |\tilde G(\si)+c|^\ga\vert_{\si=(0,0)}=\ga(\ga-1)|\tilde G(0,0)+c|^{\ga-2}\cdot 0
+\mathcal{O}\big(\ga|\tilde G(0,0)+c|^{\ga-1}\big)=\mathcal{O}(R^{1-\ga}).
$$

As for the classical phase $\phi_0,$ it is easy to check that $\phi_0\in \mathcal F^{1,\frac\ga2,\gamma}$, so that also
 $\phi_0+\phi^\ga\in \mathcal F^{1,\frac\ga 2,\gamma}.$
 In order to show that also $\phi_0$ satisfies the  condition (SMD), in view of \eqref{smd3} it suffices to show that 
 $$
 1\lesssim  C \,\big(\frac {\la^{\frac 12}} R\wedge 1\big) R^{1-\frac \ga 2},
  $$
  i.e., that $1\lesssim (\la R^{-\ga})^{1/2}$ and $1\lesssim R^{1-\frac \ga 2}.$ But these estimates are clear since $\la\ge R^{\ga}$ and $\ga\le 2.$ 
  \end{proof}

%%%%%%%%%%%%%%%%%%%%%%%%%%%%%%%%%%%%%%%%%%%%%%%%%%%%%%%%%%%%%%%%%%%%%%%%%%%%%%%%%%%%%%%%%%%%%%%%%%%%

\subsection{Applying FIO-cone multiplier estimates}
We now turn back to our multipliers $m_{\lambda,k}^\pm,$ which  according to \eqref{mpmlk2} and \eqref{phipm} can be written as $m^{\pm}_{\lambda,k}=\la^{-1} R^{\frac{M-2}{2M-2}}\,m_{\la,R}(\xi),$ 
if we set
\begin{equation}\label{mlaR}
 m_{\la,R}(\xi):=m_R(\xi)\, a_{\la,R}^\pm\big(F(\xi),\xi_3, s_2\big)\, e^{-i\la \xi_3\phi^\pm(\xi)}.
\end{equation}
Here we have  put (again with $s_1=\xi_1/\xi_3, s_2=\xi_2/\xi_3$)
$$
m_R(\xi):= \tilde{\chi}_1(s_1)\chi_1(s_2)\tilde {\tilde{\chi}}_1(\xi_3)\, 
 \chi_1\big(R|F(\xi)|\big),
 $$
and the phase is given by   $\xi_3\phi^\pm(\xi)=\phi_0(\xi)+\tilde \phi^\ga(\xi),$  with the classical phase 
$$
\phi_0(\xi):=\xi_3\Big[s_1 s_2^{\frac{\ka_1}{1-\ka_2}}x_{0,1}+s_2^{\frac{1}{1-\ka_2}} \Psi(x_{0,1})\Big],
$$
and where
$$
\tilde \phi^\ga(\xi):=
\xi_3 \, s_{2}^{\frac{\ka_1}{1-\kappa_{2}}} q_{\pm}\big(s_2^{\frac{(\gamma-1)(\ka_1-1)}{1-\ka_2}}|F(\xi) |^{\gamma-1}\big)\, |F(\xi)|^{\gamma}.
$$
Recall also from  \eqref{ga+R} that $\gamma=\frac{M}{M-1},$ so that $1<\ga<2.$ Re-writing correspondingly 
  $\frac{M-2}{2M-2}=1-\frac \ga 2,$ we obtain that $\la^{-1} R^{\frac{M-2}{2M-2}}=\la^{-1}R^{1-\frac \ga 2}.$
We can thus write 
\begin{equation}\label{mpmlk4}
m^{\pm}_{\lambda,k}=\la^{-1}R^{1-\frac \ga 2} \,m_{\la,R}.
\end{equation}
  
Recall also from \eqref {formulaF}  that 
$$
	F(\xi)=\tilde F(s):= s_1-c s_2^b,
$$
where $c:=-\Psi^{\prime}(x_{0,1})\ne 0$ and $b:=\frac{1-\kappa_{1}}{1-\kappa_{2}}\ne 1.$ 
Note that if we define the function 
$$
h(\eta_1,\eta_2):= \Big(\frac {\eta_1}{c\eta_2^b}\Big)^{\frac 1{1-b}},
$$
then we can view $F$ is a level function for the cone 
$$
\mathbf \Gamma:=\{(\eta,h(\eta)): \eta\in \Omega\}\subset \RR^3\setminus\{0\}
$$
(where $\Omega$ is a suitable sector containing the support of the function $\tilde{\chi}_1(\eta_1)\chi_1(\eta_2)$),  i.e.,
$$
\mathbf \Gamma \subset \{F=0\}.
$$
Since the curve in the plane given by $s_1=cs_2^b, s_2\sim 1,$ has non-vanishing curvature, this cone is curved.
Note also that the function $m_R$ localizes to the union of $\Gamma_R^+$ and $\Gamma_R^-.$ 
\smallskip

As for the amplitude $a_{\la,R}^\pm\big(F(\xi),\xi_3, s_2\big)$ in \eqref{mlaR}, by means of the scaling argument used in the proof of Lemma \ref{onphiga} it is easy to see that the estimates \eqref{ampest} imply that this amplitude is admissible (compare also with Lemma 4.5 and its proof in \cite{BDIM25}).

\smallskip
Let us finally  show that Lemma \ref{onphiga} remains valid also for our phase $\phi_0+\tilde \phi^\ga,$ i.e., that 
 $\phi_0+\tilde \phi^\ga$ lies in the  class $\mathcal F^{1,\frac \ga 2,\gamma}$ and satisfies the small mixed derivative condition (SMD): 
 
 \smallskip
For the first claim, it will again suffice to prove that  $\tilde \phi^\ga\in \mathcal F^{1,\frac 12,\gamma}.$  Now, since 
 $q_\pm$ is analytic, by means of a Taylor expansion of  $q_\pm$ we can expand
\begin{equation}\label{q+taylor}
\tilde \phi^\ga(\xi)
=\sum_{l=1}^\infty c_l s_2^{\frac{\kappa_{1}-1}{\kappa_{2}-1}\cdot (1-\gamma)l+\frac{\ka_1}{1-\kappa_{2}}}\, \xi_3\, |F(\xi)|^{\gamma+l(\gamma-1)}.
\end{equation}

Since the exponents   $\gamma_l:=\gamma+l(\gamma-1)$ satisfy $\ga_l>\gamma>1$, we can apply the same arguments as in the proof of Lemma \ref{onphiga} to deduce that
$$
 \xi_3|F(\xi)|^{\gamma+l(\gamma-1)} \in \mathcal F ^{1,\frac12,\gamma_l}\subset R^{-l(\ga-1)}\cdot \mathcal F ^{1,\frac12,\gamma}.
$$
Note also that the additional factors $s_2^{\frac{\kappa_{1}-1}{\kappa_{2}-1}\cdot (1-\gamma)l+\frac{\ka_1}{1-\kappa_{2}}}$ in the  terms of this series is harmless, since its derivatives in $\xi$ are bounded independently of $R.$

Thus by applying Leibniz' rule we see that indeed $\tilde \phi^\ga\in \mathcal F^{1,\frac 12,\gamma}.$

Finally, to see that $\phi_0+\tilde \phi^\ga$ satisfies the small mixed derivative condition (SMD), it suffices by Lemma \ref{onphiga} to prove this for $\tilde \phi^\ga$. In analogy with \eqref{smd2}, it will  suffice to prove that
$$
	|\partial_{n(\xi_0)}\partial_{t(\xi_0)} \tilde \phi^\gamma(\xi_0)|
	\leq C R^{1-\gamma}.
$$
Let us show this for the main term $c_0s_2^{\frac{\ka_1}{1-\kappa_{2}}}\, \xi_3\, |F(\xi)|^{\gamma}$ with $l=0$ in \eqref{q+taylor} -- for the higher order terms with $l\ge 1$  we can again argue in a very similar way:

 We again apply Leibniz' rule. Note that if at least one of the derivatives $\partial_{n(\xi_0)}$ or $\partial_{t(\xi_0)}$ hits the factor $s_2^{\frac{\ka_1}{1-\kappa_{2}}},$ then we can clearly estimate the corresponding term of $\partial_{n(\xi_0)}\partial_{t(\xi_0)} \tilde \phi^\gamma(\xi_0)$ by nothing worse than $\mathcal {O} (R^{1-\ga}),$ which is what we want. And, when  both of these derivatives hit the factor  $|F(\xi)|^{\gamma}$, then estimate \eqref{smd2} yields  the desired estimate.

\smallskip

We have thus shown that $m_{\la,R}$ is an FIO-cone multiplier to which we can apply Corollary \ref{FIOconespecc}, so that for every  $p$ in the range $4/3\le p\le 2$ and every $\epsilon>0$  we can estimate
\begin{equation}\label{TmlaB}
\|T_{m_{\la,R}}\bigl\|_{L^{p} \rightarrow L^{p}}
	\leq C_{\epsilon}\, R^{\epsilon}  N(\lambda,R)^{\frac2p-1}.
\end{equation}

Recalling from  \eqref{Ndefinega} that $N(\lambda,R)=\lambda R^{-\frac{1+\gamma}2},$ in combination with \eqref{mpmlk4} we obtain that
 \begin{equation}\label{mpmlaest2}
\|T_{m^\pm_{\la,k}}\bigl\|_{L^{p} \rightarrow L^{p}}
	\leq C_{\epsilon}  \lambda^{\frac{2}{p}-2} R^{\frac 32-\frac{1+\gamma}{p} +\epsilon},\qquad \qquad 4/3\le p\le 2.
 \end{equation}
 This  estimate  coincides with the estimate  (9.22)  in  \cite{BDIM25}.

\smallskip

We want to apply Theorem \ref{multipliertomaximal} in order to also bound the maximal operator 
$\mathcal{M}_{m_{\lambda,k}^{\pm}}.$ 
To this end, we need to consider also  the multiplier 
$$
\dot {m}_{\la,R}:=\frac {d}{dt} m_{\la,R}(t\xi)\big|_{t=1},
$$ 
which we shall then write as $\dot {m}_{\la,R}=\la \widetilde{m_{\la,R}}.$
From \eqref{mlaR}, we compute that 
\begin{eqnarray}\label{partialtmjk}
 \dot{m} _{\la,R}(\xi)&=&-i\la\xi_3\, \phi^\pm(\xi)\, m_R(\xi)\, a_{\la,R}^\pm\big(F(\xi),\xi_3, s_2\big)\, e^{-i\la \xi_3\phi^\pm(\xi)}\\ \nonumber
&&\qquad +\dot m_R(\xi)\, a_{\la,R}^\pm\big(F(\xi),\xi_3, s_2\big)\, e^{-i\la \xi_3\phi^\pm(\xi)}\\ \nonumber
&&\qquad + m_R(\xi)\, \dot a_{\la,R}^\pm\big(F(\xi),\xi_3, s_2\big)\, e^{-i\la \xi_3\phi^\pm(\xi)}. \nonumber
\end{eqnarray}
Here we have exploited that, as functions of $\xi,$   $s_1,s_2,F$ and $\phi^\pm$ are homogeneous of degree $0,$ whereas  $\xi_3$ is  homogeneous of degree $1.$ Accordingly we decompose 
$$
 \widetilde{m_{\la,R}}= \widetilde{m_{\la,R,I}}+\la^{-1} \widetilde{m_{\la,R,II}},
 $$
 where 
 \begin{equation}\label{tildemladecomp}
\widetilde{m_{\la,R,I}}(\xi):=-i\xi_3\, \phi^\pm(\xi) \, m_R(\xi)\, a_{\la,R}^\pm\big(F(\xi),\xi_3, s_2\big)\, e^{-i\la \xi_3\phi^\pm(\xi)}.
\end{equation}

As for $\widetilde{m_{\la,R,II}},$ it is easily seen that $\dot m_R(\xi)$ is just a slightly modified version of the function $m_R$ which localizes to $\Gamma^\pm_R,$ and similarly  $\dot a_{\la,R}^\pm$ is slightly modified version of  $a_{\la,R}^\pm,$ satisfying again the estimates \eqref{ampest}.

As for $\widetilde{m_{\la,R,I}},$ recall that $\xi_3\phi^\pm=\phi_0+\tilde \phi^\ga.$ 
The product of classical phase $\phi_0$ with  $m_R$ can  here easily be absorbed into  the cut-offs in $s_1$ and $s_2$,  just leading  to  modified versions of those.

 Similarly, since 
 $$
|F(\xi)|^\ga \chi_1\big(R|F(\xi)|\big) =R^{-\ga} \chi_{1,\ga}(R|F(\xi)|),
 $$
 if we set $\chi_{1,\ga}(v):=v^\ga \chi_1(v),$ which  is  again an admissible  amplitude, we can see in  a similar way that the product with $\tilde \phi^\ga$ can also be absorbed into a slightly modified version of the cut-off $m_R.$

These discussions show that we can apply Corollary \ref{FIOconespecc} in the same way as before to  $\widetilde{m_{\la,R}}$ in place off  $m_{\la,R}$ and obtain
  \begin{equation}\label{TmlaBtilde}
\|T_{\widetilde{m_{\la,R}}}\bigl\|_{L^{p} \rightarrow L^{p}}
	\leq C_{\epsilon}\, R^{\epsilon}  N(\lambda,R)^{\frac2p-1}.
\end{equation}
Thus, if we write 
$
\dot {m}^\pm_{\la,k}:=\frac {d}{dt} m^\pm_{\la,k}(t\xi)\big|_{t=1}=\la \, \widetilde{{m}^\pm_{\la,k}},
$ 
then 
\begin{equation}\label{mpmlaesttilde}
\|T_{\widetilde{m_{\lambda,k}^{\pm}}}\bigl\|_{L^{p} \rightarrow L^{p}}
	\leq C_{\epsilon}  \lambda^{\frac{2}{p}-2} R^{-\frac{1+\gamma}{p} +\frac 32+\epsilon}, \qquad \qquad 4/3\le p\le 2.
 \end{equation}
 
Finally, by means of Lemma \ref{generalkernelestimate} we can verify that Condition (ii)   in Theorem \ref{multipliertomaximal}  holds here again with a constant $B=\la^C$ as in \eqref{kernelA2}.

Note also that for $\la\ge R^\ga$  and $R\gg 1$ we have  $A:=\lambda^{\frac{2}{p}-2} R^{\frac 32-\frac{1+\gamma}{p} +\epsilon}\ll 1.$ Thus, by applying Theorem \ref{multipliertomaximal}, we obtain that
$$
\|\mathcal{M}_{m^\pm_{\la,k}} \|_{L^p\to L^p}\le C_{\epsilon,\delta,p}\,\left\{ \la^{\frac 1p}\left( \la^{ \frac 2p-2} R^{-\frac{1+\ga}{p}+\frac 32+\epsilon}\right)^{1-\delta}+\la^{\frac 1p}\log \la \,\la^{ \frac 2p-2} R^{-\frac{1+\ga}{p}+\frac 32+\epsilon} \right\}.
$$
Hence, by choosing $\epsilon$ and $\delta$ sufficiently small, we see that if $4/3\le p\le 2$, then
\begin{equation}\label{Mest1}
\|\mathcal{M}_{m^\pm_{\la,k}} \|_{L^p\to L^p} \le C_{\epsilon,p}\, \la^{ \frac 3p-2+\delta} R^{-\frac{1+\ga}{p}+\frac 32+\delta}
\end{equation}
 for every $\delta>0.$
 \smallskip
 
 Assume now that  $p >p_M:=\max\big\{\frac{3}{2}, \frac{2(M+1)}{M+3}\big\},$ so that in particular  $p>3/2.$  
Since $\la\ge R^\ga,$  by choosing $\delta>0$ sufficiently small we can estimate
$$
\la^{ \frac 3p-2+\delta} R^{-\frac{1+\ga}{p}+\frac 32+\delta}\le \la^{-\delta}R^{(\frac 3p -2+2\delta)\ga}R^{-\frac{1+\ga}{p}+\frac 32+\delta}\le 
\la^{-\delta}R^{\frac 1p( 2\ga-1)-2\ga+\frac 32+\delta(2\ga+1)}.
$$
Note that  $\frac 1p( 2\ga-1)-2\ga+\frac 32<0$ if and only if 
$$
p>\frac {2\ga-1}{2\ga-3/2}=2\frac {M+1}{M+3}=p_M.
$$
Thus, by choosing $\delta$ sufficiently small, we see that for $p>p_M$ we can establish an estimate 
$$
\|\mathcal{M}_{m^\pm_{\la,k}} \|_{L^p\to L^p} \le C_p\la^{-\delta_p}
$$
for some $\delta_p>0.$ Since each of the sums in \eqref{m2taudecomp} consists of at most $k(\la)=\mathcal{O}(\log \la)$ terms, the previous inequality implies that for $p>p_M$ 
\begin{equation}\label{sumMkla}
\sum\limits_{\la\gg 1} \sum\limits_{k=1}^{k(\la)}\|\mathcal{M}_{m^\pm_{\la,k}} \|_{L^p\to L^p}\le C'_p<\infty
\end{equation}
(recall that we are here summing only over dyadic values of  $\la$).
 
 This is exactly what we  had to prove for roots $\ga_\tau$ of Type $\mathrm{B_{T}}.$
 
As for the contributions by the multipliers  for  $m^E_{\la,k} $ in \eqref{m2taudecomp}, note that the function $E_\la$ in \eqref{IE} satisfy even better estimates than the symbols $A^\pm_\la$ in \eqref{Ik}, and thus we obtain the same type of estimate
\eqref{sumMkla} for  the maximal operators $\mathcal{M}_{m^E_{\la,k}} $ in place of $\mathcal{M}_{m^\pm_{\la,k}}.$

A similar argument applies also for the multiplier $m^0_\la$ in  \eqref{m2taudecomp} (compare also with the very similar arguments used for the maximal operator in Section 9 of \cite{BDIM25}).

This proves Theorem \ref{theorem2.1} (b) for the case of a root $\ga_\tau$ of Type  $\mathrm{B_{T}}.$

 \subsubsection{The case of roots of Type  $\mathrm{B_{NT}}.$}
For roots of  Type $\mathrm{B_{NT}}$ we need to improve the $p$-range above to the range $p>3/2.$ 

To this end, let us have a closer look at the phase $\xi_3\phi^\pm(\xi)=\phi_0(\xi)+\tilde \phi^\ga(\xi).$  We shall show that 
if $\ga_\tau$ is  of  Type $\mathrm{B_{NT}},$ then 
\begin{equation}\label{phipmi}
\xi_3\phi^\pm(\xi)=\mathcal{O}(R^{-1}).
\end{equation}
 
 Recall first from Lemma \ref{Legendret}  a) that in this case 
$\Psi(x_{0,1})=x_{0,1}\Psi'(x_{0,1}),$
  and that 
$\xi_3\phi^\pm(\xi)=\phi_0(\xi)+\tilde \phi^\ga(\xi),$  with the classical phase 
$$
\phi_0(\xi):=\xi_3s_2^{\frac{\ka_1}{1-\ka_2}}\Big[s_1 x_{0,1}+s_2^{\frac{1-\ka_1}{1-\ka_2}} \Psi(x_{0,1})\Big],
$$
and 
$$
\tilde \phi^\ga(\xi):=
\xi_3 \, s_{2}^{\frac{\ka_1}{1-\kappa_{2}}} q_{\pm}\big(s_2^{\frac{(\gamma-1)(\ka_1-1)}{1-\ka_2}}|F(\xi) |^{\gamma-1}\big)\, |F(\xi)|^{\gamma}.
$$
We can then write
$$
s_1 x_{0,1}+s_2^{\frac{1-\ka_1}{1-\ka_2}} \Psi(x_{0,1})=x_{0,1}\big[s_1 +s_2^{\frac{1-\ka_1}{1-\ka_2}} \Psi'(x_{0,1})\big]=x_{0,1} F(\xi),
$$
where $|F(\xi)|\sim R^{-1}$ on the support of $m_R.$ Since $\ga>1,$ we then obtain \eqref{phipmi}. 

This shows that we can here write
\begin{eqnarray*}
 \widetilde{m_{\la,R,I}}(\xi)&=&-i\, R^{-1}\xi_3\, \tilde m_R(\xi)\, a_{\la,R}^\pm\big(F(\xi),\xi_3, s_2\big)\, e^{-i\la \xi_3\phi^\pm(\xi)},\\
 &=:&R^{-1} \tilde{\tilde m}^\pm_{\la,R,I}(\xi),
\end{eqnarray*}
with as slightly modified cut-off $\tilde m_R.$
Thus 
$$
 \dot {m} _{\lambda,R}(\xi)=(\la R^{-1})\Big[ \tilde{\tilde m}^\pm_{\lambda,R,I}(\xi)+\tfrac R{\la} \,\widetilde{m_{\la,R,II}}(\xi)\Big]=(\la R^{-1})\, \tilde{\tilde m}_{\la,R}(\xi).
$$
Note that here $\la R^{-1}\gg 1,$ since $\la\ge R^\ga,$ with $\ga>1$ and $R\gg 1,$ so that  $T_{\tilde{\tilde m}^\pm_{\lambda,R}}$  satisfies  again an FIO-cone multiplier estimate of the form \eqref{conefiosest}.
 We may thus apply Theorem \ref{multipliertomaximal} here with $\la R^{-1}$ in place of $\la,$ and obtain
$$
\|\mathcal{M}_{m^\pm_{\la,k}} \|_{L^p\to L^p}\le C_{\epsilon,\delta,p}\,\left\{ (\la R^{-1})^{\frac 1p}\left( \la^{ \frac 2p-2} R^{-\frac{1+\ga}{p}+\frac 32+\epsilon}\right)^{1-\delta}+(\la R^{-1})^{\frac 1p}\log \la \,\la^{ \frac 2p-2} R^{-\frac{1+\ga}{p}+\frac 32+\epsilon} \right\},
$$
hence
 \begin{equation}\label{Mestitilde}
\|\mathcal{M}_{m^\pm_{\la,k}} \|_{L^p\to L^p} \le C_{\epsilon,p}\,R^{-\frac 1p} \la^{ \frac 3p-2+\delta} R^{-\frac{1+\ga}{p}+\frac 32+\delta},
\end{equation}
 for every $\delta>0.$
 
 Now assume that $p>3/2$ and that $\delta>0$ is sufficiently small. Then the exponent of $\la$ in this estimate is negative, and since $\la\ge R^\ga,$ we can estimate 
 $$
 R^{-\frac 1p} \la^{ \frac 3p-2+\delta} R^{-\frac{1+\ga}{p}+\frac 32+\delta}\le \la^{-\delta} R^{-\frac 1p} R^{ (\frac 3p-2+2\delta)\ga} R^{-\frac{1+\ga}{p}+\frac 32+\delta}=\la^{-\delta} R^{\frac 32-\frac 2p-2\ga (1-\frac 1p)+2\delta\ga}.
 $$
 Since $\ga>1,$ we have 
 $$
 \frac 32-\frac 2p-2\ga(1-\frac 1p)+2\delta\ga<  \frac 32-\frac 2p-2(1-\frac 1p)+2\delta\ga=-\frac 12+2 \delta\ga<0,
 $$
 and thus 
 $
 \|\mathcal{M}_{m^\pm_{\la,k}} \|_{L^p\to L^p} \le C_{\epsilon,p}\, \la^{-\delta}.
 $
  By means of the same arguments as before we thus obtain now that for $p>3/2$ 
\begin{equation}\label{sumMklaNT}
\sum\limits_{\la\gg 1} \sum\limits_{k=1}^{k(\la)}\|\mathcal{M}_{m^\pm_{\la,k}} \|_{L^p\to L^p}<\infty.
\end{equation}
The same kind of estimates (even better ones) hold  also for the maximal operators associated to the multipliers 
$m^E_{\la,k}$ and $m^0_\la.$ 

\smallskip
This proves Theorem \ref{theorem2.1} (b) also for the case of a root $\ga_\tau$ of Type  $\mathrm{B_{NT}}.$
\qed

\color{black}
%%%%%%%%%%%%%%%%%%%%%%%%%%%%%%%%%%%%%%%%%%%%%%%%%%%%%%%%%%%%%%%%%%%%%%%%%%%%%%%%%%%%%%%%%%%%%%%%%%%%

\section{Type C: Exactly one of $\partial_1\Phi$ and $\partial_2\Phi$ does not vanish along $\gamma_{\tau} $}\label{caseC}

In this section, we shall prove Theorem \ref{theorem2.1}(c). So assume here that $\tau\in R$ parametrizes a fixed real root $\gamma_\tau$ of Type $\mathrm{C}$, and let $\tilde m_\tau$ be the associated Fourier multiplier defined in \eqref{dmiudef}, for which $\tilde{\mathcal{M}}_{\tau}=\mathcal{M}_{\tilde m_\tau},$ i.e., 
$$
 \tilde m_\tau(\xi)=\int_{\mathbb{R}^{2}}e^{-i\bigl(\xi_1 x_1+\xi_2 x_2+\xi_3\Phi(x)\bigl)} \tilde \psi(x)\, \rho_\tau(x) \,\chi_1(x_1)\, dx,
$$
where we recall from \eqref{rhotau} that 
$\rho_\tau(x):=\chi_0\Big(\frac{x_2-\tau x_1^{a}}{\epsilon x_1^{a}}\Big).$

Since here  exactly one of $\Phi_{1}$ and $\Phi_{2}$ does not vanish along $\gamma_{\tau},$  as in Lemma \ref{Legendret} we may and shall assume without loss of generality   that $\Phi_2$ does not vanish   along 
$\gamma_{\tau},$ so that  
\begin{equation}\label{CPhi1Phi2}
	\Phi_{1}(x_{1},\tau  x_{1}^{\frac{\kappa_{2}}{\kappa_{1}}}) = 0, \quad \quad \Phi_{2}(x_{1},\tau  x_{1}^{\frac{\kappa_{2}}{\kappa_{1}}})  \neq 0 \qquad \text{for every} \quad x_1>0.
\end{equation}
Recall also from  Lemma \ref{tauinR'} that here necessarily   $T_{\tau} =0,$ hence 
\begin{equation}\label{CHPhiPhi}
	\mathrm{H}\Phi(x_{1}, \tau  x_{1}^{\frac{\kappa_{2}}{\kappa_{1}}}) =0, \quad \quad \Phi(x_{1},\tau  x_{1}^{\frac{\kappa_{2}}{\kappa_{1}}}) \neq 0 \qquad \text{for every} \quad x_1>0.
\end{equation}

Following the first steps of the proof of Theorem \ref{theorem2.1}(b),  by means of  a non-homogeneous dyadic decomposition in $|\xi|$ we may first reduce to estimating the maximal operator $\mathcal{M}_{ \tilde m_{\tau}^\la}$ associated to the Fourier multiplier $ \tilde m_\tau^\la$ defined by 
$$
\la:=2^j\gg 1 \quad\text{and} \quad \tilde m_{\tau}^\la(\xi):=\beta(\la^{-1}|\xi|) \,\tilde m_{\tau}(\xi).
$$
The full phase $\phi_\xi$ is here simply $\phi_{\xi}(x) :=\xi_1x_1+\xi_2 x_2+\xi_3\Phi(x),$ and for 
$x\in \ga_\tau$ we have 
$$
\partial_{x_1}\phi_{\xi}(x) =  \xi_1, \qquad \partial_{x_2}\phi_{\xi}(x) =  \xi_2+\xi_3\Phi_2(x);
$$
moreover,  we may assume that on the  support of $\rho_\tau(x) \,\chi_1(x_1)$ we have $|\Phi_1(x)|\ll 1 \sim |\Phi_2(x)|.$ 

In a first step, we shall here show that for all $\alpha\in \Bbb N^3$ and $N \in \mathbb{N},$
\begin{equation}\label{BrapiddecayC}
|\partial_{\xi}^{\alpha}\tilde m_\tau^\la(\xi)| \lesssim \frac{C_{N,\alpha}}{(1+|\xi|)^{N}}=\mathcal {O}(\la^{-N}),
\end{equation}
unless 
\begin{equation}\label{allsimC}
|\xi_1|\ll |\xi_2|\sim |\xi_3|.
\end{equation}

To this end, let $\xi=(\xi_1,\xi_2,\xi_3)\in\Bbb R^3$ with $|\xi|\sim \la$ be given. Choose again a permutation $(i_1,i_2,i_3)$ of $(1,2,3)$ such that 
$$
|\xi_{i_1}|\le |\xi_{i_2}|\le |\xi_{i_3}|, 
$$  so that in particular $|\xi|\sim |\xi_{i_3}|.$ 

Assume  first that  $i_3=1.$ Then  $|\partial_{x_{i_3}}\phi_{\xi}(x)| \sim |\xi_{i_3}|\sim |\xi|,$ and thus \eqref{BrapiddecayC} follows by integrations by parts in $x_{i_3}.$ 

Next, if $i_3=2,$ then integrations by parts in $x_{i_3}$  show that  \eqref{BrapiddecayC}  holds unless 
$|\xi_{i_3}|\sim |\xi_{3}|.$ 

In the latter case, assume we had  $i_1=3.$ Then $i_2=1,$ and $|\xi_i|\sim |\xi|\sim \la$ for all $i,$ in particular for $i=i_2,$ and integrations by parts in $x_{i_2}=x_1$ would yield \eqref{BrapiddecayC} as before.
So, assume that $i_1=1$ and $i_2=3.$  If we then had $|\xi_{i_1}|\gtrsim |\xi_{i_3}|,$ again integration by parts in $x_1=x_{i_1}$ would lead to \eqref{BrapiddecayC}. Thus, we are here left with the situation of \eqref{allsimC}.

Finally, assume that $i_3=3.$ Then by integrations by parts in $x_2$ we may reduce to the case where $|\xi_{i_3}|\sim |\xi_{2}|.$ If then $i_1=2,$ we are back in the situation where $|\xi_i|\sim |\xi|\sim \la$ for all $i.$ Thus there remains the case where $i_1=1, i_2=2$ and $i_3=3,$ and we again arrive at  \eqref{allsimC}.

Assume now that \eqref{allsimC} holds.  In a similar way as for roots of Type B, we can then again make use of Lemma \ref{Legendret}. As in this lemma, we shall assume without loss of generality that   $\Phi_2(x)<0$ for all $x$ in the support of $\rho_\tau(x) \,\chi_1(x_1).$ 
 
 If then $\xi_2/\xi_3<0,$ we would again find that $|\partial_{x_2}\phi_{\xi}(x)| =  |\xi_2+\xi_3\Phi_2(x)|\sim |\xi|,$ so that  integrations by parts in $x_2$  show that again  \eqref{BrapiddecayC} holds, which allows  us to reduce to the case where $\xi_2/\xi_3>0.$
 
By means of similar arguments as in Section \ref{caseB}, up to small error terms, we can thus reduce to estimating the maximal operator associated to the multiplier
$$
\tilde m_{\tau,1}^{\la}(\xi) := \chi_{0,\epsilon}\bigl(\frac{\xi_1}{\xi_3}\bigl) \chi_1\big(\frac{\xi_2}{\xi_3}\big)\, \tilde m_{\tau}^{\la}(\xi),
$$
where $\chi_{0,\epsilon}$ and $\chi_1$ are non-negative smooth cut-off functions such that $\chi_{0,\epsilon}$ is supported in a short interval $[-\epsilon, \epsilon],$ and $\chi_1$ in $ [1/2,2].$

Thus $\tilde m_{\tau,1}^{\la}$ is supported where $ |\xi_1| \ll |\xi_{2}| \sim |\xi_{3}|\sim \la$ and $\xi_{2}/\xi_{3} > 0. $

\medskip 

From here on, it will be more convenient to work with the {\it re-scaled} multiplier 
$$
m_{\tau,1}^{\la}(\xi):= \tilde m_{\tau,1}^{\la}(\la \xi).
$$
Using again the short-hand notation
\[s_1:= \frac{\xi_1}{\xi_3},  \ s_2:= \frac{\xi_2}{\xi_3}, \ s:=(s_1,s_2)
 \quad\text{(note that}\ |s_1|\ll 1\sim s_2,\, |\xi_3|\sim 1),
  \]
we may assume for simplicity that this multiplier then takes the form (compare \eqref{mla1B})
$$
 m_{\tau,1}^{\la}(\xi)=\chi_{0,\epsilon}(s_1)\chi_1(s_2)\tilde {\tilde{\chi}}_1(\xi_3) \, \int_{\mathbb{R}^{2}}
 e^{-i\la\xi_3\big(s_1 x_1+s_2 x_2+\Phi(x)\big)}\,  \tilde \psi(x)\,\rho_\tau(x)\,\chi_1(x_1)\, dx.
$$
 We shall also use the abbreviation
$$
\phi_{s}(x):=s_1 x_1+s_2 x_2+\Phi(x).
$$

Let  $x_0=(x_{0,1},x_{0,2})$ be the unique solution in $\gamma_\tau$ of the equation \eqref{x10} (cf. Lemma \ref{Legendret}), i.e., 
$$
x_0\in \gamma_\tau \quad \text{and}\quad  1+\Phi_2(x_0)=0.$$

We can now argue exactly in the same way as for roots of Type B and reduce considerations (up to small error terms)  by means of a suitable scaling argument and an application of the method of stationary phase to the integration in $x_2$ to the multiplier
\begin{eqnarray}\nonumber
 m_{\tau,2}^{\la}(\xi)&=&\la^{-\frac 12} \chi_{0,\epsilon}(s_1)\chi_1(s_2)\tilde {\tilde{\chi}}_1(\xi_3) \, \int_{\mathbb{R}}\, e^{-i\la \xi_3\big[s_1s_2^{\frac{\ka_1}{1-\ka_2}} x_1+s_2^{\frac{1}{1-\ka_2}}\Psi(x_1)\big]} \\
&&\hskip6cm  \times A_\la(\xi_3,s_2;x_1)\chi_{1,s_2}(x_1)\chi_0(\frac{x_1-x_{0,1}}\epsilon)\, dx_1,\label{mla1B4}
\end{eqnarray}
where again 
$$
\Psi(x_1) := x^{c}_{2}(x_{1}) + \Phi(x_{1},x^{c}_{2}(x_{1})), 
$$
and where 
  $A_\la(\xi_3,s_2;x_1)$ is smooth and so that  every partial derivative of order $k$ of $A_\la$ is uniformly bounded by a constant $C_k$ not depending on $\la$ in the  frequency region where   $s_2\sim1\sim|\xi_3|.$
  
Moreover, by Lemma \ref{Legendret}, 
\begin{equation*}%\label{Legendra}
	\Psi(x_{1}) = \Psi(x_{0,1}) + \Psi^{\prime}(x_{0,1})(x_{1}-x_{0,1}) + (x_{1}-x_{0,1})^{M}G(x_{1}), 
\end{equation*}
where  $G$ is an analytic function  with $G(x_{0,1}) \neq 0,$ and  where $M=M_\tau:=N_\tau+2\ge 3,$ if $N_\tau$ denotes the multiplicity of the root $\gamma_\tau$.  

Note, however, that by Euler's identity \eqref{Euler} and \eqref{CHPhiPhi} we have for any $x\in \gamma_\tau$ that
$\Phi(x)=\ka_2 x_2\Phi_2(x).$ For the point  $x_0\in \gamma_\tau,$ which according  to \eqref{x10} satisfies  $\Phi_2(x_0)=-1,$  we thus obtain  that here 
\begin{equation}\label{Gammax10C}
\Psi(x_{0,1})=x_{0,2} +\Phi(x_0)=x_{0,2} +\ka_2x_{0,2}\Phi_2(x_0)=(1-\ka_2)x_{0,2}\ne 0.
\end{equation} 
On the other hand, by \eqref{Gamma'} and \eqref{CPhi1Phi2} we now have that
\begin{equation}\label{Gammaprimeneq01C}
\Psi^{\prime}(x_{0,1}) = \Phi_{1}(x_0) = 0.
\end{equation}

Applying the coordinate change $y=x_1-x_{0,1},$ \eqref{mla1B4} can be re-written as
\begin{eqnarray*}\nonumber
 m_{\tau,2}^{\la}(\xi)&=&\la^{-\frac 12} \chi_{0,\epsilon}(s_1)\chi_1(s_2)\tilde {\tilde{\chi}}_1(\xi_3) \,  \int_{\mathbb{R}}\, e^{-i\la \xi_3\big[s_1s_2^{\frac{\ka_1}{1-\ka_2}} (x_{0,1}+y)+s_2^{\frac{1}{1-\ka_2}}\tilde \Psi(y)\big]} \\
&&\hskip6cm  \times \tilde A_\la(\xi_3,s_2;y) \chi_0(\frac{y}\epsilon)\, dy,
\end{eqnarray*}
where $\tilde A_\la(\xi_3,s_2;y):=\chi_{1,s_2}(y+x_{0,1})\,A_\la(\xi_3,s_2;x_{0,1}+y),$ and 
$$
\tilde \Psi(y) = \Psi(x_{0,1}) + y^{M}\tilde G(y), 
$$
with $\tilde G(0)\ne 0$ and $\Psi(x_{0,1})\ne 0.$
Thus 
\begin{equation}\label{mla1B3}
m_{\tau,2}^{\la}(\xi)=\la^{-\frac 12} \chi_{0,\epsilon}(s_1)\chi_1(s_2)\tilde {\tilde{\chi}}_1(\xi_3) \,  e^{-i\la\xi_3\Big[s_1 s_2^{\frac{\ka_1}{1-\ka_2}}x_{0,1}+s_2^{\frac{1}{1-\ka_2}} \Psi(x_{0,1})\Big]} \times  J(\la,\xi_3,s),
\end{equation}
where 
$$
J(\la,\xi_3,s): =\int_{\mathbb{R}}\, e^{-i\la \xi_3s_2^{\frac{1}{1-\ka_2}}\Big[s_1s_2^{\frac{\ka_1-1}{1-\ka_2}}y+y^M\tilde G(y)\Big]}\, 
\tilde A_\la(\xi_3,s_2;y)\chi_0(\frac{y}\epsilon)\, dy
$$
is again an Airy-type integral. Here we shall therefore  put
\[
u:=-s_{1}s_{2}^{\frac{\kappa_{1}-1}{1-\kappa_{2}}}\ \text{and} \quad  F(\xi):=s_1,\quad  \text{where}\ |s_{1}| \ll 1 \text{ and } |s_{2}| \sim 1,
\]
so that $J(\la,\xi_3,s)$ can again be re-written as in \eqref{Iju}, i.e., 
$$
  J(\la,\xi_3,s) =  \int_{\mathbb{R}}\, e^{-i\la \xi_3s_2^{\frac{1}{1-\ka_2}}\big[y^M\tilde G(y)-uy\big]}\, 
\tilde A_\la(\xi_3,s_2;y)\chi_0(\frac{y}\epsilon)\, dy.
$$
 
Then $J(\la,\xi_3,s)$ is as in \eqref{Iju}, and we decompose it as in \eqref{decomposeIju} into the  sum of the  terms $J_0(\la,\xi_3,s), J^{\pm}_{k}(\la,\xi_3,s)$ and $J^{E}_{k}(\la,\xi_3,s),$ where $2^{k(\la)}\sim \la^{\frac {M-1}M}.$ 

Accordingly, we  decompose the multiplier $m_{\tau,2}^{\la}$  as in \eqref{m2taudecomp}, i.e.,
$$
m_{\tau,2}^{\la}=m^{0}_{\lambda}+\sum_{\pm}\sum^{k(\lambda)}_{k=1}m^{\pm}_{\lambda,k}+\sum^{k(\lambda)}_{k=1}m^{E}_{\lambda,k},
$$
where the summands are defined as in \eqref{m0k}--\eqref{mEk}.

We shall  again separately estimate the maximal operators corresponding to each of these Fourier multipliers, focusing, however, mostly on the main terms given by the  multipliers $m^{\pm}_{\lambda,k}.$

\medskip

To this end {\it let us  fix $k$  and the corresponding multiplier $m^{\pm}_{\lambda,k},$} and introduce the following abbreviations: We put 
\begin{equation}\label{abbrevsC}
 \gamma := M/(M-1) ,\qquad R=R_k(\la):=2^{-k}\lambda^{\frac{M-1}{M}}.
 \end{equation}
Since  $M\ge 3,$  we may assume here  that 
\begin{equation}\label{abbrevs}
1<\ga <2, \quad R\gg 1 \quad  \text{and} \quad \la\ge R^\ga.
\end{equation}
Then as in \eqref{mpmlk2}  we can re-write 
$$
m^{\pm}_{\lambda,k}=\la^{-1} R^{\frac{M-2}{2M-2}}\,  \chi_{0,\epsilon}(s_1)\chi_1(s_2)\tilde {\tilde{\chi}}_1(\xi_3)\, 
 \chi_1(R|s_1|)\, a_{\la,R}^\pm\big(s_1,\xi_3, s_2\big)\, e^{-i\la \xi_3\phi^\pm(\xi)},
$$
with the phase function 
$$
\phi^\pm(\xi):=\Big[s_1 s_2^{\frac{\ka_1}{1-\ka_2}}x_{0,1}+s_2^{\frac{1}{1-\ka_2}} \Psi(x_{0,1})\Big]+
s_{2}^{\frac{\ka_1}{1-\kappa_{2}}}|s_1|^{\gamma} q_{\pm}\big(s_2^{\frac{(\gamma-1)(\ka_1-1)}{1-\ka_2}}|s_1|^{\gamma-1}\big),
$$
and a smooth  amplitude  $a_{\la,R}^\pm\big(s_1, \xi_3,s_2\big)$, where $a_{\la,R}^\pm$ satisfies estimates of the form 
$$
|\partial^\alpha_{s_1} \partial^\beta_{\xi_3} \partial^\rho_{s_2} a_{\la,R}^\pm(s_1, \xi_3,s_2)|\le C_{k,\beta} R^\alpha 
\quad \quad \forall \alpha, \beta, \rho\in  \mathbb{N}.
$$

Observe next that, in contrast to the case of roots of Type B, here the cone $\{F(\xi)=0\}$ is given by the flat hypersurface $\{\xi_1=0\}.$ This allows for a much easier and more classical treatment of the maximal operator $\mathcal{M}_{m^{\pm}_{\lambda,k}}$ than in the previous section.

Indeed,  we can here consider the {\it scaled} multiplier 
$$
\mathring m^{\pm}_{\lambda,k}(\xi_1,\xi_2,\xi_3):=m^{\pm}_{\lambda,k}(R^{-1}\xi_1,\xi_2,\xi_3)
$$
in place of $m^{\pm}_{\lambda,k},$ for which according to \eqref{mGLn2} we have $\|\mathcal {M}_{m^{\pm}_{\lambda,k}}\|_{L^p\to L^p}=\|\mathcal {M}_{\mathring m^{\pm}_{\lambda,k}}\|_{L^p\to L^p}.$ It takes the form
\begin{equation}\label{mscalC}
\mathring m^{\pm}_{\lambda,k}=\la^{-1} R^{\frac{M-2}{2M-2}}\, \chi_1(s_2)\tilde {\tilde{\chi}}_1(\xi_3)\, 
 \chi_1(|s_1|)\,  \chi_{0,\epsilon}(R^{-1}s_1)\, \mathring a_{\la,R}^\pm\big(s_1,\xi_3, s_2\big)\, e^{-i\la \xi_3\mathring\phi^\pm(\xi)},
\end{equation}
so that  after the scaling now $|s_1|\sim s_2\sim 1.$ 
The  phase is given by 
\begin{equation}\label{phipmC}
\mathring\phi^\pm(\xi):=\Big[R^{-1}s_1 s_2^{\frac{\ka_1}{1-\ka_2}}x_{0,1}+s_2^{\frac{1}{1-\ka_2}} \Psi(x_{0,1})\Big]+
R^{-\ga}s_{2}^{\frac{\ka_1}{1-\kappa_{2}}}|s_1|^{\gamma} q_{\pm}\big(R^{-(\ga-1)}s_2^{\frac{(\gamma-1)(\ka_1-1)}{1-\ka_2}}|s_1|^{\gamma-1}\big)
\end{equation}
and  the ``amplitude'' by 
\begin{equation}\label{ampestC}
\mathring a_{\la,R}^\pm\big(s_1,\xi_3, s_2\big):= a_{\la,R}^\pm\big(R^{-1}s_1,\xi_3, s_2\big).
\end{equation}
Note that by the preceding estimates the latter satisfies
\begin{equation}\label{ampderivC}
|\partial^\alpha_{s_1} \partial^\beta_{\xi_3} \partial^\rho_{s_2} \mathring a_{\la,R}^\pm(s_1, \xi_3,s_2)|\le C_{k,\beta} 
\quad \quad \forall \alpha, \beta, \rho\in  \mathbb{N}.
\end{equation}
\smallskip

Again we shall first estimate the multiplier operator $T_{\mathring m^{\pm}_{\lambda,k}}$ on $L^p.$ 
By Plancherel's theorem, \eqref{mscalC} implies that
\begin{equation}\label{CL21}
 \|T_{\mathring m_{\lambda,k}^{\pm}}\|_{L^{2}\rightarrow L^{2}} \le \|\mathring m_{\lambda,k}^{\pm}\|_{L^{\infty}} 
 \lesssim \la^{-1} R^{\frac{M-2}{2M-2}}=\lambda^{-(\frac{1}{2}+\frac{1}{M})} 2^{-\frac{M-2}{2M-2}k}.
 \end{equation}
 
 And, in view of \eqref{ampderivC} and \eqref{phipmC}, we can apply  Lemma \ref{lemma3.2}, with coordinates 
 $(\eta_1,\eta_2,\eta_3)=(\xi_2,\xi_1,\xi_3),$ $\sigma = R^{-\gamma}$ and $\rho=1/\ga\ge 1/2$ (since $\gamma<2$),
 and obtain that
 $$
\|\mathcal{F}^{-1}\mathring m_{\lambda,k}^{\pm}\|_{L^{1}} \lesssim  \big(\lambda^{-(\frac{1}{2}+\frac{1}{M})} 2^{-\frac{M-2}{2M-2}k}\big)\lambda^{\frac{1}{2}}(\lambda \sigma)^{\frac{1}{2}}
=\big(\lambda^{-(\frac{1}{2}+\frac{1}{M})} 2^{-\frac{M-2}{2M-2}k}\big)\la^{\frac 12} 2^{\frac M{2M-2}k}=\lambda^{  -\frac{1}{M}} 2^{\frac{k}{M-1}}.
$$
Thus, by Young's inequality,
\begin{equation}\label{CL11}
  \|T_{\mathring m_{\lambda,k}^{\pm}}\|_{L^{1}\rightarrow L^{1}} \le \|\mathcal{F}^{-1}\mathring m_{\lambda,k}^{\pm}\|_{L^{1}} \lesssim \lambda^{  -\frac{1}{M}} 2^{\frac{k}{M-1}}.
  \end{equation}
  Interpolating these two estimates, we obtain for $1\le p\le 2$ that
 \begin{equation}\label{CLp1}
  \|T_{\mathring m_{\lambda,k}^{\pm}}\|_{L^{p}\rightarrow L^{p}}  \lesssim \lambda^{(\frac{1}{p} -\frac{M+1}{M})} 2^{k(\frac{M}{M-1}\frac{1}{p}-1)}.
 \end{equation}
Again we want to apply Theorem \ref{multipliertomaximal} in order to also bound the maximal operator 
$\mathcal{M}_{\mathring m_{\lambda,k}^{\pm}}.$ 

To this end, we first  observe  that  from  \eqref{mscalC} it is easy  to see that the multiplier 
$$
\dot {m}_{\lambda,k}^{\pm}:=\frac {d}{dt} \mathring m_{\lambda,k}^{\pm}(t\xi)\big|_{t=1},
$$ 
as defined in Theorem \ref{multipliertomaximal}, is of the form $\la \, \widetilde{\mathring m_{\lambda,k}^{\pm}},$ where 
$\widetilde{\mathring m_{\lambda,k}^{\pm}}$ is of the same form \eqref{mscalC} as  $\mathring m_{\lambda,k}^{\pm},$ only with a slightly modified  ``amplitude''.

There remains thus  to check condition (ii) in this theorem. As in Section \ref{caseAi},  by 
Lemma  \ref{generalkernelestimate}  it will suffice for our purposes   to prove  that 
\[
\bigl|\partial ^{\alpha}\mathring m_{\lambda,k}^{\pm}(\xi)\bigl| \lesssim \lambda^{|\alpha|}.
\]
However, this follows is immediately from \eqref{mscalC}, and an analogous argument applies to $\widetilde{\mathring m_{\lambda,k}^{\pm}}.$

\smallskip
We are now in a position to apply  Theorem \ref{multipliertomaximal}, with constants
\[
A = \lambda^{\frac{1}{p} -\frac{M+1}{M}} 2^{k(\frac{M}{M-1}\frac{1}{p}-1)}, \quad \quad B =\lambda^{C}
\]
(note that when $3/2\le p\le 2$, then  $A \lesssim 1,$ since $M\ge 3$).
We thus  obtain that for every $\delta>0$
\begin{eqnarray*}
 \| \mathcal{M}_{m_{\lambda,k}^{\pm}}\|_{L^{p} \rightarrow L^{p}} &=& \| \mathcal{M}_{\mathring m_{\lambda,k}^{\pm}}\|_{L^{p} \rightarrow L^{p}}\\
 &\le& C_{\delta} \biggl\{ \lambda^{\frac{1}{p}} \biggl( \lambda^{\frac{1}{p} -\frac{M+1}{M}} 2^{k(\frac{M}{M-1}\frac{1}{p}-1)} \biggl)^{1-\delta}  + \lambda^{\frac{1}{p}} (\log\lambda) \lambda^{\frac{1}{p} -\frac{M+1}{M}} 2^{k(\frac{M}{M-1}\frac{1}{p}-1)} \biggl\}.
\end{eqnarray*}

By choosing $\delta$  sufficiently small, we see that if $3/2\le p\le 2,$ then
\begin{equation}\label{Mla0estp}
 \|  \mathcal{M}_{m_{\lambda,k}^{\pm}}\|_{L^{p} \rightarrow L^{p}} \le C_{p,\delta} \lambda^{\frac{2}{p} -\frac{M+1}{M} + \delta} 2^{k(\frac{M}{M-1}\frac{1}{p}-1 +\delta)}.
\end{equation}
For $p > \frac{2M}{M+1}$, by choosing $\delta >0$ sufficiently small, we have
$\frac{2}{p} -\frac{M+1}{M} + \delta <0$ and $\frac{M}{M-1}\frac{1}{p}-1 +\delta< 0,$ since $M \ge 3$. Summing over all $k$ and all  dyadic numbers $\lambda\gg 1$, we obtain that
\[\sum_{\lambda \gg 1} \sum_{k=1}^{k(\lambda)} \|  \mathcal{M}_{m_{\lambda,k}^{\pm}}\|_{L^{p} \rightarrow L^{p}} \lesssim 1. \]

\medskip
Next  we estimate $\mathcal{M}_{m_{\lambda}^{0}}$. Recall from \eqref{I0} and \eqref{m0k} that
$$
m^{0}_{\lambda}(\xi)= \la^{-(\frac 12+\frac 1M)} \chi_{0}(R|s_1|)\, \tilde{\chi}_1(s_1)\chi_1(s_2)\tilde {\tilde{\chi}}_1(\xi_3) g(s_1,\xi_3, s_{2})\, e^{-i\la\xi_3\Big[s_1 s_2^{\frac{\ka_1}{1-\ka_2}}x_{0,1}+s_2^{\frac{1}{1-\ka_2}} \Psi(x_{0,1})\Big]},
$$
where we have put $R:=\lambda^{\frac{M-1}{M}}$ (which formally agrees with \eqref{abbrevsC} if we choose $k=0$).
Passing to the scaled  multiplier $\mathring m^{0}_{\lambda}(\xi_1,\xi_2,\xi_3):=m^{0}_{\lambda}(R^{-1}\xi_1,\xi_2,\xi_3),$
we see that 
\begin{eqnarray*}
\mathring m^{0}_{\lambda}(\xi)&=& \la^{-(\frac 12+\frac 1M)} \chi_{0}(|s_1|)\, \tilde{\chi}_1(R^{-1}s_1)\chi_1(s_2)\tilde {\tilde{\chi}}_1(\xi_3) \,g(R^{-1}s_1,\xi_3, s_{2})\\
&&\hskip3cm \times e^{-i\la\xi_3\Big[R^{-1}s_1 s_2^{\frac{\ka_1}{1-\ka_2}}x_{0,1}+s_2^{\frac{1}{1-\ka_2}} \Psi(x_{0,1})\Big]}.
\end{eqnarray*}
Apparently the amplitude and the (slightly simpler) phase behave very much in the same way as the ones for 
$\mathring m_{\lambda,k}^{\pm},$ if   we formally put $k=0.$  Consequently, the same kind of estimates hold for the maximal operator $\mathcal {M}_{m^{0}_{\lambda}}$ as for $\mathcal{M}_{m_{\lambda,k}^{\pm}}$ when choosing $k=0$ in 
\eqref {Mla0estp}, i.e., 
$$
 \|\mathcal {M}_{m^{0}_{\lambda}} \|_{L^{p} \rightarrow L^{p}} \le C_{p,\delta} \lambda^{\frac{2}{p} -\frac{M+1}{M} + \delta}.
$$
For  $p > \frac{2M}{M+1}$, this allows again  to sum these estimates over  all dyadic numbers $\la\gg 1.$

\medskip
Finally, we consider the maximal operators $\mathcal{M}_{m_{\lambda,k}^{E}}.$ If $R$ is defined as in \eqref{abbrevsC}, then  \eqref{IE} and \eqref{mEk} show that 
$$
m^{E}_{\lambda,k}(\xi)=\la^{-\frac 32} \chi_{1}\biggl(R|s_1| \biggl) \tilde{\chi}_1(s_1)\chi_1(s_2)\tilde {\tilde{\chi}}_1(\xi_3) 
|s_1|^{-1} E_\la(|s_1|, \xi_3,s_2)\, e^{-i\la\xi_3\Big[s_1 s_2^{\frac{\ka_1}{1-\ka_2}}x_{0,1}+s_2^{\frac{1}{1-\ka_2}} \Psi(x_{0,1})\Big]},
$$
where $E_\la$ satisfies estimates of the form \eqref{symbol2}, which are stronger than the ones in \eqref{symbol1}.
The re-scaled multiplier $\mathring m^{E}_{\lambda,k}(\xi_1,\xi_2,\xi_3):=m^{E}_{\lambda,k}(R^{-1}\xi_1,\xi_2,\xi_3)$
is then of the form
$$
\mathring m^{E}_{\lambda,k}(\xi)=\la^{-\frac 32} R \tilde \chi_{1}(s_1) \tilde{\chi}_1(R^{-1}s_1)\chi_1(s_2)\tilde {\tilde{\chi}}_1(\xi_3) 
E_\la(R^{-1}|s_1|, \xi_3,s_2)\, e^{-i\la\xi_3\Big[R^{-1}s_1 s_2^{\frac{\ka_1}{1-\ka_2}}x_{0,1}+s_2^{\frac{1}{1-\ka_2}} \Psi(x_{0,1})\Big]}.
$$

But note that the inequality  $\la\ge R^\ga$ in \eqref{abbrevs} is equivalent to the inequality 
$$
\la^{-\frac 32} R\le \la^{-1} R^{\frac{M-2}{2M-2}}.
$$
A comparison with the expression for $\mathring m^{\pm}_{\lambda}$ in \eqref{mscalC} thus shows that the amplitude of 
$\mathring m^{E}_{\lambda,k}$ satisfies the same kind of estimates as the ones for $\mathring m^{\pm}_{\lambda,k}$
(potentially even stronger ones). Thus by following the same scheme of proof as devised for $\mathring m^{\pm}_{\lambda,k}$,  we see that $\mathcal{M}_{m_{\lambda,k}^{E}}$  satisfies   the same kind of  estimates 
$$
\| \mathcal{M}_{m_{\lambda,k}^{E}}\|_{L^{p} \rightarrow L^{p}} \le C_{p,\delta} \lambda^{\frac{2}{p} -\frac{M+1}{M} + \delta} 2^{k(\frac{M}{M-1}\frac{1}{p}-1 +\delta)}
$$
 as $\mathcal{M}_{m_{\lambda,k}^{\pm}}$ in \eqref{Mla0estp} (potentially even stronger ones). Thus for $p > \frac{2M}{M+1}$ we also obtain that 
 \[
 \sum_{\lambda \gg 1} \sum_{k=1}^{k(\lambda)} \|  \mathcal{M}_{m_{\lambda,k}^{E}}\|_{L^{p} \rightarrow L^{p}} \lesssim 1. \]
In combination, all these estimates then  complete the proof of Theorem \ref{theorem2.1}(c)  by interpolation with the trivial $L^\infty$-bounds.
\qed

\section{Appendix: Control by Guth-Wang-Zhang seminorms}\label{GWZcont}
We explain the seminorms $\|\cdot\|_{GWZ}$ mentioned in \eqref{gwzest} and sketch how to prove that inequality by the results in \cite{GLMX23}.

Let  $s$ be a dyadic parameter in the range $R^{-1/2}\leq s\lesssim 1$. The standard decomposition of $\Gamma_R^\pm$ can be done by decomposing $\RR^2$ into sectors $\theta$ of angular width $R^{-1/2}$. We group the sectors $\theta$ into sectors $\tau$ of angular width  $s$ (also called the aperture $s=d(\tau)$ of $\tau$  in  \cite{GWZ20}) in the $(\xi_1,\xi_2)$-plane. 
For instance, if $s=R^{-1/2}$, each sector $\tau$ simply is a single cap $\theta$. And, if $s\sim1$, $\tau$ consists essentially of all caps $\theta$.
Let us denote by
$$ \tilde\theta:= \{\xi\in\Gamma^\pm_R:(\xi_1,\xi_2)\in \theta\}$$
the lift of $\theta$ to the thickened cone $\Gamma^\pm_R$.

%Recall that each cap $\theta$ is essentially a box of dimensions $R^{-1}\times R^{-1/2}\times 1$ with respect to the coordinates 
%$\eta$ associated to the  orthonormal frame 
%$$E_1=n(\xi_\theta),\quad E_2=t(\xi_\theta),\quad E_3=\xi_\theta/|\xi_\theta|,$$
%if $\xi_\theta$ denotes the center of the cap $\theta.$ 
The dual convex body $\theta^*$ to the convex hull of $\tilde\theta$ is then a rectangular box of dimensions $R\times R^{1/2}\times 1$, with orientation depending on $\theta$.
%Note that if  $\chi_\theta$ is  a smooth bump function adapted to the box $\theta$, then, by the uncertainty principle,
%$\widehat{\chi_\theta}$ is essentially supported in $\theta^*$.
%More precisely: if, in the coordinates $\zeta$ given by  $\xi=\xi_\theta+E\zeta$, $\chi_\theta$ is of the form 
%$$
%\chi_\theta(\xi)=\chi_0(R\zeta_1)\chi_0(R^{1/2}\zeta_2)\chi_0(\zeta_3), 
%$$
%then by integration by parts, $\widehat{\chi_\theta}$ decays rapidly far away from $\theta^*$.
%\smallskip
Of course, the orientation of the boxes $\theta^*$ differs for different $\theta$ in $\tau,$ but there is a ``bounding box''
$U_{\tau,R}$ containing all these dual boxes $\theta^*$,  which is essentially the convex hull of the union of all these $\theta^*,$ i.e., 
$$
U_{\tau,R} \simeq \text{Convex Hull} (\bigcup_{\theta\subset\tau} \theta^*).
$$
There are different ways to describe $U_{\tau,R}$, compare \cite{GWZ20} and \cite{GLMX23}.
%If $\xi_\tau\in \Gamma_R$ denotes the center of $\tau,$  then in the  coordinates $x$ corresponding to the orthonormal frame at $\xi_\tau$ this box is given by
%$$
%U_{\tau,R} :=\{x\in\RR^3: |x_1|\le R, |x_2|\le Rs \text{ and } |x_3|\le Rs^2\}.
%$$
Let us again look at the   two extremal cases: For  $s=R^{-1/2}$, $U_{\tau,R}$ consists of a single dual box $\theta^*,$ which is indeed a box of dimensions $R\times R^{1/2}\times 1$. And, for $s=1$, $U_{\tau,R}$ is a cube of side $R$.
Now by $\{U\}_{U// U_{\tau,R}}$, we denote a tiling of $\RR^3$ by boxes $U$ which are translates of $U_{\tau,R}$ and only overlap on a set of measure zero. 
Define the square function over $\tau$ by 
$$S_\tau (F)=(\sum_{\theta\subset\tau} |F_\theta|^2)^\frac12$$ 
(recall that $\hat F_\theta=\hat F\chi_{\tilde\theta}$, where $\chi_{\tilde\theta}$ is a smooth bump function adapted to $\tilde\theta$) and the following semi-norms:
\begin{eqnarray*}\label{GWZs}
	\|F\|_{GWZ,s}&:=& \left(\sum_{\angle(\tau)=s} \sum_{U// U_{\tau,R}} |U|^{-1}\big \|S_\tau(F) \big \|_{L^2(U)}^4 \right)^\frac14 \\
	\|F\|_{GWZ}&:=& \sum_{R^{-1/2}\le s\le 1}\|F\|_{GWZ,s}.
\end{eqnarray*}
Here, $\sum_{\angle(\tau)=s}$ means summation  over a decomposition into sectors $\tau$ of angular width  $s$ with measure zero overlap, $\sum_{R^{-1/2}\le s\le 1}$  means summation over all dyadic values of $s$ in the range $R^{-1/2}\leq s\leq 1$.

In \cite{GLMX23}, the authors essentially show that \eqref{gwzest} holds for functions given by oscillatory integral operators 
\begin{equation}\label{extension}
 F(x,t) = \mathcal{F}^R(f)(x,t) =\int e^{i\phi^R(x,t,\eta)} a^R(x,t,\eta) \hat f(\eta)  d\eta,
\end{equation}
which includes our phase $\phi^R(x,t,\eta)=x\cdot\eta+th(\eta)$, and $a^R$ is a suitable amplitude function. Here, we may and will choose $a^R(x,t,\eta)= \chi((x-x_0,t-t_0)/R)\chi_1(\eta)$, localising to a ball $B_R$ of radius $R$ and centre $(x_0,t_0)$ and to an annulus $1\leq|\eta|\leq2$ on the frequency side. Since any translation of $F$ only leads to a modulation on the Fourier side which leaves the Fourier support invariant, we may assume without loss of generality that the ball of radius $R$ is centred at the origin.
 
 The Fourier support of such functions $F$ is indeed essentially contained in a $1/R$-neighbourhood of the cone, but not any function $G$ with such Fourier support is of the specific form \eqref{extension}. However, we can write $G$ as an average of such functions. Such averaging arguments are frequently used  in restriction and decoupling theory; since the seminorms $\|\cdot\|_{GWZ}$ are quite new and elaborate, we feel it warranted to sketch an argument for this situation.

It is a short exercise to check that
\begin{equation}
	G(x,t) = \int_\RR F^s(x,t+s) R^{-1}\chi(R^{-1}(t+s)) ds 
\end{equation}
for all $(x,t)\in B_R$, where $\chi$ is a smooth bump function and
\begin{equation}
	F^s = \mathcal{F}^R(f^s),\qquad\qquad f^s(x)=G(x,s).
\end{equation}
The precise estimate in \cite{GLMX23} is 
\def\thesum{\sum\limits_{R^{-1/2}\leq s\leq 1} \sum\limits_{\substack{\tau: \\ d(\tau)=s}} \sum\limits_{\substack{U//U_{\tau,R} \\ U\subset B_{2R} }} |U|^{-1} }

\begin{equation}
	\| \mathcal{F}^R(f)\|_{L^4(B_R)}^4 \lesssim R^{\epsilon} \thesum \| S_\tau(\mathcal{F}^R(f))\|_{L^2(U)}^4 + \mathcal{O}(R^{-N}) \|f\|_{L^2(B'_R)}^4. 
\end{equation}
Here, $B_{2R}$ is the ball with the same centre as $B_R$ and double the radius, and $B'_{R}$ its projection to $\RR^2$.
Actually, they have some form of rapid decay instead of the à priori stronger statement $U\subset B_{2R}$, but it is easy to see that the sum over all $U$ not contained in $B_{2R}$ can be absorbed into the error term.

We claim that for all $|s|\leq R$
\begin{eqnarray*}
	\sum_U \| S_\tau(F^s)\|_{L^2(U)}^4
	\lesssim \sum_U \|S_\tau( G)\|_{L^2(U)}^4 +\mathcal{O}(R^{-N})  \|f^s\|_{L^2(B'_R)}.
\end{eqnarray*}
The reason for this is that $F_\theta^s$ is morally constant on the tubes that are the the Fourier duals of $\tilde\theta$ and can be seen using a wave package analysis. We provide more details:\\
Write 
$$U_t=\{x:(x,t)\in U\}$$
for the $t$-section of $U$. Since the orientation of $U$ is determined by the caps $\theta$ in $\tau$, we can write
$$U=U_0\times\{0\}+[-R,R](v_\tau,1)$$
for a vector $(v_\tau,1)$ depending on $\tau$. Further,
$$ U_t=U_0+tv_\tau.$$
Let $$f^s_{\theta,U}(x):= f^s_\theta(x)\chi_{U_0}(x)$$
where $\chi_{U_0}$ is a smooth bump function adapted to $U_0$ so that
$$f^s_\theta=\sum_U f^s_{\theta,U}.$$
Note that by construction of the sets $U$, 
$$\mathop{supp} \hat{f}^s_{\theta,U} \subset 2\theta.$$
Since
\begin{eqnarray*}
	 F^s_{\theta,U}(x,t)&:=&\mathcal{F}^R(f^s_{\theta,U})(x,t)  \\
	&=&\int e^{i(x\cdot\eta+th(\eta))} \hat f^s_{\theta,U}(\eta) \chi_1(\eta)d\eta, \\ 
	&=&\int\int e^{i([x-y]\cdot\eta+th(\eta))} \chi_1(\eta) d\eta\ f^s_{\theta,U}(y) dy,
\end{eqnarray*}
an integration by parts argument shows that $F^s_{\theta,U}$ decays rapidly for $(x,t)\not\in 2U$. Hence
\begin{eqnarray*}
	\| S_\tau(F^s)\|_{L^2(U)}^2 
	=\sum_{\theta\subset\tau} \| F_\theta^s\|_{L^2(U)}^2
	\lesssim\sum_{\theta\subset\tau}\sum_{U'\subset 2U} \|F_{\theta,U'}^s\|_{L^2}^2 +\mathcal{O}(R^{-N})  \|f^s\|_{L^2(B'_R)}, 
\end{eqnarray*}
and for any $t\in[-R,R]$ we have
\begin{eqnarray*}
	\|F_{\theta,U'}^s(\cdot,t)\|_{L^2}^2 
	&=&\int |\int e^{i(x\eta+th(\eta))} \hat f^s_{\theta,U'}(\eta) \chi_1(\eta) d\eta|^2 dx \\
	&=&\int |e^{ith(\eta)} \hat f^s_{\theta,U'}(\eta) \chi_1(\eta) |^2 d\eta \\ 
	&\leq&\int |f^s_{\theta,U'}(x) |^2 dx  \\
	&=& \int_{U'_0} |f^s_{\theta}(x) |^2 dx\ =\ \| G_\theta(\cdot,s)\|_{L^2(U_0')}^2  ,
\end{eqnarray*}
by Plancherel's theorem, so that
\begin{eqnarray*}
	\sum_U |U|^{-1} \| S_\tau(F^s)\|_{L^2(U)}^4  
	&\lesssim& \sum_{U'} |U'|^{-1} \left(R \sum_{\theta\subset\tau} \|G_\theta(\cdot,s)\|_{L^2(U'_0)}^2 \right)^2 
	 +\mathcal{O}(R^{-N})  \|f^s\|_{L^2(B'_R)} \\
	&\lesssim& \sum_{U} |U|^{-1} \left(R \sum_{\theta\subset\tau} \|G_\theta(\cdot,s)\|_{L^2(U_0-sv_\tau)}^2 \right)^2 
	+\mathcal{O}(R^{-N})  \|f^s\|_{L^2(B'_R)} .
\end{eqnarray*}
Since $G_{\theta}$  has Fourier support in $\theta$, $G_{\theta}$ behaves essentially constant on $\theta^*$ and translates; decomposing $U$ into translates of $\theta^*$ (which is feasable by construction of $U_{\tau,R}$)
$$ R \|G_\theta(\cdot,s)\|_{L^2(U_0-sv_\tau)}^2 
= R \|G_\theta(\cdot,s)\|_{L^2(U_s)}^2 \approx
   \|G_\theta\|_{L^2(U)}^2$$
 and the claim follows.

 For our general function $G$, we conclude
\begin{eqnarray*}
	\|G\|_{L^4(B_R)} &\lesssim& 
	R^{-1} \int_\RR  \| F^s \|_{L^4(B_R)} (1+|s|/R)^{-N} ds \\
	&\lesssim&  R^{-1}\int_\RR  \left(\thesum \| S_\tau(F^s)\|_{L^2(U)}^4 \right)^\frac14\ (1+|s|/R)^{-N} ds \\
	&& \qquad\qquad\qquad + 	R^{-1} \int_\RR \mathcal{O}(R^{-N})  \|f^s\|_{L^2(B'_R)}  (1+|s|/R)^{-N} ds \\
	&\lesssim& \left(\thesum \| S_\tau(G)\|_{L^2(U)}^4 \right)^\frac14 \\
	&& \qquad\qquad\qquad +\mathcal{O}(R^{-N'}) 
	\left(\int_\RR \|G(\cdot,s)\|_{L^4(B'_R)}^4 (1+|s|/R)^{-N} ds\right)^\frac14.
\end{eqnarray*}
Summing over a finite overlapping set of balls $B_R$ covering $\RR^3$ and bootstraping the error, we obtain 
\begin{eqnarray*}
	\|G\|_{L^4(\RR^3)} &\lesssim& 
	\left(\sum\limits_{R^{-1/2}\leq s\leq 1} \sum\limits_{\substack{\tau: \\ d(\tau)=s}} \sum\limits_{U//U_{\tau,R}} |U|^{-1} \| S_\tau(G)\|_{L^2(U)}^4 \right)^\frac14 
\end{eqnarray*}
for all functions $G$ with Fourier support in $\Gamma_R^\pm$.

 %%%%%%%%%%%%%%%%%%%%%%%%%%%%%%%%%%%%%%%%%%%%%%%%%%%%%%%%%%%%%%%%%%%%%%
%%%%%%%%%%%%%%%  References %%%%%%%%%%%%%%%%%%%%%%%%%%%%%%%%%%%%%%%%%%%%is
%%%%%%%%%%%%%%%%%%%%%%%%%%%g%%%%%%%%%%%%%%%%%%%%%%%%%%%%

\begin{flushleft}
\vspace{0.3cm}\textsc{Stefan Buschenhenke\\
Mathematisches Seminar, C.A.-Universit\"{a}t zu Kiel, 24118, Kiel, Germany\\
Email address:}  buschenhenke@math.uni-kiel.de

\vspace{0.3cm}\textsc{Wenjuan Li\\
	School of Mathematics and Statistics, Northwestern Polytechnical University, 710129, Xi'an, People's Republic of China\\
	Email address:} liwj@nwpu.edu.cn

\vspace{0.3cm}\textsc{Detlef M\"{u}ller\\
	Mathematisches Seminar, C.A.-Universit\"{a}t zu Kiel, 24118, Kiel, Germany\\
	Email address:} mueller@math.uni-kiel.de

\vspace{0.3cm}\textsc{Huiju Wang\\
	School of Mathematics and Statistics, Henan University, 475000, Kaifeng, People's Republic of China\\
	Email address:} huijuwang@mail.nwpu.edu.cn
\end{flushleft}


\begin{thebibliography}{99}
	
	\bibitem{bourgain85} J. Bourgain,
	Estimations de certaines fonctions maximales,
	{\it C. R. Acad. Sci. Paris S\'er. I Math.}, \textbf{301(2)} (1985), 499--502.
	
	\bibitem{Bo86} J. Bourgain, Averages in the plane over convex curves and maximal operators, {\it J. Anal. Math.}, {\bf47} (1986), 69--85.
	
	\bibitem{BNW}  J. Bruna, A. Nagel, and S. Wainger, Convex hypersurfaces and Fourier transforms, {\it Ann. of Math.}, {\bf127} (1988), 333--365.
	
	\bibitem{BDIM19}  S. Buschenhenke, S. Dendrinos, I. A. Ikromov, and D. M\"{u}ller, Estimates for maximal functions associated to hypersurfaces in $\mathbb{R}^3$ with height h $<$ 2: Part I, {\it Trans. Amer. math. Soc.}, {\bf372} (2019), 1363--1406.
	
	\bibitem{BIM25}  S. Buschenhenke,  I. A. Ikromov, and D. M\"{u}ller, Estimates for maximal functions associated to hypersurfaces in $\mathbb{R}^3$ with height h $<$ 2: Part II: A geometric conjecture and its proof for generic 2-surfaces, {\it Ann. Sc. Norm. Super. Pisa Cl. di Sc.}, \textbf{XXVI(5)} (2025), 1765-1877.
	
\bibitem{BDIM25}  S. Buschenhenke, S. Dendrinos, I. A. Ikromov, and D. M\"{u}ller, $L^p$ estimates for FIO-cone multipliers,  arXiv:2511.05243, 2025.

	\bibitem{CM86} M. G. Cowling and G. Mauceri, Inequalities for some maximal functions, II, {\it Trans. Amer. Math. Soc.}, \textbf{296} (1986), 341--365.
	
	
	\bibitem{DZ19}S. Dendrinos  and E. Zimmermann,
	On $L^{p}$-improving for averages associated to mixed homogeneous
	polynomial hypersurfaces in $\mathbb{R}^{3}$, {\it J. Anal. Math.}, \textbf{138(2)} (2019), 563--595.
	
	\bibitem{F86}J. L. Rubio de Francia, Maximal functions and Fourier transforms, {\it Duke Math. J.}, \textbf{53} (1986), 395-404.
	
	
\bibitem{GWZ20} L. Guth, H. Wang, and R. Zhang,
 A sharp square function estimate for the cone in $\mathbb{R}^{3}$, \textit{Ann. of Math.}, \textbf{192(2)} (2020), 551-581.

\bibitem{GLMX23} C. Gao, B. Liu,  C. Miao and Y. Xi, Square function estimates and local smoothing for Fourier integral operators. \textit{Proc. London Math. Soc.}, \textbf{126} (2023), 1923-1960.

\bibitem{Gr13} M. Greenblatt, $L^{p}$-boundedness of maximal averages over hypersurfaces in $\mathbb{R}^{3}$, {\it Trans. Amer. Math. Soc.}, \textbf{365} (2013), 1875--1900.	
	
	
	\bibitem{Gr81}  A. Greenleaf, Principal curvature and harmonic analysis, {\it Indiana Univ. Math. J.}, {\bf30} (1981), 519--537.
	
	\bibitem{IKM05}  I. A. Ikromov, M. Kempe, and D. M\"{u}ller, Damped oscillatory integrals and boundedness of maximal operators associated to mixed homogenous hypersurfaces, {\it Duke Math. J.}, {\bf126} (2005), 471--490.
	
	\bibitem{IKM10}  I. A. Ikromov, M. Kempe, and D. M\"{u}ller, Estimates for maximal functions associated with hypersurfaces in $\mathbb{R}^3$ and related problems of harmonic analysis, {\it Acta Math.}, {\bf204} (2010), 151--271.
	
	\bibitem{IM11} I. A. Ikromov  and D. M\"{u}ller, On adapted coordinate systems, {\it Trans. Amer. Math. Soc.}, {\bf 363} (2011), 2821--2848.
	
	
	\bibitem{IM16}  I. A. Ikromov  and D. M\"{u}ller, Fourier restriction for hypersurfaces in three dimensions and Newton polyhedra; Annals of Mathematics Studies {\bf194}, Princeton University Press, Princeton and Oxford 2016, 260 pages.
	
	\bibitem{Io94}  A. Iosevich, Maximal operators associated to families of flat curves in the plane, {\it Duke Math. J.}, {\bf76} (1994), 633--644.
	
	\bibitem{IS97}  A. Iosevich and E. Sawyer, Maximal averages over hypersurfaces, {\it Adv. Math.}, {\bf132} (1997), 46--119.
	
	\bibitem{ISS}  A. Iosevich. E. Sawyer, and A. Seeger, On averaging operators associated with convex hypersurfaces of finite type, {\it J. Anal. Math.}, {\bf79} (1999), 159--187.
	
	
	%\bibitem{LLOO} J. Lee, J. Lee, J. Oh and S. Oh, Maximal averages and non-transversality,  {\it arXiv: 2601.01880}, (2026).
	
	
	
	\bibitem{L}W. Li, Maximal functions associated with non-isotropic dilations of hypersurfaces in $\mathbb{R}^{3}$, {\it J. Math. Pures Appl.}, \textbf{113} (2018), 70--140.
	
	
	
	
	\bibitem{LWZ} W. Li, H. Wang, and Y Zhai, $L^{p}$-improving bounds and weighted estimates for maximal functions associated with curvature, {\it J. Fourier Anal. Appl.,} \textbf{29} (2023).
	
	
	\bibitem{LW24}  W. Li and H. Wang, Maximal functions related to homogeneous hypersurfaces in $\mathbb{R}^3$,  {\it arXiv: 2406.06876v2}, (2024).
	
	
	
	
	\bibitem{NSW93}  A. Nagel, A. Seeger, and S. Wainger, Averages over convex hypersurfaces,  {\it Amer. J. Math.}, {\bf115} (1993), 903--927.
	
	\bibitem{MSS93}  G. Mockenhaupt, A. Seeger, and C. D. Sogge, Local smoothing of Fourier integral operators and Carleson--Sj\"{o}lin estimates, {\it J. Amer. Math.}, {\bf6} (1993), 65--130.
	

\bibitem{PS97} D. H. Phong and E. M. Stein,
The Newton polyhedron and oscillatory integral operators,
{ \it Acta Math.}, \textbf{179(1)} (1997), 105--152.

\bibitem{Schwend} J. Schwend, Near optimal $L^{p} \rightarrow L^{q}$ estimates for euclidean averages
	over prototypical hypersurfaces in $\mathbb{R}^{3}$, \textit{Math. Ann.}, \textbf{390} (2024), 1309--1364.


\bibitem{See}
A. Seeger,   Some inequalities for singular convolution operators in $L^p$-spaces, {\it Trans. Amer. Math. Soc. } \textbf{308(1)} (1988), 259--272.

	
	\bibitem{SS85}  C. D. Sogge and E. M. Stein, Averages of functions over hypersurfaces in $\mathbb{R}^n$, {\it Invent. Math.}, {\bf82} (1985), 543--556.
	
	\bibitem{St76}  E. M. Stein, Maximal functions, I: Spherical means, {\it Proc. Nat. Acad. Sci. U.S.A.}, {\bf73} (1976), 2174--2175.
	

	\bibitem{Stbook} E. M. Stein,
	\emph{Hamonic Analysis: real-variable methods, orthogonality and
		Oscillatory integrals}, Princeton Mathematical Series, 43, Monographs in Harmonic Analysis, Princeton University Press, Princeton, NJ, 1993.
	
		
	\bibitem{Zi14}  E. Zimmermann, On $L^p$-estimates for maximal average over hypersurfaces not satisfying the transversality condition, \textit{Phd thesis, Christian-Albrechts Universit\"{a}t Bibliothek Kiel}, 2014.
		\color{black}
	
	
	
	
\end{thebibliography}
\end{document}